\def\tab(#1){\,\mbox{\tiny$\young(#1)$}\,}
\title{On homomorphisms indexed by semistandard tableaux}
\author[S.~Lyle]{Sin\'ead Lyle}
\address{School of Mathematics, University of East Anglia, Norwich NR4 7TJ, UK.}
\email{s.lyle@uea.ac.uk}
\subjclass[2000]{20C08, 20C30, 05E10}
\keywords{Hecke algebras, Specht modules, Homomorphisms.}
\thanks{The author acknowledges support from the EPSRC}
\numberwithin{equation}{section}
\numberwithin{figure}{section}
\newtheorem{lemma}{Lemma}[section]
\newtheorem{theorem}[lemma]{Theorem}
\newtheorem{proposition}[lemma]{Proposition}
\newtheorem{corollary}[lemma]{Corollary}
\newtheorem*{theorem*}{Theorem}
\newtheorem*{proposition*}{Proposition}
\newtheorem*{lemma*}{Lemma}
\newtheorem*{definition*}{Definition}
\newtheorem*{caution*}{Caution}
\theoremstyle{definition}
\theoremstyle{remark}
\newtheorem*{remark}{Remark}
\newtheorem{ex}{Example}
\newcommand{\h}{\mathcal{H}}
\newcommand{\hla}{\mathcal{H}^{\rhd \la}}
\newcommand{\hmu}{\mathcal{H}^{\rhd \mu}}
\newcommand{\symn}{\mathfrak{S}_n} 
\newcommand{\sym}{\mathfrak{S}} 
\newcommand{\C}{\mathbb{C}}
\newcommand{\Z}{\mathbb{Z}}
\newcommand{\la}{\lambda}
\newcommand{\ft}{\mathfrak{t}}
\newcommand{\fs}{\mathfrak{s}}
\newcommand{\fu}{\mathfrak{u}}
\newcommand{\fv}{\mathfrak{v}}
\newcommand{\q}{\hat{q}}
\newcommand{\gauss}[2]{{{#1} \brack {#2}}}
\newcommand{\seq}[3]{\langle{#1},{#2},{#3}\rangle}
\newcommand{\rep}[2]{\begin{array}{l}{#1}\\{#2}\end{array}}
\DeclareMathOperator{\CC}{C}
\DeclareMathOperator{\D}{D}
\newcommand{\Dt}{\D^\flat}
\newcommand{\Per}{\pi}
\newcommand{\calZ}{\mathcal{Z}}
\DeclareMathOperator{\Std}{Std}
\DeclareMathOperator{\rowstd}{RStd}
\DeclareMathOperator{\Hom}{Hom}
\DeclareMathOperator{\EHom}{EHom}
\DeclareMathOperator{\first}{first}
\DeclareMathOperator{\corank}{corank}
\DeclareMathOperator{\rank}{rank}
\DeclareMathOperator{\rrr}{r}
\newcommand{\RowT}{\mathcal{T}_{\rrr}}
\def\S{\mathsf{S}}
\def\T{\mathsf{T}}
\def\U{\mathsf{U}}
\def\V{\mathsf{V}}
\begin{document}
\begin{abstract}
We study the homomorphism spaces between Specht modules for the Hecke algebras $\h$ of type $A$.  We prove a cellular analogue of the kernel intersection theorem and a $q$-analogue of a theorem of Fayers and Martin and apply these results to give an algorithm which computes the homomorphism spaces $\Hom_\h(S^\mu,S^\la)$ for certain pairs of partitions $\la$ and $\mu$.  We give an explicit description of the homomorphism spaces $\Hom_\h(S^\mu,S^\la)$ where $\h$ is an algebra over the complex numbers, $\la=(\la_1,\la_2)$ and $\mu$ is an arbitrary partition with $\mu_1 \geq \la_2$.   

\end{abstract}
\maketitle

\section{Introduction}
The Hecke algebras $\h=\h_{F,q}(\sym_n)$ of the symmetric groups are classical objects of study and the most important open problem in their representation theory is to determine the structure of the Specht modules $S^\la$ where $\la$ is a partition of $n$.   In this area there are many obvious questions that remain unanswered.  For example, we rarely know the composition factors of $S^\la$ or their multiplicities. However, information on the Specht modules may be obtained by computing $\Hom_{\h}(S^\mu,S^\la)$ for $\mu$ and $\la$ partitions of $n$. 
An approach to this problem using the kernel intersection theorem was suggested by James, who gave an easy classification of $\Hom_{F\sym_n}(S^\mu, S^{(n)})$~\cite[Theorem 24.4]{James}.  This approach has subsequently been developed.  In particular, results of Fayers and Martin~\cite{FayersMartin:homs} have given us techniques to compute $\Hom_{F\sym_n}(S^\mu, S^{\la})$ for more general $\la$ (which they used in the same paper to give an elementary proof of the Carter-Payne theorem).  In this paper, we extend the most useful of their results to the Hecke algebra $\h$.  This enables us to give an algorithm, easily implemented on a computer, which will compute certain homomorphism spaces. Using this method we completely classify the homomorphism space $\Hom_{\h}(S^\mu,S^\la)$ where $\la$ has at most two parts, $\mu_1 \geq \la_2$ and $\h$ is defined over a field of characteristic zero.  

The paper is organised as follows.  We begin with the definition of the Hecke algebras and some background discussion.  We then state our main results and give some examples and applications.  The proofs of the main results, Theorem~\ref{hdtthm}, Theorem~\ref{Lemma7} and Propositions~\ref{first} to~\ref{last}  are deferred to the next section; in fact, we give only an indication of the proof of the last propositions, for reasons we discuss in Section~\ref{Spaces}.  We end with a brief discussion about homomorphisms between the Specht modules of Dipper and James.  
\section{Main results}

\subsection{The Hecke algebras of type $A$}
The definitions in this section are standard and may all be found the the book of Mathas~\cite{M:ULect}.  

For each integer $n\geq 0$, let $\sym_n$ be the symmetric group on $n$ letters.  If $R$ is a ring and $q$ an invertible element of $R$ then the Hecke algebra
$\h=\h_{R,q}(\sym_n)$ is defined to be the unital associative 
$R$-algebra with generators
$T_1,\dots,T_{n-1}$ subject to the relations
\begin{align*}
T_i T_j & = T_j T_i, && 1\leq i < j-1\leq n-2, \\
T_iT_{i+1}T_i &= T_{i+1}T_i T_{i+1}, && 1 \leq i \leq n-2, \\
(T_i+1)(T_i-q)&=0, &&1 \leq i \leq n-1,
\end{align*}
so that if $q=1$ then $\h \cong R\symn$.  If $w \in \sym_n$ can be written as 
$w=(i_1,i_1+1)\ldots(i_k,i_k+1)$ where $k$ is minimal, we define $T_w = T_{i_1} \ldots T_{i_k}$.  
Then $\h$ is a free $R$-module with a basis $\{T_w \mid w \in \sym_n\}$. 
An expression $w=(i_1,i_1+1)\ldots(i_k,i_k+1)$ with $k$ minimal is known as a reduced expression for $w$ and we define the length of $w$ by $\ell(w)=k$.  

Recall that a composition of $n\geq 0$ is a sequence $\mu=(\mu_1,\mu_2,\ldots,\mu_l)$ of non-negative integers that sum to $n$ and a partition is a composition with the additional property that $\mu_1 \geq \mu_2 \geq \ldots \geq \mu_l$.  If $\mu$ is a partition of $n$, write $\mu \vdash n$.  
We define a partial order $\unrhd$ on the set of compositions of $n$ by saying that $\la \unrhd \mu$ if 
\[ \sum_{i=1}^j \la_i \geq \sum_{i=1}^j \mu_i\]
for all $j$.  If $\la \unrhd \mu$ and $\la \neq \mu$, write $\la \rhd \mu$.  

Let $\mu$ be a composition of $n$. Define the corresponding Young diagram $[\mu]$ by
\[[\mu] = \{(r,c) \mid 1 \leq c \leq \mu_r \}.\]
A $\mu$-tableau $\T$ is a map $\T: [\mu] \rightarrow \{1,2,\ldots\}$; we think of this as a way of replacing the nodes $(r,c)$ of $[\mu]$ with the integers $1,2,\ldots$ and so may talk about the rows and columns of $\T$.  (Note that we use the English convention for writing our diagrams.) Let $\mathcal{T}(\mu)$ be the set of $\mu$-tableaux such that each integer $1,2,\ldots,n$ appears exactly once and let $\ft^\mu \in \mathcal{T}(\mu)$ be the tableau with $\{1,2,\ldots,n\}$ entered in order along the rows of $[\mu]$ from left to right and top to bottom.  If $\ft \in \mathcal{T}(\mu)$ and $I \subseteq \{1,2,\ldots,n\}$, say that $I$ is in row-order in $\ft$ if for all $i,j \in I$ with $i<j$ either $j$ lies in a lower row than $i$ (that is, a row of higher index); or $i$ and $j$ lie in the same row, with $j$ to the right of $i$.  Then $\ft^\mu$ is the unique tableau in $\mathcal{T}(\mu)$ with $\{1,2,\ldots,n\}$ in row-order.  

The symmetric group $\sym_n$ acts on the right on the elements of $\mathcal{T}(\mu)$ by permuting the entries in each tableau.  If $\ft \in \mathcal{T}(\mu)$, let $d(\ft)$ be the permutation such that $\ft = \ft^\mu d(\ft)$.  Let $\sym_{\mu}$ denote the row-stabilizer of $\ft^\mu$, that is, the set of all permutations $w$ such that each $i \in \{1,2,\ldots,n\}$ lies in the same row of $\ft^\mu$ as $\ft^\mu w$.   
We say that $\ft \in \mathcal{T}(\mu)$ is row-standard if the entries increase along the rows and standard if $\mu$ is a partition and the entries increase both along the rows and down the columns. Let $\rowstd(\mu) \subseteq \mathcal{T}(\mu)$ denote the set of row-standard $\mu$-tableaux and, if $\mu$ is a partition, let $\Std(\mu) \subseteq \rowstd(\mu)$ denote the set of standard $\mu$-tableaux.
Define    
\[m_\mu = \sum_{w \in \sym_\mu} T_w,\] and set $M^\mu$ to be the right $\h$-module
\[M^\mu = m_\mu \h.\]
Define $\ast:\h \rightarrow \h$ to be the anti-isomorphism determined by $T_{i}^\ast = T_{i}$
and if $\fs,\ft \in \rowstd(\mu)$ define 
\[m_{\fs\ft}=T_{d(\fs)}^\ast m_\mu T_{d(\ft)}.\]  
Then
\[\{m_{\fs\ft} \mid \fs,\ft \in \Std(\la) \text{ for some } \la \vdash n\}\]
is a cellular basis of $\h$ with respect to the partial order $\unrhd$ and the anti-isomorphism $\ast$.  In accordance with the theory of cellular algebras, if $\la \vdash n$ we define $\hla$ to be the free $R$-module with basis 
\[\{m_{\fs\ft} \mid \fs,\ft \in \Std(\nu) \text{ for some } \nu \vdash n \text{ such that } \nu \rhd \la\};\]
then $\hla$ is a two-sided ideal of $\h$.  Following Graham and Lehrer~\cite{GL}, we define the cell module $S^\la$, also known as a Specht module, to be the right $\h$-module 
\[S^\la = (\hla + m_\la)\h\]
and define $\pi_\la:M^\la \rightarrow S^\la$ to be the natural projection determined by $\pi_\la(m_\la)=\hla + m_\la$.  
These Specht modules are the main objects of interest in the study of the representation theory of the Hecke algebras $\h$ and the symmetric groups $\sym_n$.  One of the most important open problems in representation theory is to determine the decomposition matrices for the Hecke algebra $\h$, that is, compute the composition factors of the Specht modules.  In this paper, we study a closely related problem.  We consider homomorphisms between Specht modules $S^\mu$ and $S^\la$, for $\mu$ and $\la$ partitions of $n$.  

\subsection{Homomorphisms between Specht modules} \label{HomSection}
Suppose $\la$ is a partition of $n$ and let $\T$ be a $\la$-tableaux.  Say that $\T$ is of type $\mu$ if $\mu$ is the composition such that each integer $i \geq 1$ appears $\mu_i$ times in $\T$.    
Let $\mathcal{T}(\la,\mu)$ denote the set of $\la$-tableaux of type $\mu$.  We say that $\S \in \mathcal{T}(\la,\mu)$ is row-standard if the entries are non-decreasing along the rows and is semistandard if it is row-standard and the entries are strictly increasing down the columns.  Let $\mathcal{T}_{\text{r}}(\la,\mu)\subseteq \mathcal{T}(\la,\mu)$ denote the set of row-standard $\la$-tableaux of type $\mu$ and $\mathcal{T}_0(\la,\mu) \subseteq \mathcal{T}_{\text{r}}(\la,\mu)$ denote the set of semistandard $\la$-tableaux of type $\mu$. 
If $\fs \in \mathcal{T}(\la)$, define $\mu(\fs) \in \mathcal{T}(\la,\mu)$ to be the tableau obtained by replacing each integer $i\geq 1$ with its row index in $\ft^\mu$. 

Suppose that $\la$ is a partition of $n$ and that $\mu$ is a composition of $n$.  
If $\S\in \mathcal{T}_{\text{r}}(\la,\mu)$ define $\Theta_\S:M^\mu \rightarrow S^\la$ to be the homomorphism determined by 
\[\Theta_\S(m_\mu) = \hla+\sum_{{\fs \in \rowstd(\la) \atop \mu(\fs)=\S}}m_\la T_{d(\fs)}. \]
Let $\EHom_{\h}(M^\mu,S^\la)$ be the subspace of $\Hom_{\h}(M^\mu,S^\la)$ consisting of homomorphisms $\Theta$ such that $\Theta=\pi_\la \circ \tilde \Theta$ for some $\tilde\Theta:M^\mu \rightarrow M^\la$.  By construction, if $\S\in \mathcal{T}_{\text{r}}(\la,\mu)$ then $\Theta_\S \in \EHom_{\h}(M^\mu,S^\la)$.  
If $e\geq 2$, say that a partition $\la$ is $e$-restricted if $\la_i - \la_{i+1} <e$ for all $i$.  

\begin{theorem}[{\cite[Corollary 8.7]{DJ:qWeyl}}] \label{SSBasis}
The maps
\[ \{\Theta_S \mid S \in \mathcal{T}_0(\lambda, \mu)\}\]
are a basis of $\EHom_\h(M^\mu,S^\la)$.  Furthermore, unless $q=-1$ and $\lambda$ is not 2-restricted \[\EHom_\h(M^\mu,S^\la)= \Hom_\h(M^\mu, S^\lambda).\]  
\end{theorem}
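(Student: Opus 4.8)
The plan is to prove this as a statement about the $q$-Weyl module (permutation module) structure, following the Dipper--James approach to the $q$-Schur algebra. The key tool is that $M^\mu = m_\mu\h$ and $M^\la = m_\la\h$, so that $\Hom_\h(M^\mu, S^\la)$ and $\Hom_\h(M^\mu, M^\la)$ can be analysed by tracking where a generator goes. Concretely, since $M^\mu$ is generated by $m_\mu$ subject to the relations encoded by $\sym_\mu$, a homomorphism $\Theta: M^\mu \to S^\la$ is determined by the element $\Theta(m_\mu) \in S^\la$, which must be annihilated (on the left, after applying $\ast$) by the same relations that annihilate $m_\mu$, i.e.\ it must lie in the ``$\mu$-weight space'' of $S^\la$ cut out by the Young subgroup $\sym_\mu$. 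First I would recall from Dipper--James~\cite{DJ:qWeyl} that $S^\la$ has a basis indexed by standard $\la$-tableaux, and that the $\mu$-weight space of $M^\la$ (equivalently, the image of $m_\mu\h$ in the relevant Hom space) has a basis indexed by $\mathcal{T}_{\text{r}}(\la,\mu)$, with the semistandard tableaux $\mathcal{T}_0(\la,\mu)$ picking out exactly those that survive projection to $S^\la$ and remain linearly independent.

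The structure of the argument is then in two parts. For the first assertion --- that $\{\Theta_\S \mid \S \in \mathcal{T}_0(\la,\mu)\}$ is a basis of $\EHom_\h(M^\mu, S^\la)$ --- I would argue that the maps $\widetilde\Theta_\S: M^\mu \to M^\la$ sending $m_\mu$ to the sum of $m_\la T_{d(\fs)}$ over row-standard $\fs$ with $\mu(\fs)=\S$ form a basis of $\Hom_\h(M^\mu, M^\la)$ as $\S$ ranges over all of $\mathcal{T}_{\text{r}}(\la,\mu)$; this is the $q$-analogue of the classical statement that $\Hom_{F\sym_n}(M^\mu, M^\la)$ has a basis indexed by $\la$-tableaux of type $\mu$, and is exactly~\cite[Corollary 8.7]{DJ:qWeyl} or can be deduced from the Mackey-type decomposition of $m_\mu\h m_\la$. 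Composing with $\pi_\la$ and using the straightening/Garnir relations in $S^\la$, one shows the image of a non-semistandard $\Theta_\S$ is an $R$-linear combination of $\Theta_\U$ with $\U$ semistandard and $\U \rhd \S$ in the dominance order on tableaux, so after triangular elimination the semistandard ones span $\EHom$; their linear independence follows because the leading terms (with respect to a term order on standard tableaux) are distinct.

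For the second assertion --- that $\EHom_\h(M^\mu, S^\la) = \Hom_\h(M^\mu, S^\la)$ unless $q=-1$ and $\la$ is not $2$-restricted --- the point is to show that every homomorphism $M^\mu \to S^\la$ factors through $M^\la$, equivalently that $\pi_\la: \Hom_\h(M^\mu, M^\la) \to \Hom_\h(M^\mu, S^\la)$ is surjective. This reduces to showing that $S^\la$ is, in the appropriate sense, a quotient of $M^\la$ through which all maps into it from permutation modules factor; by general nonsense this holds whenever the natural map $M^\la \to S^\la$ induces a surjection after applying $\Hom_\h(M^\mu, -)$, which can fail only when $\Hom_\h(M^\mu, \hla + \cdots)$ obstructs it. The precise place this breaks is controlled by the semistandard basis theorem for $q$-Schur algebras together with Dipper--James's analysis of when $M^\la$ and its Specht filtration behave well: the exceptional case $q=-1$ with $\la$ not $2$-restricted is exactly where $S^\la$ fails to appear with the expected multiplicity in a Specht filtration of $M^\la$, or where the form on $M^\la$ degenerates in a way that produces extra homomorphisms. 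I would cite~\cite[Corollary 8.7]{DJ:qWeyl} for this dichotomy rather than reprove it.

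The main obstacle is the second part: verifying carefully that the exceptional locus is \emph{precisely} $q=-1$ together with $\la$ not $2$-restricted, rather than something coarser. This requires knowing the submodule structure of $M^\la$ near its Specht quotient well enough to detect the extra homomorphisms, and is genuinely delicate --- it is the reason the statement is quoted from~\cite{DJ:qWeyl} rather than proved here. Since the theorem is attributed to Dipper--James in the excerpt, in the paper itself I would simply invoke~\cite[Corollary 8.7]{DJ:qWeyl}, noting only that the first assertion is the cellular-basis incarnation of their semistandard homomorphism theorem and the second is their characterisation of when $\EHom$ exhausts $\Hom$.
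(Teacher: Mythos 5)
The paper does not prove this theorem; it is stated as a quotation from Dipper and James~\cite[Corollary~8.7]{DJ:qWeyl} with no argument supplied. Your sketch of the double-coset basis of $\Hom_\h(M^\mu,M^\la)$, the straightening argument giving the semistandard basis of $\EHom_\h(M^\mu,S^\la)$, and the exceptional $q=-1$ locus is sound at the level of detail you give, and you correctly conclude that the appropriate move is simply to invoke the Dipper--James citation, which is exactly what the paper does.
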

    
Note that this implies that if $\EHom_\h(M^\mu,S^\la) \neq \{0\}$ then $\la \unrhd \mu$.  \\

Now suppose $\mu$ is a partition and let $\EHom_{\h}(S^\mu,S^\la)$ be the set of maps $\bar\Theta \in \Hom_{\h}(S^\mu,S^\la)$ with the property that $\bar\Theta \circ \pi_\mu \in \EHom_{\h}(M^\mu,S^\la)$.  Again, $\EHom_{\h}(S^\mu,S^\la) = \Hom_{\h}(S^\mu,S^\la)$ unless $q=-1$ and $\la$ is not 2-restricted.  Apart from the fact that our techniques are well-adapted to determining $\EHom_{\h}(S^\mu,S^\la)$, we have another reason to want to study this space.

\begin{theorem}[{\cite[Corollary 8.6]{DJ:qWeyl}}] \label{Weyl}
Let $\mathcal{S}_q$ denote the $q$-Schur algebra and let $\Delta^\mu$ and $\Delta^\la$ be the Weyl modules corresponding to the partitions $\mu$ and $\la$.  Then
\[\EHom_{\h}(S^\mu,S^\la) \cong_R \Hom_{\mathcal{S}_q}(\Delta^\mu,\Delta^\la).\]
\end{theorem}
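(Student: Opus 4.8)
The plan is to prove this through the Schur functor. Realise the $q$-Schur algebra as $\mathcal{S}_q = \End_{\h}\big(\bigoplus_{\nu} M^\nu\big)$, the sum over compositions $\nu$ of $n$ with at most $n$ parts, let $e_\nu \in \mathcal{S}_q$ be the idempotent projecting onto the summand $M^\nu$, put $\omega = (1^n)$, and let $f\colon \mathcal{S}_q\text{-Mod} \to \h\text{-Mod}$ be the Schur functor $f(V) = Ve_\omega$. The standard facts I shall quote from the theory of $q$-Schur algebras (see \cite{M:ULect,DJ:qWeyl}) are that $f$ is exact; that $f(e_\nu\mathcal{S}_q) \cong M^\nu$ and $f(\Delta^\nu) \cong S^\nu$; and that, under these identifications, the canonical surjection $e_\nu\mathcal{S}_q \twoheadrightarrow \Delta^\nu$ is carried to the projection $\pi_\nu\colon M^\nu \to S^\nu$. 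The first step is to establish, for every composition $\nu$ of $n$, that $f$ induces an $R$-module isomorphism
\[
f\colon \Hom_{\mathcal{S}_q}(e_\nu\mathcal{S}_q,\Delta^\la)\ \xrightarrow{\ \sim\ }\ \EHom_{\h}(M^\nu,S^\la).
\]
Both sides are free $R$-modules of rank $|\mathcal{T}_0(\la,\nu)|$: the right side by Theorem~\ref{SSBasis}, and the left side because $\Hom_{\mathcal{S}_q}(e_\nu\mathcal{S}_q,\Delta^\la)\cong \Delta^\la e_\nu$ is the $\nu$-weight space of the Weyl module, which carries the classical semistandard (bideterminant) basis indexed by $\mathcal{T}_0(\la,\nu)$. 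That the displayed map lands in $\EHom$ is formal: given $\psi\colon e_\nu\mathcal{S}_q\to\Delta^\la$, lift it through the projective cover-type surjection $e_\la\mathcal{S}_q\twoheadrightarrow\Delta^\la$ to $\tilde\psi\colon e_\nu\mathcal{S}_q\to e_\la\mathcal{S}_q$, so that $f(\psi)=\pi_\la\circ f(\tilde\psi)$ with $f(\tilde\psi)\colon M^\nu\to M^\la$, which exhibits $f(\psi)$ as an element of $\EHom_{\h}(M^\nu,S^\la)$.

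The crux — and the step I expect to be the main obstacle — is to show that this $R$-linear map between two free modules of equal rank is actually an isomorphism. The clean way to do this is to trace both semistandard-tableau-indexed bases through $f$ and to check that they correspond up to a change of basis that is unitriangular with respect to the dominance order on tableaux of type $\nu$; this reduces the problem to a comparison between the straightening combinatorics used to build the basis of $\Delta^\la e_\nu$ and the tableau combinatorics underlying the maps $\Theta_{\S}$ of Theorem~\ref{SSBasis}. Since all the modules and bases involved are defined integrally over $\Z[q,q^{-1}]$, and the statement is compatible with base change, it suffices to carry out this comparison generically, i.e.\ over the fraction field $\Q(q)$ (or, where convenient, at $q=1$), which is precisely the regime in which the classical Carter--Lusztig / James identity relating $\Hom$-spaces to weight spaces applies and can be $q$-deformed.

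The final step bootstraps from $M^\mu$ to $S^\mu$. Choose a projective presentation $P_1 \xrightarrow{\ \alpha\ } e_\mu\mathcal{S}_q \twoheadrightarrow \Delta^\mu$ with $P_1$ a finite direct sum of modules $e_\nu\mathcal{S}_q$; applying the exact functor $f$ yields $f(P_1)\xrightarrow{f(\alpha)} M^\mu \xrightarrow{\pi_\mu} S^\mu \to 0$ with $f(P_1)$ a direct sum of the corresponding $M^\nu$. Applying $\Hom_{\mathcal{S}_q}(-,\Delta^\la)$ to the presentation and $\EHom_{\h}(-,S^\la)$ to its image gives a commuting ladder
\[
\Hom_{\mathcal{S}_q}(\Delta^\mu,\Delta^\la) \hookrightarrow \Hom_{\mathcal{S}_q}(e_\mu\mathcal{S}_q,\Delta^\la) \to \Hom_{\mathcal{S}_q}(P_1,\Delta^\la),
\]
lying over the analogous sequence with $\EHom_{\h}(S^\mu,S^\la)$, $\EHom_{\h}(M^\mu,S^\la)$, $\EHom_{\h}(f(P_1),S^\la)$; the top row is exact because $\Hom_{\mathcal{S}_q}(-,\Delta^\la)$ is left exact, and the bottom row is exact because $\EHom_{\h}(-,S^\la)$ is a subfunctor of the left-exact $\Hom_{\h}(-,S^\la)$ and one checks directly that the subspace of $\EHom_{\h}(M^\mu,S^\la)$ killing $\operatorname{im}f(\alpha)=\ker\pi_\mu$ is exactly the image of $\EHom_{\h}(S^\mu,S^\la)$ under precomposition with $\pi_\mu$ — this is the definition of $\EHom_{\h}(S^\mu,S^\la)$. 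The two right-hand vertical maps are isomorphisms by the first step (applied to each summand of $P_1$ and to $e_\mu\mathcal{S}_q$), so the left-hand vertical map $f\colon \Hom_{\mathcal{S}_q}(\Delta^\mu,\Delta^\la)\to\EHom_{\h}(S^\mu,S^\la)$ is an isomorphism as well. Since the whole argument is $R$-linear and compatible with base change, it holds over an arbitrary ground ring $R$, which gives the asserted $\cong_R$.
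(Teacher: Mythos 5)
The paper does not prove this statement at all: it is quoted directly from Dipper and James \cite[Corollary 8.6]{DJ:qWeyl}, so there is no internal proof to measure yours against. Your Schur-functor strategy is essentially the standard argument lying behind that citation, and your final bootstrapping step is sound: applying the exact functor $f$ to a presentation $P_1\to e_\mu\mathcal{S}_q\to\Delta^\mu\to 0$, and using that precomposition with $\pi_\mu$ identifies $\EHom_\h(S^\mu,S^\la)$ with the maps in $\EHom_\h(M^\mu,S^\la)$ vanishing on $\ker\pi_\mu=\im f(\alpha)$, does reduce the theorem to your Step 1, by the usual diagram chase.

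The genuine gap is Step 1 itself. Everything hinges on the claim that $f$ carries the semistandard basis of $\Hom_{\mathcal{S}_q}(e_\nu\mathcal{S}_q,\Delta^\la)\cong\Delta^\la e_\nu$ to a family related to the basis $\{\Theta_\S \mid \S\in\mathcal{T}_0(\la,\nu)\}$ of $\EHom_\h(M^\nu,S^\la)$ by a unitriangular change of basis, and you never verify this; it is exactly the nontrivial combinatorial content of the Dipper--James results (their Corollary 8.7, quoted here as Theorem~\ref{SSBasis}, rests on this kind of comparison), so asserting it amounts to assuming the theorem's substance. Moreover, the fallback you offer --- that it ``suffices to carry out this comparison generically'' over $\Q(q)$ --- does not close the gap on its own: an $R$-linear map between free $R$-modules of equal rank which becomes invertible over the fraction field need not be invertible over $\Z[q,q^{-1}]$, let alone after specialisation to an arbitrary $R$ (multiplication by a non-unit already fails). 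What the generic computation legitimately gives is the entries of the transition matrix, since $\Z[q,q^{-1}]$ embeds in $\Q(q)$ and the relevant bases and maps are defined integrally; but you must still prove that this integral matrix is unitriangular (or otherwise invertible over $\Z[q,q^{-1}]$), and that computation --- matching the straightening/bideterminant basis of the weight space against the explicit maps $\Theta_\S$ under $f$ --- is precisely the step that is missing.
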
  

Fix a pair of partitions $\la$ and $\mu$ of $n$.  We want to compute $\EHom_{\h}(S^\mu,S^\la)$.  If $\bar\Theta \in \EHom_{\h}(S^\mu,S^\la)$ then $\bar\Theta$ can be pulled back to give a homomorphism $\Theta \in \EHom_{\h}(M^\mu,S^\la)$.  Conversely, $\Theta \in \EHom_\h(M^\mu,S^\la)$ factors through $S^\mu$ if and only if $\Theta(m_\mu h)=0$ for all $h \in \h$ such that $m_\mu h \in \hmu$.  

\begin{diagram}
M^\mu &\rTo^{\pi_\mu} & S^\mu \\
&\rdTo_{\Theta} & \dTo_{\bar\Theta} \\
&& S^\la 
\end{diagram}

We would therefore like to make it easier to check this condition.  
If $\eta=(\eta_1,\eta_2,\ldots,\eta_l)$ is any composition and $k \geq 0$, let $\bar\eta_k=\sum_{i=1}^k \eta_i$ and $\bar{\eta}=\sum_{i \geq 1} \eta_i$.  For $m\geq 0$, let $\sym_{\{m+1,\ldots,m+\bar\eta\}}$ denote the symmetric group on the letters $m+1,\ldots,m+\bar\eta$ and let $\mathcal{D}_{m,\eta}$ be the set of minimal length right coset representatives of $\sym_{(m,\eta_1,\dots,\eta_l)}\cap\sym_{\{m+1,\ldots,m+\bar\eta\}}$ in $\sym_{\{m+1,\ldots,m+\bar\eta\}}$. (Hence if $\ft$ is the $\eta$-tableau with the numbers $m+1,\ldots,m+\bar{\eta}$ entered in order along its rows then $w \in \mathcal{D}_{m,\eta}$ if and only if $\ft w$ is row-standard.)  Set
\[\CC(m;\eta)=\CC(m;\eta_1,\eta_2,\ldots,\eta_l)=\sum_{w \in \mathcal{D}_{m,\eta}} T_w.\]
If $\mu=(\mu_1,\mu_2,\ldots,\mu_b)$ then for $1 \leq d < b$ and $1 \leq t \leq \mu_{d+1}$, define 
\[h_{d,t}= \CC(\bar\mu_{d-1};\mu_d,t).\]

\begin{ex}
Let $\mu = (3,2,2)$.  Then
\begin{align*}
h_{1,1} & = I + T_3 + T_3T_2 + T_3T_2T_1, \\ 
h_{1,2} & = I + T_3 + T_3T_2 + T_3T_2T_1 + T_3T_4 + T_3T_2T_4 + T_3 T_2 T_1 T_4 + T_3T_2T_4T_3+ T_3T_2T_1T_4T_3 + T_3T_2T_1T_4T_3T_2, \\
h_{2,1} &= I + T_5 + T_5T_4, \\
h_{2,2} & = I + T_5 + T_5T_4 +T_5 T_6 +T_5T_4T_6 +T_5T_4T_6T_5. \\
\end{align*}
\end{ex}

\begin{theorem} \label{hdtthm}
Let $\mathcal{I}$ be the right ideal generated by $\{m_\mu h_{d,t} \mid 1 \leq d <b \text{ and } 1 \leq t \leq \mu_{d+1}\}$.  Then  
\[\mathcal{I}=M^\mu \cap \hmu.\]
\end{theorem}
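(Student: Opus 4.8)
Since $S^\mu = (m_\mu + \hmu)\h = (m_\mu\h + \hmu)/\hmu$, the natural projection $\pi_\mu\colon M^\mu\to S^\mu$ has kernel exactly $m_\mu\h\cap\hmu = M^\mu\cap\hmu$. Thus the theorem is equivalent to the assertion $\mathcal I = \ker\pi_\mu$, i.e.\ that the natural map $M^\mu/\mathcal I\to S^\mu$ is an isomorphism. The plan is to prove this in two steps: first, that $\mathcal I\subseteq\ker\pi_\mu$, so that the map is a well-defined surjection; and second, that $M^\mu/\mathcal I$ is generated as an $R$-module by at most $\#\Std(\mu) = \rank_R S^\mu$ elements. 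Granting both, the composite $R^{\#\Std(\mu)}\twoheadrightarrow M^\mu/\mathcal I\twoheadrightarrow S^\mu\cong R^{\#\Std(\mu)}$ is a surjective endomorphism of a finitely generated module over a commutative ring, hence an isomorphism; so $M^\mu/\mathcal I\cong S^\mu$ and, since $\mathcal I\subseteq\ker\pi_\mu$, this forces $\mathcal I = \ker\pi_\mu$ as required.

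For the first step it suffices to check $\pi_\mu(m_\mu h_{d,t}) = 0$ in $S^\mu$ for each admissible $d,t$ (the inclusion $\mathcal I\subseteq M^\mu$ being obvious). Writing $z_\mu = \pi_\mu(m_\mu)$, which satisfies $z_\mu T_i = q z_\mu$ whenever $s_i\in\sym_\mu$, this is the single identity $z_\mu\,\CC(\bar\mu_{d-1};\mu_d,t) = 0$. I would establish it by a direct computation inside $S^\mu$: expand $z_\mu h_{d,t}$ over the distinguished coset representatives, re-express each $z_\mu T_w$ in the Murphy basis $\{z_\mu T_{d(\ft)} : \ft\in\Std(\mu)\}$ using the straightening rules for the action of $\h$, and verify that the resulting sum telescopes to zero; alternatively one can use the elementary multiplication identity $m_{(a)}\,\CC(c;a,b) = m_{(a+b)}$ for row symmetrizers to rewrite $m_\mu h_{d,t}$ in a form from which vanishing of its image in $S^\mu$ is transparent. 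This step is essentially formal once the relations in $S^\mu$ are available.

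The substance of the theorem lies in the second step. Recall that $\{m_\mu T_{d(\fu)} : \fu\in\rowstd(\mu)\}$ is an $R$-basis of $M^\mu$ and that the images of the sub-collection indexed by $\Std(\mu)$ are the Murphy basis of $S^\mu$. I must therefore show that for every $\fu\in\rowstd(\mu)\setminus\Std(\mu)$ one has $m_\mu T_{d(\fu)}\in\mathcal I + \sum_{\ft\in\Std(\mu)}R\,m_\mu T_{d(\ft)}$. I would argue by induction on $\rowstd(\mu)$ with respect to the dominance order on tableaux. If $\fu$ is row-standard but not standard it has a column in which some entry strictly exceeds the entry immediately below it, say across rows $d$ and $d+1$; for an appropriate $t\le\mu_{d+1}$ the relation $m_\mu h_{d,t}\in\mathcal I$, right-multiplied by a suitable element of $\h$ chosen so that the symmetrizer $h_{d,t}$ is aligned with the relevant portions of rows $d$ and $d+1$ of $\fu$, yields a Hecke-algebra Garnir relation expressing $m_\mu T_{d(\fu)}$ as an $R$-combination of $m_\mu T_{d(\fv)}$ with each $\fv$ strictly more dominant than $\fu$, modulo terms already in $\mathcal I$. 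Iterating drives $m_\mu T_{d(\fu)}$ into the span of the standard basis vectors modulo $\mathcal I$, which gives the required generating set of size $\#\Std(\mu)$.

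The main obstacle is exactly this straightening. One must show that the single family of generators $\{m_\mu h_{d,t}\}$ already encodes enough Garnir relations to reduce an arbitrary non-standard row-standard tableau, and that the induction is well-founded — that each tableau produced at a straightening step is genuinely more dominant than $\fu$. Making the correspondence between "right-multiply $m_\mu h_{d,t}$ by an element of $\h$" and "apply a Garnir relation at a prescribed descent of $\fu$" precise, and controlling the terms that arise, is where essentially all the work is concentrated; the reformulation and the first step are comparatively routine.
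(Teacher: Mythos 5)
Your framing is correct and the route is genuinely different from the paper's.  The observation that, once $\mathcal{I}\subseteq\ker\pi_\mu=M^\mu\cap\hmu$ is established, it suffices to exhibit $\#\Std(\mu)=\rank_R S^\mu$ generators of $M^\mu/\mathcal{I}$ (so that the surjection $M^\mu/\mathcal{I}\twoheadrightarrow S^\mu$ is forced to be an isomorphism because a surjective endomorphism of a finitely generated module over a commutative ring is injective) is a clean reduction of the theorem to a rank count.  Your first step is also sound in outline: $m_\mu h_{d,t}$ can be rewritten as $m_{\S\ft^\nu}$ for a row-standard $\S$ of shape $\nu\rhd\mu$, which is precisely Lemma~\ref{leftright}, and such elements lie in $\hmu$ by Lemma~\ref{higher}; this gives Corollary~\ref{halfsub}.

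The gap is exactly where you flag it, and it is where essentially all of the content of the theorem lives.  You need, for each non-standard $\fu\in\rowstd(\mu)$, an element of $\mathcal{I}$ of the form $m_\mu h_{d,t}T_v$ whose expansion in the basis $\{m_\mu T_{d(\fs)}\mid\fs\in\rowstd(\mu)\}$ contains $m_\mu T_{d(\fu)}$ with an invertible coefficient, with every other term either strictly more dominant or already reducible.  This is not a formal matter in the Hecke algebra: $h_{d,t}$ symmetrizes the fixed initial segment $\{\bar\mu_{d-1}+1,\ldots,\bar\mu_d+t\}$ of the letters of $\ft^\mu$, right multiplication by $T_v$ acts on the \emph{other} side and does not simply conjugate the symmetrizer onto a Garnir belt of $\fu$, and the $q$-powers that arise from reducing each $T_wT_v$ must be controlled to see that a unit multiple of $m_\mu T_{d(\fu)}$ survives.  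You would also need to fix a well-founded order and verify that the induction closes up.  None of this is carried out, and it is unlikely to be shorter than the computation in the paper.

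The paper decomposes the work differently.  By Murphy's theory (Lemma~\ref{mstbasis}), $M^\mu\cap\hmu$ already has an explicit cellular $R$-basis $\{m_{\S\ft}\}$ with $\S\in\mathcal{T}_0(\nu,\mu)$ semistandard, $\ft\in\Std(\nu)$ and $\nu\rhd\mu$; since $\mathcal{I}$ is a right ideal it is enough to show $m_{\S\ft^\nu}\in\mathcal{I}$ for each such $\S$ (Lemma~\ref{termstofront}), and the inductive argument that does this is Lemma~\ref{killsall}.  That computation is of the same flavour as the straightening you propose, but it has a concrete finite target handed to it by cellularity rather than needing to manufacture a generating set and an order from scratch.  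The identities from Corollary~\ref{RearrangeCor} through Lemma~\ref{IntoThree} give a sense of the bookkeeping your step would also require.
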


We prove Theorem~\ref{hdtthm} in Section~\ref{hdtproof}.  

\begin{corollary} \label{simple}
Suppose that $\Theta:M^\mu \rightarrow S^\la$.  Then $\Theta(m_\mu h)=0$ for all $h \in \h$ such that $m_\mu h \in \hmu$ if and only if $\Theta(m_\mu h_{d,t})=0$ for all $1 \leq d<b$ and $1 \leq t \leq \mu_{d+1}$.  
\end{corollary}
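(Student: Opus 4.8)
The plan is to read the statement off Theorem~\ref{hdtthm} essentially for free, the key point being that $\Theta$ is an $\h$-module map, hence is determined on the right ideal $M^\mu \cap \hmu$ by its values on any generating set of that ideal.

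For the forward implication, I would suppose $\Theta(m_\mu h)=0$ for all $h\in\h$ with $m_\mu h\in\hmu$. By Theorem~\ref{hdtthm} the right ideal $\mathcal{I}$ generated by $\{m_\mu h_{d,t}\mid 1\le d<b,\ 1\le t\le\mu_{d+1}\}$ equals $M^\mu\cap\hmu$; in particular each generator $m_\mu h_{d,t}$ lies in $\hmu$, so applying the hypothesis with $h=h_{d,t}$ gives $\Theta(m_\mu h_{d,t})=0$ for all admissible $d$ and $t$.

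For the converse, assume $\Theta(m_\mu h_{d,t})=0$ for all such $d,t$, and take any $h\in\h$ with $m_\mu h\in\hmu$. Then $m_\mu h\in M^\mu\cap\hmu=\mathcal{I}$ by Theorem~\ref{hdtthm}, so $m_\mu h=\sum_{d,t} m_\mu h_{d,t}\,g_{d,t}$ for suitable $g_{d,t}\in\h$. Since $\Theta$ respects the right $\h$-action, $\Theta(m_\mu h)=\sum_{d,t}\Theta(m_\mu h_{d,t})\,g_{d,t}=0$, as required.

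There is no genuine obstacle in the corollary itself: all of the content is in Theorem~\ref{hdtthm}, whose proof is deferred to Section~\ref{hdtproof}. The only point worth noting along the way --- and it is immediate from Theorem~\ref{hdtthm} --- is that the generators $m_\mu h_{d,t}$ do lie in $M^\mu\cap\hmu$, so that the two conditions being compared in the corollary are indeed conditions on the same submodule of $M^\mu$.
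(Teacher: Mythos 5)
Your proof is correct and is exactly what the paper leaves implicit: the corollary is stated without proof, as an immediate consequence of Theorem~\ref{hdtthm}, and the two-line argument you give (generators of $\mathcal{I}$ lie in $\hmu$; conversely, any $m_\mu h \in \hmu$ lies in $\mathcal{I}$, so is an $\h$-combination of the generators and is killed since $\Theta$ is an $\h$-module map) is the intended reading.
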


\begin{remark}
We have chosen to work with the Specht modules which arise as the cell modules for the Murphy basis, rather than the Specht modules of Dipper and James. This is consistent, for example, with the work of Corlett on homomorphisms between Specht modules for the Ariki-Koike algebras~\cite{Corlett}.  As such, the kernel intersection theorem~\cite[Theorem~3.6]{DJ:qWeyl} does not apply, and so Theorem~\ref{hdtthm} has been created to take its place.  In Section~\ref{DJSpecht}, we show that working in either world gives the same results.    
\end{remark}

We have shown that determining $\EHom_{\h}(S^\mu,S^\la)$ is equivalent to finding \[\Psi(\mu,\la)=\{\Theta \in \EHom_{\h}(M^\mu,S^\la) \mid \Theta(m_\mu h_{d,t})=0 \text{ for all } 1 \leq d<b, \;1 \leq t \leq \mu_{d+1}\}, \]
bearing in mind that $\EHom_{\h}(M^\mu,S^\la)$ has a basis indexed by semistandard $\la$-tableaux of type $\mu$.  

If $\S$ is a tableau and $X$ and $Y$ are sets of
positive integers we define~$\S^X_Y$ be the number of entries in row $r$
of~$\S$, for some $r\in Y$, which are equal to some $x \in X$.  We further abbreviate this notation by setting 
$\S^{\le x}_{>r}=\S_{(r,\infty)}^{[1,x]}$, $\S^x_r=\S^{\{x\}}_{\{r\}}$ 
and so on.
Now if $m \geq 0$ define 
\[ [m] = 1+q+\ldots + q^{m-1} \in R.\]
Let $[0]!=1$ and for $m \geq 1$, set $[m]! = [m][m-1]!$.  If $m \geq k \geq 0$, set 
\[\gauss{m}{k} = \frac{[m]!}{[k]![m-k]!}.\]
If $m,k \in \Z$ and any of the conditions $m \geq k \geq 0$ fail, set $\gauss{m}{k}=0$.  We record some results which we will need later. The first is well-known.  

\begin{lemma} \label{GaussSum}
Suppose $n, m \geq 0$.  Then
\begin{align*}
\gauss{n+1}{m} & = \gauss{n}{m-1} + q^{m} \gauss{n}{m} \\
&=\gauss{n}{m} + q^{n-m+1}\gauss{n}{m-1}. 
\end{align*}
\end{lemma}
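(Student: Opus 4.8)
The plan is to prove both identities by a direct manipulation of the defining formula $\gauss{m}{k} = [m]!/([k]![m-k]!)$, reducing everything to the single algebraic relation $[n+1] = [m] + q^{m}[n+1-m]$, which is immediate from the definition $[j] = 1 + q + \cdots + q^{j-1}$ since $[m] + q^{m}[n+1-m] = (1 + \cdots + q^{m-1}) + (q^{m} + \cdots + q^{n}) = [n+1]$. First I would dispose of the degenerate cases in which one of the Gaussian binomials vanishes by the convention on page: if $m < 0$ or $m > n+1$ both sides are zero (for the first line, noting $\gauss{n}{m-1}$ and $\gauss{n}{m}$ also vanish), and the boundary cases $m = 0$ and $m = n+1$ are checked by hand, so one may assume $1 \le m \le n$.

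In the main case I would write $\gauss{n+1}{m} = \dfrac{[n+1]!}{[m]![n+1-m]!} = \dfrac{[n+1]\,[n]!}{[m]![n+1-m]!}$ and substitute $[n+1] = [m] + q^{m}[n+1-m]$ into the numerator. The first term gives $\dfrac{[m]\,[n]!}{[m]![n+1-m]!} = \dfrac{[n]!}{[m-1]![n-(m-1)]!} = \gauss{n}{m-1}$, and the second term gives $\dfrac{q^{m}[n+1-m]\,[n]!}{[m]![n+1-m]!} = q^{m}\dfrac{[n]!}{[m]![n-m]!} = q^{m}\gauss{n}{m}$, which is the first displayed identity. For the second identity I would instead use the companion splitting $[n+1] = [n+1-m] + q^{n+1-m}[m]$, proved identically, and run the same computation: the $[n+1-m]$ term yields $\gauss{n}{m}$ and the $q^{n+1-m}[m]$ term yields $q^{n+1-m}\gauss{n}{m-1}$. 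Alternatively, and perhaps more cleanly, the second identity follows from the first by the symmetry $\gauss{n}{k} = \gauss{n}{n-k}$ together with re-indexing, but I would likely just present the direct computation to keep the argument self-contained.

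There is essentially no obstacle here; the only points requiring the slightest care are the bookkeeping of the vanishing conventions at the boundary and making sure the factorials are manipulated so that $[m]![n+1-m]! = [m-1]![n-(m-1)]! = [m]![n-m]!\cdot[n+1-m]/[n+1-m]$ matches up correctly after cancellation. Since the statement says "the first is well-known", I expect the author's proof to be at most a line or two, and I would keep mine to the one-line substitution $[n+1] = [m] + q^{m}[n+1-m]$ followed by the two cancellations above.
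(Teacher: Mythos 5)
Your proof is correct. The paper offers no proof of Lemma~\ref{GaussSum}, simply declaring it ``well-known'' and moving on, so there is nothing to compare against; your direct computation from the definition $\gauss{m}{k} = [m]!/([k]![m-k]!)$ via the splitting $[n+1] = [m] + q^{m}[n+1-m]$ (and its companion $[n+1] = [n+1-m] + q^{n+1-m}[m]$) is the standard argument and is exactly what one would supply here. The handling of the degenerate and boundary cases under the paper's vanishing convention is appropriate, and the factorial cancellations are carried out correctly.
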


\begin{lemma} \label{GaussLemma}
Suppose $m, k \geq n \geq 0$.  Then,
\[\sum_{\gamma \geq 0}(-1)^\gamma q^{\binom{\gamma}{2}}\gauss{n}{\gamma}\gauss{m-\gamma}{k} = q^{n(m-k)}\gauss{m-n}{k-n}.\]
\end{lemma}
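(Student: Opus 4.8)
The plan is to prove the identity
\[\sum_{\gamma \geq 0}(-1)^\gamma q^{\binom{\gamma}{2}}\gauss{n}{\gamma}\gauss{m-\gamma}{k} = q^{n(m-k)}\gauss{m-n}{k-n}\]
by induction on $n$, using the two Pascal-type recurrences from Lemma~\ref{GaussSum} to reduce the case $n$ to the case $n-1$. The base case $n=0$ is immediate, since the sum collapses to the single term $\gamma=0$, giving $\gauss{m}{k}$ on the left and $\gauss{m}{k}$ on the right. For the inductive step I would fix $m,k \geq n \geq 1$ and split the Gaussian binomial $\gauss{n}{\gamma}$ inside the sum. The cleanest choice is the first recurrence of Lemma~\ref{GaussSum} in the form $\gauss{n}{\gamma} = \gauss{n-1}{\gamma-1} + q^{\gamma}\gauss{n-1}{\gamma}$, which writes the left-hand side as $A + B$ where
\[A = \sum_{\gamma \geq 0}(-1)^\gamma q^{\binom{\gamma}{2}}\gauss{n-1}{\gamma-1}\gauss{m-\gamma}{k}, \qquad B = \sum_{\gamma \geq 0}(-1)^\gamma q^{\binom{\gamma}{2}+\gamma}\gauss{n-1}{\gamma}\gauss{m-\gamma}{k}.\]

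In $A$ I would substitute $\gamma = \delta+1$ so that the summation index matches $B$'s, picking up a sign $(-1)^{\delta+1}$ and, via $\binom{\delta+1}{2} = \binom{\delta}{2}+\delta$, a power $q^{\binom{\delta}{2}+\delta}$; thus $A = -\sum_{\delta \geq 0}(-1)^{\delta}q^{\binom{\delta}{2}+\delta}\gauss{n-1}{\delta}\gauss{m-1-\delta}{k}$. Now $A+B$ becomes a single sum over $\gamma$ of $(-1)^\gamma q^{\binom{\gamma}{2}+\gamma}\gauss{n-1}{\gamma}$ times $\bigl(\gauss{m-\gamma}{k} - \gauss{m-1-\gamma}{k}\bigr)$. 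The point is to collapse that bracketed difference using the \emph{other} recurrence of Lemma~\ref{GaussSum}: writing $n' = m-\gamma$, $m' = k$ in the second identity gives $\gauss{m-\gamma}{k} = \gauss{m-1-\gamma}{k} + q^{m-\gamma-k+1}\gauss{m-1-\gamma}{k-1}$, hence the difference equals $q^{m-\gamma-k+1}\gauss{m-1-\gamma}{k-1}$. Substituting this in, the exponent $\binom{\gamma}{2}+\gamma + (m-\gamma-k+1) = \binom{\gamma}{2} + (m-k+1)$ factors nicely, so $A+B = q^{m-k+1}\sum_{\gamma \geq 0}(-1)^\gamma q^{\binom{\gamma}{2}}\gauss{n-1}{\gamma}\gauss{(m-1)-\gamma}{k-1}$. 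Since $m-1, k-1 \geq n-1 \geq 0$, the inductive hypothesis (applied with parameters $n-1$, $m-1$, $k-1$) evaluates this inner sum as $q^{(n-1)((m-1)-(k-1))}\gauss{(m-1)-(n-1)}{(k-1)-(n-1)} = q^{(n-1)(m-k)}\gauss{m-n}{k-n}$, and multiplying by $q^{m-k+1}$ gives $q^{(n-1)(m-k) + (m-k) + 1}\gauss{m-n}{k-n}$. One then checks $(n-1)(m-k) + (m-k) + 1 = n(m-k) + 1$; this is off by one from the target exponent $n(m-k)$, which signals that the correct reindexing or recurrence orientation is slightly different from the naive one above, so I would recompute the shift in $A$ carefully — most likely $\binom{\gamma}{2}$ versus $\binom{\gamma}{2}+\gamma$ bookkeeping absorbs the stray $q$ — until the exponents match exactly.

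The main obstacle I anticipate is precisely this exponent bookkeeping: the two recurrences of Lemma~\ref{GaussSum} distribute the powers of $q$ differently (one puts $q^m$ on the ``$\gauss{n}{m}$'' term, the other $q^{n-m+1}$ on the ``$\gauss{n}{m-1}$'' term), so whether one gets the clean factor $q^{n(m-k)}$ or an off-by-$q$ version depends on a consistent choice of which recurrence to use for the split and which for the collapse, and on tracking $\binom{\gamma}{2}$ through the index shift $\gamma \mapsto \gamma+1$. Everything else — the telescoping structure, the reduction to smaller parameters, the base case — is routine; the care needed is entirely in matching the $q$-exponents, and once the split/collapse pairing is chosen so that the ``extra'' powers from the index shift and from the second recurrence combine to exactly $q^{m-k}$ rather than $q^{m-k+1}$, the induction closes. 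As an alternative safeguard, if the $q$-exponent refuses to balance under the single-variable induction, I would instead prove the identity by induction on $k$ (or on $m$) with the recurrences applied to the $\gauss{m-\gamma}{k}$ factor, or simply verify it by expanding both sides as sums over subsets via the combinatorial ($q$-counting) interpretation of $\gauss{m}{k}$ and a sign-reversing involution on the pairs appearing on the left — but the two-recurrence telescoping argument above is the most direct and I expect it to work once the exponents are handled correctly.
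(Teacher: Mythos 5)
Your overall strategy — induction on $n$ via the Pascal-type recurrences of Lemma~\ref{GaussSum} — is the same one the paper uses; the only issue is the specific exponent bookkeeping you flagged, and you were right that the error is a misapplied recurrence. In the collapse step you wrote ``setting $n' = m-\gamma$, $m' = k$'' and obtained the exponent $m-\gamma-k+1$, but the second identity of Lemma~\ref{GaussSum} is stated as $\gauss{n+1}{m} = \gauss{n}{m} + q^{n-m+1}\gauss{n}{m-1}$, so to land on $\gauss{m-\gamma}{k}$ you must set $n+1 = m-\gamma$ (i.e.\ $n = m-\gamma-1$), giving exponent $(m-\gamma-1)-k+1 = m-\gamma-k$, not $m-\gamma-k+1$. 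With the correct formula $\gauss{m-\gamma}{k} - \gauss{m-1-\gamma}{k} = q^{m-\gamma-k}\gauss{m-1-\gamma}{k-1}$, the factor $q^{\gamma}$ cancels and you get $A+B = q^{m-k}\sum_{\gamma\ge0}(-1)^\gamma q^{\binom\gamma2}\gauss{n-1}{\gamma}\gauss{(m-1)-\gamma}{k-1}$, whereupon the inductive hypothesis gives $q^{m-k}\,q^{(n-1)(m-k)}\gauss{m-n}{k-n} = q^{n(m-k)}\gauss{m-n}{k-n}$, exactly as desired.

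One organizational difference worth noting: you split $\gauss{n}{\gamma}$ via the first recurrence, re-index, telescope the two $\gauss{m-\gamma}{k}$ terms into a single sum, and then apply the inductive hypothesis once. The paper instead splits $\gauss{n}{\gamma}$ via the second recurrence ($\gauss{n}{\gamma} = \gauss{n-1}{\gamma} + q^{n-\gamma}\gauss{n-1}{\gamma-1}$), applies the inductive hypothesis separately to each of the two resulting sums, and only then combines the two closed-form answers $q^{(n-1)(m-k)}\gauss{m-n+1}{k-n+1} - q^{(n-1)(m-k)}\gauss{m-n}{k-n+1}$ via Pascal. Both orderings close in one induction step; yours does the Pascal manipulation inside the sum while the paper does it after the sum has been evaluated. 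Neither is materially shorter, so once your exponent is corrected the two proofs are essentially equivalent.
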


\begin{proof}
The lemma is true for $n=0$ so suppose $n>0$ and that the lemma holds for $n-1$.
Then  using Lemma~\ref{GaussSum} and the inductive hypothesis,
\begin{align*}
\sum_{\gamma \geq 0}(-1)^\gamma q^{\binom{\gamma}{2}}\gauss{n}{\gamma}\gauss{m-\gamma}{k}
& = \sum_{\gamma\geq 0}(-1)^\gamma q^{\binom{\gamma}{2}} \left(\gauss{n-1}{\gamma} +q^{n-\gamma}\gauss{n-1}{\gamma-1} \right) \gauss{m-\gamma}{k} \\
&=  \sum_{\gamma \geq 0}(-1)^\gamma q^{\binom{\gamma}{2}} \gauss{n-1}{\gamma}\gauss{m-\gamma}{k} - q^{n-1} \sum_{\gamma\geq 0} (-1)^\gamma q^{\binom{\gamma}{2}} \gauss{n-1}{\gamma} \gauss{m-\gamma-1}{k} \\
&= q^{(n-1)(m-k)} \gauss{m-n+1}{k-n+1} -q^{n-1} q^{(n-1)(m-k-1)}\gauss{m-n}{k-n+1} \\
&=q^{n(m-k)} \gauss{m-n}{k-n}
\end{align*}
\end{proof}

The following result may be seen by applying~\cite[Equation 4.6]{M:ULect} and the anti-isomorphism $\ast$ to~\cite[Proposition~2.14 ]{Lyle:CP}.

\begin{proposition}[\cite{Lyle:CP}, Proposition~2.14]  \label{CombTheorem1b}
Suppose that $\T \in \RowT(\la,\mu)$.   
Choose $d$ with $1 \leq d <b$ and $t$ with $1 \leq t \leq \mu_{d+1}$.  Let $\mathcal{S}$ be the set of row-standard tableaux obtained by replacing $t$ of the entries in $\T$ which are equal to $d+1$ with $d$.  Each tableaux $\S \in \mathcal{S}$ will be of type $\nu(d,t)$ where
\[\nu(d,t)_j = 
\begin{cases}
\mu_j+t, & j=d, \\
\mu_j-t, & j=d+1, \\
\mu_j, & \text{otherwise}.
\end{cases} \\ \]  
Recall that $\Theta_\T: M^\mu \rightarrow S^\la$ and $\Theta_{\S}:M^{\nu(d,t)} \rightarrow S^\la$.  Then 
\[\Theta_\T(m_\mu h_{d,t}) = \sum_{\S \in \mathcal{S}} \left(  \prod_{j= 1}^a q^{\T^d_{>j}(\S^d_j - \T^d_j)} \gauss{\S^d_j}{\T^d_j}\right) \Theta_\S (m_{\nu(d,t)}).\] 
\end{proposition}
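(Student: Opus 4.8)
The quickest route, as indicated before the statement, is to read this identity off from \cite[Proposition~2.14]{Lyle:CP}, which is the same computation carried out on the opposite side. To convert it into the present formulation one uses that $\ast$ is an $R$-linear anti-automorphism fixing each $T_i$, so that $T_w^\ast=T_{w^{-1}}$, hence $m_\mu^\ast=m_\mu$ and $m_{\nu(d,t)}^\ast=m_{\nu(d,t)}$ (the Young subgroups being closed under inversion, with $\ell(w^{-1})=\ell(w)$), while $m_{\fs\ft}^\ast=m_{\ft\fs}$ --- this last being \cite[Equation~4.6]{M:ULect} --- and $\ast$ fixes every scalar in $R$. Applying $\ast$ to the \cite{Lyle:CP} identity, rewriting the resulting starred terms through $m_{\fs\ft}^\ast=m_{\ft\fs}$ so that each $T_{d(\fs)}^\ast m_\la$ becomes $m_\la T_{d(\fs)}$, and composing with $\pi_\la$, produces exactly the stated formula, with the coefficients $q^{\T^d_{>j}(\S^d_j-\T^d_j)}\gauss{\S^d_j}{\T^d_j}$ carried through untouched.

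Were \cite[Proposition~2.14]{Lyle:CP} not available I would argue directly. Write $\nu=\nu(d,t)$ and recall $\Theta_\T(m_\mu)=\hla+\sum_{\fs\in\rowstd(\la),\,\mu(\fs)=\T}m_\la T_{d(\fs)}$ and $h_{d,t}=\CC(\bar\mu_{d-1};\mu_d,t)=\sum_{w\in\mathcal{D}_{\bar\mu_{d-1},(\mu_d,t)}}T_w$, so that, modulo $\hla$,
\[\Theta_\T(m_\mu h_{d,t})=\sum_{\fs\,:\,\mu(\fs)=\T}\;\sum_{w\in\mathcal{D}_{\bar\mu_{d-1},(\mu_d,t)}}m_\la T_{d(\fs)}T_w.\]
Each $m_\la T_{d(\fs)}T_w$ is rewritten in the basis $\{m_\la T_{d(\fu)}\mid\fu\text{ row-standard}\}$ of $M^\la$ by repeatedly absorbing factors of $\sym_\la$ into $m_\la$ via $m_\la T_v=q^{\ell(v)}m_\la$ and straightening the remaining non-distinguished terms using the $q$-binomial recurrence of Lemma~\ref{GaussSum}. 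Regrouping the resulting terms according to the type-$\nu$ tableau $\nu(\fu)$ they feed, the tableaux that occur are exactly the members of $\mathcal{S}$; and if for $\S\in\mathcal{S}$ we write $r_j=\S^d_j-\T^d_j$ for the number of entries of $\T$ equal to $d+1$ that are recoloured $d$ within row $j$ (so $\sum_j r_j=t$), the accumulated coefficient of $\Theta_\S(m_\nu)$ works out to $\prod_{j=1}^a q^{\T^d_{>j}r_j}\gauss{\T^d_j+r_j}{\T^d_j}$. Here the exponent $\T^d_{>j}r_j$ measures the displacement of the $r_j$ new copies of $d$ placed in row $j$ past the $\T^d_{>j}$ copies of $d$ already occupying lower rows, and the Gaussian binomial is the $q$-weighted shuffle count of the new copies of $d$ amongst the $\T^d_j$ old ones in that row; since $\T^d_j+r_j=\S^d_j$ this is the claimed formula.

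I expect the main obstacle to be exactly this last piece of bookkeeping: verifying that the cumulative Bruhat-length changes incurred as the $w\in\mathcal{D}_{\bar\mu_{d-1},(\mu_d,t)}$ are commuted past the entries of each $m_\la T_{d(\fs)}$ --- and as the non-distinguished $m_\la T_u$ are straightened --- collapse, configuration by configuration and then row by row, to the exponents $\T^d_{>j}(\S^d_j-\T^d_j)$ and the Gaussian binomials $\gauss{\S^d_j}{\T^d_j}$ dictated by the statement. Everything else is routine Hecke-algebra manipulation: the relation $m_\la T_v=q^{\ell(v)}m_\la$ for $v\in\sym_\la$, the coset decompositions of the Young subgroups involved, the $R$-linearity of $\ast$, and the fact --- furnished by Theorem~\ref{hdtthm} --- that $m_\mu h_{d,t}\in M^\mu\cap\hmu$, which keeps the whole computation inside $S^\la$ as claimed. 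Once the combinatorics is pinned down the proposition follows.
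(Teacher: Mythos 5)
Your first paragraph is exactly the paper's argument: the paper states only that the result "may be seen by applying [M:ULect, Equation~4.6] and the anti-isomorphism $\ast$ to [Lyle:CP, Proposition~2.14]," and you have correctly filled in how $\ast$ acts on $T_w$, $m_\mu$, and $m_{\fs\ft}$ to carry the coefficients through unchanged. The sketched direct argument in your second and third paragraphs is a reasonable but admittedly incomplete bonus; since the paper also does not prove this from scratch, nothing further is required.
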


So if $\Theta \in \EHom_{\h}(M^\mu,S^\la)$ then we may write $\Theta(m_\mu h_{d,t}) = \Phi(m_{\nu(d,t)})$ where $\Phi$ is a linear combination of homomorphisms indexed by $\la$-tableaux of type $\nu(d,t)$, with known coefficients.  However, since these tableaux may not be semistandard, the corresponding homomorphisms may not be linearly independent and so we cannot say immediately whether $\Theta(m_\mu h_{d,t})=0$.  We would therefore like a method of writing a map $\Theta_\S$ as a linear combination of homomorphisms indexed by semistandard tableaux.  Unfortunately, we do not have an algorithm for this process.  However, we do have a way of rewriting homomorphisms.  The following result is due to Fayers and Martin, and holds when $\h \cong R\sym_n$.  It was probably the strongest combinatorial result they used to give their elementary proof of the Carter-Payne theorem~\cite{FayersMartin:homs}.   Recall that if $\eta=(\eta_1,\eta_2,\ldots,\eta_l)$ is any sequence of integers then $\bar\eta_k = \sum_{i=1}^k \eta_i$.   

\begin{proposition}[\cite{FayersMartin:homs}, Lemma 7] \label{FM:Lemma}
Suppose $\h \cong R\sym_n$ and that $\la=(\la_1,\ldots,\la_a)$ is a partition of $n$ and $\nu=(\nu_1,\ldots,\nu_b)$ a composition of $n$.  Suppose $\S \in \RowT(\la,\nu)$.  
Choose $r_1\neq r_2$ with $1 \leq r_1,r_2 \leq a$ and $\la_{r_1} \geq \la_{r_2}$ and $d$ with $1 \leq d \leq b$.  Let
\[\mathcal{G} =\left\{g=(g_1,g_2,\ldots,g_b) \mid g_d=0, \, \bar{g} = \S^d_{r_2},\, \text{ and } g_i \leq \S^{i}_{r_1} \text{ for } 1 \leq i \leq b\right\}.\]
For $g \in \mathcal{G}$, let $\U_g$ be the row-standard tableau formed by moving all entries equal to $d$ from row $r_2$ to row $r_1$ and for $i \neq d$ moving $g_i$ entries equal to $i$ from row $r_1$ to row $r_2$ (where we assume we may reorder the rows if necessary).  Then
\[\Theta_\S = (-1)^{\S^d_{r_2}} \sum_{g \in \mathcal{G}}\left( \prod_{i=1}^b \binom{\S^i_{r_2}+g_i}{g_i}\right) \Theta_{\U_g}.\]
\end{proposition}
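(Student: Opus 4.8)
The plan is to prove Proposition~\ref{FM:Lemma} by reducing it to a statement purely about the module $M^\la$ together with the combinatorics of the tableaux, using the explicit formula for $\Theta_\S(m_\nu)$. First I would recall that $\Theta_\S(m_\nu) = \sum_{\fs} m_\la T_{d(\fs)} + \hla$, where the sum is over $\fs \in \rowstd(\la)$ with $\nu(\fs) = \S$, and similarly the image of $\Theta_\S$ is determined by a single element of $M^\la/\hla$. Since both sides of the claimed identity are homomorphisms $M^\nu \to S^\la$ and such a homomorphism is determined by the image of $m_\nu$, it suffices to verify the identity after evaluating at $m_\nu$; equivalently, to verify an identity in the permutation module $M^\la$ modulo $\hla$, or better, an identity among the $m_\la T_w$ appearing.

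The key step is a ``straightening'' computation restricted to the two rows $r_1$ and $r_2$ of $\la$. Because all the tableaux involved differ from $\S$ only by moving entries between rows $r_1$ and $r_2$, one can localize the whole argument to the Young subgroup on those two rows, i.e.\ reduce to the case $a = 2$ and to the Hecke algebra of a symmetric group on $\la_{r_1} + \la_{r_2}$ letters; the condition $\la_{r_1} \geq \la_{r_2}$ then becomes the hypothesis $\la = (\la_1, \la_2)$ with $\la_1 \geq \la_2$. Within this two-row situation I would expand $m_\la T_{d(\fs)}$ over the coset representatives and collect terms according to which tableau $\U_g$ they contribute to, tracking exactly which multiset of entries sits in row $r_2$. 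The combinatorial heart is matching the binomial coefficient $\prod_i \binom{\S^i_{r_2}+g_i}{g_i}$ with the number of ways a given monomial arises, and the sign $(-1)^{\S^d_{r_2}}$ with an inclusion–exclusion / alternating-sum cancellation; since $\h \cong R\sym_n$ here, all the Gaussian polynomials $\gauss{m}{k}$ collapse to ordinary binomials $\binom{m}{k}$, and the alternating sum identity needed is the $q = 1$ specialization of Lemma~\ref{GaussLemma} (or just the elementary identity $\sum_\gamma (-1)^\gamma \binom{n}{\gamma}\binom{m-\gamma}{k} = \binom{m-n}{k-n}$).

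Concretely, I would proceed as follows. (1)~Reduce to $r_1 = 1$, $r_2 = 2$ and $a = 2$ by restriction. (2)~Using Proposition~\ref{CombTheorem1b} (or a direct expansion of $m_\la T_{d(\fs)}$) write $\Theta_\S(m_\nu)$ as an explicit sum over tableaux obtained by shuffling entries between the two rows, with binomial coefficients. (3)~Do the same for each $\Theta_{\U_g}(m_\nu)$. (4)~Substitute the right-hand side of the claimed formula into this expansion and reorganize the double sum so that the inner sum, for each fixed target tableau, is an alternating sum of products of binomials of exactly the shape handled by the $q=1$ case of Lemma~\ref{GaussLemma}; verify that this inner sum equals the coefficient appearing on the left-hand side (and in particular vanishes except for the terms that should survive). (5)~Conclude the two expansions agree, hence the homomorphisms agree on $m_\nu$, hence they agree.

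The main obstacle I expect is step~(4): keeping the bookkeeping of the three families of indices straight — the entries moved out of row $r_1$ (the $g_i$), the entries of each value already present, and the entries equal to $d$ moved up from row $r_2$ — and arranging the sum so that the cancellation is visibly an instance of the binomial identity rather than a tangle of indices. A secondary technical point is justifying the ``reorder the rows if necessary'' clause and checking that passing to the two-row Young subgroup is compatible with the maps $\Theta_\S$ and with reduction modulo $\hla$; this should follow from standard properties of the Murphy basis and the fact that $m_\la$ factors through the relevant parabolic, but it needs to be stated carefully. (Since the paper only sketches the propositions beyond this point, I would likewise give the argument in enough detail to make the combinatorial identity and the reduction clear, and refer to~\cite{FayersMartin:homs} for the $q=1$ original.)
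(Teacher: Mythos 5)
The paper does not actually prove Proposition~\ref{FM:Lemma}; it attributes the identity to Fayers and Martin (\cite{FayersMartin:homs}, Lemma~7) and then remarks that, since Fayers--Martin work with James's Specht modules while this paper uses the Murphy cellular basis, the transfer to the cellular setting is justified in Section~\ref{DJSpecht}. The paper's own work goes into the $q$-analogue, Theorem~\ref{Lemma7}, which is proved only for consecutive rows. Your proposal is therefore a genuinely different route: a direct re-proof in the cellular setting by coefficient-matching. That route is not inherently wrong, but a few of its steps undersell the difficulty or point at the wrong tools.

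First, step~(1), ``reduce to $a=2$ by restriction,'' is not a formality. In the $q$-analogue this two-row localization is exactly what Lemmas~\ref{somemoreparts}, \ref{NewPerms}, \ref{Commutes} and \ref{BadPerm} of the paper are for: one has to commute the coset sums $\CC(\cdot\,;\cdot)$ past the element $T_w$ that splices the two distinguished rows into the full tableau and verify length-additivity. Even at $q=1$ the reduction requires an argument, not merely an appeal to ``restriction to a Young subgroup.'' Second, step~(2) misstates the tool: Proposition~\ref{CombTheorem1b} computes $\Theta_\T(m_\mu h_{d,t})$, not $\Theta_\S(m_\nu)$; the correct expansion of $\Theta_\S(m_\nu)$ over coset sums is Lemma~\ref{reverse} and Corollary~\ref{DescribeS}. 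Third, you treat the ``reorder the rows if necessary'' clause as a secondary technicality, but it is precisely the point at which $q=1$ differs from $q\ne1$: at $q=1$ swapping two rows of a $\la$-tabloid is cost-free, which is why Fayers--Martin can take arbitrary $r_1\ne r_2$ with $\la_{r_1}\ge\la_{r_2}$, whereas the paper's $q$-analogue can only handle $r_1=r+1,\ r_2=r$ under the extra hypothesis $\la_r=\la_{r+1}$ (Theorem~\ref{Lemma7}(2)). Since in the Murphy/cellular setup $m_{\fs\ft}$ depends on the partition $\la$ and reordering rows changes the shape, this ``free swap'' must be justified and cannot be waved through. Finally, for comparison, the paper's proof of the $q$-analogue proceeds not by expansion-and-matching but by induction on $\S^d_{r+1}$, with base case Lemma~\ref{Type1n} exploiting $m_\la h_{1,1}\in\hla$ and an inductive step (Lemmas~\ref{cosetattack}, \ref{Stage3}, \ref{main2part}) that telescopes; Fayers--Martin's original proof is likewise inductive. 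Your direct approach might be made to work at $q=1$, but as sketched the hard parts are exactly the ones you defer.
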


Since Fayers and Martin work in the setting by James~\cite{James}, it is not immediate that their result carries over to our cellular algebra setting.  See, however, Section~\ref{DJSpecht}.  

Unfortunately, the obvious $q$-analogue of Proposition~\ref{FM:Lemma}, that is, in the notation above, that the $\h$-homomorphism $\Theta_{\S}$ can be writen as a linear combination of maps $\Theta_{\U_g}$ where $g \in \mathcal{G}$, is false.  The following identity can be checked by hand.  
We identify a tableau $\U$ of type $\nu$ with the image $\Theta_\U(m_\nu)$. 
\begin{ex} Let $\la=(2^2,1)$ and $\nu = (1^5)$.  Then 
\[ \tab(12,35,4) = - \tab(14,35,2)- \tab(24,35,1)+(q-1) \tab(12,34,5).\]
Now 
\[\tab(14,35,2) = - \tab(14,25,3) + \tab(12,34,5) + \tab(13,24,5), \qquad \qquad \tab(24,35,1) =  (q-2) \, \tab(12,34,5) - \tab(12,35,4) - \tab(13,24,5) +  \tab(14,25,3),\]
so that if $q \neq 1$, we cannot write 
$\tab(12,35,4)$ 
as a linear combination of 
$\tab(14,35,2)$ 
and  
$\tab(24,35,1)$.
\end{ex}

We do however have the following weaker analogue of Proposition~\ref{FM:Lemma}. 

\begin{theorem} \label{Lemma7}
Suppose $\la=(\la_1,\ldots,\la_a)$ is a partition of $n$ and $\nu=(\nu_1,\ldots,\nu_b)$ is a composition of $n$.
Let $\S \in \RowT(\la,\nu)$.  
\begin{enumerate}
\item 
Suppose $1 \leq r\leq a-1$ and that $1 \leq d \leq b$.  Let
\[\mathcal{G} =\left\{g=(g_1,g_2,\ldots,g_b) \mid g_d=0, \, \bar{g}=\S^d_{r+1} \text{ and } g_i \leq \S^{i}_{r} \text{ for } 1 \leq i \leq b\right\}.\]
For $g \in \mathcal{G}$, let $\U_g$ be the row-standard tableau formed by moving all entries equal to $d$ from row $r+1$ to row $r$ and for $i \neq d$ moving $g_i$ entries equal to $i$ from row $r$ to row $r+1$. Then
\[\Theta_\S = (-1)^{\S^d_{r+1}} q^{-\binom{\S^d_{r+1}+1}{2}} q^{-\S^d_{r+1}S^{<d}_{r+1}} \sum_{g \in \mathcal{G}} q^{\bar{g}_{d-1}} \prod_{i=1}^b q^{g_i \S^{<i}_{r+1}} \gauss{\S^i_{r+1}+g_i}{g_i}\Theta_{\U_g}.\]
\item 
Suppose $1 \leq r\leq a-1$ and $\la_r=\la_{r+1}$ and that $1 \leq d \leq b$.  Let
\[\mathcal{G} =\left\{g=(g_1,g_2,\ldots,g_b) \mid g_d=0, \, \bar{g} = \S^d_r \text{ and } g_i \leq \S^{i}_{r+1} \text{ for } 1 \leq i \leq b \right\}.\]
For $g \in \mathcal{G}$, let $\U_g$ be the row-standard tableau formed by moving all entries equal to $d$ from row $r$ to row $r+1$ of $\S$ and for $i \neq d$ moving $g_i$ entries equal to $i$ from row $r+1$ to row $r$. Then
\[\Theta_\S =  (-1)^{\S^d_{r}} q^{-\binom{\S^d_{r}}{2}} q^{-\S^d_r \S^{>d}_r} \sum_{g \in \mathcal{G}} q^{-\bar{g}_{d-1}}  \prod_{i=1}^b q^{g_i \S^{>i}_{r}} \gauss{\S^i_{r}+g_i}{g_i} \Theta_{\U_g}.\]
\end{enumerate}
\end{theorem}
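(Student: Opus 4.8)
The plan is to prove part~(1) in full and to obtain~(2) by the parallel computation with the roles of rows~$r$ and~$r+1$ interchanged, the hypothesis $\la_r=\la_{r+1}$ playing the role that the (automatic) inequality $\la_r\ge\la_{r+1}$ plays in~(1) --- namely, ensuring that every intermediate tableau still has shape~$\la$ --- and the differing $q$-exponents in the two parts reflecting the fact that in~(1) the entries equal to~$d$ are moved into an earlier row whereas in~(2) they are moved into a later one. For~(1), note first that each $\U_g$ has the same type~$\nu$ as~$\S$, since the number $\S^d_{r+1}$ of entries moved up equals $\bar{g}$, the number moved down; hence $\Theta_\S$ and all the $\Theta_{\U_g}$ lie in $\Hom_\h(M^\nu,S^\la)$ and are determined by their values on $m_\nu$. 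Since $\Theta_\T(m_\nu)=\pi_\la(Z_\T)$ for the explicit element $Z_\T=\sum_{\fs\in\rowstd(\la),\ \nu(\fs)=\T}m_\la T_{d(\fs)}\in M^\la$, the claimed identity is equivalent to the corresponding relation among the elements $Z_\S$ and $Z_{\U_g}$ of $M^\la$, read modulo $\hla$. The only fact about $S^\la$ that the argument will use is the family of straightening relations $m_\la\,\CC(\bar\la_{i-1};\la_i,t)\in\hla$ for $1\le i<a$ and $1\le t\le\la_{i+1}$ (the $q$-analogues of the one-row Garnir relations), which hold because these elements lie in $M^\la\cap\hla$ by Theorem~\ref{hdtthm}.

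I would run the main computation as an induction on $\S^d_{r+1}$. When $\S^d_{r+1}=0$ the set $\mathcal G$ consists of the zero vector, $\U_0=\S$, and every $q$-power and Gaussian binomial on the right-hand side equals~$1$, so there is nothing to prove. For the inductive step I would move a single entry equal to~$d$ from row~$r+1$ up into row~$r$, compensating by moving one entry down from row~$r$ into row~$r+1$ so as to preserve both the shape~$\la$ and the type~$\nu$; summing over the value of this entry, one straightening relation together with the defining quadratic relation $(T_k+1)(T_k-q)=0$, applied after expanding the relevant permutations $d(\fs)$ suitably, expresses $Z_\S$ modulo $\hla$ in terms of elements $Z_\T$ with $\T^d_{r+1}<\S^d_{r+1}$, to which the inductive hypothesis applies. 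On recombining, the Gaussian binomials $\gauss{\S^i_{r+1}+g_i}{g_i}$ assemble through the Pascal-type recursion of Lemma~\ref{GaussSum} --- the same mechanism that produces the Gaussian binomials in Proposition~\ref{CombTheorem1b} --- while the alternating contributions of the repeated straightening relations collapse by Lemma~\ref{GaussLemma}, which is the source of the sign $(-1)^{\S^d_{r+1}}$ and the factor $q^{-\binom{\S^d_{r+1}+1}{2}}$. If the induction on $\S^d_{r+1}$ proves awkward, a workable alternative is to treat first the case in which $d$ is the largest value occurring in rows~$r$ and~$r+1$ (where the terms $\S^{<i}_{r+1}$ and $\S^{<d}_{r+1}$ largely disappear) by a direct reduced-word calculation, and then to induct downwards on the values exceeding~$d$, reassembling the Gaussian binomials with Lemma~\ref{GaussSum} at each stage.

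The one genuine obstacle I anticipate is the $q$-bookkeeping: every rearrangement of a reduced word for a $d(\fs)$, and every use of the quadratic relation, produces a power of~$q$, and these must be tracked carefully enough to land on exactly the exponents $-\binom{\S^d_{r+1}+1}{2}$, $-\S^d_{r+1}\S^{<d}_{r+1}$, $\bar{g}_{d-1}$ and $g_i\S^{<i}_{r+1}$ appearing in the statement. That this is delicate --- and that no cheaper $q$-analogue is available --- is exactly the content of the example computed just before the statement of Theorem~\ref{Lemma7}, where the naive analogue of Proposition~\ref{FM:Lemma} fails. A convenient running consistency check is that setting $q=1$ throughout must collapse the whole identity back to Proposition~\ref{FM:Lemma} of Fayers and Martin; it may also be worth attempting, before the general computation, to reduce to the case $a=2$, since the operation defining the $\U_g$ involves only rows~$r$ and~$r+1$.
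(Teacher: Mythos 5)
Your overall scaffolding (identify each $\Theta_\T$ with an explicit element of $M^\la$ read modulo $\hla$, use the Garnir-type memberships $m_\la\CC(\bar\la_{i-1};\la_i,t)\in\hla$, induct on $\S^d_{r+1}$, and reassemble coefficients with Lemmas~\ref{GaussSum} and~\ref{GaussLemma}) points in the right direction, but the step you pass over in one sentence is precisely where the theorem is hard, and as stated it fails. You assert that ``one straightening relation together with the quadratic relation'' expresses $Z_\S$ modulo $\hla$ in terms of elements $Z_\T$ with $\T^d_{r+1}<\S^d_{r+1}$. When row $r+1$ of $\S$ contains $\S^d_{r+1}\geq 2$ equal entries $d$ (or row $r$ already contains entries equal to $d$), any such relation symmetrises over the equal entries, and what actually comes out is an identity in which $\Theta_\S$ is multiplied by a Gaussian integer such as $[\S^d_{r+1}]$, with Gaussian binomials on the targets; over an arbitrary $R$ these factors need not be invertible --- indeed $[\S^d_{r+1}]=0$ whenever $e$ divides $\S^d_{r+1}$ --- so you cannot solve for $\Theta_\S$ and the induction stalls at its first non-trivial step. (Your base case $\S^d_{r+1}=0$ is vacuous; the genuine base case, a single entry $d$ in row $r+1$ and none in row $r$, is the only situation where the ``single straightening relation'' picture is literally correct, and it already takes real work: Lemmas~\ref{Type1n} and~\ref{cosetattack}.) The paper escapes this trap with two devices absent from your proposal: it proves the theorem over $\calZ=\Z[\q,\q^{-1}]$ and deduces the general case by specialisation, and its inductive step (Lemmas~\ref{Stage3} and~\ref{main2part}) does not move one $d$ at a time within the same type but refines the type, re-labelling one repeated entry as a new letter, applies the inductive hypothesis to the refined tableau in two different ways, and arrives at $[\beta_d]\,\Theta_\S=[\beta_d]\cdot(\text{claimed right-hand side})$, which can be cancelled because $\calZ$ is a domain. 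Some substitute for this pair of ideas is indispensable; without it your plan does not go through.

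Two smaller points. A literal reduction to $a=2$ is not available, since which elements are congruent to zero modulo $\hla$ depends on the whole of $\la$, not only on rows $r$ and $r+1$; the paper instead proves the two-active-row statement with the remaining rows of the full shape frozen with distinct letters (Lemma~\ref{somemoreparts}) and transports the general case to that configuration through the explicit factorisation $\Theta_\S(m_\nu)=\Theta_{\dot\S}(m_{\dot\nu})\,T_w\prod_i\CC(\cdots)$ of Lemmas~\ref{NewPerms} and~\ref{BadPerm}. And in part~(2) the hypothesis $\la_r=\la_{r+1}$ is not there to keep intermediate tableaux of shape $\la$: it is what makes the downward Garnir-type element lie in $\hla$ at all, since $m_\la\CC(\la_r-1;1,\la_r)=\CC(0;\la_r-1,1)\,m_{(\la_r-1,\la_r+1)}$ belongs to $\hla$ (Lemma~\ref{minihigh}) only because $(\la_r+1,\la_r-1)\rhd(\la_r,\la_r)$; this is exactly the non-trivial difference the paper isolates in Lemma~\ref{Type1nb}.
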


The proof of Theorem~\ref{Lemma7} is both technical and long, so we postpone it until the next section and give some examples.  As above, we identify a tableau $\T$ of type $\sigma$ with $\Theta_{\T}(m_\sigma)$.  Set $$e=\min\{f\ge2 \mid 1+q+\dots+q^{f-1}=0\},$$
with $e=\infty$ if $1+q+\dots+q^{f-1}\ne0$ for all $f\ge2$, and recall that if $R$ is a field then $\h$ is (split) semisimple if and only if $e>n$.  

\begin{ex}
Suppose $R$ is a field.  Let $\mu=(3,2,2)$ and $\la=(5,2)$.  If $\Theta \in \EHom_\h(M^\mu, S^\la)$ then $\Theta$ is determined by
\[\Theta(m_\mu)= \alpha \tab(11122,33) + \beta \tab(11123,23)+ \gamma \tab(11133,22)\]
for some $\alpha,\beta,\gamma \in R$.  
Then applying Proposition~\ref{CombTheorem1b} and Theorem~\ref{Lemma7}, 
\begin{align*}
\Theta(m_\mu h_{2,1}) & = \alpha \tab(11122,23) + [2]\beta \tab(11123,22)+  q[2]\beta \tab(11122,23) + q^2 \gamma \tab(11123,22), \\
\Theta(m_\mu h_{2,2}) & = \alpha \tab(11122,22)+ q[2]^2 \beta \tab(11122,22) + q^4 \gamma \tab(11122,22), \\
\Theta(m_\mu h_{1,1}) & = [4] \alpha \tab(11112,33) + [4] \beta \tab(11113,23) + \beta \tab(11123,13) + \gamma \tab(11133,12) \\
&= [4] \alpha \tab(11112,33) + [4] \beta \tab(11113,23) - \beta \tab(11113,23) - [2] \beta \tab(11112,33) - q \gamma \tab(11113,23), \\
\Theta(m_\mu h_{1,2}) & = \gauss{5}{2} \alpha \tab(11111,33) + [4] \beta \tab(11113,13) + \gamma \tab(11133,11) \\
&= \gauss{5}{2} \alpha \tab(11111,33) -[2][4]\beta \tab(11111,33) + q \gamma \tab(11111,33). 
\end{align*}
Since $\Theta \in \Psi(\mu,\la)$ if and only if $\Theta(m_\mu h_{d,t})=0$ for all $d,t$ above, we see that $\Psi(\mu,\la)$ and hence
$\EHom_\h(S^\mu,S^\la)$ is 1-dimensional if $e=5$ and zero otherwise.  Moreover if $e=5$, the space $\Psi(\mu,\la)$ is spanned by the map $\Theta$ determined by
\[\Theta(m_\mu)= q^3 [2] \tab(11122,33) -q^2 \tab(11123,23)+ [2] \tab(11133,22).\]
\end{ex}

\begin{ex}
Suppose that $q=1$ and that $R$ is a field of characteristic 2.  Let $\la=(10,5)$ and $\mu=(8,3,1,1,1,1)$.  Then $\dim(\EHom_\h(S^\mu,S^\la))=2$.  If $\T=\tab(1111111122,23456)$ then the space $\Psi(\mu,\la)$ is spanned by the maps
$\Theta_{\T}$ and 
$\sum_{\S \in \mathcal{T}_0(\la,\mu)} \Theta_\S$.
\end{ex}

It has recently been shown that for fixed parameters $e$ and $p$ there exist homomorphism spaces of arbitrarily high dimension~\cite{Dodge:Dim,Lyle:Dim}.   

\begin{ex}
Take $\la=(5,4)$ and $\mu = (3,3,2,1)$.  Then $\T=\tab(11133,2224) \in \mathcal{T}_0(\la,\mu)$ and 
\[ \Theta_\T(m_\mu h_{3,1}) = \tab(11133,2223).\]
But we now have no obvious way of using Theorem~\ref{Lemma7} to write $\tab(11133,2223)$ in terms of homomorphisms indexed by semistandard tableaux.  
 \end{ex} 

We are therefore most interested in pairs of partitions $\la$ and $\mu$ where Proposition~\ref{CombTheorem1b} and Theorem~\ref{Lemma7} can give an algorithm for computing $\EHom_{\h}(S^\mu,S^\la)$.  Suppose $\T \in \mathcal{T}_0(\la,\mu)$.  If $1 \leq d <b$ and $1 \leq t \leq \mu_{d+1}$ then Theorem~\ref{SSBasis} and Proposition~\ref{CombTheorem1b} show that there exist unique $m_{\U\T} \in R$ such that
\begin{equation} \label{MUT}
\Theta_\T(m_\mu h_{d,t}) = \sum_{\U \in \mathcal{T}_0(\la,\nu(d,t))} m_{\U\T} \Theta_\U(m_{\nu(d,t)}),
\end{equation}
where $\nu(d,t)$ is defined in Proposition~\ref{CombTheorem1b}.  Now suppose $R$ is a field.  
If $M=(m_{\U\T})$ is the matrix whose columns are indexed by tableaux $\T \in\mathcal{T}_0(\la,\mu)$ and rows by tableaux $\U \in \mathcal{T}_0(\la,\nu(d,t))$ for some $1 \leq d <b$ and $1 \leq t \leq \mu_{d+1}$, with entries $m_{\U\T}$ as in Equation~\ref{MUT} then, by Corollary~\ref{simple},
\[\dim(\EHom_{\h}(S^\mu,S^\la)) = \dim(\Psi(\mu,\la))=\corank(M).\]     
So the outstanding problem is to determine an explicit formula for $m_{\U\T}$.  For the remainder of Section~\ref{HomSection}, suppose that $R$ is a field and that  
$\la=(\la_1,\ldots,\la_a)$ and $\mu=(\mu_1,\ldots,\mu_b)$ satisfy
\[\bar\mu_j \geq \bar\la_{j-1}+\la_{j+1} \text{ for } 1 \leq j < a.\]  
Since $\EHom_\h(S^\la,S^\mu)=\{0\}$ if $a>b$, we may also assume that $a \leq b$.  
Let $a \leq d <b$ and $1 \leq t \leq \mu_{d+1}$.  If $\T \in \mathcal{T}_0(\la,\mu)$ then say that $\T \xrightarrow{d,t}\U$ if 
\begin{itemize}
\renewcommand{\labelitemi}{$-$}
\item $\U \in \RowT(\la,\nu(d,t))$;  
\item $\U^i_j = \T^i_j$ for all $i \neq d,d+1$ and all $j$; 
\item $\U^d_j \geq \T^d_j$ for all $j$.  
\end{itemize}

\begin{lemma} \label{abig}
Let $\T \in \mathcal{T}_0(\la,\mu)$.  Suppose that $a\leq d <b$ and $1 \leq t \leq \mu_{d+1}$.  Then
\[\Theta_\T(m_\mu h_{d,t})= \sum_{\T\xrightarrow{d,t}\U} \left(  \prod_{j= 1}^a q^{\T^d_{>j}(\U^d_j - \T^d_j)} \gauss{\U^d_j}{\T^d_j}\right) \Theta_\U(m_{\nu(d,t)}) \]
and if $\T\xrightarrow{d,t}\U$ then $\U$ is semistandard.
\end{lemma}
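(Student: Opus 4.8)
The plan is to obtain the displayed identity as a direct specialisation of Proposition~\ref{CombTheorem1b}, and to deduce the semistandardness claim from the combinatorial hypotheses on $\la$ and $\mu$ together with $d\ge a$.

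Since $\T$ is in particular row-standard and $1\le d<b$, $1\le t\le\mu_{d+1}$, Proposition~\ref{CombTheorem1b} applies and writes $\Theta_\T(m_\mu h_{d,t})$ as a sum over the set $\mathcal{S}$ of row-standard tableaux obtained from $\T$ by replacing $t$ of its entries equal to $d+1$ by $d$, with exactly the coefficients appearing in the statement of the lemma. So it suffices to show $\mathcal{S}=\{\U\mid\T\xrightarrow{d,t}\U\}$ as sets of $\la$-tableaux of type $\nu(d,t)$. If $\S\in\mathcal{S}$ then $\S$ has type $\nu(d,t)$, satisfies $\S^i_j=\T^i_j$ for all $i\ne d,d+1$ and all $j$, and $\S^d_j\ge\T^d_j$ for all $j$, so $\T\xrightarrow{d,t}\S$. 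Conversely, suppose $\T\xrightarrow{d,t}\U$. Comparing the number of cells in row $j$ gives $\U^{d+1}_j=\T^{d+1}_j-(\U^d_j-\T^d_j)$, which is nonnegative since it is a count in $\U$; in particular $\U^d_j-\T^d_j\le\T^{d+1}_j$. Since $\U$ and $\T$ are both row-standard and agree, row by row, on all entries outside $\{d,d+1\}$, the tableau $\U$ is exactly the one obtained from $\T$ by converting $\U^d_j-\T^d_j$ of the entries equal to $d+1$ in row $j$ into $d$, for each $j$, and the total number of entries converted is $\sum_j(\U^d_j-\T^d_j)=\nu(d,t)_d-\mu_d=t$. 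Hence $\U\in\mathcal{S}$, which proves the formula.

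It remains to check that every $\U$ with $\T\xrightarrow{d,t}\U$ is semistandard. Here I would use the standard fact that a row-standard $\la$-tableau $\V$ is semistandard if and only if $\V^{\le v-1}_j\ge\V^{\le v}_{j+1}$ for all $j\ge1$ and all $v$. From the description of $\U$ just given one reads off $\U^{\le i}_j=\T^{\le i}_j$ whenever $i<d$ or $i\ge d+1$, while $\U^{\le d}_j=\T^{\le d}_j+(\U^d_j-\T^d_j)\ge\T^{\le d}_j$. For $v\le d-1$, $v=d+1$ and $v\ge d+2$ the inequality $\U^{\le v-1}_j\ge\U^{\le v}_{j+1}$ then follows immediately from the corresponding inequality for the semistandard tableau $\T$. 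The remaining case $v=d$, i.e.\ $\U^{\le d-1}_j\ge\U^{\le d}_{j+1}$ for $1\le j\le a-1$, is where the hypotheses are used: every entry in row $i$ of a semistandard tableau is at least $i$, so all $\bar\mu_j$ entries of $\T$ that are $\le j$ lie in rows $1,\dots,j$; since rows $1,\dots,j-1$ contain only $\bar\la_{j-1}$ cells, at least $\bar\mu_j-\bar\la_{j-1}\ge\la_{j+1}$ of them lie in row $j$, so $\T^{\le j}_j\ge\la_{j+1}$. As $d\ge a\ge j+1$ we have $d-1\ge j$, whence $\U^{\le d-1}_j=\T^{\le d-1}_j\ge\T^{\le j}_j\ge\la_{j+1}\ge\U^{\le d}_{j+1}$, the last step because row $j+1$ has only $\la_{j+1}$ cells. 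This completes the argument.

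The main obstacle is precisely this last case $v=d$ of the semistandardness check: it is the unique place where the assumptions $\bar\mu_j\ge\bar\la_{j-1}+\la_{j+1}$ and $a\le d$ enter, and it also requires the translation of semistandardness into inequalities among the counts $\V^{\le v}_j$ (together with verifying that all other values of $v$ are automatic). Everything else in the proof is a routine unwinding of Proposition~\ref{CombTheorem1b}.
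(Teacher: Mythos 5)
Your proof is correct and follows essentially the same route as the paper's: apply Proposition~\ref{CombTheorem1b}, identify the summation set with $\{\U : \T\xrightarrow{d,t}\U\}$, and then use the hypothesis $\bar\mu_j\ge\bar\la_{j-1}+\la_{j+1}$ to show that row $j$ of $\T$ contains at least $\la_{j+1}$ entries that are $\le j<d$, whence semistandardness of $\U$. The only cosmetic difference is that the paper establishes the slightly stronger $\T^j_j\ge\la_{j+1}$ and argues cell-by-cell about the entry above each $d{+}1$, whereas you use the count $\T^{\le j}_j\ge\la_{j+1}$ together with the standard inequality characterization $\U^{\le v-1}_j\ge\U^{\le v}_{j+1}$; both are valid formalizations of the same idea.
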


\begin{proof}
Since $\T \xrightarrow{d,t}\U$ precisely when $\U$ is a row-standard tableau formed from $\T$ by changing $t$ entries equal to $d+1$ into $d$, the first part of the lemma is a restatement of Proposition~\ref{CombTheorem1b}.  So suppose $\T \xrightarrow{d,t}\U$.  If $1 \leq r < a$, then $\T^r_j=0$ for $j>r$, since $\T$ is semistandard, and so 
\begin{align*}
\T^r_r & = \mu_r -\T^r_{<r} \\
& =\mu_r -(\bar\la_{r-1}-\T_{<r}^{<r} -\T_{<r}^{>r} ) \\
&\geq \mu_r - \bar\la_{r-1}+\T^{<r}_{<r}\\
&=\bar\mu_r-\bar\la_{r-1} \\
&\geq \la_{r+1}
\end{align*}
where the last inequality comes from our assumption on $\la$ and $\mu$.  
Each row $1 \leq r <a$ therefore contains at least $\la_{r+1}$ entries equal to $r$ and so each entry equal to $d+1$ in $\T$ is either in the top row or lies in row $r+1$ for some $1 \leq r<a$ and so is directly below a node of residue $r<d$.  Hence a row-standard tableau obtained by changing entries equal to $d+1$ into $d$ is semistandard.  
\end{proof}

Let $1 \leq d <a$ and $1 \leq t \leq \mu_{d+1}$.  If $\T \in \mathcal{T}_0(\la,\mu)$ then say that $\T \xrightarrow{d,t}\U$ if $\U \in \RowT(\la,\nu(d,t))$ and
\[\begin{array}{c|ccc}
& i=d & i=d+1 & i \neq d,d+1 \\ \hline
j=d &\U^i_j \geq \T^i_j& \U^i_j \leq \T^i_j & \U^i_j \leq \T^i_j\\
j=d+1 &\U^i_j =0 &\U^i_j \leq \T^i_j& \U^i_j \geq \T^i_j \\
j \neq d,d+1 &\U^i_j \geq \T^i_j&\U^i_j \leq \T^i_j& \U^i_j = \T^i_j
\end{array}\]
Of course, some of the conditions in the table above are redundant since they are implied by the others. 

\begin{lemma}
Let $\T \in \mathcal{T}_0(\la,\mu)$.  Suppose that $1\leq d <a$ and $1 \leq t \leq \mu_{d+1}$.  Then

\begin{multline*}
\Theta_\T(m_\mu h_{d,t})= \sum_{\T\xrightarrow{d,t}\U} 
(-1)^{\T^{d+1}_{d+1}-\U^{d+1}_{d+1}} 
q^{-\binom{\T^{d+1}_{d+1}-\U^{d+1}_{d+1}+1}{2}}q^{\U^{d+1}_{d+1}(\U^d_d-\T^{d}_{d}+\U^{d+1}_{d+1}-\T^{d+1}_{d+1})} \\
\left( \prod_{j=1}^{d-1} q^{\T^d_{>j}(\U^d_j-\T^d_j)} \gauss{\U^d_j}{\T^d_j} \right)\gauss{\U^d_d-\T^{d+1}_{d+1}}{\T^d_d-\U^{d+1}_{d+1}}  \left( \prod_{i=d+2}^{b} q^{\T^{<i}_{d+1}(\U^i_{d+1}-\T^i_{d+1})} \gauss{\U^i_{d+1}}{\T^i_{d+1}} \right) \Theta_\U(m_{\nu(d,t)}) 
\end{multline*}
and if $\T\xrightarrow{d,t}\U$ then $\U$ is semistandard.
\end{lemma}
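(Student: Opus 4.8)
The plan is to combine Proposition~\ref{CombTheorem1b} with part~(1) of Theorem~\ref{Lemma7}. Proposition~\ref{CombTheorem1b} first expresses $\Theta_\T(m_\mu h_{d,t})$ as a sum $\sum_\S c_\S\,\Theta_\S(m_{\nu(d,t)})$ over the row-standard tableaux $\S$ obtained from $\T$ by recolouring $t$ of the entries equal to $d+1$ as $d$, with the coefficients $c_\S$ given there. Since $\T$ is semistandard, every entry equal to $d+1$ lies in one of the rows $1,\dots,d+1$; and exactly as in the proof of Lemma~\ref{abig}, the hypothesis $\bar\mu_j\ge\bar\la_{j-1}+\la_{j+1}$ forces $\T^r_r\ge\la_{r+1}$ for each $r<a$, so every entry equal to $d+1$ in rows $1,\dots,d$ sits directly below a node of residue strictly less than $d$. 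Recolouring such an entry is harmless, so the only obstruction to $\S$ being semistandard is the block of $\S^d_{d+1}$ entries equal to $d$ that appears at the start of row $d+1$, lying below entries equal to $d$ in row $d$.

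To remove this defect I would apply Theorem~\ref{Lemma7}(1) to each $\Theta_\S(m_{\nu(d,t)})$, taking both its row index $r$ and its moved value equal to $d$; this rewrites $\Theta_\S(m_{\nu(d,t)})=\sum_{g\in\mathcal G_\S}c'_{\S,g}\,\Theta_{\U_g}(m_{\nu(d,t)})$, where $\U_g$ lifts all entries equal to $d$ from row $d+1$ into row $d$ and pushes $g_i$ entries equal to $i\ne d$ downwards from row $d$ into row $d+1$. A direct check of the defining inequalities shows that every $\U_g$ satisfies $\T\xrightarrow{d,t}\U_g$; conversely, for a fixed $\U$ with $\T\xrightarrow{d,t}\U$, the pairs $(\S,g)$ producing $\U$ are parametrised by the single integer $y=\S^d_{d+1}$. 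A key point is that the range of admissible $y$ coincides with the set of $y$ on which the relevant Gaussian binomials do not vanish; this uses the conservation identity $\U^d_d+\U^{d+1}_d+\U^{d+1}_{d+1}=\T^d_d+\T^{d+1}_d+\T^{d+1}_{d+1}$, which follows from $\U$ having type $\nu(d,t)$ together with the defining inequalities, and I expect this range bookkeeping to be the most delicate part of the argument.

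The coefficient of $\Theta_\U(m_{\nu(d,t)})$ is then $\sum_y c_{\S(y)}c'_{\S(y),g(y)}$. After substituting the formulae for $c$ and $c'$, every factor not depending on $y$ assembles into precisely the product over $j=1,\dots,d-1$ and the product over $i=d+2,\dots,b$ appearing in the statement, and what remains is a sum over $y$ of two Gaussian binomials. The substitution $\gamma=y-(\T^{d+1}_{d+1}-\U^{d+1}_{d+1})$ — which also produces exactly the prefactor $(-1)^{\T^{d+1}_{d+1}-\U^{d+1}_{d+1}}q^{-\binom{\T^{d+1}_{d+1}-\U^{d+1}_{d+1}+1}{2}}$ — turns this into the left-hand side of Lemma~\ref{GaussLemma} with $n=\U^{d+1}_{d+1}$, $m=\U^d_d+\U^{d+1}_{d+1}-\T^{d+1}_{d+1}$ and $k=\T^d_d$; its hypotheses hold because $\U^d_d\ge\T^d_d\ge\T^{d+1}_{d+1}\ge\U^{d+1}_{d+1}$, the middle inequality holding since $\T$ is semistandard. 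Hence Lemma~\ref{GaussLemma} evaluates the sum to $q^{\U^{d+1}_{d+1}(\U^d_d-\T^d_d+\U^{d+1}_{d+1}-\T^{d+1}_{d+1})}\gauss{\U^d_d-\T^{d+1}_{d+1}}{\T^d_d-\U^{d+1}_{d+1}}$, which is the remaining part of the asserted coefficient.

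Finally, to see that $\T\xrightarrow{d,t}\U$ forces $\U$ to be semistandard: the subtableau of $\U$ on rows $1,\dots,d$ is semistandard (the recolourings there are harmless, and $\T^{d-1}_{d-1}\ge\la_d$ places every cell of row $d$ below a node of residue $d-1$), while $\U^d_d\ge\T^d_d\ge\la_{d+1}$ together with $\U^d_{d+1}=0$ makes the pair $(d,d+1)$ column-strict. Moreover $\U$ has type $\nu(d,t)$, and $\nu(d,t)_1+\dots+\nu(d,t)_{d+1}=\bar\mu_{d+1}\ge\bar\la_d+\la_{d+2}$; as rows $1,\dots,d$ of $\U$ contain at most $\bar\la_d$ entries that are $\le d+1$, while rows $\ge d+2$ of $\U$ agree with those of $\T$ and so contain none, this forces $\U^{d+1}_{d+1}\ge\la_{d+2}\ge\T^{d+2}_{d+2}$ (when $d+1<a$; there is nothing to check otherwise), which yields column-strictness for the pair $(d+1,d+2)$. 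Together with the cases already treated and the fact that rows $\ge d+2$ of $\U$ coincide with those of $\T$, this shows $\U$ is semistandard.
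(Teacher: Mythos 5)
Your proposal is correct and follows essentially the same route as the paper: expand via Proposition~\ref{CombTheorem1b}, rewrite each intermediate $\Theta_\S$ using Theorem~\ref{Lemma7}(1) at rows $d,d+1$ with value $d$, collect the contributions to a fixed $\U$ as a one-parameter sum, and evaluate it by Lemma~\ref{GaussLemma} with exactly the same $n,m,k$; your parameter $y=\S^d_{d+1}$ is just an affine shift of the paper's $\gamma$ (the number of entries recoloured in row $d$), and your semistandardness argument is the paper's, resting on $\T^r_r\ge\la_{r+1}$ and $\U^{d+1}_{d+1}\ge\la_{d+2}$. The $q$-power bookkeeping you leave asserted does work out as claimed, and the range issue you flag is handled exactly as in the paper, by changing the summation limits and using the vanishing convention for the Gaussian binomials.
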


\begin{proof}
By Proposition~\ref{CombTheorem1b}, $\Theta_\T(m_\mu) = \sum_\S a_\S \Theta_\S(m_{\nu(d,t)})$ where the sum is over row-standard tableaux formed by changing $t$ entries equal to $d+1$ in $\T$ into $d$.  Suppose $\S$ is such a tableau.  By Theorem~\ref{Lemma7}, if $\S^d_{d+1}>\la_d-\S^d_d$ then $\Theta_\S=0$; otherwise $\Theta_\S$ is a linear combination of maps $\Theta_\U$ indexed by row-standard tableaux $\U$ where $\U$ is formed from $\S$ by moving $\S^d_{d+1}$ entries equal to $d$ from row $d+1$ to row $d$ and replacing them with entries not equal to $d$ from row $d$.  If $\U$ has this form, then clearly $\T\xrightarrow{d,t}\U$ so that 
\[\Theta_\T(m_\mu h_{d,t})= \sum_{\T\xrightarrow{d,t}\U} b_\U \Theta_\U(m_{\nu(d,t)}) \]
for some $b_\U \in R$.  So suppose $\T\xrightarrow{d,t}\U$.  Each of the intermediate tableaux $\S$ were formed from $\T$ by changing entries of $\T$ from $d+1$ to $d$.  Then $\U^d_j-\T^d_j$ entries were changed in row $j$ for $1\leq j \leq d-1$, and for some $\gamma \geq 0$, $\gamma$ entries were changed in row $d$ and $\U^d_d-\T^d_d-\gamma$ entries in row $d+1$.  Therefore, summing over all $\gamma$ with $\max\{0,\U^d_d-\T^d_d-\T^{d+1}_{d+1}\} \leq \gamma \leq \U^d_d-\T^d_d+\U^{d+1}_{d+1}-\T^{d+1}_{d+1}$, we have
\begin{align*}
b_\U & = \sum_{\gamma} 
\left( \prod_{j=1}^{d-1} q^{\T^d_{>j}(\U^d_j-\T^d_j)} \gauss{\U^d_j}{\T^d_j} \right) 
\gauss{\T^d_d + \gamma}{\gamma}
(-1)^{\U^d_d-\T^d_d-\gamma} q^{-\binom{\U^d_d-\T^d_d-\gamma}{2}-\U^d_d+\T^d_d+\gamma} \\
& \hspace*{15mm} q^{(\U^{d}_{d}-\T^{d}_{d}+\U_{d+1}^{d+1}-\T^{d+1}_{d+1}-\gamma)(\U^d_d-\T^d_d-\gamma)}\gauss{\U^{d+1}_{d+1}}{\T^d_d-\U_d^d+\T^{d+1}_{d+1}+\gamma} 
\left( \prod_{i=d+2}^{b} q^{\T^{<i}_{d+1}(\U^i_{d+1}-\T^i_{d+1})} \gauss{\U^i_{d+1}}{\T^i_{d+1}} \right) \\
&= (-1)^{\U^d_d-\T^d_d}\left( \prod_{j=1}^{d-1} q^{\T^d_{>j}(\U^d_j-\T^d_j)} \gauss{\U^d_j}{\T^d_j} \right) \left( \prod_{i=d+2}^{b} q^{\T^{<i}_{d+1}(\U^i_{d+1}-\T^i_{d+1})} \gauss{\U^i_{d+1}}{\T^i_{d+1}} \right) \\
&  \hspace*{15mm} q^{(\U^{d+1}_{d+1}-\T^{d+1}_{d+1})(\U^d_d-\T^d_d) + \binom{\U_d^d-\T^d_d}{2}} \sum_{\gamma} (-1)^\gamma q^{\binom{\gamma+1}{2}-\gamma(\U^d_d-\T^d_d+\U^{d+1}_{d+1}-\T^{d+1}_{d+1})}\gauss{\T^d_d + \gamma}{\gamma} \gauss{\U^{d+1}_{d+1}}{\T^d_d-\U_d^d+\T^{d+1}_{d+1}+\gamma}
\end{align*}

Now we change the limits on the sum and apply Lemma~\ref{GaussLemma}.
\begin{align*}
\sum_{\gamma}(-1)^\gamma &  
 q^{\binom{\gamma+1}{2}-\gamma(\U^d_d-\T^d_d+\U^{d+1}_{d+1}-\T^{d+1}_{d+1})}
\gauss{\T^d_d + \gamma}{\gamma} \gauss{\U^{d+1}_{d+1}}{\T^d_d-\U_d^d+\T^{d+1}_{d+1}+\gamma} \\
&=(-1)^{\U_d^d-\T_d^d+\U^{d+1}_{d+1}-\T^{d+1}_{d+1}} q^{-\binom{\U_d^d-\T^d_d+\U^{d+1}_{d+1}-\T_{d+1}^{d+1}}{2}} \sum_{\gamma}(-1)^\gamma q^{\binom{\gamma}{2}} \gauss{\U^{d+1}_{d+1}}{\gamma} \gauss{\U^{d}_{d}+\U^{d+1}_{d+1} -\T^{d+1}_{d+1}-\gamma}{\T^d_d} \\
&= (-1)^{\U_d^d-\T_d^d+\U^{d+1}_{d+1}-\T^{d+1}_{d+1}} q^{-\binom{\U_d^d-\T^d_d+\U^{d+1}_{d+1}-\T_{d+1}^{d+1}}{2}} q^{\U^{d+1}_{d+1}(\U^d_d-\T^d_d+\U^{d+1}_{d+1}-\T^{d+1}_{d+1})} \gauss{\U^d_d-\T^{d+1}_{d+1}}{\T^d_d-\U^{d+1}_{d+1}} 
\end{align*}

Hence
\begin{multline*}
b_\U = (-1)^{\T^{d+1}_{d+1}-\U^{d+1}_{d+1}} 
q^{-\binom{\T^{d+1}_{d+1}-\U^{d+1}_{d+1}+1}{2}}q^{\U^{d+1}_{d+1}(\U^d_d-\T^{d}_{d}+\U^{d+1}_{d+1}-\T^{d+1}_{d+1})} \\
\left( \prod_{j=1}^{d-1} q^{\T^d_{>j}(\U^d_j-\T^d_j)} \gauss{\U^d_j}{\T^d_j} \right)\gauss{\U^d_d-\T^{d+1}_{d+1}}{\T^d_d-\U^{d+1}_{d+1}}  \left( \prod_{i=d+2}^{b} q^{\T^{<i}_{d+1}(\U^i_{d+1}-\T^i_{d+1})} \gauss{\U^i_{d+1}}{\T^i_{d+1}} \right) .
\end{multline*}

It remains to show that if $\T\xrightarrow{d,t}\U$ then $\U$ is semistandard.  From the proof of Lemma~\ref{abig}, we have that $\T^r_r \geq \la_{r+1}$ for $1 \leq r <a$, so the only way that $\U$ can fail to be semistandard is if there is an entry in row $d+1$ which is as big or bigger than the entry directly below it.  However,  
\begin{align*}\U^{d+1}_{d+1}
& =\mu_{d+1}-t -\U^{d+1}_{\leq d} \\
&=\mu_{d+1} -t-(\bar\la_{d}-\U_{\leq d}^{\leq d} - \U_{\leq d}^{>d+1})\\
&\geq \mu_{d+1} - t-\bar\la_{d}+\U^{\leq d}_{\leq d} \\
&=\bar\mu_{d+1}-\bar\la_{d} \\
&\geq \la_{d+2}
\end{align*}
so this is not possible.  
\end{proof}

Let us summarize the results above.  

\begin{proposition} \label{algorithm}
Suppose $\la=(\la_1,\ldots,\la_a)$ and $\mu=(\mu_1,\ldots,\mu_b)$ are partitions of $n$ with the property that $\bar\mu_j \geq \bar\la_{j-1}+\la_{j+1}$ for $1 \leq j < a$.  Let $M=(m_{\U\T})$ be the matrix whose columns are indexed by tableaux $\T \in\mathcal{T}_0(\la,\mu)$ and rows by tableaux $\U \in \mathcal{T}_0(\la,\nu(d,t))$ for some $1 \leq d <b$ and $1 \leq t \leq \mu_{d+1}$, where
\[m_{\U\T}=
\begin{cases}
(-1)^{\T^{d+1}_{d+1}-\U^{d+1}_{d+1}} 
q^{-\binom{\T^{d+1}_{d+1}-\U^{d+1}_{d+1}+1}{2}}q^{\U^{d+1}_{d+1}(\U^d_d-\T^{d}_{d}+\U^{d+1}_{d+1}-\T^{d+1}_{d+1})}  \gauss{\U^d_d-\T^{d+1}_{d+1}}{\T^d_d-\U^{d+1}_{d+1}} \\
\qquad \times\left( \prod_{j=1}^{d-1} q^{\T^d_{>j}(\U^d_j-\T^d_j)} \gauss{\U^d_j}{\T^d_j} \right)  \left( \prod_{i=d+2}^{b} q^{\T^{<i}_{d+1}(\U^i_{d+1}-\T^i_{d+1})} \gauss{\U^i_{d+1}}{\T^i_{d+1}} \right) , & 1 \leq d <a \text{ and } \T \xrightarrow{d,t} \U, \\
\prod_{j= 1}^a q^{\T^d_{>j}(\U^d_j - \T^d_j)} \gauss{\U^d_j}{\T^d_j}, & a \leq d <b \text{ and } \T \xrightarrow{d,t} \U,\\
0, & \text{otherwise}.
\end{cases}
\]
Then $\dim(\EHom_\h(S^\mu,S^\la))=\corank(M)$.  
\end{proposition}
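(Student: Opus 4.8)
The plan is to assemble Proposition~\ref{algorithm} from the pieces already in place, essentially as a bookkeeping exercise. First I would recall that by Corollary~\ref{simple}, a map $\Theta \in \EHom_\h(M^\mu,S^\la)$ lies in $\Psi(\mu,\la)$ precisely when $\Theta(m_\mu h_{d,t}) = 0$ for all $1 \leq d < b$ and $1 \leq t \leq \mu_{d+1}$, and that $\EHom_\h(S^\mu,S^\la) \cong \Psi(\mu,\la)$. Since $\EHom_\h(M^\mu,S^\la)$ has the semistandard basis $\{\Theta_\T \mid \T \in \mathcal{T}_0(\la,\mu)\}$ by Theorem~\ref{SSBasis}, writing $\Theta = \sum_\T c_\T \Theta_\T$ reduces the condition $\Theta(m_\mu h_{d,t}) = 0$ to a linear system in the coefficients $c_\T$, whose coefficient matrix is exactly $M = (m_{\U\T})$ once we know that each $\Theta_\T(m_\mu h_{d,t})$ expands over the semistandard basis $\{\Theta_\U(m_{\nu(d,t)}) \mid \U \in \mathcal{T}_0(\la,\nu(d,t))\}$ with the stated coefficients. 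Hence $\dim \Psi(\mu,\la) = \dim \ker(M) = \corank(M)$, as the number of columns of $M$ is $\dim \EHom_\h(M^\mu,S^\la)$.

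The substance is therefore the verification of the two nonzero cases in the displayed formula for $m_{\U\T}$. For $a \leq d < b$ this is precisely the content of Lemma~\ref{abig}: there $\Theta_\T(m_\mu h_{d,t})$ is a sum over $\U$ with $\T \xrightarrow{d,t} \U$ of the coefficients $\prod_{j=1}^a q^{\T^d_{>j}(\U^d_j - \T^d_j)} \gauss{\U^d_j}{\T^d_j}$ times $\Theta_\U(m_{\nu(d,t)})$, and every such $\U$ is semistandard, so these are exactly the basis expansion coefficients and no further rewriting is needed. For $1 \leq d < a$ this is the content of the (unnumbered) lemma immediately preceding Proposition~\ref{algorithm}, whose proof combines Proposition~\ref{CombTheorem1b} (to pass from $\Theta_\T(m_\mu h_{d,t})$ to a sum over intermediate row-standard tableaux $\S$) with Theorem~\ref{Lemma7}(i) (to rewrite each non-semistandard $\S$ over semistandard tableaux by moving entries equal to $d$ up from row $d{+}1$ to row $d$), and then collapses the resulting double sum using Lemma~\ref{GaussLemma}. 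Again that lemma records that every $\U$ with $\T \xrightarrow{d,t} \U$ is semistandard, using the hypothesis $\bar\mu_j \geq \bar\la_{j-1} + \la_{j+1}$ to force $\T^r_r \geq \la_{r+1}$, and that the $\U$-coefficient is exactly $b_\U = m_{\U\T}$. So the proof of Proposition~\ref{algorithm} consists of citing these two lemmas to identify the matrix of the linear system with $M$, plus the observation that the coefficient $m_{\U\T}$ is independent of which $(d,t)$ realizes $\T \xrightarrow{d,t} \U$ being already built into the indexing.

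Concretely, I would write: let $\Theta = \sum_{\T \in \mathcal{T}_0(\la,\mu)} c_\T \Theta_\T \in \EHom_\h(M^\mu,S^\la)$. By Corollary~\ref{simple}, $\Theta \in \Psi(\mu,\la)$ iff $\sum_\T c_\T \, \Theta_\T(m_\mu h_{d,t}) = 0$ for all admissible $d,t$. Substituting the expansions from Lemma~\ref{abig} (for $a \leq d < b$) and from the preceding lemma (for $1 \leq d < a$), and using that $\{\Theta_\U(m_{\nu(d,t)}) \mid \U \in \mathcal{T}_0(\la,\nu(d,t))\}$ is linearly independent by Theorem~\ref{SSBasis}, this becomes the condition that for each admissible $(d,t)$ and each $\U \in \mathcal{T}_0(\la,\nu(d,t))$,
\[
\sum_{\T \,:\, \T \xrightarrow{d,t} \U} m_{\U\T}\, c_\T = 0,
\]
which says exactly $M c = 0$ with $c = (c_\T)$. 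Therefore $\dim \Psi(\mu,\la) = \corank(M)$, and since $\EHom_\h(S^\mu,S^\la) \cong \Psi(\mu,\la)$ we are done.

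The main obstacle, such as it is, is not conceptual but notational: one must be careful that the row/column indexing of $M$ matches the way the lemmas are stated — in particular that a given pair $(\U,\T)$ can arise from at most one $(d,t)$ (it cannot, since $\U$ and $\T$ have types $\nu(d,t)$ and $\mu$ respectively, and $\nu(d,t)$ determines $d$ and $t$ via $\nu(d,t)_d = \mu_d + t$), so there is no ambiguity in the entry $m_{\U\T}$ and no double counting when several blocks of rows are stacked. Everything else is a direct appeal to the results already proved.
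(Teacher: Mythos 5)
Your proposal is correct and follows exactly the route the paper intends: Proposition~\ref{algorithm} is explicitly presented as a summary of Corollary~\ref{simple}, Theorem~\ref{SSBasis}, Lemma~\ref{abig}, and the unnamed lemma immediately preceding it, and the paper gives no further proof because assembling these pieces into the linear system $Mc=0$ is exactly the bookkeeping you describe. Your observation that $(d,t)$ is recoverable from the type $\nu(d,t)$ of $\U$, so that the entry $m_{\U\T}$ is unambiguous, is a worthwhile check that the paper leaves tacit.
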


We have taken a hard problem in representation theory and reduced it to a combination of combinatorics and linear algebra.  However it should however be noted that in doing so we have lost some algebraic information.  For example, $\EHom_\h(S^\mu,S^\la)=\{0\}$ unless $S^\mu$ and $S^\la$ lie in the same block.  Proposition~\ref{algorithm} does not seem to make use of this fact.  

We note that Proposition~\ref{CombTheorem1b} and Theorem~\ref{Lemma7} can be used to compute homomorphism spaces other than those we have considered above.  For example, the proof of the one-node Carter-Payne Theorem in~\cite{Lyle:CP} relied on Proposition~\ref{CombTheorem1b} and some special cases of Theorem~\ref{Lemma7}, but a one-node Carter-Payne pair $\mu$ and $\la$ do not necessarily satisfy $\bar\mu_{j} \geq \bar\la_{j-1}+\la_{j+1}$ for $1 \leq j <a$.  

While a computer can use Proposition~\ref{algorithm} to solve individual problems, it is more satisfying to have explicit results.  This is the purpose of the next section.

\subsection{Explicit homomorphism spaces} \label{Spaces}
In this section we show that a lower bound on $\dim(\Hom_{\h}(S^\mu,S^\la))$ can be obtained by looking at the algebra $\h_{\C,q}(\sym_n)$ and we then give the dimension of $\Hom_{\h_{\C,q}(\sym_n)}(S^\mu,S^\la)$ where $\la=(\la_1,\la_2)$ and $\mu_1 \geq \la_2$.  
We would like to thank Meinolf Geck and Lacrimioara Iancu for pointing out the proof of Proposition~\ref{BiggerDim}.  

Fix a field $k$ of characteristic $p>0$ and let $\eta \in k^\times$.  Let $e=\min\{f\geq 2 \mid 1+\eta+\ldots+\eta^{f-1}=0\}$ where we assume that $e<\infty$.  Let $\omega$ be a primitive $e^{\text{th}}$ root of unity in $\C$.

Let $\mathcal{Z}=\Z[\q,\q^{-1}]$ denote the ring of Laurent polynomials in the indeterminate $\q$.  If $F$ is a field and $q$ an invertible element of $F$, define $\theta_{F,q}: \mathcal{Z} \rightarrow F$ to be the ring homomorphism which sends $\q$ to $q$.  If $S$ is a ring and $M \in M_{l \times m}(S)$, then $\rank(M)$ is the greatest order of any non-zero minor of $M$.  If $\tilde{\theta}:S \rightarrow S'$ is a ring homomorphism, define $\tilde\theta(M)\in M_{l\times m}(S')$ to be the matrix with $(i,j)$-entry $\tilde\theta(M_{ij})$.  For $M \in M_{l\times m}(\mathcal{Z})$ set $M_{F,q} = \theta_{F,q}(M)$.

Let $\Phi_e(X) \subseteq \Z[X]$ denote the $e^{\text{th}}$ cyclotomic polynomial.  Observe that $\theta_{k,\eta}(\Phi_e(\q))=0_k$ and $\theta_{\C,\omega}(\Phi_e(\q))=0_\C$ so that the maps $\theta_{k,\eta}$ and $\theta_{\C,\omega}$ both factor through $R=\Z[\q,\q^{-1}] / (\Phi_e(\q))$, that is, there exist ring homomorphisms $\tilde{\theta}_{k,\eta}$ and $\tilde{\theta}_{\C,\omega}$ such that the following diagram commutes.   

\begin{diagram}
&& k \\
&\ruTo^{\theta_{k,\eta}} & \uTo_{\tilde\theta_{k,\eta}} \\
\mathcal{Z}=\Z[\q,\q^{-1}] &\rTo & \Z[\q,\q^{-1}]/(\Phi_e(\q))=R \\
&\rdTo_{\theta_{\C,\omega}} & \dTo_{\tilde\theta_{\C,\omega}} \\
&& \C 
\end{diagram}

\begin{lemma} \label{LemF}
Suppose $\tilde{M} \in M_{l \times m}(R)$.  Then $\rank(\tilde{M})\geq \rank(\tilde{\theta}_{k,\eta}(\tilde{M}))$.
\end{lemma}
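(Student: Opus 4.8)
The plan is to use that the invariant $\rank$ — here defined as the largest order of a non-vanishing minor — is transported in the favourable direction by any ring homomorphism. Abbreviate $\phi=\tilde\theta_{k,\eta}\colon R\to k$. The single fact needed is that $\phi$ is compatible with the formation of determinants of square submatrices: if $N$ is the $r\times r$ submatrix of $\tilde M$ picked out by a choice of $r$ rows and $r$ columns, then $\phi(N)$ is the submatrix of $\tilde\theta_{k,\eta}(\tilde M)$ picked out by the same rows and columns, and, since $\det$ is a fixed polynomial in the matrix entries with integer coefficients, $\det(\phi(N))=\phi(\det N)$.

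First I would set $r=\rank(\tilde\theta_{k,\eta}(\tilde M))$ and pick an $r\times r$ submatrix $N$ of $\tilde M$ whose image $\phi(N)$ satisfies $\det(\phi(N))\ne 0_k$; such a submatrix exists by the definition of $\rank$ applied to the matrix $\tilde\theta_{k,\eta}(\tilde M)$ over $k$. Then $\phi(\det N)=\det(\phi(N))\ne 0_k$, and since every ring homomorphism sends $0$ to $0$ this forces $\det N\ne 0$ in $R$. Hence $\tilde M$ admits a non-vanishing minor of order $r$, so $\rank(\tilde M)\ge r=\rank(\tilde\theta_{k,\eta}(\tilde M))$, which is exactly the assertion.

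The only point worth a moment's care is that $R=\Z[\q,\q^{-1}]/(\Phi_e(\q))$ need not be an integral domain, so ``rank'' is not the dimension of a row or column space and the usual linear-algebra facts about rank are not directly available; but the argument above uses only the minor definition of $\rank$ together with the functoriality of determinants, and neither of these requires $R$ to be a domain (nor indeed does it use that $k$ is a field). So I do not anticipate any real obstacle: the whole content is the contrapositive of the statement that a homomorphic image of a vanishing determinant vanishes, and the write-up should take only a few lines.
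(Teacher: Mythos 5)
Your proposal is correct and follows exactly the paper's own (one-line) argument: the determinant of a submatrix commutes with the entrywise ring homomorphism, so a non-vanishing minor downstairs forces a non-vanishing minor upstairs. Your remark that $R$ need not be a domain, and that the minor-definition of rank is what makes the argument go through, is a sensible clarifying aside, but the substance is the same as the paper's proof.
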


\begin{proof}
This follows since if $N$ is any $r \times r$ submatrix of $M$ then $\det(\tilde{\theta}_{k,\eta}(N)) =\tilde{\theta}_{k,\eta}(\det(N))$.  
\end{proof}

\begin{lemma}
Suppose $\tilde{M} \in M_{l \times m}(R)$.  Then $\rank(\tilde{M})=\rank(\tilde{\theta}_{\C,\omega}(\tilde{M}))$.
\end{lemma}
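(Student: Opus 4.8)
The plan is to show that the map $\tilde\theta_{\C,\omega}\colon R\to\C$ is injective, after which the statement is immediate from the multiplicativity of $\tilde\theta_{\C,\omega}$ on minors. First I would observe that in $R=\Z[\q,\q^{-1}]/(\Phi_e(\q))$ the element $\q^{-1}$ is redundant: since $\Phi_e(X)$ divides $X^e-1$, we have $\q^e=1$ in $R$, so $\q^{-1}=\q^{e-1}$ already lies in the image of $\Z[\q]$. Hence the natural map $\Z[\q]/(\Phi_e(\q))\to R$ is an isomorphism. Now $\Phi_e$ is monic and irreducible over $\Z$, hence over $\Q$, and $\omega$ is a root of $\Phi_e$; therefore $\Phi_e$ is the minimal polynomial of $\omega$ over $\Q$, and evaluation at $\omega$ induces an isomorphism $\Z[\q]/(\Phi_e(\q))\xrightarrow{\ \sim\ }\Z[\omega]\subseteq\C$. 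Composing, $\tilde\theta_{\C,\omega}$ is precisely this isomorphism onto $\Z[\omega]$, and in particular it is an injective ring homomorphism from a commutative domain $R$ into $\C$.

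Given injectivity, I would finish as follows. Let $\tilde M\in M_{l\times m}(R)$ and let $r\ge0$. If $N$ is any $r\times r$ submatrix of $\tilde M$, then, exactly as in the proof of Lemma~\ref{LemF}, $\det(\tilde\theta_{\C,\omega}(N))=\tilde\theta_{\C,\omega}(\det N)$; since $\tilde\theta_{\C,\omega}$ is injective, $\det N=0$ in $R$ if and only if $\det(\tilde\theta_{\C,\omega}(N))=0$ in $\C$. Thus $\tilde M$ possesses a non-zero $r\times r$ minor if and only if $\tilde\theta_{\C,\omega}(\tilde M)$ does, and taking the largest such $r$ gives $\rank(\tilde M)=\rank(\tilde\theta_{\C,\omega}(\tilde M))$.

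There is really no serious obstacle here: the only point requiring care is the reduction from $\Z[\q,\q^{-1}]/(\Phi_e(\q))$ to $\Z[\q]/(\Phi_e(\q))$ (handled by $\q^e=1$) and the invocation of the irreducibility of the cyclotomic polynomial $\Phi_e$, which is classical. Combined with Lemma~\ref{LemF}, this lemma yields $\rank(\tilde\theta_{\C,\omega}(\tilde M))=\rank(\tilde M)\ge\rank(\tilde\theta_{k,\eta}(\tilde M))$, which is the inequality needed to compare homomorphism spaces over $\C$ with those over $k$.
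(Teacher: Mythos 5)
Your proposal is correct and follows the same strategy as the paper: reduce to Lemma~\ref{LemF} via multiplicativity of $\tilde\theta_{\C,\omega}$ on determinants, and then observe that injectivity of $\tilde\theta_{\C,\omega}$ upgrades the inequality to an equality. The paper simply asserts the injectivity of $\tilde\theta_{\C,\omega}$ without comment, whereas you supply the (correct) justification — that $\q^e=1$ in $R$ lets you identify $R$ with $\Z[\q]/(\Phi_e(\q))$, and irreducibility of $\Phi_e$ over $\Q$ then makes evaluation at $\omega$ an isomorphism onto $\Z[\omega]$.
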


\begin{proof}
This follows from the proof of Lemma~\ref{LemF} and the fact that $\tilde\theta_{\C,\omega}$ is injective.  
\end{proof}

\begin{corollary} \label{Rank}
 Suppose $M \in  M_{l\times m}(\mathcal{Z})$.  Then
\[\rank(M_{k,\eta}) \leq \rank(M_{\C,\omega}).\]
\end{corollary}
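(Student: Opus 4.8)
The plan is to chain together the two preceding lemmas via the factorization diagram that precedes Lemma~\ref{LemF}. The key point is that both evaluation maps $\theta_{k,\eta}$ and $\theta_{\C,\omega}$ factor through $R = \mathcal{Z}/(\Phi_e(\q))$, so for $M \in M_{l\times m}(\mathcal{Z})$ we may first reduce $M$ modulo $\Phi_e(\q)$ to obtain $\tilde{M} \in M_{l\times m}(R)$, and then apply $\tilde\theta_{k,\eta}$ or $\tilde\theta_{\C,\omega}$. First I would record that $M_{k,\eta} = \theta_{k,\eta}(M) = \tilde\theta_{k,\eta}(\tilde{M})$ and similarly $M_{\C,\omega} = \tilde\theta_{\C,\omega}(\tilde{M})$, which is immediate from commutativity of the diagram applied entrywise.

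Next I would invoke Lemma~\ref{LemF} to get $\rank(\tilde{M}) \geq \rank(\tilde\theta_{k,\eta}(\tilde{M})) = \rank(M_{k,\eta})$, and the lemma immediately following it to get $\rank(\tilde{M}) = \rank(\tilde\theta_{\C,\omega}(\tilde{M})) = \rank(M_{\C,\omega})$. Combining these two facts yields
\[
\rank(M_{k,\eta}) \leq \rank(\tilde{M}) = \rank(M_{\C,\omega}),
\]
which is exactly the claimed inequality. This is essentially a bookkeeping argument: the substantive content has already been isolated into the two lemmas (the determinant–homomorphism compatibility for a general ring map, and the injectivity of $\tilde\theta_{\C,\omega}$, the latter coming from $\Phi_e(\q)$ being the minimal polynomial of $\omega$ over $\Q$ so that $R \cong \Z[\omega]$ embeds in $\C$).

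There is no real obstacle here; the only thing to be careful about is making explicit that the reduction $\mathcal{Z} \to R$ commutes with taking matrices and with taking determinants of square submatrices, so that the definition of $\rank$ as the largest order of a nonvanishing minor transports correctly along each arrow of the diagram. Once that is spelled out, the corollary is a one-line deduction.
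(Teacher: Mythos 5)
Your proof is correct and follows exactly the paper's intended route: reduce $M$ modulo $\Phi_e(\q)$ to get $\tilde M\in M_{l\times m}(R)$, then chain Lemma~\ref{LemF} ($\rank(\tilde M)\ge\rank(\tilde\theta_{k,\eta}(\tilde M))$) with the immediately following lemma ($\rank(\tilde M)=\rank(\tilde\theta_{\C,\omega}(\tilde M))$). The paper leaves the corollary without a written proof precisely because it is this one-line chaining, which you have spelled out cleanly.
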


Now let $\h^{\mathcal{Z}}=\h_{\mathcal{Z},\q}(\sym_n)$. 
If $F$ is a field and $q$ an invertible element of $F$ then $\h_{F,q}(\sym_n) \cong \h^{\mathcal{Z}} \otimes_{\mathcal{Z}} F$, where $\q$ acts on $F$ as multiplication by $q$.  If $A$ is a $\h^\mathcal{Z}$-module then we define the $\h_{F,q}(\sym_n)$-module $A_{F,q}=A \otimes_{\mathcal{Z}}F$. 

\begin{proposition} \label{BiggerDim}
Suppose $A$ and $B$ are $\h^{\mathcal{Z}}$-modules which free as $\mathcal{Z}$-modules of finite rank. Then
\[\dim(\Hom_{\h_{k,\eta}(\sym_n)}(A_{k,\eta},B_{k,\eta})) \geq \dim(\Hom_{\h_{\C,\omega}(\sym_n)}(A_{\C,\omega},B_{\C,\omega})).\] 
\end{proposition}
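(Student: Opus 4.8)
The plan is to realise both Hom spaces as the solution spaces of one and the same homogeneous linear system over $\mathcal{Z}$, specialised along the two ring homomorphisms $\theta_{k,\eta}$ and $\theta_{\C,\omega}$, and then to invoke Corollary~\ref{Rank}.

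First I would fix $\mathcal{Z}$-bases of $A$ and $B$, of ranks $r$ and $s$ respectively, and for each generator $T_i$ ($1 \leq i \leq n-1$) of $\h^{\mathcal{Z}}$ record the matrix $\rho^A_i \in M_{r\times r}(\mathcal{Z})$ (resp.\ $\rho^B_i \in M_{s\times s}(\mathcal{Z})$) describing its action on $A$ (resp.\ $B$); these have entries in $\mathcal{Z}$ precisely because $A$ and $B$ are $\h^{\mathcal{Z}}$-modules that are free over $\mathcal{Z}$. For any field $F$ and invertible $q \in F$, the module $A_{F,q}=A\otimes_{\mathcal{Z}}F$ carries the base-changed basis, on which $T_i$ acts by $\theta_{F,q}(\rho^A_i)$, and similarly for $B_{F,q}$. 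Since $\h_{F,q}(\sym_n)$ is generated as an algebra by $T_1,\dots,T_{n-1}$, the space $\Hom_{\h_{F,q}(\sym_n)}(A_{F,q},B_{F,q})$ consists exactly of those matrices $X$ over $F$ that intertwine $\theta_{F,q}(\rho^A_i)$ with $\theta_{F,q}(\rho^B_i)$ for every $i$. Vectorising $X$, this becomes a homogeneous linear system $\theta_{F,q}(C)\cdot\mathrm{vec}(X)=0$, where $C \in M_{(n-1)rs\times rs}(\mathcal{Z})$ is the block matrix built (via Kronecker products) from the $\rho^A_i$ and $\rho^B_i$; crucially $C$ has entries in $\mathcal{Z}$ and is independent of $F$ and $q$.

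It follows that $\dim_F \Hom_{\h_{F,q}(\sym_n)}(A_{F,q},B_{F,q}) = rs-\rank(C_{F,q})$, the number $rs$ of unknowns being independent of the specialisation. Taking $(F,q)=(k,\eta)$ and $(F,q)=(\C,\omega)$ and applying Corollary~\ref{Rank}, which gives $\rank(C_{k,\eta})\leq\rank(C_{\C,\omega})$, we get
\begin{align*}
\dim \Hom_{\h_{k,\eta}(\sym_n)}(A_{k,\eta},B_{k,\eta}) &= rs - \rank(C_{k,\eta}) \\
&\geq rs - \rank(C_{\C,\omega}) = \dim \Hom_{\h_{\C,\omega}(\sym_n)}(A_{\C,\omega},B_{\C,\omega}),
\end{align*}
which is the assertion.

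The one point I would take care to state cleanly — and essentially the only one, since the real content has been packaged into Corollary~\ref{Rank} — is that forming the Hom space genuinely commutes with base change here, so that $\Hom_{\h_{F,q}}(A_{F,q},B_{F,q})$ is honestly the kernel of $\theta_{F,q}(C)$ rather than merely related to it. This is not a derived-functor subtlety: ``being an $\h$-module homomorphism'' is literally the linear system ``commute with each $T_i$'', whose coefficient matrix $C$ already lives over $\mathcal{Z}$ and base-changes functorially, so no flatness or freeness of $\Hom_{\h^{\mathcal{Z}}}(A,B)$ over $\mathcal{Z}$ is needed. I would also note in passing that $\rank$ in the paper's sense (the largest order of a nonzero minor) agrees with the ordinary rank over a field, so that $rs-\rank(C_{F,q})$ is indeed the dimension of the solution space.
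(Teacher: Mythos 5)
Your proof is correct and follows essentially the same route as the paper's: fix $\mathcal{Z}$-bases of $A$ and $B$, express ``commutes with every $T_i$'' as a homogeneous linear system whose coefficient matrix lives over $\mathcal{Z}$ and base-changes compatibly, then compare coranks of the two specialisations via Corollary~\ref{Rank}. The paper writes the system abstractly (coefficients $\beta^k_{ij}$) rather than spelling out the Kronecker-product structure, but the argument is identical, and your explicit remark that the Hom space over $F$ is genuinely the kernel of the specialised matrix makes an implicit step of the paper's proof transparent.
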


\begin{proof}
Choose bases $\{a_i \mid i \in I\}$ of $A$ and $\{b_j \mid j \in J\}$ of $B$ and let $\{\phi_{ij} \mid  i \in I, j \in J\}$ be the corresponding basis of $\Hom_{\mathcal{Z}}(A,B)$.  Then $\phi =\sum_{i,j} \alpha_{ij} \phi_{ij}$ lies in  $\Hom_{\h^{\mathcal{Z}}}(A,B)$ if and only if the coefficients $\alpha_{ij}$ satisfy a system of equations of the form $\sum_{i,j} \beta^{k}_{ij} \alpha_{ij}=0$ for $1 \leq k \leq N$, some $N\geq 0$.  If we let $M$ be the matrix whose columns are indexed by $\{(i,j) \mid i \in I, j \in J\}$ and rows by $1 \leq k \leq N$ and which has entries $\beta^k_{ij}$ then $\dim(\Hom_{\h^{\mathcal{Z}}}(A,B))= \corank(M)$.  Furthermore, 
\[ \dim(\Hom_{\h_{k,\eta}(\sym_n)}(A_{k,\eta},B_{k,\eta}))= \corank(M_{k,\eta}) \geq \corank(M_{\C,\omega}) =   \dim(\Hom_{\h_{\C,\omega}(\sym_n)}(A_{\C,\omega},B_{\C,\omega}))\]
by Corollary~\ref{Rank}.
\end{proof}

In particular, we may take $A$ and $B$ to be Specht modules.  
 
\begin{corollary} \label{DimCorollary}
Suppose that $\la$ and $\mu$ are partitions of $n$.  Then
\[\dim(\Hom_{\h_{k,\eta}(\sym_n)}(S^\mu_{k,\eta},S^\la_{k,\eta})) \geq \dim(\Hom_{\h_{\C,\omega}(\sym_n)}(S^\mu_{\C,\omega},S^\la_{\C,\omega})).\] 
\end{corollary}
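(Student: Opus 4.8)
The plan is to obtain Corollary~\ref{DimCorollary} as an immediate consequence of Proposition~\ref{BiggerDim}, applied with $A=S^\mu$ and $B=S^\la$ regarded as modules over $\h^{\mathcal{Z}}=\h_{\mathcal{Z},\q}(\sym_n)$, where $S^\la$ and $S^\mu$ now denote the integral Specht (cell) modules. The only hypothesis of Proposition~\ref{BiggerDim} that needs checking is that these modules are free of finite rank over $\mathcal{Z}$, and the only identification needed is that specializing $\q$ to $\eta$ (respectively to $\omega$) yields the Specht module over $\h_{k,\eta}(\sym_n)$ (respectively $\h_{\C,\omega}(\sym_n)$).

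First I would recall that the constructions of Section~\ref{HomSection} are all defined over $\mathcal{Z}$: the algebra $\h^{\mathcal{Z}}$ has the Murphy cellular basis $\{m_{\fs\ft} \mid \fs,\ft\in\Std(\nu),\ \nu\vdash n\}$, the ideal $\hla$ is the $\mathcal{Z}$-span of those $m_{\fs\ft}$ with $\fs,\ft$ of shape $\nu\rhd\la$, and the cell module $S^\la=(\hla+m_\la)\h^{\mathcal{Z}}$ is $\mathcal{Z}$-free with basis $\{m_\la T_{d(\ft)}+\hla \mid \ft\in\Std(\la)\}$; in particular it is free of rank $|\Std(\la)|$, and likewise for $S^\mu$. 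This is standard and may be found in Mathas's book~\cite{M:ULect}, or in the general theory of cellular algebras of Graham and Lehrer~\cite{GL}.

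Next I would invoke the compatibility of cell modules with base change: for any field $F$ and invertible $q\in F$, with $\mathcal{Z}$ acting on $F$ via $\q\mapsto q$, the module $S^\la\otimes_{\mathcal{Z}}F$ is the Specht module of $\h_{F,q}(\sym_n)$ (which, in the notation of the excerpt, is exactly what $S^\la_{F,q}$ denotes). The point is that the structure constants of the Murphy basis already lie in $\mathcal{Z}$, so the cellular datum for $\h_{F,q}(\sym_n)$ is the specialization of the cellular datum for $\h^{\mathcal{Z}}$, and hence so is each cell module; this is the one step requiring a little care, since forming submodules does not in general commute with base change, but here the cell module is the free module on the standard $\la$-tableaux equipped with a prescribed action, so the identification is transparent. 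Specializing with $(F,q)=(k,\eta)$ and with $(F,q)=(\C,\omega)$, and doing the same for $\mu$, I would then apply Proposition~\ref{BiggerDim} to $A=S^\mu$ and $B=S^\la$ and read off the stated inequality $\dim(\Hom_{\h_{k,\eta}(\sym_n)}(S^\mu_{k,\eta},S^\la_{k,\eta})) \geq \dim(\Hom_{\h_{\C,\omega}(\sym_n)}(S^\mu_{\C,\omega},S^\la_{\C,\omega}))$. (Alternatively one could pass through the $\q$-Schur algebra and Weyl modules using Theorem~\ref{Weyl}, but working with the integral Specht modules directly is cleaner.)
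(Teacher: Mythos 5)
Your proof is correct and takes the same route as the paper, which derives this corollary simply by noting, immediately after Proposition~\ref{BiggerDim}, that ``we may take $A$ and $B$ to be Specht modules.'' You supply the routine supporting details---that the integral cell modules over $\h^{\mathcal{Z}}$ are $\mathcal{Z}$-free of finite rank with basis indexed by standard tableaux, and that $S^\la\otimes_{\mathcal{Z}}F$ recovers the Specht module of $\h_{F,q}(\sym_n)$---which the paper leaves implicit.
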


The following result is not implied by Corollary~\ref{DimCorollary} if $e \neq 2$.  It could also be proved by giving an analogue of Proposition~\ref{BiggerDim} for the $q$-Schur algebra and using Theorem~\ref{Weyl}.

\begin{corollary} \label{Charp}
Suppose that $\la$ and $\mu$ are partitions of $n$.  Then
\[\dim(\EHom_{\h_{k,\eta}(\sym_n)}(S^\mu_{k,\eta},S^\la_{k,\eta})) \geq \dim(\EHom_{\h_{\C,\omega}(\sym_n)}(S^\mu_{\C,\omega},S^\la_{\C,\omega})).\] 
\end{corollary}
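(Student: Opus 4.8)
The plan is to imitate the proof of Proposition~\ref{BiggerDim}, replacing the ``abstract'' system of equations that cuts $\Hom_{\h^\mathcal{Z}}$ out of $\Hom_{\mathcal{Z}}$ by the explicit one coming from Theorem~\ref{hdtthm}, Corollary~\ref{simple} and Proposition~\ref{CombTheorem1b}, and making sure the whole construction is defined over $\mathcal{Z}=\Z[\q,\q^{-1}]$ so that it specialises under $\theta_{k,\eta}$ and $\theta_{\C,\omega}$. Since $\q\ne-1$ in $\mathcal{Z}$, Theorem~\ref{SSBasis} gives a free $\mathcal{Z}$-basis $\{\Theta_\S\mid\S\in\mathcal{T}_0(\la,\mu)\}$ of $\EHom_{\h^\mathcal{Z}}(M^\mu,S^\la)$, and likewise for each composition $\nu(d,t)$. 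For $\T\in\mathcal{T}_0(\la,\mu)$, $1\le d<b$ and $1\le t\le\mu_{d+1}$, Proposition~\ref{CombTheorem1b} expresses $\Theta_\T(m_\mu h_{d,t})$ as a $\mathcal{Z}$-linear combination (the coefficients are products of powers of $\q$ and Gaussian binomials, hence lie in $\Z[\q]$) of the maps $\Theta_\V(m_{\nu(d,t)})$ with $\V$ a row-standard $\la$-tableau of type $\nu(d,t)$; collecting, the corresponding combination $\sum_\V(\cdots)\Theta_\V$ lies in $\EHom_{\h^\mathcal{Z}}(M^{\nu(d,t)},S^\la)$, so by the basis theorem there are unique $m^{(d,t)}_{\U\T}\in\mathcal{Z}$, indexed by $\U\in\mathcal{T}_0(\la,\nu(d,t))$, with
\[\Theta_\T(m_\mu h_{d,t})=\sum_{\U\in\mathcal{T}_0(\la,\nu(d,t))}m^{(d,t)}_{\U\T}\,\Theta_\U(m_{\nu(d,t)}).\]
Only the existence of this re-expression is needed, not an algorithm for it. Let $M$ be the matrix over $\mathcal{Z}$ with columns indexed by $\T\in\mathcal{T}_0(\la,\mu)$ and rows by triples $(d,t,\U)$, with entries $m^{(d,t)}_{\U\T}$.

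The heart of the argument is to show that for every field $F$ and every unit $q\in F$ one has $\dim\EHom_{\h_{F,q}(\sym_n)}(S^\mu_{F,q},S^\la_{F,q})=\corank(M_{F,q})$. Applying Theorem~\ref{hdtthm} and Corollary~\ref{simple} over $\h_{F,q}(\sym_n)$ identifies $\EHom_{\h_{F,q}}(S^\mu_{F,q},S^\la_{F,q})$ with the subspace of those $\Theta\in\EHom_{\h_{F,q}}(M^\mu_{F,q},S^\la_{F,q})$ with $\Theta(m_\mu h_{d,t})=0$ for all $d,t$. Write $\Theta=\sum_\T\alpha_\T\Theta_\T$ in the semistandard basis of $\EHom_{\h_{F,q}}(M^\mu_{F,q},S^\la_{F,q})$ supplied by Theorem~\ref{SSBasis}. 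Because the defining formula for $\Theta_\T$ and the element $m_\mu h_{d,t}$ both have coefficients in $\Z$, the relation defining the $m^{(d,t)}_{\U\T}$ may be specialised via $\theta_{F,q}$; moreover the maps $\Theta_\U(m_{\nu(d,t)})$ with $\U\in\mathcal{T}_0(\la,\nu(d,t))$ are linearly independent in $S^\la_{F,q}$, since a homomorphism out of $M^{\nu(d,t)}_{F,q}$ is determined by the image of its generator. It follows that $\Theta(m_\mu h_{d,t})=0$ for all $d,t$ if and only if $\sum_\T\theta_{F,q}(m^{(d,t)}_{\U\T})\,\alpha_\T=0$ for every $(d,t,\U)$, so the subspace in question is $\ker(M_{F,q})$ and the claimed equality holds.

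Finally, specialising at $(F,q)=(k,\eta)$ and at $(F,q)=(\C,\omega)$ and invoking Corollary~\ref{Rank}, we get $\rank(M_{k,\eta})\le\rank(M_{\C,\omega})$; as the two specialisations of $M$ have the same number of columns, this gives $\corank(M_{k,\eta})\ge\corank(M_{\C,\omega})$, which is exactly the asserted inequality. (Alternatively, one could run the proof of Proposition~\ref{BiggerDim} verbatim with $\h^\mathcal{Z}$ replaced by the $q$-Schur algebra over $\mathcal{Z}$ and $A,B$ the Weyl modules $\Delta^\mu,\Delta^\la$, and then apply Theorem~\ref{Weyl}.) I expect the only real difficulty to be the bookkeeping in the first two paragraphs: checking that the semistandard basis, the coefficients produced by Proposition~\ref{CombTheorem1b}, and the identification of $\EHom(S^\mu,S^\la)$ via Theorem~\ref{hdtthm} all live over $\mathcal{Z}$ and are compatible with the homomorphisms $\theta_{F,q}$. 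Once that is granted, the rank comparison is immediate from Corollary~\ref{Rank}.
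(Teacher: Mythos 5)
Your proposal is correct and follows essentially the same route as the paper: the paper's proof of this corollary is precisely to take the matrix $M=(m_{\U\T})$ of Equation~\ref{MUT} with entries in $\mathcal{Z}$, identify $\dim(\EHom_{\h_{F,q}}(S^\mu,S^\la))$ with $\corank(M_{F,q})$ via Corollary~\ref{simple} and Theorem~\ref{SSBasis}, and then apply Corollary~\ref{Rank}. The only difference is that you spell out the base-change bookkeeping (integrality of the coefficients and compatibility of the semistandard basis with $\theta_{F,q}$) that the paper leaves implicit, and you note the alternative $q$-Schur algebra argument which the paper also mentions in passing.
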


\begin{proof}
Let $M=(m_{\U\T})$ be the matrix with entries in $\mathcal{Z}$ given by Equation~\ref{MUT}.  Then
\[\dim(\EHom_{\h_{k,\eta}(\sym_n)}(S^\mu_{k,\eta},S^\la_{k,\eta})) =\corank(M_{k,\eta}) \geq \corank(M_{\C,\omega}) = \dim(\EHom_{\h_{\C,\omega}(\sym_n)}(S^\mu_{\C,\omega},S^\la_{\C,\omega})).\] 
\end{proof}

Now fix $2 \leq e < \infty$ and let $\h=\h_{\C,\omega}(\sym_n)$ where $\omega$ is a primitive $e^{\text{th}}$ root of unity in $\C$.  If $\nu$ is a partition of $n$, let $\ell(\nu)$ be the number of non-zero parts of $\nu$.  We describe $\dim(\EHom_\h(S^\mu,S^\la))$ where $\ell(\la) \leq 2$ and $\mu_1 \geq \la_2$.   
Where $\ell(\mu) \leq 3$, please note that the homomorphism space $\EHom_{\h_{F,q}}(S^\mu,S^\la)$ has been computed for arbitary Hecke algebras of type $A$, even for partitions where $\mu_1<\la_2$~\cite{Cox,Parker}. 

\begin{proposition}[~\cite{Cox}] \label{first}
Suppose that $\la=(n)$ and that $\mu=(\mu_1,\ldots,\mu_b)$ is a partition of $n$.  Then 
\[\dim(\EHom_\h(S^\mu,S^\la)) = \begin{cases}
1, & \mu=(n), \\ 
1, & \ell(\mu) \geq 2, \,\mu_1 \equiv -1 \mod e \text{ and } \mu_i=e-1 \text{ for }2 \leq i<b,\\
0, & \text{otherwise}. 
\end{cases}\]
\end{proposition}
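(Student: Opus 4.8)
The plan is to reduce the whole statement to a single Gaussian-binomial vanishing condition, exploiting the fact that $\la=(n)$ has only one row. First, since a row-standard $(n)$-tableau of type $\mu$ is just the single row containing $\mu_i$ entries equal to $i$ for each $i$, there is a unique such tableau $\T$, and it is automatically semistandard; so by Theorem~\ref{SSBasis} the space $\EHom_\h(M^\mu,S^{(n)})$ is one-dimensional, spanned by $\Theta_\T$. Hence $\EHom_\h(S^\mu,S^{(n)})=\Psi(\mu,(n))$ has dimension $0$ or $1$, and by Corollary~\ref{simple} it is $1$ exactly when $\Theta_\T(m_\mu h_{d,t})=0$ for all $1\le d<b$ and all $1\le t\le\mu_{d+1}$.

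Next I would evaluate these. Applying Proposition~\ref{CombTheorem1b} with $\la=(n)$: the set $\mathcal S$ of row-standard tableaux obtained by changing $t$ entries equal to $d+1$ into $d$ has a single element (again the sorted row), and since $a=1$ and $\T^d_{>1}=0$ the formula degenerates to $\Theta_\T(m_\mu h_{d,t})=\gauss{\mu_d+t}{\mu_d}\,\Theta_\U(m_{\nu(d,t)})$, where $\U$ is the unique semistandard $(n)$-tableau of type $\nu(d,t)$. As $\Theta_\U(m_{\nu(d,t)})$ is a nonzero basis vector of $\EHom_\h(M^{\nu(d,t)},S^{(n)})$, we get $\Theta_\T(m_\mu h_{d,t})=0$ if and only if the Gaussian binomial $\gauss{\mu_d+t}{\mu_d}$ vanishes at $q=\omega$.

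The combinatorial heart of the argument is then the following criterion: for $m\ge k\ge 0$, the polynomial $\gauss{m}{k}$ (which equals $\prod_{j=m-k+1}^{m}(\q^j-1)/\prod_{j=1}^{k}(\q^j-1)$) vanishes at the primitive $e$-th root of unity $\omega$ if and only if $(k\bmod e)+((m-k)\bmod e)\ge e$. I would prove this by computing the multiplicity of $\Phi_e(\q)$ in $\gauss{m}{k}$: using $\q^j-1=\prod_{d\mid j}\Phi_d(\q)$ this multiplicity is $\lfloor m/e\rfloor-\lfloor(m-k)/e\rfloor-\lfloor k/e\rfloor$, which is the carry out of the units digit when $k$ and $m-k$ are added in base $e$, hence is $1$ precisely when $(k\bmod e)+((m-k)\bmod e)\ge e$ and is $0$ otherwise; since $\omega$ is a root of the irreducible $\Phi_e$, vanishing at $\omega$ is equivalent to this multiplicity being positive. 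Taking $m=\mu_d+t$ and $k=\mu_d$ this reads: $\Theta_\T(m_\mu h_{d,t})=0\iff(\mu_d\bmod e)+(t\bmod e)\ge e$.

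It remains to turn this into arithmetic on $\mu$. If some part $\mu_j$ with $2\le j\le b$ has $\mu_j\ge e$, then $d=j-1$, $t=e$ gives $(\mu_d\bmod e)+0<e$, so $\Theta_\T\notin\Psi(\mu,(n))$; thus we need $\mu_2,\dots,\mu_b\le e-1$. Granting that, for each $1\le d<b$ the values $t=1,\dots,\mu_{d+1}$ are all $<e$, so the requirement is $(\mu_d\bmod e)+t\ge e$ for all of them; the case $t=1$ forces $\mu_d\equiv-1\pmod e$, and conversely that makes $(\mu_d\bmod e)+t=(e-1)+t\ge e$ for every $t\ge1$. Hence $\Theta_\T\in\Psi(\mu,(n))$ if and only if $\mu_i\equiv-1\pmod e$ for $1\le i<b$ and $\mu_i\le e-1$ for $2\le i\le b$; since a positive integer that is $\equiv-1\pmod e$ and $<e$ equals $e-1$, this is the condition in the statement (for $b=1$, i.e.\ $\mu=(n)$, there are no constraints, and the bound $\mu_b\le e-1$ is automatic once $b\ge3$). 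I expect the only genuinely delicate point to be the Gaussian-binomial lemma: over $\C$ this is controlled by a single base-$e$ digit, so it is \emph{not} the naive reduction mod $p$ of Kummer's theorem. It is precisely the single-row shape of $\la$ that lets Proposition~\ref{CombTheorem1b} close up with no non-semistandard correction terms, which is what makes $\la=(n)$ the elementary case, as James originally observed.
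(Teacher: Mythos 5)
Your proof is correct and is essentially the paper's own algorithmic framework specialised to the one-row case. The paper attributes Proposition~\ref{first} to Cox and does not itself prove it; the methodological sketch in Section~\ref{SpaceProof} is worked out for $\ell(\mu)=3$ instead. Your chain of reductions --- Theorem~\ref{SSBasis} giving one-dimensionality of $\EHom_\h(M^\mu,S^{(n)})$, Corollary~\ref{simple} reducing factorisation through $S^\mu$ to the vanishing of each $\Theta_\T(m_\mu h_{d,t})$, and Proposition~\ref{CombTheorem1b} with $a=1$ collapsing each such condition to the vanishing of a single $\gauss{\mu_d+t}{\mu_d}$ --- is exactly the intended route, and your observation that $a=1$ means Theorem~\ref{Lemma7} is never needed is what makes $\la=(n)$ the elementary case. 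Your criterion for $\gauss{m}{k}$ to vanish at a primitive $e$-th root of unity, via the $\Phi_e$-multiplicity $\lfloor m/e\rfloor-\lfloor k/e\rfloor-\lfloor(m-k)/e\rfloor$, is equivalent to the paper's Lemma~\ref{WhenZero}, there derived from Lemma~\ref{DivZero} by cancelling $[je]$ factors; your $q$-Kummer phrasing is a legitimate and arguably cleaner alternative.

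One point your careful bookkeeping brings out: your derived conditions are $\mu_d\equiv -1\pmod e$ for $1\le d\le b-1$ together with $\mu_j\le e-1$ for $2\le j\le b$. For $b\ge 3$ the constraint on $\mu_b$ follows from $\mu_b\le\mu_{b-1}=e-1$ and your conditions reduce exactly to those printed. For $b=2$, however, the extra condition $\mu_2\le e-1$ is genuinely needed, and the literal wording of Proposition~\ref{first} omits it (the clause $\mu_i=e-1$ for $2\le i<b$ is vacuous when $b=2$). Your version is the right one: it agrees with the paper's subsequent $\ell(\mu)=2$ proposition specialised to $\la_2=0$, where $\la_1-\mu_1<e$ is precisely $\mu_2<e$; and for instance $e=3$, $\mu=(5,3)$ has $\mu_1\equiv -1\pmod 3$ yet $\gauss{8}{5}=\gauss{8}{3}=2\neq 0$ at a primitive cube root of unity, so that Hom-space is zero. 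Do not weaken your final condition to match the printed statement.
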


For the remainder of Section~\ref{Spaces}, suppose $\la$ and $\mu$ are such that $\ell(\la)=2$, $\ell(\mu)=b$ and $\mu_1 \geq \la_2$.  Since $\EHom_{\h}(S^\mu,S^\la)=\{0\}$ if $\mu_1 > \la_1$ we also assume $\mu_1 \leq \la_1$ and $b\geq 2$.  For $k \geq 1$, let $N^\mu_k=\#\{1 \leq i \leq b \mid \mu_i =k\}$.  If $m\geq 0$, let $m'$ be the integer such that $0\leq m'<e$ and $m\equiv m' \mod e$.    

\begin{proposition} \label{OneDim}We have that
\[\dim(\Hom_{\h}(S^\mu,S^\la))\leq 1.\]
\end{proposition}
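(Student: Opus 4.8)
The plan is to apply the algorithm encapsulated in Proposition~\ref{algorithm} and show that the relevant matrix $M$ has corank at most $1$. First I would check that the hypothesis of Proposition~\ref{algorithm} is satisfied. Since $\ell(\la)=2$, the condition $\bar\mu_j \geq \bar\la_{j-1}+\la_{j+1}$ is only required for $j=1$, and it reads $\mu_1 \geq \la_2$, which is precisely our standing assumption. So Proposition~\ref{algorithm} applies and $\dim(\EHom_\h(S^\mu,S^\la)) = \corank(M)$, where the columns of $M$ are indexed by $\mathcal{T}_0(\la,\mu)$ and the rows by the various $\mathcal{T}_0(\la,\nu(d,t))$. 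Note also that by Theorem~\ref{SSBasis} and the surrounding discussion, $\EHom_\h$ agrees with $\Hom_\h$ here (we may assume $\la$ is $2$-restricted, or else $e=2$ forces $\la_1=\la_2$ and a direct check; in any case $\dim \Hom_\h \le \dim \EHom_\h$ would suffice for an upper bound were it not for the $q=-1$ subtlety, which I would address separately).

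Next I would describe $\mathcal{T}_0(\la,\mu)$ concretely. A semistandard $\la$-tableau of type $\mu$ with $\la=(\la_1,\la_2)$ is determined by which entries appear in the second row: the second row is a strictly increasing sequence of length $\la_2$, say $1 \le c_1 < c_2 < \dots < c_{\la_2} \le b$, subject to the constraint that for each value $i$ the number of $i$'s placed in row $2$ does not exceed $\mu_i$ and the total number of $i$'s equals $\mu_i$ (so row $1$ absorbs the rest), together with the column-strictness which here just says the second-row entries are distinct — automatically true — and that each second-row entry lies strictly below a smaller (or equal, but column-strict forbids equal) first-row entry; since row $1$ begins with $\mu_1 \ge \la_2$ copies of $1$, any choice $c_1 \ge 2$ works for the first $\la_2$ columns, so really $\mathcal{T}_0(\la,\mu)$ is indexed by subsets of $\{2,\dots,b\}$-with-multiplicity of total size $\la_2$, i.e. by $\la_2$-element multisets drawn from the multiset of available parts. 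Parametrising these as tuples $(s_2,\dots,s_b)$ with $0 \le s_i \le \min(\mu_i,\text{col-room})$ and $\sum_{i\ge2} s_i = \la_2$, the tableau is $\S$ with $\S^i_2 = s_i$.

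Then I would analyse the row space of $M$. The key is that the rows coming from a single pair $(d,t)$ already cut down the solution space substantially, and that the full system of equations $\Theta(m_\mu h_{d,t})=0$ has a one-dimensional (or zero-dimensional) solution set. The cleanest route is an induction on $b$ or a direct linear-algebra argument: show that the equations, when written out using the explicit entries $m_{\U\T}$ from Proposition~\ref{algorithm}, can be triangularised so that all but one of the coefficients $\alpha_\S$ (in $\Theta(m_\mu) = \sum_\S \alpha_\S \Theta_\S$) are determined by the remaining one. Concretely, ordering the semistandard tableaux by, say, the dominance-type order on their second rows, one expects each equation of the form $\Theta(m_\mu h_{d,t})=0$ to express $\alpha_\S$ (for $\S$ with a $d+1$ in a ``high'' position in row $2$) in terms of $\alpha_{\S'}$ for $\S'$ obtained by lowering that entry, with a nonzero Gaussian-binomial leading coefficient; the chain of such relations connects every $\alpha_\S$ back to the single ``maximal'' tableau where row $2$ is filled as far right as possible. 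This forces $\corank(M) \le 1$.

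The main obstacle I anticipate is verifying that the relevant leading coefficients (products of Gaussian binomials and powers of $q$, as in the case $1\le d<a$ versus $a\le d<b$ of Proposition~\ref{algorithm}) are genuinely nonzero in $\C$ with $q=\omega$ a primitive $e$-th root of unity — a Gaussian binomial $\gauss{m}{k}$ vanishes exactly when a certain digit condition on the base-$e$ expansions fails, so one must choose the triangularising order carefully so that only ``small'' Gaussian binomials $\gauss{m}{k}$ with $m<e$ (hence automatically nonzero) appear as pivots. Managing this bookkeeping — essentially showing that the ``move one box at a time'' relations always have invertible leading terms — is where the real work lies; the rest is the routine combinatorial description of $\mathcal{T}_0(\la,\mu)$ and substitution into the formulas of Proposition~\ref{algorithm}. (The precise value of $\corank(M)$, i.e. whether it is $0$ or $1$ and under exactly which congruence conditions on $\mu_1$ and the $N^\mu_k$, is the content of the propositions that follow; here we only need the upper bound.)
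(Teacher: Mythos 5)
Your framework --- reduce via Proposition~\ref{algorithm} to showing $\corank(M)\leq 1$ and then triangularise the system --- is the same framework the paper uses, but the step at the heart of your argument is exactly the one you flag as ``where the real work lies'' and then leave undone, and the paper itself explicitly warns that this step cannot be dispatched uniformly: ``our proof of Proposition~\ref{OneDim} \dots{} relies on looking at cases individually; we do not know of a direct proof.'' Your claim that each relation expresses $\alpha_\S$ in terms of $\alpha_{\S'}$ ``with a nonzero Gaussian-binomial leading coefficient'' so that ``the chain of such relations connects every $\alpha_\S$ back to the single maximal tableau'' is false in general. By Lemma~\ref{WhenZero}, $\gauss{m}{k}$ vanishes exactly when $m'<k'$, and the $t=1$ chain relations (of the form $[j+1]a_j+q^{j+1}[\mu_2-j]a_{j+1}=0$ in the three-part case) genuinely break whenever both brackets vanish; Lemma~\ref{GoodLemma} records precisely this phenomenon, showing the solution can then be supported only on certain residue classes modulo $e$. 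The paper must then bring in the longer-range $t=e$ equations (as in Lemma~\ref{lemmalow}) to re-link the broken pieces, and whether those additional relations suffice, and which Gaussian binomials serve as the invertible pivots, is a delicate, $\mu$-dependent question. That bookkeeping is not ``routine'': it is the entire content of the omitted case analysis, and your proposal simply defers it.

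There is a second, independent gap. Proposition~\ref{OneDim} asserts a bound on $\Hom_\h$, but your argument via Proposition~\ref{algorithm} controls $\EHom_\h$. These coincide except when $q=-1$ and $\la$ is not $2$-restricted, i.e.\ $e=2$ and $\la_1-\la_2\geq 2$, and in that case $\EHom_\h$ may be a proper subspace of $\Hom_\h$, so $\dim\EHom_\h\leq 1$ does not give the statement. Your parenthetical remark that ``$e=2$ forces $\la_1=\la_2$'' is not correct --- $e=2$ forces nothing of the sort --- so this case is genuinely left open and requires separate treatment rather than a deferred footnote.
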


\begin{proposition}[~\cite{LM:rowhoms,Donkin:tilting}]
Suppose that $\mu_1 = \la_1$.  Let $\bar\la=(\la_2)$ and $\bar\mu=(\mu_2,\ldots,\mu_b)$.  Then   
\[\dim(\EHom_{\h}(S^\mu,S^\la)) = \dim(\EHom_{\h}(S^{\bar\mu},S^{\bar\la})).\]
\end{proposition}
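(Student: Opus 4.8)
The plan is to show that "capping off" the first row — removing the $\la_1 = \mu_1$ boxes of the first row of both partitions — induces an isomorphism between the two homomorphism spaces. Concretely, I would set up a bijection between $\mathcal{T}_0(\la,\mu)$ and $\mathcal{T}_0(\bar\la,\bar\mu)$ and check that it is compatible with the linear conditions coming from Corollary~\ref{simple}, i.e. with the matrix $M$ of Proposition~\ref{algorithm}.

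First I would analyse what a semistandard $\S \in \mathcal{T}_0(\la,\mu)$ looks like when $\mu_1 = \la_1$. Since $\S$ is row-standard of type $\mu$ and its first row has length $\la_1 = \mu_1$, a counting argument (of exactly the kind used in the proof of Lemma~\ref{abig}) forces the first row of $\S$ to be filled entirely with $1$'s: there are $\mu_1$ entries equal to $1$, they must all appear in the first row because the columns are strictly increasing, and the first row has room for exactly $\mu_1$ of them. Hence $\S$ is completely determined by its second row, which is a row-standard filling of a row of length $\la_2$ with content $\bar\mu = (\mu_2,\dots,\mu_b)$ using the symbols $2,3,\dots,b$; relabelling $i \mapsto i-1$ identifies this with a semistandard $\bar\la$-tableau of type $\bar\mu$ (semistandardness is automatic since $\bar\la$ has one row). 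This gives a content-respecting bijection $\S \leftrightarrow \bar\S$ between $\mathcal{T}_0(\la,\mu)$ and $\mathcal{T}_0(\bar\la,\bar\mu)$, and the same analysis applies to $\mathcal{T}_0(\la,\nu(d,t))$ versus $\mathcal{T}_0(\bar\la,\bar\nu(d,t))$ for each relevant $(d,t)$ — note that when $\mu_1 = \la_1$ and $d \geq 1$, the operation $\nu(d,t)$ never touches $\mu_1$, so passing to $\bar{}$ is consistent, and the relevant homomorphisms are exactly those indexed by $2 \leq d < b$.

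Next I would compare the coefficients. The cleanest route is to use Proposition~\ref{CombTheorem1b} directly rather than the reduced formulas of Proposition~\ref{algorithm}: applying $h_{d,t}$ to $\Theta_\S(m_\mu)$ changes $t$ entries equal to $d+1$ into $d$, and since for $d \geq 1$ none of these entries lies in the (all-ones) first row, the combinatorics of which tableaux appear and with which coefficients $\prod_j q^{\T^d_{>j}(\S^d_j - \T^d_j)}\gauss{\S^d_j}{\T^d_j}$ is literally unchanged when we delete the first row — every $\S^d_j$ and $\T^d_j$ occurring in the formula has $j \geq 2$ (equivalently $j \geq 1$ after relabelling rows in $\bar\la$), and the first-row contributions are all zero. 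Thus the matrix $M$ computing $\Psi(\mu,\la)$ is, up to relabelling rows and columns by the bijections above, identical to the matrix computing $\Psi(\bar\mu,\bar\la)$. By Corollary~\ref{simple} (equivalently the corank formula), $\dim \EHom_\h(S^\mu,S^\la) = \corank(M) = \dim \EHom_\h(S^{\bar\mu},S^{\bar\la})$.

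The main obstacle is verifying that the bijection on tableaux genuinely intertwines the full systems of linear equations, not just that it matches individual coefficients: one must be careful that the set of constraints indexed by pairs $(d,t)$ on the $\mu$-side corresponds bijectively to those on the $\bar\mu$-side (the constraint $(1,t)$, which involves the first row, must be shown to be either vacuous or automatically implied — indeed $h_{1,t}$ acts by changing $2$'s to $1$'s and moving them into the already-full first row, and a direct check via Proposition~\ref{CombTheorem1b} shows $\Theta_\S(m_\mu h_{1,t})$ produces tableaux that are again of the capped form, giving a constraint equivalent to one already present), and that no semistandardness subtleties arise on the intermediate tableaux. An alternative, and perhaps more conceptual, route would be to invoke the cited results of~\cite{LM:rowhoms} on row homomorphisms and of~\cite{Donkin:tilting} on the tilting/truncation functor for the $q$-Schur algebra together with Theorem~\ref{Weyl}, in which case this proposition becomes a translation statement; but the elementary tableau argument above is self-contained given Proposition~\ref{CombTheorem1b} and Corollary~\ref{simple}.
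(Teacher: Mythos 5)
Your approach matches the paper's own: after the citations, the paper remarks only that \emph{this is obvious in our setup since the matrices $M=(m_{\U\T})$ obtained in both cases are the same}, and your proposal fills in exactly what that remark summarises --- the bijection between $\mathcal{T}_0(\la,\mu)$ and $\mathcal{T}_0(\bar\la,\bar\mu)$ obtained by deleting the all-$1$'s first row, together with the observation that the coefficients in Proposition~\ref{CombTheorem1b} (and hence the entries of $M$) see only rows $\geq 2$ when $d\geq 2$.

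The one place where your mechanism is off is the treatment of the $d=1$ constraints. You say $h_{1,t}$ \emph{produces tableaux that are again of the capped form, giving a constraint equivalent to one already present}, but that is not what happens. Changing $t$ entries equal to $2$ into $1$'s gives a tableau of type $\nu(1,t)$ with $\nu(1,t)_1 = \mu_1 + t = \la_1 + t > \la_1$; hence $\nu(1,t)$ is not dominated by $\la$, so $\mathcal{T}_0(\la,\nu(1,t)) = \emptyset$, so $\EHom_\h(M^{\nu(1,t)},S^\la) = \{0\}$ by Theorem~\ref{SSBasis}, and $\Theta(m_\mu h_{1,t}) = 0$ holds automatically for every $\Theta \in \EHom_\h(M^\mu,S^\la)$. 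Equivalently, the blocks of $M$ indexed by $(1,t)$ contribute no rows at all. Your hedge \emph{either vacuous or automatically implied} names the correct outcome (vacuous), but the intermediate tableaux you invoke are not of the capped form: their extra $1$'s necessarily sit in rows below the first, which is exactly why none of them can be semistandard. Aside from this, the argument is sound.
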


In fact, this is obvious in our setup since the matrices $M=(m_{\U\T})$ obtained in both cases are the same.  

\begin{proposition}[~\cite{Cox}]
Suppose that $\ell(\mu)=2$ and $\la_1>\mu_1$.  Then $\dim(\EHom_{\h}(S^\mu,S^\la))=1$ if and only if
\begin{itemize}
\item $\la_1 - \mu_1 <e$ and $\mu_1-\la_2+1 \equiv 0$.
\end{itemize}
\end{proposition}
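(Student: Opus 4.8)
The strategy is to invoke Proposition~\ref{algorithm}. First I would check its hypotheses: here $a=\ell(\la)=2$ and $b=\ell(\mu)=2$, the required inequality $\bar\mu_j\ge\bar\la_{j-1}+\la_{j+1}$ for $1\le j<a$ reduces to the single condition $\mu_1\ge\la_2$, which is given, and $a\le b$. So $\dim(\EHom_\h(S^\mu,S^\la))=\corank(M)$. Next I would describe the relevant tableaux. Because $\la$ has two rows and $\mu$ has two parts, column-strictness forces any semistandard $\la$-tableau of type $\mu$ to have second row $2^{\la_2}$ and first row $1^{\mu_1}2^{\la_1-\mu_1}$; hence $\mathcal{T}_0(\la,\mu)=\{\T\}$ is a singleton and $M$ has a single column (this is already Proposition~\ref{OneDim}). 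For the rows only $d=1$ occurs, $\nu(1,t)=(\mu_1+t,\mu_2-t)$, and since $\mu_2-(\la_1-\mu_1)=\la_2\ge0$ one sees that $\mathcal{T}_0(\la,\nu(1,t))$ is nonempty exactly when $1\le t\le\la_1-\mu_1$, in which case it is the single tableau $\U_t$ with first row $1^{\mu_1+t}2^{\la_1-\mu_1-t}$ and second row $2^{\la_2}$; as $\la_1>\mu_1$, the value $t=1$ does occur, so $M$ is a nonempty $(\la_1-\mu_1)\times1$ matrix. One checks $\T\xrightarrow{1,t}\U_t$ and substitutes $\T^2_2=\U^2_2=\la_2$, $\T^1_1=\mu_1$, $\U^1_1=\mu_1+t$ into the formula for $m_{\U\T}$ in the case $d<a$; the sign and the $q^{-\binom{\cdot}{2}}$ factor become trivial, both products over $j$ and $i$ are empty, and one is left with
\[m_{\U_t\T}=q^{\la_2 t}\gauss{\mu_1+t-\la_2}{\mu_1-\la_2}.\]
Since $q=\omega$ is invertible, $\corank(M)=1$ exactly when all these entries vanish, that is, when $\gauss{m+t}{m}=0$ for every $1\le t\le\la_1-\mu_1$, where $m:=\mu_1-\la_2\ge0$; otherwise $\corank(M)=0$.

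It then remains to show this vanishing condition is equivalent to $\la_1-\mu_1<e$ together with $e\mid m+1$. For the direction $(\Leftarrow)$: if $e\mid m+1$ then $[m+1]=0$ in $\C$, and as $[m+1]$ is one of the factors $[m+1],\dots,[m+t]$ in the numerator of $\gauss{m+t}{t}=\gauss{m+t}{m}$ while its denominator $[t]!$ specialises to an invertible element of $\C$ whenever $t\le e-1$, we get $\gauss{m+t}{m}=0$ for all $1\le t\le\la_1-\mu_1\le e-1$. For $(\Rightarrow)$: the case $t=1$ gives $\gauss{m+1}{m}=[m+1]=0$, forcing $e\mid m+1$; and if $\la_1-\mu_1\ge e$ then $t=e$ lies in range, so it suffices to show $\gauss{m+e}{m}\ne0$ to obtain a contradiction and hence conclude $\la_1-\mu_1<e$.

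The heart of the argument is this last nonvanishing, which cannot be read off directly since both numerator and denominator of $\gauss{m+e}{m}$ vanish at $\omega$. I would obtain it from Lemma~\ref{GaussLemma}: specialising that identity at $n=e$ and $q=\omega$, the terms with $1\le\gamma\le e-1$ drop out because $\gauss{e}{\gamma}=0$, and, using $\omega^e=1$ and the elementary fact $(-1)^e\omega^{\binom e2}=-1$, one is left with the reduction $\gauss{M}{e}=\gauss{M-e}{e}+1$ for all $M\ge e$. Writing $m=se+(e-1)$ (legitimate because $e\mid m+1$ and $m\ge0$) and iterating this from $\gauss{m+e}{m}=\gauss{m+e}{e}$ down to $\gauss{e-1}{e}=0$ yields $\gauss{m+e}{m}=s+1=(m+1)/e$, a positive integer and hence nonzero in $\C$; equivalently one may quote the $q$-analogue of Lucas' theorem. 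This contradicts $\gauss{m+e}{m}=0$, so $\la_1-\mu_1<e$, which finishes the proof. Apart from this step, the only care needed is bookkeeping: correctly specialising the heavy formula of Proposition~\ref{algorithm} and pinning down exactly which values of $t$ contribute rows to $M$.
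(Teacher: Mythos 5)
Your proof is correct and follows exactly the method the paper prescribes at the end of Section~\ref{SpaceProof}: specialise Proposition~\ref{algorithm}, observe that $\mathcal{T}_0(\la,\mu)$ is a singleton (so $M$ is a column vector), compute the entries $m_{\U_t\T}=q^{\la_2 t}\gauss{\mu_1+t-\la_2}{\mu_1-\la_2}$ for $1\le t\le\la_1-\mu_1$, and translate $\corank(M)=1$ into simultaneous vanishing of these Gaussian binomials. The paper itself does not reprove this case, citing Cox, so there is no proof to compare verbatim, but your argument is precisely the instance of the paper's general strategy.

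The only place where you take a longer route than necessary is the nonvanishing $\gauss{m+e}{m}\neq 0$ when $e\mid m+1$: you derive it by specialising Lemma~\ref{GaussLemma} at $n=e$ and iterating the resulting recursion $\gauss{M}{e}=\gauss{M-e}{e}+1$. The paper supplies tools that make this immediate: since $e\mid m+1$ we have $m'=e-1$, so $(m+e)'=e-1\not<m'$ and Lemma~\ref{WhenZero} gives $\gauss{m+e}{m}\neq 0$ directly; equivalently $\gauss{m+e}{m}=\gauss{m+e}{e}$ is nonzero by Corollary~\ref{e}. Your derivation is correct, just heavier than the paper's intended one-line appeal to Lemma~\ref{WhenZero}.
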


\begin{proposition} [\cite{Parker}]
Suppose that $\ell(\mu)=3$ and $\la_1>\mu_1$.  Then $\dim(\EHom_{\h}(S^\mu,S^\la))=1$ if and only if
\begin{itemize}
\item $\mu_2=e-1$ and $\mu_1-\la_2+1 \equiv 0$ and $\la_2 \leq \mu_3$; or
\item $\mu_2+1 \equiv 0$ and  $\mu_1-\la_2+1 \equiv 0$ and $\la_2 \geq \mu_3$ and $\mu_3 \leq e-1$ and $\la_1-\mu_1<e$; or
\item $\mu_1+2 \equiv 0$ and $\mu_2 = \la_2$ and $\mu_3 \leq e-1$; or
\item $\mu_1+2 \equiv 0$ and $\mu_2 = \la_2$ and $\mu_3 \leq 2e-2$ and $(\la_2+1)'>\mu_3'$; or
\item $\mu_1+2 \equiv 0$ and $\mu_2 \neq \la_2$ and $\la_2>\mu_3$ and $\la_1-\mu_2+1 \equiv 0$ and $\mu_3 \leq e-1$ and $\la_1 - \mu_1 <e$.  
\end{itemize}
\end{proposition}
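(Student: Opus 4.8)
We may assume $\ell(\mu)=3$, since otherwise the statement is vacuous. Recall the standing assumptions $\ell(\la)=2$, $\mu_1\le\la_1$, together with the present hypotheses $\mu_1\ge\la_2$ and $\la_1>\mu_1$; write $\la=(\la_1,\la_2)$ and $\mu=(\mu_1,\mu_2,\mu_3)$, so that $a=2$ and $b=3$. The hypothesis $\mu_1\ge\la_2$ is exactly the condition of Proposition~\ref{algorithm} (which for $a=2$ asserts only $\bar\mu_1\ge\la_2$), and $a=2\le 3=b$; hence Proposition~\ref{algorithm} applies and $\dim(\EHom_\h(S^\mu,S^\la))=\corank(M)$ for the explicit matrix $M$ described there. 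Since $\EHom_\h(S^\mu,S^\la)$ is a subspace of $\Hom_\h(S^\mu,S^\la)$, Proposition~\ref{OneDim} gives $\corank(M)\in\{0,1\}$, so the whole problem reduces to deciding exactly when the columns of $M$ fail to be linearly independent, i.e.\ when they admit a (then essentially unique) linear dependence.

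The combinatorics is tractable because every tableau occurring is governed by a single parameter. Since $\la$ has two rows and only the entries $1,2,3$ appear, a tableau in $\mathcal{T}_0(\la,\mu)$ has all $\mu_1$ of its ones in the first row (a second-row entry strictly exceeds the one above it, hence is at least $2$); it is therefore determined by the number $y$ of twos in its first row, the first row then containing $\la_1-\mu_1-y$ threes and the second row consisting of $\mu_2-y$ twos followed by $\mu_3-(\la_1-\mu_1-y)$ threes. As $\mu_1\ge\la_2\ge\mu_2-y$ and $\mu_1\ge\mu_2\ge\mu_3$, no further column condition arises, so writing $\T_y$ for this tableau we have $\mathcal{T}_0(\la,\mu)=\{\T_y\mid y\in Y_\mu\}$ where $Y_\mu$ is the set of integers $y$ with $\max\{0,\,\la_1-\mu_1-\mu_3\}\le y\le\min\{\mu_2,\,\la_1-\mu_1\}$; by Theorem~\ref{SSBasis}, $\dim(\EHom_\h(M^\mu,S^\la))=|Y_\mu|$. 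An entirely analogous analysis describes each of the sets $\mathcal{T}_0(\la,\nu(1,t))$ for $1\le t\le\mu_2$ and $\mathcal{T}_0(\la,\nu(2,t))$ for $1\le t\le\mu_3$ — these being the only compositions $\nu(d,t)$ occurring, since $b=3$, with $\nu(1,t)=(\mu_1+t,\,\mu_2-t,\,\mu_3)$ and $\nu(2,t)=(\mu_1,\,\mu_2+t,\,\mu_3-t)$ — as an explicitly described one-parameter family of row-standard tableaux $\U$.

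With these parametrizations the rows of $M$ fall into two blocks: for $d=1$ (which is $<a=2$) the entries $m_{\U\T}$ are read off from the first branch of Proposition~\ref{algorithm}, and for $d=2$ (which satisfies $a=2\le d<3=b$) from the second branch. In either case $m_{\U\T}$ is a power of $q$ times a product of at most two Gaussian binomials $\gauss{\cdot}{\cdot}$, each argument a linear function of the integers indexing $\T$ and $\U$. Specialising to $q=\omega$ and invoking the standard evaluation
\[ \gauss{m_1 e+m_0}{k_1 e+k_0}\Big|_{q=\omega}\;=\;\binom{m_1}{k_1}\,\gauss{m_0}{k_0}\Big|_{q=\omega}\qquad(0\le m_0,k_0<e), \]
which over $\C$ is nonzero precisely when $0\le k_0\le m_0$ and $0\le k_1\le m_1$, one obtains an explicit description of the vanishing pattern of $M$.

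It then remains to compute $\corank(M)$, which is at most $1$. One carries out a finite case analysis driven by the vanishing pattern just recorded: the relations on a putative dependence $(\alpha_y)_{y\in Y_\mu}$ imposed by the ``one-box'' operators $h_{1,1}$ and $h_{2,1}$ already constrain $(\alpha_y)$ severely, and one then determines, case by case, for which tuples $(\la_1,\la_2,\mu_1,\mu_2,\mu_3)$ these are compatible with the relations coming from the remaining $h_{d,t}$. The configurations that survive are exactly the five bulleted families (all the displayed congruences $\equiv$ being modulo $e$): the divisibility conditions such as $\mu_1+2\equiv 0$, $\mu_1-\la_2+1\equiv 0$, $\la_1-\mu_2+1\equiv 0$ record the vanishing of the relevant $q$-integers $[j]$, while the inequalities $\la_1-\mu_1<e$, $\mu_3\le e-1$, $\mu_3\le 2e-2$, $(\la_2+1)'>\mu_3'$ and the comparisons of $\la_2$ with $\mu_3$ record how many $e$-adic blocks the affine arguments of the Gaussian binomials are allowed to straddle. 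The one genuine obstacle is this bookkeeping — organising the case split so that precisely these five families, and no others, appear. As this computation is carried out, in the equivalent language of Weyl modules for the $q$-Schur algebra (cf.\ Theorem~\ref{Weyl}), by Parker~\cite{Parker}, we give only the indication above; see the discussion at the start of Section~\ref{Spaces}.
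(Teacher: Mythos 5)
Your proposal takes essentially the same route as the paper: set up the corank problem for the matrix $M$ of Proposition~\ref{algorithm} (the hypothesis of which, for $a=2$, is exactly the standing condition $\mu_1\geq\la_2$), use Proposition~\ref{OneDim} to cap the corank at one, parameterize $\mathcal{T}_0(\la,\mu)$ by a single integer, observe that the entries of $M$ are powers of $q$ times Gaussian binomials whose vanishing is controlled $e$-adically, and then defer the finite case analysis to Parker. The paper, however, carries the illustration considerably further than you do in Section~\ref{SpaceProof}: it writes out the explicit relations $(1,t,k)$ and $(2,t,j)$ obtained by applying Proposition~\ref{CombTheorem1b} and Theorem~\ref{Lemma7} to the basis maps $\Theta_i$, proves the key structural Lemma~\ref{GoodLemma} that pins down the support of any solution vector from the degree-$1$ relations, and then completely works the case $\la_2<\mu_3$ (Lemma~\ref{lemmalow} and the following proposition), which accounts for the first of the five bullets; its recommended strategy (``use the equations $(d,1,j)$ and $(d,e,j)$ to obtain necessary conditions, then verify sufficiency'') is more concrete than your appeal to a generic $q$-Lucas evaluation. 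So while there is no gap beyond the one the paper itself openly leaves (it explicitly declines to prove Propositions~\ref{first}--\ref{last} in full), your proposal stops at the level of the framework: to match the paper you would at minimum want to display the explicit linear system in the parameter $i$ (or your $y$) and carry out one representative case as the paper does.
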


\begin{proposition}
Suppose that $\ell(\mu) \geq 4$, that $\la_1>\mu_1$ and that $\mu_3=e-1$.   Then $\dim(\EHom_{\h}(S^\la,S^\mu))=1$ if and only if $\mu_{b-1}=e-1$ and 
\begin{itemize}
\item $\mu_2=e-1$ and $\mu_1-\la_2+1 \equiv 0$; or
\item $\mu_2+1 \equiv 0$ and $\mu_1-\la_2+1 \equiv 0$ and $\la_2 \geq \mu_2$ and $\la_1< \mu_1+\mu_2$ and $\la_1-\mu_1 <e$; or
\item $\mu_1+2 \equiv 0$ and $\la_1-\mu_2+1 \equiv 0$ and $\la_2 \geq \mu_2$ and $\la_1< \mu_1+\mu_2$ and $\la_1-\mu_1 <e$; or
\item $\mu_1+2 \equiv 0$ and $\mu_2 \equiv \la_2$ and $\la_2 \geq \mu_2$ and $(\mu_2+1)'\leq \la_1-\mu_1$.  
\end{itemize}  
\end{proposition}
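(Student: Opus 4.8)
The plan is to apply Proposition~\ref{algorithm}. By the standing assumptions of this section $\ell(\la)=a=2$ and $\mu_1\ge\la_2$, which is precisely the condition $\bar\mu_j\ge\bar\la_{j-1}+\la_{j+1}$ of Proposition~\ref{algorithm} in its only relevant case $j=1$, so that
\[\dim\EHom_\h(S^\mu,S^\la)=\corank(M),\]
where $M=(m_{\U\T})$ is the explicit matrix there, with columns indexed by the semistandard $\la$-tableaux $\T$ of type $\mu$ and rows by the semistandard $\la$-tableaux $\U$ of type $\nu(d,t)$, over all $1\le d<b$ and $1\le t\le\mu_{d+1}$. By Proposition~\ref{OneDim} we already know $\corank(M)\in\{0,1\}$, so the whole problem is to decide whether the columns of $M$ satisfy a linear dependence (then necessarily unique up to scalar). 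The first observation is that, because $\la$ has only two rows, $M$ is combinatorially transparent: a semistandard $(\la_1,\la_2)$-tableau of type $\mu$ must have all $\mu_1$ of its entries equal to $1$ in the first row (column-strictness is then automatic since $\mu_1\ge\la_2$), so it is determined by the strictly increasing $\la_2$-tuple forming its second row, i.e.\ by a $\la_2$-subset of $\{2,\dots,b\}$; the same description applies to each target type $\nu(d,t)$. First I would write $M$ out explicitly in these terms.

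Next I would identify the candidate dependence. By Theorem~\ref{Weyl}, $\EHom_\h(S^\mu,S^\la)\cong\Hom_{\mathcal{S}_q}(\Delta^\mu,\Delta^\la)$, and the submodule structure of the two-row Weyl module $\Delta^\la$ is classical; this already forces the homomorphism, when it exists, to be unique up to scalar and makes its coefficients $c_\T$ in $\Theta=\sum_\T c_\T\Theta_\T$ explicit products of Gaussian binomials. The same thing can be seen directly from $M$: by Lemma~\ref{abig} the equations $\Theta(m_\mu h_{d,t})=0$ with $2\le d<b$ are, in a suitable ordering of the semistandard tableaux of each type, triangular, and one checks they already pin $\Theta$ down up to a scalar. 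Substituting this $\Theta$ into the remaining equations — those with $d=1$, and the larger values of $t$ — and simplifying by repeated use of Proposition~\ref{CombTheorem1b}, Theorem~\ref{Lemma7} and Lemma~\ref{GaussLemma}, should collapse everything to a single scalar obstruction $\Xi$, an alternating sum of products of Gaussian binomials, with the property that $\corank(M)=1$ if and only if $\Xi$ vanishes at the $e$-th root of unity $\omega$.

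It then remains to evaluate $\Xi$ at $\omega$. Here the hypothesis $\mu_3=e-1$ is used: since $\mu$ is a partition it forces $\mu_b\le\mu_{b-1}\le\dots\le\mu_3=e-1$, and I would first show that $\Xi$ is forced to be nonzero — so that $\corank(M)=0$ — unless in fact $\mu_3=\mu_4=\dots=\mu_{b-1}=e-1$, i.e.\ unless $\mu_{b-1}=e-1$; this accounts for the first clause of the statement. Once $\mu=(\mu_1,\mu_2,(e-1)^{b-3},\mu_b)$, the computation of $\Xi$ becomes essentially independent of $b$ — alternatively one can reduce to the case $\ell(\mu)=3$ treated above by peeling off the middle rows of length $e-1$ one at a time — and the vanishing condition can be unpacked. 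For this one applies the $q$-analogue of Lucas' theorem, $\gauss{m}{k}_\omega=\binom{\lfloor m/e\rfloor}{\lfloor k/e\rfloor}\gauss{m'}{k'}_\omega$ with $m',k'$ the residues modulo $e$, to each factor of $\Xi$, separates the integer-part binomial factors (which, over a field of characteristic zero, vanish for purely arithmetic reasons) from the genuinely $\omega$-dependent factors, and reads off when the product can vanish. The resulting finite case split — on whether $\la_1-\mu_1<e$, whether $\la_1<\mu_1+\mu_2$, whether $\la_2\ge\mu_2$, and on the residues of $\mu_1$, $\mu_2$ and $\la_1$ — should produce exactly the four alternatives listed in the statement.

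The hard part will be this last bookkeeping. Throughout the product making up $\Xi$ one must keep track of which Gaussian binomials contribute a nonzero reduced factor $\gauss{m'}{k'}_\omega$ and which contribute an ordinary binomial $\binom{\lfloor m/e\rfloor}{\lfloor k/e\rfloor}$, and then show that $\Xi$ vanishes at $\omega$ exactly when one of these factors does; the four listed conditions are the (nearly) mutually exclusive ways this can happen. The genuine work is in verifying completeness — that no case has been omitted and none is spurious — and in justifying, uniformly in $b$, the step that removes the middle rows of length $e-1$. Since this is a long but essentially mechanical computation built from Proposition~\ref{algorithm}, I would, as the paper does for its other explicit propositions, present it only in outline, checking the small values of $b$ and of the parameters against a direct computation of $\corank(M)$.
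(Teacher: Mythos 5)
The paper never actually proves this proposition: at the end of Section~\ref{Spaces} it states explicitly that the proofs of Propositions~\ref{first}--\ref{last} are a long, mechanical case analysis which is omitted, and Section~\ref{SpaceProof} only illustrates the general method by working the $\ell(\mu)=3$, $\la_2<\mu_3$ case to completion. Your plan is consistent with that method in outline — set up the linear system via Proposition~\ref{algorithm}, use the two-row structure to index unknowns by second-row content, and evaluate Gaussian binomials at $\omega$ — but it introduces machinery the paper does not use: the full $q$-Lucas theorem (the paper only has the weaker Lemmas~\ref{DivZero} and~\ref{WhenZero}), the submodule structure of two-row Weyl modules via Theorem~\ref{Weyl}, and, most distinctively, a proposed reduction of $\ell(\mu)\geq4$ to $\ell(\mu)=3$ by peeling off middle rows of length $e-1$ one at a time. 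That last idea is attractive and would, if justified, replace much of the case split, but nothing in your proposal or the paper establishes it; the paper's stated strategy is instead to extract necessary conditions from the equations $(d,1,j)$ and $(d,e,j)$ directly, as in Lemma~\ref{GoodLemma}. There is also a small combinatorial slip: with $\mu_1\geq\la_2$ a semistandard $(\la_1,\la_2)$-tableau of type $\mu$ is determined by its second row, but that row is a \emph{weakly} increasing word in $\{2,\dots,b\}$ with letter $i$ repeated at most $\mu_i$ times, i.e.\ a multiset, not a strictly increasing $\la_2$-subset; and your assertion that the $d\geq 2$ equations are triangular is left unargued. Since neither you nor the paper carries out the final bookkeeping, the four conditions in the statement remain unchecked in both accounts; your sketch is a plausible skeleton but does not yet amount to a proof.
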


Now say that the partition $\mu$ has a good shape if it satisfies the following properties.

\begin{itemize}
\renewcommand{\labelitemi}{$-$}
\item $\ell(\mu) \geq 4$; and
\item $\mu_1 +2 \equiv \mu_2+2 \equiv 0 \mod e$; and 
\item $\mu_3 \leq 2e-2$; and 
\item $\#\{3 \leq i \leq b \mid \mu_i \neq 2e-2,e-1\} \leq 2$; and 
\item If $N^\mu_{e-1} >0$ then $\#\{3 \leq i \leq b \mid e-1>\mu_i\}\leq 1$ and 
$\#\{3 \leq i \leq b \mid 2e-2>\mu_i>e-1\} \leq 1$. 
\end{itemize}
If $\mu$ has a good shape define $\mu^\ast$ to be the partition given by
\[\mu^\ast = (\mu_1 + (N^\mu_{e-1}+1)(e-1),\mu_2+(N^\mu_{e-1}-1)(e-1)).\]
Let \[\alpha = \min\{3 \leq i \leq b \mid \mu_i \neq 2e-2,e-1\},\] with $\alpha=b+1$ if no such integer exists and let \[\beta = \min\{\alpha < i \leq b \mid \mu_i \neq 2e-2,e-1\},\] with $\beta=b+1$ if no such integer exists.  (We assume $\mu_{b+1}=0$.)  If $\sigma$ and $\tau$ are two compositions, define the composition $\sigma+\tau$ by $(\sigma+\tau)_i=\sigma_i+\tau_i$ for all $i$.    

\begin{proposition}
Suppose that $\la_1>\mu_1$, that $\mu$ has a good shape and that $N^\mu_{e-1}=0$.      
Set
\begin{align*}
\mu^{(1)}&=\mu^\ast+(\mu_\beta,\mu_\alpha), \\
\mu^{(2)}&=\mu^\ast+(\mu_\alpha-e+1,\mu_\beta+(e-1)).
\end{align*}
Then we have the following homomorphisms.  
\begin{itemize}
\item Suppose that $e-1<\mu_\beta$ or $\mu_\alpha<e-1$. Then $\EHom_{\h}(S^\mu,S^\la)=1$ if and only if $\la=\mu^{(1)}$.
\item Suppose that $0<\mu_\beta<e-1<\mu_\alpha$.  Then $\EHom_{\h}(S^\mu,S^\la)=1$ if and only $\la=\mu^{(1)}$ or $\la=\mu^{(2)}$.  
\item Suppose that $\mu_\beta=0$ and $e-1<\mu_\alpha$.  Then $\EHom_{\h}(S^\mu,S^\la)=1$ if and only $\la=\mu^{(2)}$.  
\end{itemize}
\end{proposition}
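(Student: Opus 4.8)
The plan is to apply Proposition~\ref{algorithm}. Since $\ell(\la)=2$, the only instance of the hypothesis $\bar\mu_j\ge\bar\la_{j-1}+\la_{j+1}$ ($1\le j<a$) needed there is $j=1$, which reads $\mu_1\ge\la_2$, exactly what we are assuming; so $\dim(\EHom_\h(S^\mu,S^\la))=\corank(M)$ for the explicit matrix $M=(m_{\U\T})$ of that proposition, and by Proposition~\ref{OneDim} this corank is $0$ or $1$. The problem therefore reduces to deciding, for each $\la$, whether the system $Mc=0$ (equivalently $\Psi(\mu,\la)\ne\{0\}$) has a non-zero solution $c=(c_\T)_{\T\in\mathcal{T}_0(\la,\mu)}$. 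To prune the candidates for $\la$, note that a non-zero element of $\EHom_\h(M^\mu,S^\la)$ forces $\la\unrhd\mu$; combined with $\ell(\la)=2$, $|\la|=n$, $\la_1>\mu_1$ and $\mu_1\ge\la_2$ this leaves a finite one-parameter family, and I would check that $\mu^{(1)}$ and, in the ranges where the statement asserts it, $\mu^{(2)}$ lie in this family (in the degenerate cases, where these are not partitions of $n$ dominating $\mu$, the assertion is simply that $\EHom=0$).

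Next I would put the system into convenient coordinates. The decisive simplification is that $\la$, and every type $\nu(d,t)$, has first part at least $\la_2$; hence in a semistandard tableau of shape $\la=(\la_1,\la_2)$ of such a type the entire second row sits beneath entries equal to $1$, column-strictness is automatic, and the tableau is determined by the multiset of its second row. Recording $\S\in\mathcal{T}_0(\la,\sigma)$ by the vector $x(\S)=(x_2,x_3,\dots)$, where $x_i$ counts the entries equal to $i$ in the second row of $\S$, the index sets $\mathcal{T}_0(\la,\mu)$ and $\mathcal{T}_0(\la,\nu(d,t))$ become sets of bounded compositions $\sum_{i\ge2}x_i=\la_2$ with $0\le x_i\le\sigma_i$, and the entries $m_{\U\T}$ are read off from the product formula in Proposition~\ref{algorithm}.

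Now one analyses $Mc=0$ using the good shape of $\mu$, whose parts split into a periodic piece ($\mu_1,\mu_2$, both $\equiv-2\bmod e$, together with the blocks of $2e-2$'s) and the two exceptional parts $\mu_\alpha,\mu_\beta$. I expect that the equations $\Theta(m_\mu h_{d,t})=0$ with $d$ inside the periodic piece --- where the relevant $q$-integers are non-zero --- force $c$ up to an overall scalar to be a prescribed product of Gaussian binomials in $x(\T)$, in a manner parallel to row-removal for Weyl modules; this reduces the problem to the finitely many remaining equations, indexed by the pairs $(d,t)$ with $d$ adjacent to positions $\alpha$ and $\beta$ and $1\le t\le\mu_{d+1}$. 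In those equations the non-semistandard intermediate tableaux produced by Proposition~\ref{CombTheorem1b} are rewritten via Theorem~\ref{Lemma7}, and the resulting alternating sums of products of Gaussian binomials are collapsed using Lemma~\ref{GaussSum} and Lemma~\ref{GaussLemma}. One then finds the remaining equations are simultaneously solvable exactly when certain explicit $q$-integers vanish, i.e. when $e$ divides certain linear combinations of $\mu_1,\mu_\alpha,\mu_\beta,\la_2$; translating these divisibilities into the position of $\mu_\alpha$ and $\mu_\beta$ relative to $e-1$ gives precisely the stated trichotomy, with $\la=\mu^{(1)}$ occurring unless $\mu_\beta=0<e-1<\mu_\alpha$, and $\la=\mu^{(2)}$ occurring exactly when $\mu_\beta<e-1<\mu_\alpha$.

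The main obstacle is the bookkeeping in this last step: solving the system uniformly across all admissible pairs $(d,t)$ and collapsing the nested Gaussian-binomial sums without the indices multiplying out of control. This is exactly why, as announced in Section~\ref{Spaces}, only an indication of the computation is given here; the conceptual input is entirely contained in Propositions~\ref{CombTheorem1b} and~\ref{algorithm} and Theorem~\ref{Lemma7}, and what remains is a long but routine verification.
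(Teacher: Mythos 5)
Your proposal mirrors the paper's approach exactly: reduce to the corank of the matrix $M$ from Proposition~\ref{algorithm} (whose hypothesis degenerates to $\mu_1\geq\la_2$ when $\ell(\la)=2$), cap the answer at $1$ via Proposition~\ref{OneDim}, and then decide solvability of the resulting homogeneous system using Proposition~\ref{CombTheorem1b}, Theorem~\ref{Lemma7} and the $q$-binomial identities assembled in Section~\ref{SpaceProof}. The paper itself gives no proof of this proposition beyond such an indication --- it explicitly declines to write out the case analysis for the good-shape propositions, offering only a three-part worked example --- so your level of detail is commensurate with the source, and the part you defer (the explicit equations at $d$ near the exceptional positions $\alpha,\beta$ and the verification that the stated divisibility conditions are both necessary and sufficient) is precisely what the paper also omits.
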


\begin{proposition}
Suppose that $\la_1>\mu_1$, that $\mu$ has a good shape and that $N^\mu_{e-1}\geq 1$.  Suppose $k,m \geq 0$ are such that
\begin{align*}
\mu_\beta+(N^\mu_{e-1}+1)(e-1)& \geq \mu_\alpha+ke, \\ \mu_\alpha+(N^\mu_{e-1}-1)(e-1) &\geq me.
\end{align*}
Set
\begin{align*}
\mu^{(1)}&=\mu^\ast+(\mu_\beta,\mu_\alpha+N^\mu_{e-1}(e-1)), \\
\mu^{(2)}&=\mu^\ast+(\mu_\alpha-e+1,\mu_\beta+(N^\mu_{e-1}+1)(e-1)), \\
\mu^{(3,m)}&=\mu^\ast+(\mu_\alpha+(N^\mu_{e-1}-1)(e-1)-me,me+e-1), \\
\mu^{(4,k)}&=\mu^\ast+(\mu_\beta+N^\mu_{e-1}(e-1)-ke,\mu_\alpha+ke).
\end{align*}
Then we have the following homomorphisms.  
\begin{itemize}
\item If $0=\mu_\beta \leq \mu_\alpha<e-1$ then $\EHom_{\h}(S^\mu,S^\la)=1$ if and only $\la = \mu^{(1)}$ or $\la=\mu^{(3,m)}$ for some $m$ as above.
\item If $\mu_\beta<e-1<\mu_\alpha$ then $\EHom_{\h}(S^\mu,S^\la)=1$ if and only $\la=\mu^{(2)}$ or $\la=\la^{(4,k)}$ for some $k$ as above.      
\end{itemize}    
\end{proposition}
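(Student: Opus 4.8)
The plan is to read the proposition off Proposition~\ref{algorithm}. Since $\ell(\la)=2$, the only instance of the hypothesis $\bar\mu_j\ge\bar\la_{j-1}+\la_{j+1}$ needed in that proposition is $j=1$, which reads $\mu_1\ge\la_2$ and holds by assumption. Hence $\dim(\EHom_\h(S^\mu,S^\la))=\corank(M)$ for the explicit matrix $M=(m_{\U\T})$ of Proposition~\ref{algorithm}, and Proposition~\ref{OneDim} tells us this corank is $0$ or $1$. So the problem is purely one of linear algebra: decide for which two-part partitions $\la$ with $\la_1>\mu_1\ge\la_2$ the matrix $M$ fails to have full column rank, and in those cases produce a generator of $\ker M$ --- equivalently, of $\Psi(\mu,\la)$, which has the same dimension as $\EHom_\h(S^\mu,S^\la)$.

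First I would fix the combinatorics. Write $N=N^\mu_{e-1}$. A semistandard $\la$-tableau of type $\sigma$ (for $\la=(\la_1,\la_2)$) is determined by its second row, and is recorded by the vector $(\S^i_2)_{i\ge2}$ subject to $\S^i_2\le\sigma_i$, $\sum_i\S^i_2=\la_2$, and the column-strictness inequalities. Because $a=2$, the index $d$ of Proposition~\ref{algorithm} runs over $d=1$ (the case $1\le d<a$) and $d=2,\dots,b-1$ (the case $a\le d<b$); in both cases $\T^i_{>2}=0$, so each $m_{\U\T}$ collapses to a short product of Gaussian binomials $\gauss{\U^i_j}{\T^i_j}$ with easily described powers of $q$. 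Over $\C$ one has the familiar criterion that $\gauss{m}{k}\ne0$ precisely when $m\ge k\ge0$ and $m\bmod e\ge k\bmod e$; in particular $\gauss{e-1}{k}$ never vanishes in range, while $\gauss{2e-2}{k}=0$ exactly when $k=e-1$. Combined with $\mu_1\equiv\mu_2\equiv-2\pmod e$ and $\mu_3\le2e-2$ this is what pins everything down, and it is also why $|\la|=n$ forces $\#\{i:\mu_i=2e-2\}=N$, so that $\mu^{(1)},\mu^{(2)},\mu^{(3,m)},\mu^{(4,k)}$ are the only candidates that are even partitions of $n$.

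The substance is then twofold. For the ``if'' direction, for each of $\mu^{(1)},\mu^{(2)},\mu^{(3,m)},\mu^{(4,k)}$ I would exhibit the candidate vector $c=(c_\T)_{\T\in\mathcal{T}_0(\la,\mu)}$ and check $\sum_\T c_\T m_{\U\T}=0$ for every relevant $\U$: the parts of $\mu$ equal to $2e-2$ must be split across the two rows of $\T$ avoiding the forbidden ``middle'' split, so only the parts equal to $e-1$ are free to move, and the inequalities $\mu_\beta+(N+1)(e-1)\ge\mu_\alpha+ke$ and $\mu_\alpha+(N-1)(e-1)\ge me$ are exactly the conditions for the corresponding rearrangement of the $e-1$-block to stay column-strict; this is where the one-parameter families $\mu^{(3,m)}$ and $\mu^{(4,k)}$ come from. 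After simplification each sum $\sum_\T c_\T m_{\U\T}$ is an alternating sum of products of Gaussian binomials, and Lemma~\ref{GaussLemma} (together with Lemma~\ref{GaussSum}) is precisely the telescoping identity that forces it to vanish. For the ``only if'' direction I would show that for every other two-part $\la$ the matrix $M$ has full column rank, by ordering $\mathcal{T}_0(\la,\mu)$ by dominance of second rows and exhibiting a square submatrix that is triangular with non-zero diagonal --- the diagonal entries again being products of non-vanishing Gaussian binomials by the criterion above.

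I expect the main obstacle to be organising the case analysis rather than any single estimate. One must separate whether $\mu$ genuinely has two exceptional parts ($\mu_\beta>0$) or one ($\mu_\beta=0$), and whether $\mu_\alpha<e-1$ or $\mu_\alpha>e-1$, track which of $\mu^{(1)},\mu^{(2)},\mu^{(3,m)},\mu^{(4,k)}$ survives in each sub-case, deal with the overlap of the two one-parameter families and the boundary values of $k$ and $m$, and simultaneously rule out all remaining $\la$; this bookkeeping, rather than anything conceptual, is why (as the text explains) we content ourselves with an indication of the proof. A cleaner route I would also try is an induction that peels one part equal to $e-1$ off $\mu$ at a time, reducing to the $N^\mu_{e-1}=0$ case already treated; this is plausible because, just as in the $\mu_1=\la_1$ reduction noted earlier, deleting such a part changes $M$ only in a controlled, rank-preserving way.
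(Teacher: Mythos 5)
Your proposal takes essentially the paper's own route: reduce to the corank of the matrix of Proposition~\ref{algorithm} (equivalently the linear system $\Theta(m_\mu h_{d,t})=0$), use Proposition~\ref{OneDim} for the upper bound, and settle the cases with the Gaussian-binomial tools of Section~\ref{SpaceProof} (Lemmas~\ref{WhenZero}, \ref{Repeat} and \ref{NotRepeat}), exhibiting an explicit solution when $\la$ is one of the listed partitions and showing the system has only the trivial solution otherwise. Since the paper deliberately gives only this indication of the case-by-case argument for Propositions~\ref{first} to~\ref{last}, your sketch matches it in both method and level of detail.
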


\begin{proposition} \label{last}
Suppose that $\ell(\mu) \geq 4$, that $\la_1 > \mu_1$, that $\mu_3 \neq e-1$ and that $\mu$ does not have a good shape.  Then
$\dim(\EHom_{\h}(S^\mu,S^\la))=0$.
\end{proposition}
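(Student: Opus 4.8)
The plan is to use Proposition~\ref{algorithm}. Since $\la_1>\mu_1\geq\la_2$ the hypothesis $\bar\mu_1\geq\bar\la_0+\la_2$ of that proposition holds, so $\dim\EHom_\h(S^\mu,S^\la)=\corank(M)$ for the explicit matrix $M=(m_{\U\T})$ whose columns are indexed by $\mathcal{T}_0(\la,\mu)$. Moreover Proposition~\ref{OneDim} already gives $\corank(M)\leq\dim\Hom_\h(S^\mu,S^\la)\leq1$, so it is enough to rule out the one-dimensional case; equivalently, to show that if $\Theta=\sum_{\T\in\mathcal{T}_0(\la,\mu)}c_\T\Theta_\T$ lies in $\Psi(\mu,\la)$ then every $c_\T=0$.

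Two preliminary reductions make this tractable. First, because $\ell(\la)=2$ and $\mu_1\geq\la_2$, a semistandard $\la$-tableau of type $\mu$ is freely determined by its second row, whose $\la_2\leq\mu_1$ entries all sit beneath $1$'s so that column-strictness is automatic; thus $\mathcal{T}_0(\la,\mu)$ is in bijection with the vectors $(\T^2_2,\dots,\T^b_2)$ with $0\leq\T^i_2\leq\mu_i$ and $\sum_{i\geq2}\T^i_2=\la_2$, and the same holds for each intermediate type $\nu(d,t)$. This renders all entries $m_{\U\T}$ of Proposition~\ref{algorithm} fully explicit. Second, since we are over $\C$ with $q=\omega$ a primitive $e$-th root of unity, every power of $q$ occurring is a unit, so (for $\T\xrightarrow{d,t}\U$) $m_{\U\T}\neq0$ exactly when each of its Gaussian-binomial factors is nonzero; and by the $q$-analogue of Lucas's theorem a factor $\gauss{m}{k}$ with $0\leq k\leq m$ is nonzero at $q=\omega$ precisely when the base-$e$ units digit of $k$ does not exceed that of $m$. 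I would isolate this last fact as a one-line lemma, since it is the only arithmetic input.

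With these in hand the argument becomes a downward induction showing $c_\T=0$ for all $\T$, organised according to which clause in the definition of ``good shape'' fails; since the hypotheses supply $\ell(\mu)\geq4$ and $\mu_3\neq e-1$, the violated clause is one of: $\mu_1+2\not\equiv0\pmod e$, $\mu_2+2\not\equiv0\pmod e$, $\mu_3>2e-2$, there being at least three parts $\mu_i$ with $3\leq i\leq b$ that lie outside $\{e-1,2e-2\}$, or the final clause concerning $N^\mu_{e-1}$. For each $\T$ one selects $d$, $t$ and a target $\U\in\mathcal{T}_0(\la,\nu(d,t))$ so that the linear equation $\sum_{\T'}m_{\U\T'}c_{\T'}=0$ coming from $\Theta(m_\mu h_{d,t})=0$ has $m_{\U\T}\neq0$ while every other contributing $\T'$ is strictly earlier in a suitable order on the parametrising vectors (or has already been shown to vanish); the induction then forces $c_\T=0$. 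The relevant moves are $h_{1,t}$ for the conditions on $\mu_1$ and $\mu_2$ (whose coefficients, by Proposition~\ref{algorithm}, involve Gaussian binomials with top entries built from $\mu_1$ and $\mu_2$) and the higher moves $h_{d,t}$ with $d\geq2$ for the conditions on $\mu_3$ and on the counts of parts (these, by Lemma~\ref{abig}, move mass one step between adjacent values of the row-$2$ vector, and the residue criterion guarantees a usable nonzero pivot precisely when the good-shape inequalities fail); the final clause is handled by composing two such moves. Throughout one watches the boundary cases where $\mu_1+t>\la_1$, in which $\mathcal{T}_0(\la,\nu(1,t))=\emptyset$ and the corresponding equation is vacuous; the assumption $\la_1>\mu_1$ ensures enough non-vacuous equations survive to carry the induction.

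The principal obstacle is exactly this bookkeeping: one must choose, uniformly in $\T$ and across all five failure modes, both the order on tableaux and the sequence of moves $h_{d,t}$ that triangularises $M$, and then verify the non-vanishing of the pivot entries at $q=\omega$ by checking that no Gaussian binomial slips into the range where the residue criterion makes it zero. These verifications are routine but lengthy — of the same nature as, though more involved than, the computations proving Lemma~\ref{abig} and the lemma following it, and reproducible mechanically from Proposition~\ref{algorithm} — which is why only an indication is given here.
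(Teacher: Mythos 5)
Your proposal takes essentially the same route as the paper: reduce to solving the linear system $\Theta(m_\mu h_{d,t})=0$ via Proposition~\ref{algorithm}, parametrize $\mathcal{T}_0(\la,\mu)$ by the second-row composition, and exploit the root-of-unity vanishing criterion for Gaussian binomials (your $q$-Lucas observation is precisely Lemma~\ref{WhenZero}). The paper itself gives no proof of this proposition, only an illustration of the method on the $\ell(\mu)=3$ case in Section~\ref{SpaceProof}, so both you and the paper stop at an indication rather than carrying out the lengthy case-by-case verification.
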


Combining Propositions~\ref{first} to~\ref{last} above completely classifies the homomorphism space $\EHom_{\h}(S^\mu,S^\la)$ where $\h=\h_{\C,q}(\sym_n)$, $\ell(\la)\leq 2$ and $\mu_1 \geq \la_2$.    

Our proof of these results is obtained via case-by-case analysis; we are doing nothing more than solving systems of homogeneous linear equations.  Unfortunately, there are many cases to check and the resulting computations are repetitive and formulaic.  We do not, therefore, propose to prove all the propositions in this paper.  In Section~\ref{SpaceProof}, we highlight the methods used and illustrate them with some examples.       

The computations that helped lead us to these results were carried out using GAP~\cite{GAP}.

\section{Proofs} \label{Proofs}
In this section, we give the proofs of Theorem~\ref{hdtthm} and Theorem~\ref{Lemma7} and indicate the proof of Propositions~\ref{first} to~\ref{last}.  For obvious reasons, this section is more technical than those preceeding it.  

\subsection{Proof of Theorem~\ref{hdtthm}} \label{hdtproof}
Let $\h=\h_{R,q}(\sym_n)$ and fix a partition $\mu=(\mu_1,\ldots,\mu_b) \vdash n$.  For $1 \leq d \leq b-1$ and $1 \leq t \leq \mu_{d+1}$, recall that 
\[h_{d,t} = \CC(\bar{\mu}_{d-1}; \mu_d,t),\]
and that $\mathcal{I}$ is the right ideal of $\h$ generated by
\[\{m_\mu h_{d,t} \mid 1 \leq d \leq b-1, 1 \leq t \leq \mu_{d+1}\}.\]
We prove Theorem~\ref{hdtthm}, that is, that
\[\mathcal{I} = M^\mu \cap \hmu.\]

Let $\mathcal{D}_\mu$ be a set of minimal length right coset representatives for $\sym_\mu$ in $\sym_n$ and recall~\cite[Propn.~3.3]{M:ULect} that $\mathcal{D}_\mu = \{ d \in \sym_n \mid \ft^\mu d \in \rowstd(\mu)\}$.  

\begin{lemma} [{~\cite[Cor.~3.4]{M:ULect}}] 
The module $M^\mu$ is a free $R$-module with basis
\[\{m_\mu T_{d(\ft)} \mid \ft \in \rowstd(\mu)\},\]
with the action of $\h$ determined by
\[m_\mu T_{d(\ft)} T_{i} = \begin{cases}
q m_{\mu} T_{d(\ft)}, & i,i+1 \text{ lie in the same row of }\ft, \\
m_\mu T_{d(\fs)}, & i\text{ lies above } i+1 \text{ in } \ft, \\
q m_\mu T_{d(\fs)} + (q-1)m_\mu T_{d(\ft)}, & i\text{ lies below } i+1 \text{ in } \ft, \\
\end{cases}\]
where $\fs=\ft(i,i+1)$.  

\end{lemma}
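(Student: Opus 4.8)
The plan is to combine two standard ingredients. The first is the theory of minimal length right coset representatives: $\mathcal{D}_\mu$ is a transversal of $\sym_\mu\backslash\symn$, every $w\in\symn$ factors uniquely as $w=ud$ with $u\in\sym_\mu$, $d\in\mathcal{D}_\mu$ and $\ell(w)=\ell(u)+\ell(d)$, and $\mathcal{D}_\mu=\{d(\ft)\mid\ft\in\rowstd(\mu)\}$ (this last being the fact recalled just before the lemma). The second is the multiplication rule for the $T$-basis of $\h$: writing $s_i=(i,i+1)$, one has $T_wT_i=T_{ws_i}$ if $\ell(ws_i)>\ell(w)$ and $T_wT_i=qT_{ws_i}+(q-1)T_w$ if $\ell(ws_i)<\ell(w)$. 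I shall also use the easy identity $m_\mu T_{s_j}=qm_\mu$ for every simple reflection $s_j\in\sym_\mu$, obtained by pairing $w$ with $ws_j$ in $m_\mu=\sum_{w\in\sym_\mu}T_w$; inductively this gives $m_\mu T_u=q^{\ell(u)}m_\mu$ for all $u\in\sym_\mu$.

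For the basis statement, factor $w=ud$ as above: then $T_w=T_uT_d$, so $m_\mu T_w=q^{\ell(u)}m_\mu T_d$, and hence $M^\mu=m_\mu\h$ is spanned by $\{m_\mu T_d\mid d\in\mathcal{D}_\mu\}$. For linear independence, length-additivity gives $m_\mu T_d=\sum_{u\in\sym_\mu}T_{ud}$, so for distinct $d$ these elements are supported, with unit coefficients, on the pairwise disjoint coset-blocks of the $T$-basis. Combining this with $\mathcal{D}_\mu=\{d(\ft)\mid\ft\in\rowstd(\mu)\}$ identifies the basis with $\{m_\mu T_{d(\ft)}\mid\ft\in\rowstd(\mu)\}$.

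For the action, fix $\ft\in\rowstd(\mu)$, set $d=d(\ft)$, and analyse $m_\mu T_dT_i$ according to the relative positions of $i$ and $i+1$ in $\ft$; in each case the key step is to identify $ds_i$ and to decide whether it is longer or shorter than $d$. If $i$ and $i+1$ lie in the same row of $\ft$ they occupy adjacent nodes, say $(r,c)$ and $(r,c+1)$, and in $\ft^\mu$ these nodes carry consecutive entries $a,a+1$ of a single row, so $s_a\in\sym_\mu$; a direct computation with the permutation $d=\ft^\mu\circ\ft^{-1}$ gives $ds_i=s_ad$, whence $\ell(ds_i)=1+\ell(d)$ and $m_\mu T_dT_i=m_\mu T_{s_a}T_d=qm_\mu T_d$. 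If instead $i$ and $i+1$ lie in different rows then $\fs:=\ft s_i$ is again row-standard (interchanging two values with nothing strictly between them cannot reverse the order within any row), so $ds_i=d(\fs)\in\mathcal{D}_\mu$; feeding the length formula $\ell(d(\ft))=\#\{\,k<l\mid\row_\ft(l)<\row_\ft(k)\,\}$ through the swap of $i$ and $i+1$ shows that only the pair $(i,i+1)$ contributes a change, so $\ell(ds_i)=\ell(d)+1$ when $i$ is above $i+1$ and $\ell(ds_i)=\ell(d)-1$ when $i$ is below $i+1$. Substituting into the $T$-multiplication rule yields $m_\mu T_dT_i=m_\mu T_{d(\fs)}$ in the first subcase and $m_\mu T_dT_i=qm_\mu T_{d(\fs)}+(q-1)m_\mu T_{d(\ft)}$ in the second, which is the assertion of the lemma.

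The main obstacle is the bookkeeping in the last paragraph: pinning down the sign of $\ell(ds_i)-\ell(d)$ in the two ``different rows'' subcases and verifying the identity $ds_i=s_ad$ in the ``same row'' subcase. Both become routine once the combinatorial length formula for $d(\ft)$ is in place, after which each case collapses to a single indicator-function comparison; everything else is formal manipulation with the defining relations of $\h$ together with standard facts about $\mathcal{D}_\mu$. (The full details are in \cite{M:ULect}.)
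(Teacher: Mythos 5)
Your proof is correct, but note that the paper does not prove this lemma at all: it is quoted verbatim from Mathas's book (the citation \cite[Cor.~3.4]{M:ULect} in the lemma's header), so there is no in-paper argument to compare against. What you have written is essentially the standard proof one finds in that reference: the coset decomposition $w=ud$ with $u\in\sym_\mu$, $d\in\mathcal{D}_\mu$ together with $m_\mu T_u=q^{\ell(u)}m_\mu$ gives spanning; the expansion $m_\mu T_d=\sum_{u\in\sym_\mu}T_{ud}$ onto pairwise disjoint coset blocks of the $T$-basis gives linear independence; and the three cases of the action follow from identifying $d(\ft)s_i$ and comparing lengths via the inversion formula. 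Two small points worth tightening if you were to write this up formally: (i) in the ``same row'' case you should make explicit that row-standardness forces $i$ and $i+1$ to occupy \emph{horizontally adjacent} nodes, since no integer lies strictly between them, which is what makes $s_a\in\sym_\mu$; (ii) the claim that ``only the pair $(i,i+1)$ contributes a change'' to $\ell(d(\ft))$ is true in aggregate but not pair-by-pair --- for $k<i$ the individual contributions of $(k,i)$ and $(k,i+1)$ swap under $s_i$, so it is their \emph{sum} that is invariant; the net effect is still that $\ell(d(\fs))-\ell(d(\ft))=\pm1$ according to whether $i$ lies above or below $i+1$, so your conclusion stands.
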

If $v \in \sym_\mu$ and $d \in \mathcal{D}_\mu$ then $\ell(vd)=\ell(v)+\ell(d)$ and $T_{vd}=T_vT_d$.    
Therefore if $w \in \sym_n$ then $w=vd$ for some $v \in \sym_{\mu}$ and $d \in \mathcal{D}_\mu$ so that $m_\mu T_w = q^{\ell(v)} m_{\mu}T_{d}$.  Equally, if $\ft^\mu w = \fs$, let $\dot\fs$ be the row-standard tableau obtained by rearranging the entries in each row of $\fs$.  Then $d=d(\fs)$ and $m_\mu T_w = q^{\ell(v)} m_\mu T_{d(\dot\fs)}$.    

If $\nu$ is a composition of $n$, define an equivalence relation $\sim_r$ on $\mathcal{T}(\nu,\mu)$ by saying that $\S\sim_r \T$ if $\S_j^i = \T_j^i$ for all $i,j$. 
Recall that if $\fs \in \mathcal{T}(\nu)$, then $\mu(\fs) \in \mathcal{T}(\nu,\mu)$ is the tableau obtained by replacing each integer $i\geq 1$ with its row index in $\ft^\mu$. 
Now if $\S \in \mathcal{T}(\nu,\mu)$ and $\ft \in \rowstd(\nu)$, define 
\[m_{\S\ft} = \sum_{{\fs \in \rowstd(\nu) \atop \mu(\fs)=\S}} m_{\fs\ft}.\]

\begin{lemma} [{~\cite[Thm.~4.9]{M:ULect}}] \label{mstbasis}
The right ideal $M^\mu \cap \hmu$ has a basis 
\[\{m_{\S\ft} \mid \S \in \mathcal{T}_0(\nu,\mu), \ft \in \Std(\nu) \text{ for some } \nu \vdash n \text{ such that } \nu \rhd \mu\}.\]  
\end{lemma}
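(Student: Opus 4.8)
The plan is to deduce the lemma from the corresponding statement for the whole permutation module $M^\mu$ (that is, with $\nu\unrhd\mu$ in place of $\nu\rhd\mu$), and then to discard the part of that basis indexed by shape $\mu$. This is~\cite[Thm.~4.9]{M:ULect}, so I will only sketch the route. Two preliminary remarks are useful: since $\mu$ is a partition, $m_\mu=m_{\ft^\mu\ft^\mu}$ is a Murphy basis element of shape $\mu$, so $m_\mu$ lies in the two-sided ideal $\h^{\unrhd\mu}$, whence $M^\mu=m_\mu\h\subseteq\h^{\unrhd\mu}$ and $M^\mu\cap\hmu=M^\mu\cap\h^{\rhd\mu}$; likewise $m_\nu\in\h^{\unrhd\nu}$ for every partition $\nu$.

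First I would recall (again from~\cite{M:ULect}) that
\[B=\{\,m_{\S\ft}\mid \S\in\mathcal{T}_0(\nu,\mu),\ \ft\in\Std(\nu)\text{ for some }\nu\vdash n\text{ with }\nu\unrhd\mu\,\}\]
is an $R$-basis of $M^\mu$. To see this one expands $m_{\S\ft}=\sum_{\mu(\fs)=\S}T_{d(\fs)}^\ast m_\nu T_{d(\ft)}$ in the Murphy basis of $\h$: straightening the terms $m_{\fs\ft}$ with $\fs\in\rowstd(\nu)$ produces basis elements of shapes $\rho\unrhd\nu$, and a triangularity argument with respect to the dominance order on shapes (and suitable orders on semistandard and standard tableaux) shows that modulo $\h^{\rhd\nu}$ the element $m_{\S\ft}$ has a distinguished Murphy basis contribution at shape $\nu$ with an invertible coefficient; this gives linear independence of $B$. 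That $B$ is then a basis follows from the count $\dim_R M^\mu=|\rowstd(\mu)|=n!/(\mu_1!\cdots\mu_b!)=\sum_\nu|\mathcal{T}_0(\nu,\mu)|\cdot|\Std(\nu)|=|B|$, a Kostka-number identity (note $\mathcal{T}_0(\nu,\mu)=\varnothing$ unless $\nu\unrhd\mu$), or alternatively from an explicit straightening algorithm showing that $B$ spans $M^\mu$.

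It then remains to identify $M^\mu\cap\hmu$ with the span of $B_1:=\{m_{\S\ft}\in B\mid\nu\rhd\mu\}$. The only semistandard $\mu$-tableau of type $\mu$ is the tableau with every entry of row $r$ equal to $r$, and the only $\fs\in\rowstd(\mu)$ mapping to it under $\mu(\cdot)$ is $\ft^\mu$; hence $B\setminus B_1=\{m_{\ft^\mu\ft}\mid\ft\in\Std(\mu)\}=\{m_\mu T_{d(\ft)}\mid\ft\in\Std(\mu)\}$, a set of Murphy basis elements of shape exactly $\mu$. On one hand, for $\nu\rhd\mu$ the element $m_{\S\ft}$ is a sum of terms $T_{d(\fs)}^\ast m_\nu T_{d(\ft)}$ with $m_\nu\in\h^{\unrhd\nu}\subseteq\hmu$, so $m_{\S\ft}\in\hmu$, and also $m_{\S\ft}\in M^\mu$ since $B\subseteq M^\mu$; thus $\operatorname{span}_R(B_1)\subseteq M^\mu\cap\hmu$. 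On the other hand, given $x\in M^\mu\cap\hmu$, write $x=\sum_{\ft\in\Std(\mu)}a_\ft\,m_\mu T_{d(\ft)}+y$ with $y\in\operatorname{span}_R(B_1)$; then $y\in\hmu$ by the previous point, so $\sum_\ft a_\ft m_{\ft^\mu\ft}=x-y\in\hmu=\h^{\rhd\mu}$, and as the $m_{\ft^\mu\ft}$ ($\ft\in\Std(\mu)$) are distinct Murphy basis elements of shape $\mu$, they are linearly independent modulo $\h^{\rhd\mu}$, forcing all $a_\ft=0$. Hence $x\in\operatorname{span}_R(B_1)$ and $M^\mu\cap\hmu=\operatorname{span}_R(B_1)$; since $B_1\subseteq B$ is linearly independent it is a basis, as claimed. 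The main obstacle is the first step — the semistandard basis of $M^\mu$, and in particular pinning down the straightening of $m_{\fs\ft}$ for non-standard $\fs$ and the precise leading term of $m_{\S\ft}$ in the Murphy basis; the subsequent passage to the shapes $\nu\rhd\mu$ is routine bookkeeping with the cellular structure.
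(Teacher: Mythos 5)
The paper gives no proof of this lemma, citing it directly to \cite[Thm.~4.9]{M:ULect}, so there is no in-paper argument to compare against; your deduction is precisely the content of that citation and is correct. In particular your split $B=B_1\sqcup\{m_\mu T_{d(\ft)}\mid\ft\in\Std(\mu)\}$, the observation that every $m_{\S\ft}$ with $\nu\rhd\mu$ lies in $\h^{\unrhd\nu}\subseteq\hmu$, and the final linear-independence-modulo-$\hmu$ argument for the shape-$\mu$ Murphy elements are exactly the standard route from the semistandard basis of $M^\mu$ to a basis of $M^\mu\cap\hmu$.
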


\begin{lemma} [{~\cite[Eqn.~4.6]{M:ULect}}] \label{leftright}
Suppose $\nu$ is a composition of $n$ and $\S \in \mathcal{T}(\nu,\mu)$.  Then
\[m_{\S\ft^\nu}=\sum_{\S' \sim_r \S} m_\mu T_{\S'}.\]
\end{lemma}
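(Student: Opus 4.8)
This identity is \cite[Eqn.~4.6]{M:ULect}, so the argument below just unwinds that reference. The plan is to expand both sides of the asserted equality in the standard $R$-basis $\{T_w\mid w\in\sym_n\}$ of $\h$ and to recognise each expansion as $\sum_{w\in D}T_w$ for one and the same double coset $D$ in $\sym_\mu\backslash\sym_n/\sym_\nu$ -- concretely, the double coset whose associated matrix of non-negative integers is $(\S^i_j)_{i,j}$ (which has row sums $\mu_i$ and column sums $\nu_j$). Recall that such matrices biject with the double cosets of $\sym_\mu\backslash\sym_n/\sym_\nu$, and that $|D|=|\sym_\mu|\,|\sym_\nu|/\prod_{i,j}(\S^i_j)!$.

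First I would treat the left-hand side. Unravelling the definitions, $m_{\S\ft^\nu}=\sum_\fs m_{\fs\ft^\nu}$, the sum running over $\fs\in\rowstd(\nu)$ with $\mu(\fs)=\S$. Since $d(\ft^\nu)=1$ and $\ast$ carries $T_w$ to $T_{w^{-1}}$ (because $T_i^\ast=T_i$), we have $m_{\fs\ft^\nu}=T_{d(\fs)}^\ast m_\nu=T_{d(\fs)^{-1}}m_\nu$; and as $d(\fs)\in\mathcal{D}_\nu$, additivity of length along $\sym_\nu$-cosets turns $m_\nu=\sum_{v\in\sym_\nu}T_v$ into $m_{\fs\ft^\nu}=\sum_{w\in d(\fs)^{-1}\sym_\nu}T_w$, a sum of distinct basis vectors indexed by a single coset. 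Now the condition $\mu(\fs)=\S$ records, row by row of $[\nu]$, the multiset of $\ft^\mu$-row-indices occurring among the entries of $\fs$ in that row -- that is, the corresponding column of $(\S^i_j)$ -- and this says precisely that $d(\fs)^{-1}$ lies in $D$. As $\fs$ runs over all such tableaux, $d(\fs)^{-1}$ runs over the distinguished representatives of the $\sym_\nu$-cosets contained in $D$, and those cosets partition $D$ (a comparison of cardinalities via the double-coset size formula rules out overlap and omission). Hence $m_{\S\ft^\nu}=\sum_{w\in D}T_w$.

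Next the right-hand side. By its construction, $T_{\S'}$ equals $T_{x_{\S'}}$ for a minimal-length $\sym_\mu$-coset representative $x_{\S'}$ whose double coset is again the one determined by the row content of $\S'$; hence $m_\mu T_{\S'}=\sum_{u\in\sym_\mu}T_uT_{x_{\S'}}=\sum_{w\in\sym_\mu x_{\S'}}T_w$. Since the $\sim_r$-class of $\S$ is, by definition, the set of $\S'\in\mathcal{T}(\nu,\mu)$ with the same matrix $(\S^i_j)$, the map $\S'\mapsto x_{\S'}$ identifies this class with the distinguished representatives of the $\sym_\mu$-cosets contained in $D$, which again partition $D$. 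Therefore $\sum_{\S'\sim_r\S}m_\mu T_{\S'}=\sum_{w\in D}T_w$, in agreement with the left-hand side, and the lemma follows.

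I expect the real obstacle to be the middle step of each half: making precise, with respect to the conventions actually in force (the side on which $\sym_n$ acts on tableaux, the normalisation of $d(\cdot)$, the form of $\ast$, the convention $m_\mu=\sum_{w\in\sym_\mu}T_w$), the dictionary ``$\mu(\fs)=\S$ iff $d(\fs)^{-1}\in D$'' and its analogue for $\S'\mapsto x_{\S'}$, and checking that the two families of parabolic cosets genuinely tile $D$. This is the classical Mackey decomposition of $\sym_\mu\backslash\sym_n/\sym_\nu$ together with its cardinality bookkeeping (and, should the representative attached to a non-row-standard $\S'$ fail to be of minimal length, a preliminary reduction of $m_\mu T_{\S'}$ using the quadratic relation $(T_i+1)(T_i-q)=0$). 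Everything else is formal manipulation in the $T_w$-basis, using only additivity of length along parabolic cosets.
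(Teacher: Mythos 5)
The paper itself does not prove this lemma --- it is quoted directly from \cite[Eqn.~4.6]{M:ULect} --- so there is no in-text argument for yours to be measured against. Taken on its own merits, your argument is correct and is the standard double-coset one: both sides, expanded in the $T_w$-basis of $\h$, equal $\sum_{w\in D}T_w$ where $D$ is the $(\sym_\mu,\sym_\nu)$-double coset attached to the matrix $(\S^i_j)$ (row sums $\mu_i$, column sums $\nu_j$). The left side tiles $D$ by the left $\sym_\nu$-cosets $d(\fs)^{-1}\sym_\nu$ with $\fs\in\rowstd(\nu)$ and $\mu(\fs)=\S$, the right side tiles $D$ by the right $\sym_\mu$-cosets $\sym_\mu\,d(\ft_{\S'})$ with $\S'\sim_r\S$; length-additivity is available in both halves because $d(\fs)\in\mathcal D_\nu$ and $d(\ft_{\S'})\in\mathcal D_\mu$, and the count $|D|=|\sym_\mu|\,|\sym_\nu|/\prod_{i,j}\S^i_j!$ together with injectivity of $\fs\mapsto d(\fs)^{-1}$ and $\S'\mapsto d(\ft_{\S'})$ into distinguished coset representatives closes each tiling exactly as you indicate.

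Two small remarks. Your closing precaution --- that $m_\mu T_{\S'}$ might need a reduction via the quadratic relation if the representative attached to a non-row-standard $\S'$ were not of minimal length --- is never actually needed: by construction $\ft_{\S'}$ is a \emph{row-standard} $\mu$-tableau for \emph{every} $\S'\in\mathcal T(\nu,\mu)$, so $d(\ft_{\S'})\in\mathcal D_\mu$ automatically. Conversely, your dictionary ``$\mu(\fs)=\S$ iff $d(\fs)^{-1}\in D$'' is only an equivalence when $\S$ is itself row-standard; since $i\mapsto$ (row index of $i$ in $\ft^\mu$) is weakly increasing, $\mu(\fs)$ is always row-standard, so for a non-row-standard $\S$ the left-hand side is the empty sum $0$ while the right-hand side is still the nonzero $\sum_{w\in D}T_w$. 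Thus the statement really does require $\S\in\RowT(\nu,\mu)$, which is the case everywhere it is used (e.g.\ Corollary~\ref{DescribeS}) and should be made explicit in a self-contained proof.
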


\begin{ex}
Let $\nu=(3,2)$ and $\mu=(2,2,1)$.  Let $\S=\tab(112,23)$.  Then
\[m_{\S\ft^{(3,2)}} = (1+T_3)m_{(3,2)} = m_{(2,2,1)}(I+T_2+T_2T_1)(I+T_4).\]
\end{ex}

\begin{lemma}[{~\cite[Lemma~3.10]{M:ULect}}]  \label{minihigh} 
Suppose $\nu$ is any composition of $n$.  Let $\la$ be the partition obtained by rearranging the parts of $\nu$.  If $\fs,\ft \in \rowstd(\nu)$ then $m_{\fs\ft}$ is an $R$-linear combination of elements of the form $m_{\fu\fv}$ where $\fu$ and $\fv$ are row-standard $\la$-tableaux.   
\end{lemma}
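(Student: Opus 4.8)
The plan is to isolate the content of the lemma in a single statement about the element $m_\nu$ alone, namely that $m_\nu$ lies in the two-sided ideal $\h m_\la\h$. Granting this, I would argue as follows. The anti-isomorphism $\ast$ fixes $m_\la$, since $T_w^\ast=T_{w^{-1}}$ and $w\mapsto w^{-1}$ permutes $\sym_\la$; applying $\ast$ to the basis $\{m_\la T_{d(\fv)}\mid\fv\in\rowstd(\la)\}$ of $M^\la=m_\la\h$ therefore shows that $\{T_{d(\fu)}^\ast m_\la\mid\fu\in\rowstd(\la)\}$ is a basis of $\h m_\la$. Hence for any $x,y\in\h$ we have $xm_\la\in\mathrm{span}\{T_{d(\fu)}^\ast m_\la\}$ and $m_\la y\in\mathrm{span}\{m_\la T_{d(\fv)}\}$, and combining these two facts every product $xm_\la y$ is an $R$-linear combination of the elements $m_{\fu\fv}=T_{d(\fu)}^\ast m_\la T_{d(\fv)}$ with $\fu,\fv\in\rowstd(\la)$. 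Writing $m_\nu=\sum_k x_k m_\la y_k$ and applying this to $m_{\fs\ft}=T_{d(\fs)}^\ast m_\nu T_{d(\ft)}=\sum_k\bigl(T_{d(\fs)}^\ast x_k\bigr)m_\la\bigl(y_k T_{d(\ft)}\bigr)$ then gives the lemma, so all the real work is concentrated in the claim about $m_\nu$.

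To prove that claim I would produce an explicit conjugating element. Since $\la$ is obtained by sorting the parts of $\nu$, let $g\in\symn$ be the permutation carrying the consecutive blocks of $\{1,\dots,n\}$ of sizes $\la_1,\la_2,\dots$ onto the blocks of sizes $\nu_1,\nu_2,\dots$, matching blocks of equal size by order-preserving bijections and fixing the internal order of each block, so that $g\,\sym_\la\,g^{-1}=\sym_\nu$. I claim $m_\nu=T_g m_\la T_g^{-1}$, which in particular puts $m_\nu$ in $\h m_\la\h$. This rests on two length properties of this ``separated block permutation'' $g$: first, $g$ sends any adjacent pair $j,j+1$ lying in a common block of $\la$ to an \emph{increasing} adjacent pair, so $g$ has no right descent belonging to $\sym_\la$, whence $\ell(gv)=\ell(g)+\ell(v)$ and $T_{gv}=T_gT_v$ for all $v\in\sym_\la$; second, $g$ restricts to order-preserving bijections between the matched blocks, so conjugation by $g$ carries Coxeter generators of $\sym_\la$ to Coxeter generators of $\sym_\nu$ and hence preserves Coxeter length, $\ell(gvg^{-1})=\ell(v)$. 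Combining these via the identity $gv=(gvg^{-1})\,g$ and comparing lengths forces $T_{gvg^{-1}}=T_gT_vT_g^{-1}$; summing over $v\in\sym_\la$ then yields $T_g m_\la T_g^{-1}=\sum_{v\in\sym_\la}T_{gvg^{-1}}=\sum_{w\in\sym_\nu}T_w=m_\nu$, as wanted.

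The only step that requires genuine care is the verification of these two length properties of $g$; everything else is bookkeeping with the bases of $M^\la$ and $\h m_\la$. They are instances of standard facts about parabolic subgroups of Coxeter groups, specialised to the Young subgroups $\sym_\la$ and $\sym_\nu$, and what makes them routine here is that these subgroups are generated by simple reflections supported on disjoint intervals, so that inversions stay confined to a single block. An equally workable, more hands-on variant would be to sort the parts of $\nu$ one adjacent transposition at a time: the same argument applied to the block-swap of two neighbouring parts shows that replacing $\nu$ by the composition $\nu'$ obtained by interchanging those two parts sends each $m_{\fs\ft}$, $\fs,\ft\in\rowstd(\nu)$, into the span of the $m_{\fu\fv}$, $\fu,\fv\in\rowstd(\nu')$, and iterating until the parts are weakly decreasing yields the lemma.
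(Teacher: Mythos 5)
The paper does not prove this lemma; it simply cites it from Mathas's book, so there is no internal proof to compare against. Your argument is correct in structure and is the standard proof: reduce everything to the single fact $m_\nu\in\h m_\la\h$ (indeed $m_\nu$ is conjugate to $m_\la$), then use that $\ast$ carries the basis $\{m_\la T_{d(\fv)}\mid\fv\in\rowstd(\la)\}$ of $m_\la\h$ to a basis of $\h m_\la$ to conclude that any $xm_\la y$ lies in the $R$-span of the $m_{\fu\fv}$.

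One small point worth flagging: with the right-action conventions of the paper (where $\ft=\ft^\mu d(\ft)$ and $T_{uv}=T_uT_v$ whenever $\ell(uv)=\ell(u)+\ell(v)$), the permutation $g$ you describe, carrying $\la$-blocks to $\nu$-blocks, satisfies $g^{-1}\sym_\la g=\sym_\nu$ rather than $g\sym_\la g^{-1}=\sym_\nu$, and the correct conjugation identity is $m_\nu=T_g^{-1}m_\la T_g$, equivalently $m_\la T_g=T_g m_\nu$. (Try $\nu=(1,2)$, $\la=(2,1)$: then $g=s_2s_1$, $m_\la T_g=T_2T_1+T_1T_2T_1$ and $T_gm_\nu=T_2T_1+T_2T_1T_2$, equal by the braid relation, whereas $T_gm_\la\ne m_\nu T_g$.) This is purely a left/right bookkeeping slip and is repaired by replacing $g$ with $g^{-1}$, i.e.\ taking $g$ to carry $\nu$-blocks to $\la$-blocks order-preservingly; your two length observations (that $g$ has no right descent in $\sym_\la$, and that conjugation by $g$ is a length-preserving Coxeter isomorphism $\sym_\la\to\sym_\nu$) then hold verbatim and give $T_vT_g=T_{vg}$ for $v\in\sym_\la$ and $T_gT_w=T_{gw}$ for $w\in\sym_\nu$, from which $m_\la T_g=T_gm_\nu$ follows since $\sym_\la g=g\sym_\nu$. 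The rest of your argument, including the reduction step and the final substitution $m_{\fs\ft}=(T_{d(\fs)}^\ast T_g^{-1})\,m_\la\,(T_gT_{d(\ft)})$, is fine.
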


\begin{lemma}\label{higher}
Suppose $\nu$ is any composition of $n$ such that $\nu \rhd \mu$ and $\fu,\fv \in \rowstd(\la)$. Then $m_{\fu\fv} \in \hmu$.
\end{lemma}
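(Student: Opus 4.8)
The plan is to obtain this directly from the cellular structure of $\h$ already set up above, with no new computation. Throughout, $\la$ denotes the partition obtained by rearranging the parts of $\nu$ (as in Lemma~\ref{minihigh}), so that $\rowstd(\la)$ makes sense. The first step is a purely combinatorial observation: the parts of the composition $\nu$ can be sorted into weakly decreasing order by a sequence of transpositions of adjacent out-of-order pairs, and transposing $(\nu_k,\nu_{k+1})$ with $\nu_k<\nu_{k+1}$ changes only the $k$-th partial sum, increasing it; hence $\la \unrhd \nu$. Combined with the hypothesis $\nu \rhd \mu$ this gives $\la \unrhd \nu \rhd \mu$, and in particular $\la \neq \mu$, so $\la \rhd \mu$.

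The second step is to exhibit $m_{\fu\fv}$ inside a cellular ideal. Since $\fu,\fv \in \rowstd(\la)$ we have, by definition, $m_{\fu\fv} = T_{d(\fu)}^\ast\, m_\la\, T_{d(\fv)}$, so $m_{\fu\fv}$ lies in the two-sided ideal $\h m_\la \h$ generated by $m_\la$. Now $\ft^\la$ is a standard $\la$-tableau and $m_\la = m_{\ft^\la \ft^\la}$ is one of the Murphy cellular basis elements, of shape $\la$; hence $m_\la$ lies in $\h^{\unrhd\la}$, the $R$-span of those $m_{\fs\ft}$ with $\fs,\ft \in \Std(\sigma)$ for some partition $\sigma \unrhd \la$. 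By the theory of cellular algebras, $\h^{\unrhd\la}$ is a two-sided ideal of $\h$, so it contains all of $\h m_\la \h$, and in particular $m_{\fu\fv} \in \h^{\unrhd\la}$.

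Finally, every partition $\sigma$ with $\sigma \unrhd \la$ satisfies $\sigma \rhd \mu$ because $\la \rhd \mu$; therefore $\h^{\unrhd\la} \subseteq \h^{\rhd\mu} = \hmu$ by the definition of $\hmu$, and we conclude $m_{\fu\fv} \in \hmu$, as required. There is essentially no obstacle here: the only point that is not formal manipulation of the cellular order is the passage from the row-standard pair $(\fu,\fv)$ to standard-indexed basis elements, and this is subsumed in the statement that $m_\la$ generates a two-sided ideal contained in $\h^{\unrhd\la}$ --- equivalently, it is precisely the standardization step in the construction of the Murphy basis in~\cite[Section~3]{M:ULect}, which could alternatively be invoked in place of the ideal argument above.
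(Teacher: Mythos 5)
Your proof is correct and addresses the statement exactly as printed (with $\fu,\fv\in\rowstd(\la)$, $\la$ the partition rearrangement of $\nu$). The paper's own proof is a single sentence citing Lemma~\ref{minihigh} together with $\la\unrhd\nu\rhd\mu$; read against the stated hypothesis, the only thing \ref{minihigh} contributes there is the dominance $\la\unrhd\nu$, and the remaining step --- that $m_{\fu\fv}$ for row-standard $\la$-tableaux lands in $\hmu$ once $\la\rhd\mu$ --- is left implicit. You make that step explicit via the chain $m_{\fu\fv}=T_{d(\fu)}^\ast m_\la T_{d(\fv)}\in\h m_\la\h\subseteq\h^{\unrhd\la}\subseteq\h^{\rhd\mu}=\hmu$, using that $\h^{\unrhd\la}$ is a two-sided ideal of the cellular algebra containing $m_\la=m_{\ft^\la\ft^\la}$. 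This is a cleaner formulation; as you observe, the ideal argument is a repackaging of the straightening (row-standard to standard) that underlies the Murphy basis, so the two routes are equivalent in substance. The one respect in which your reading diverges from how the lemma is later used: in the proof of Corollary~\ref{halfsub} the object $m_{\S\ft^\nu}$ is a sum of $m_{\fs\ft^\nu}$ over $\fs\in\rowstd(\nu)$, so the hypothesis there is really $\fu,\fv\in\rowstd(\nu)$; in that case one first applies \ref{minihigh} to reduce to the partition shape $\la$ and then invokes exactly your argument, so nothing is lost.
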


\begin{proof}
This follows from Lemma~\ref{minihigh}, noting that $\la \unrhd \nu \rhd \mu$.  
\end{proof}

Now suppose $1 \leq d < b$ and $1 \leq t \leq \mu_{d+1}$ and let $\nu=\nu(d,t)$ be the composition given by
\[\nu_i=\begin{cases}
\mu_{i}+t, & i=d, \\
\mu_{i}-t, & i=d+1, \\
\mu_{i}, & \text{otherwise}.
\end{cases}\]
Let $\S=\S_{d,t}$ be the the row-standard $\nu$-tableau such that $\S^{d}_d = \mu_d$, $\S^{d+1}_d = t$ and $\S^i_i = \nu_i$ for all $i \neq d$.  By Lemma~\ref{leftright}, $m_\mu h_{d,t} = m_{\S\ft^\nu} \in \hmu$.  The next result then follows by Lemma~\ref{higher}, noting that $M^\mu \cap \hmu$ is a right ideal of $\h$.  

\begin{corollary} \label{halfsub} We have
\[\mathcal{I} \subseteq M^\mu \cap \hmu.\]
\end{corollary}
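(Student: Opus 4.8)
The plan is to use that $M^\mu \cap \hmu$ is a right ideal of $\h$: since $M^\mu = m_\mu\h$ is a right ideal and $\hmu$ is a two-sided ideal, their intersection is a right ideal, and so it suffices to show that each generator $m_\mu h_{d,t}$ (with $1 \le d < b$ and $1 \le t \le \mu_{d+1}$) of $\mathcal{I}$ already lies in $M^\mu \cap \hmu$. Membership in $M^\mu = m_\mu\h$ is immediate, so the whole content is the assertion $m_\mu h_{d,t} \in \hmu$.

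First I would rewrite $m_\mu h_{d,t}$ in cellular-basis language. Put $\nu = \nu(d,t)$ and let $\S = \S_{d,t}$ be the row-standard $\nu$-tableau of type $\mu$ introduced above. The point is to match the index set $\mathcal{D}_{\bar\mu_{d-1},(\mu_d,t)}$ defining $h_{d,t} = \CC(\bar\mu_{d-1};\mu_d,t)$ with the set of tableaux $\S'$ with $\S' \sim_r \S$ occurring in Lemma~\ref{leftright}: a permutation $w \in \mathcal{D}_{\bar\mu_{d-1},(\mu_d,t)}$ moves only the $\mu_d + t$ entries occupying row $d$ and the first $t$ cells of row $d+1$ of $\ft^\mu$ and keeps the result row-standard there, and such $w$ correspond bijectively to the row rearrangements $\S'$ of $\S$ within rows $d$ and $d+1$. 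Comparing this with Lemma~\ref{leftright} gives $m_\mu h_{d,t} = \sum_{\S'\sim_r\S} m_\mu T_{\S'} = m_{\S\ft^\nu}$. This tableau bookkeeping --- fixing the conventions and checking the bijection --- is the one step that needs genuine care, so I expect it to be the main obstacle; everything else is an application of cited results.

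It then remains to see that $m_{\S\ft^\nu} \in \hmu$. The composition $\nu = \nu(d,t)$ satisfies $\nu \rhd \mu$, since $\bar\nu_j = \bar\mu_j$ for $j \ne d$ whereas $\bar\nu_d = \bar\mu_d + t > \bar\mu_d$. Writing $m_{\S\ft^\nu} = \sum_{\fs} m_{\fs\ft^\nu}$, the sum being over the row-standard $\nu$-tableaux $\fs$ with $\mu(\fs) = \S$, Lemma~\ref{minihigh} expresses each $m_{\fs\ft^\nu}$ as an $R$-linear combination of elements $m_{\fu\fv}$ with $\fu,\fv$ row-standard tableaux of shape $\la$, where $\la$ is the partition obtained by sorting the parts of $\nu$; since $\la \unrhd \nu \rhd \mu$, Lemma~\ref{higher} gives $m_{\fu\fv} \in \hmu$. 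Hence $m_\mu h_{d,t} = m_{\S\ft^\nu} \in \hmu$, and combined with the reduction of the first paragraph this gives $\mathcal{I} \subseteq M^\mu \cap \hmu$.
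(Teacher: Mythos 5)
Your proof is correct and follows essentially the same route as the paper's: the identity $m_\mu h_{d,t} = m_{\S_{d,t}\ft^\nu}$ obtained from Lemma~\ref{leftright}, membership of $m_{\S_{d,t}\ft^\nu}$ in $\hmu$ via Lemmas~\ref{minihigh} and~\ref{higher}, and the right-ideal property of $M^\mu \cap \hmu$ to pass from the generators to all of $\mathcal{I}$. You are somewhat more explicit than the paper --- in particular you spell out the check that $\nu(d,t) \rhd \mu$ and the bijection between $\mathcal{D}_{\bar\mu_{d-1},(\mu_d,t)}$ and the row rearrangements of $\S_{d,t}$ --- but these are exactly the (correct) steps the paper leaves implicit.
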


Now we introduce some new notation which will help us describe the elements $m_{\S\ft}$.  If $\S\in \RowT(\nu,\mu)$, let $\ft_\S \in \rowstd(\mu)$ be the row-standard $\mu$-tableau in which $i$ is in row $r$ if the place occupied by $i$ in $\ft^\nu$ is occupied by $r$ in $\S$.  If $\ft_\S = \ft^\mu w$ then define $T_\S=T_w$.   
 
\begin{corollary} \label{DescribeS}
Suppose $\nu \vdash n$ and $\S \in \RowT(\nu,\mu)$.  Then
\[m_{\S\ft^\nu} = m_\mu T_\S \prod_{i\geq 1} \CC(\bar{\nu}_{i-1}; \S^1_i,\ldots,\S^b_i).\]
\end{corollary}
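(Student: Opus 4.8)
The plan is to deduce the identity directly from Lemma~\ref{leftright} by regrouping the sum $\sum_{\S'\sim_r\S}m_\mu T_{\S'}$ according to how each $\S'$ permutes the individual rows of $[\nu]$. Throughout I read $T_{\S'}$ as $T_{d(\ft_{\S'})}$, the definition of $\ft_\S$ extending verbatim to any $\S'\in\mathcal{T}(\nu,\mu)$ (place $i$ in the row recorded by $\S'$ at the cell of $\ft^\nu$ holding $i$, then order the rows increasingly); note that $\S'\mapsto\ft_{\S'}$ is a bijection from $\mathcal{T}(\nu,\mu)$ onto $\rowstd(\mu)$. The first, easy, point is that $\ft_\S$ is itself row-standard: since $\S$ is row-standard, the numbers that $\ft_\S$ places in a fixed row $r$ form, within each block $B_i=\{\bar\nu_{i-1}+1,\dots,\bar\nu_i\}$ of $\ft^\nu$, a single interval $B_{i,r}$ of size $\S^r_i$, and the intervals $B_{1,r},B_{2,r},\dots$ occupy increasing ranges, so each row of $\ft_\S$ increases. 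Hence $d(\ft_\S)\in\mathcal{D}_\mu$ and $T_\S$ is legitimate.

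Next I would unpack the right-hand side. The symmetric groups $\sym_{B_i}$ for distinct $i$ commute and have disjoint supports, so $\prod_{i\ge1}\CC(\bar\nu_{i-1};\S^1_i,\dots,\S^b_i)=\sum_{u\in\mathcal{D}}T_u$, where $\mathcal{D}=\prod_i\mathcal{D}_{\bar\nu_{i-1};\S^1_i,\dots,\S^b_i}$ is precisely the set of $u\in\sym_\nu$ increasing on each $B_{i,r}$, and where lengths are additive over the factorisation $u=\prod_iu_i$ (disjoint supports), so that $T_u=\prod_iT_{u_i}$. Thus it suffices to prove $\sum_{\S'\sim_r\S}m_\mu T_{\S'}=\sum_{u\in\mathcal{D}}m_\mu T_\S T_u$.

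The heart of the matter is a length-additive bijection $\mathcal{D}\to\{\S':\S'\sim_r\S\}$. To $u\in\mathcal{D}$ I assign the tableau $\S'(u)$ read off from $\ft_\S u$ (apply $u$ to the entries of $\ft_\S$): $\S'(u)_c=\row_{\ft_\S u}(i)$ when $i$ occupies cell $c$ of $\ft^\nu$. Because $u$ is increasing on each $B_{i,r}$ and the $B_{i,r}$ lie in increasing ranges within each row of $\ft_\S$, the tableau $\ft_\S u$ is again row-standard, so $\ft_{\S'(u)}=\ft_\S u$ and $d(\ft_{\S'(u)})=d(\ft_\S)\,u$. I would then establish $\ell(d(\ft_\S)\,u)=\ell(d(\ft_\S))+\ell(u)$ by an inversion count: writing $\ell(d(\ft))=\#\{x<y\mid\row_\ft(y)<\row_\ft(x)\}$, the pairs $x<y$ lying in distinct blocks of $\nu$ contribute the same number $\sum_{i<j}\sum_{v>v'}\S^v_i\S^{v'}_j$ to $d(\ft_\S)$ and to $d(\ft_\S u)$ (it depends only on the row-contents, and $\S'(u)\sim_r\S$), the within-block pairs of $\ft_\S$ contribute $0$, and the within-block pairs of $\ft_\S u$ contribute $\sum_i\ell(u_i)=\ell(u)$ (here one uses that $u$ is increasing on each $B_{i,r}$ to identify the inversions inside $B_i$ with the inversions of $u_i$). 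Consequently $T_{\S'(u)}=T_{d(\ft_\S)}T_u=T_\S T_u$. Finally $u\mapsto\S'(u)$ is injective, since $u=d(\ft_\S)^{-1}d(\ft_{\S'(u)})$ is recovered from $\S'(u)$, and both sets have cardinality $\prod_i\binom{\nu_i}{\S^1_i,\dots,\S^b_i}$, so it is a bijection. Summing over it gives $\sum_{\S'\sim_r\S}m_\mu T_{\S'}=\sum_{u\in\mathcal{D}}m_\mu T_\S T_u=m_\mu T_\S\sum_{u\in\mathcal{D}}T_u=m_\mu T_\S\prod_{i\ge1}\CC(\bar\nu_{i-1};\S^1_i,\dots,\S^b_i)$, as required.

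The one genuinely delicate step is the length identity $\ell(d(\ft_\S)\,u)=\ell(d(\ft_\S))+\ell(u)$ together with the claim that $\ft_\S u\in\rowstd(\mu)$ for $u\in\mathcal{D}$; the rest is bookkeeping with the definitions. An alternative that sidesteps the explicit count is to induct on the number of parts of $\mu$ (peeling off one row of $\nu$ at a time) and appeal to the standard fact that $\sym_\nu=\sym_\gamma\,\mathcal{D}$ with additive lengths, where $\gamma$ is the refinement of $\nu$ determined by $\S$, but the direct inversion argument is the cleanest route.
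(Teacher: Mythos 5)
The paper offers no written proof of Corollary~\ref{DescribeS}, treating it as immediate from Lemma~\ref{leftright} and the definition of $T_\S$; your argument correctly supplies that derivation, regrouping $\sum_{\S'\sim_r\S}m_\mu T_{\S'}$ via a length-additive bijection between $\{\S':\S'\sim_r\S\}$ and the product of the distinguished coset representative sets $\prod_i\mathcal{D}_{\bar\nu_{i-1},(\S^1_i,\dots,\S^b_i)}$. The inversion count separating cross-block from within-block pairs, together with the observation that $\ft_\S u$ stays row-standard so that $d(\ft_\S u)=d(\ft_\S)u$ with additive lengths, is precisely the verification the paper leaves implicit, so this is correct and matches the intended approach.
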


Recall that the length $\ell(w)$ of a permutation $w \in \sym_n$ may be determined by
\begin{align} \label{length} \ell(w)=\#\{ (i,j) \mid 1 \leq i <j \leq n \text{ and } i w > j w\},\end{align}
and if $w=uv \in \sym_n$ is such that $\ell(w)=\ell(u)+\ell(v)$ then $T_w=T_uT_v$. If $s \geq r \geq 1$ and $x \geq 1$ define 
\begin{align*}
\Per(s,r)&=(r,r+1,\ldots,s), \\
\D(s,r)&= T_{\Per(s,r)} = T_{s-1}T_{s-2}\ldots T_r, \\
\intertext{and}
\Per^\flat(s,r,x) &= \prod_{j=1}^x \Per(s+j,r+j), \\
\Dt(s,r,x) &= T_{\Per^\flat(s,r,x)}=\prod_{j=1}^x \D(s+j,r+j).
\end{align*}
Note that the last identity holds since $\ell({\Per^\flat(s,r,x)}) = x(s-r)$. Observe that if $s \geq t \geq r$ then
\[\Dt(s,r,x) = \Dt(s,t,x)\Dt(t,r,x).\]

\begin{lemma} \label{GetOrder}
Suppose $\nu \vdash n$ and $\S \in \RowT(\nu,\mu)$. 
Write $\fs=\ft_{\S}$. Let $\fs(0)=\ft^\mu$ and, for $1 \leq i \leq n$, let 
\[\fs(i) = \fs(i-1)\Per(i^\ast,i)\]
where $i^\ast$ occupies the same position in $\fs(i-1)$ that $i$ occupies in $\fs$.  Then $\fs(i)$ is the row-standard $\mu$-tableau with the entries $1,\ldots,i$ occupying the same positions that they occupy in $\fs$ and with all other entries in row order, and furthermore 
\begin{equation} \label{Eqni}
T_{d(\fs(i))} = \prod_{j=1}^i \D(j^\ast,j).\end{equation}  
In particular,
\[m_{\mu}T_\S = m_\mu \prod_{j=1}^n \D(j^\ast,j).\]
\end{lemma}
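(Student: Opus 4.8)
\emph{Proof strategy.} I would prove the two displayed assertions about $\fs(i)$ simultaneously by induction on $i$; the concluding ``in particular'' claim is then the case $i=n$, since when $i=n$ every entry $1,\dots,n$ sits in its $\fs$-position, so $\fs(n)=\fs=\ft_\S$, whence $T_{d(\fs(n))}=T_{d(\ft_\S)}=T_\S$ by the definition of $T_\S$, and multiplying \eqref{Eqni} on the left by $m_\mu$ gives the stated identity. The base case $i=0$ is immediate: $\fs(0)=\ft^\mu$ has all entries in row order, and $T_{d(\fs(0))}=T_{\mathrm{id}}=I$ is the empty product.

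For the inductive step I would first track the entries. By hypothesis $\fs(i-1)$ carries $1,\dots,i-1$ in their $\fs$-positions and the values $i,i+1,\dots,n$, in this order, in the remaining ``free'' cells read along the rows. The cell $P$ holding $i$ in $\fs$ is free in $\fs(i-1)$, since its $\fs$-entry is $i$ and so it holds none of $1,\dots,i-1$; if $P$ is the $m$-th free cell in row-reading order then the entry $i^\ast$ at $P$ in $\fs(i-1)$ equals $i-1+m\ge i$, so $\Per(i^\ast,i)=(i,i+1,\dots,i^\ast)$ is defined, and it acts on $\fs(i-1)$ by placing $i$ at $P$ and raising each of the entries $i,i+1,\dots,i^\ast-1$ (which fill the first $m-1$ free cells, in order) by one. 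A direct inspection then shows that $\fs(i)=\fs(i-1)\Per(i^\ast,i)$ has $1,\dots,i$ in their $\fs$-positions and $i+1,\dots,n$ in row order in the rest, and that $\fs(i)$ is row-standard, because in each row the cells holding values $\le i$ form an initial segment (as $\fs$ is row-standard), followed by the larger values placed left to right.

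It remains to establish \eqref{Eqni}, which reduces to an additivity-of-length statement. From $\fs(i)=\ft^\mu d(\fs(i))$ and $\fs(i)=\ft^\mu d(\fs(i-1))\Per(i^\ast,i)$ we read off $d(\fs(i))=d(\fs(i-1))\Per(i^\ast,i)$; the crux is that $\ell(d(\fs(i)))=\ell(d(\fs(i-1)))+\ell(\Per(i^\ast,i))=\ell(d(\fs(i-1)))+(i^\ast-i)$, for then $T_{d(\fs(i))}=T_{d(\fs(i-1))}T_{\Per(i^\ast,i)}=\bigl(\prod_{j=1}^{i-1}\D(j^\ast,j)\bigr)\D(i^\ast,i)$ by the remark following \eqref{length} together with the definition $\D(i^\ast,i)=T_{\Per(i^\ast,i)}$, which is exactly \eqref{Eqni}. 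To prove the additivity I would identify $d(\ft)$ with the one-line word obtained by reading the entries of $\ft$ along its rows, so that by \eqref{length} $\ell(d(\ft))$ is the number of inversions of that word. Passing from $\fs(i-1)$ to $\fs(i)$ changes only the entries in the first $m$ free cells, where the increasing block $i,i+1,\dots,i^\ast$ becomes $i+1,i+2,\dots,i^\ast,i$; every pair consisting of one of these positions and an unchanged position keeps its inversion status, since an unchanged entry is either $<i$ or $>i^\ast$ and hence compares the same way with every value in $\{i,\dots,i^\ast\}$; thus the inversion count increases by exactly the $m-1=i^\ast-i$ inversions newly created between the trailing small entry $i$ and the $m-1$ entries preceding it. This completes the induction.

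The fussy part is the first step, namely keeping straight which cells are free, expressing $i^\ast$ in terms of $m$, and verifying that $\fs(i)$ is row-standard; but the only genuine content lies in the inversion count of the last paragraph, and everything else is formal manipulation using facts already recorded in the paper.
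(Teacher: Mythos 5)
Your proof is correct and takes the same route as the paper's, which is essentially a one-line appeal to induction together with the length-additivity observation $\ell(d(\fs(i))) = \sum_{j=1}^i \ell(\Per(j^\ast,j))$. You have simply supplied the bookkeeping --- the description of the free cells, the identification $i^\ast = i-1+m$, and the inversion count showing that only $m-1 = i^\ast - i$ new inversions appear inside the perturbed block --- that the paper leaves to the reader.
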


\begin{proof}
The description of $\fs(i)$ is easily seen by induction on $i$.  Equation~\ref{Eqni} follows, using induction or otherwise, by observing that
\[\ell(d(\fs(i))) = \sum_{j=1}^i \ell(\Per(j^\ast,j)).\]  
\end{proof}

For $m,a,b \geq 0$ define 
\[\seq{m}{a}{b}=\{{\bf i}=(i_1,\ldots,i_b) \mid m+1 \leq i_1<\ldots<i_b\leq m+a+b\}.\]

\begin{lemma}
Let $m \geq 0$ and let $(a,b)$ be a composition.  Then   
\[\CC(m;a,b)= \sum_{{\bf i} \in \seq{m}{a}{b}} \prod_{k=1}^{b} \D(m+a+k,i_k).\]
\end{lemma}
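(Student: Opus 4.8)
The plan is to exhibit an explicit bijection between the set $\mathcal{D}_{m,(a,b)}$ of distinguished right coset representatives indexing $\CC(m;a,b)$ and the index set $\seq{m}{a}{b}$, and then to check that under this bijection $T_w$ becomes the asserted product of $\D$'s. Recall that $\CC(m;a,b)=\sum_{w\in\mathcal{D}_{m,(a,b)}}T_w$, where $w\in\mathcal{D}_{m,(a,b)}$ if and only if $\ft w$ is row-standard, $\ft$ being the $(a,b)$-tableau with $m+1,\dots,m+a+b$ entered in order along its rows. For ${\bf i}=(i_1,\dots,i_b)\in\seq{m}{a}{b}$ I set $w_{\bf i}=\Per(m+a+1,i_1)\,\Per(m+a+2,i_2)\cdots\Per(m+a+b,i_b)$, and let $j_1<\dots<j_a$ list the elements of $\{m+1,\dots,m+a+b\}\setminus\{i_1,\dots,i_b\}$.

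The first step is to prove, by induction on $b$, that $w_{\bf i}$ sends $m+a+c\mapsto i_c$ for $1\le c\le b$ and $m+c\mapsto j_c$ for $1\le c\le a$. In the inductive step one post-composes $w_{(i_1,\dots,i_{b-1})}$ with the cycle $\Per(m+a+b,i_b)$: the values $i_1,\dots,i_{b-1}$ are fixed by it since $i_c<i_b$, while on the complementary values it acts as the order-preserving injection fixing everything below $i_b$ and adding $1$ to everything at least $i_b$, and a short computation identifies the resulting set with $\{j_1,\dots,j_a\}$ in increasing order. It follows immediately that $\ft w_{\bf i}$ has $(1,c)$-entry $j_c$ and $(2,c)$-entry $i_c$, hence is row-standard, so $w_{\bf i}\in\mathcal{D}_{m,(a,b)}$; moreover ${\bf i}$ is recovered as the second row of $\ft w_{\bf i}$, so ${\bf i}\mapsto w_{\bf i}$ is injective. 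Since $\#\seq{m}{a}{b}=\binom{a+b}{b}=\#\mathcal{D}_{m,(a,b)}$, the map is a bijection.

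The second step is a length count. The one-line form of $w_{\bf i}$ on $\{m+1,\dots,m+a+b\}$ is $(j_1,\dots,j_a,i_1,\dots,i_b)$, so its inversions are exactly the pairs with $j_p>i_q$, and for fixed $q$ the number of elements of $\{j_1,\dots,j_a\}$ exceeding $i_q$ is $(m+a+b-i_q)-(b-q)=m+a+q-i_q$. Hence $\ell(w_{\bf i})=\sum_{q=1}^b(m+a+q-i_q)=\sum_{q=1}^b\ell(\Per(m+a+q,i_q))$, so concatenating the reduced words for the factors $\Per(m+a+q,i_q)$ produces a reduced word for $w_{\bf i}$, giving $T_{w_{\bf i}}=\prod_{q=1}^b\D(m+a+q,i_q)$. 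Summing over $\mathcal{D}_{m,(a,b)}$ through the bijection then yields the claimed identity.

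The main obstacle is essentially bookkeeping: keeping straight the conventions for the right action of $\sym_n$ on tableaux and for composition of permutations, and carrying out the two elementary counts above carefully. (An alternative is to induct on $b$ directly on the stated formula: peeling off the factor indexed by $i_1$ reduces it, modulo the inductive hypothesis for $b-1$, to the identity $\CC(m;a,b)=\sum_{r=m+1}^{m+a+1}\D(m+a+1,r)\,\CC(r;m+a+1-r,b-1)$, which is a parabolic-coset decomposition; but the bijective argument above seems cleaner, as it avoids having to identify the relevant intermediate parabolic subgroup.)
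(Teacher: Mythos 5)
Your proof is correct and follows essentially the same route as the paper's: define $w_{\bf i}=\prod_k\Per(m+a+k,i_k)$, verify that the lengths add (so $T_{w_{\bf i}}$ is the stated product of $\D$'s), and observe that $\ft w_{\bf i}$ is the row-standard $(a,b)$-tableau with second row ${\bf i}$, giving a bijection $\seq{m}{a}{b}\to\mathcal{D}_{m,(a,b)}$. The paper is terser (it reduces to $m=0$, cites the inversion-count formula for the length directly, and simply "observes" the form of $\ft w$), whereas you spell out the one-line form of $w_{\bf i}$ by induction on $b$ and then do the inversion count explicitly; the content is the same.
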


\begin{proof}
We may assume $m=0$.  Let ${\bf i} \in \seq{0}{a}{b}$.  If $w = \prod_{k=1}^{b}\Per(a+k,i_k)$ then by Equation~\ref{length}, 
\[\ell(w)= \sum_{k=1}^{b}(a+k-i_k)=\sum_{k=1}^b \ell(\pi(a+k,i_k)),\] 
so that $T_w = \prod_{k=1}^{b}\D(a+k,i_k)$.  Now recall that $w \in \mathcal{D}_{0,(a,b)}$ if and only if $\ft^{(a,b)} w \in \rowstd((a,b))$ and observe that $\ft^{(a,b)} w$ is precisely the row-standard $(a,b)$-tableau with the entries $i_1,\ldots,i_{b}$ in the second row.
\end{proof}

\begin{corollary} \label{RearrangeCor} Suppose that $m,a,b \geq 0$ and $\bar{m} \geq m+a$.  Then
\[\sum_{{\bf i} \in \seq{m}{a}{b}} \prod_{k=1}^b \D(\bar{m}+k,i_k) = \Dt(\bar{m},m+a,b) \CC(m;a,b).\] 
\end{corollary}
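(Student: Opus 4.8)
\subsection*{Proof proposal for Corollary~\ref{RearrangeCor}}

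The plan is to derive the corollary from the preceding lemma, which we restate as $\CC(m;a,b) = \sum_{{\bf i} \in \seq{m}{a}{b}} \prod_{k=1}^b \D(m+a+k,i_k)$. Since $\Dt(\bar m, m+a, b)$ is independent of ${\bf i}$, it can be brought inside the sum, so it is enough to establish, for each fixed ${\bf i}=(i_1,\ldots,i_b) \in \seq{m}{a}{b}$, the single identity
\[\Dt(\bar m, m+a, b)\,\prod_{k=1}^b \D(m+a+k,i_k) \;=\; \prod_{k=1}^b \D(\bar m+k,i_k);\]
summing this over ${\bf i}$ and quoting the lemma then gives the result, with no linear independence needed anywhere.

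First I would record two elementary observations. Combinatorially, the strict inequalities $i_1<i_2<\cdots<i_b\le m+a+b$ force $i_k \le m+a+k$ for all $k$, so by the hypothesis $\bar m \ge m+a$ we also have $i_k \le m+a+k \le \bar m+k$. Algebraically, concatenating reduced words gives $\D(s,r)=T_{s-1}\cdots T_r=(T_{s-1}\cdots T_t)(T_{t-1}\cdots T_r)=\D(s,t)\D(t,r)$ whenever $s \ge t \ge r$; in particular $\D(\bar m+k,i_k)=\D(\bar m+k,m+a+k)\,\D(m+a+k,i_k)$. Substituting the latter into the right-hand side of the displayed identity, it remains to prove the rearrangement
\[\prod_{k=1}^b \Bigl(\D(\bar m+k,m+a+k)\,\D(m+a+k,i_k)\Bigr) = \Bigl(\prod_{k=1}^b \D(\bar m+k,m+a+k)\Bigr)\Bigl(\prod_{k=1}^b \D(m+a+k,i_k)\Bigr),\]
since the first bracket on the right is precisely $\Dt(\bar m,m+a,b)=\prod_{j=1}^b \D(\bar m+j,m+a+j)$ by definition.

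This rearrangement is the one step I expect to demand some care. The idea is to move each factor $\D(\bar m+k,m+a+k)$ leftward, for $k=2,3,\ldots,b$ in turn, past the factors $\D(m+a+k',i_{k'})$ with $k'<k$, so that the $\D(\bar m+j,m+a+j)$ accumulate at the front in their natural order while the $\D(m+a+j,i_j)$ are left untouched behind them. The commutation that makes this legal is the braid relation $T_iT_j=T_jT_i$ for $|i-j|\ge2$: as a word in the generators, $\D(m+a+k',i_{k'})$ uses only $T_j$ with $i_{k'}\le j\le m+a+k'-1$, while $\D(\bar m+k,m+a+k)$ uses only $T_j$ with $m+a+k\le j\le \bar m+k-1$, and for $k>k'$ the smallest index of the second word exceeds the largest of the first by $(m+a+k)-(m+a+k'-1)=(k-k')+1\ge2$. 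Hence the two words commute in $\h$, the displayed rearrangement holds, and the per-sequence identity follows; summing over ${\bf i}\in\seq{m}{a}{b}$ and applying the preceding lemma completes the proof.
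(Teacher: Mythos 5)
Your proposal is correct and follows essentially the same route as the paper: decompose $\D(\bar m+k,i_k)=\D(\bar m+k,m+a+k)\,\D(m+a+k,i_k)$, collect the prefix factors into $\Dt(\bar m,m+a,b)$ via the commutation $T_iT_j=T_jT_i$ for $|i-j|\ge 2$, and invoke the preceding lemma to identify the remaining sum with $\CC(m;a,b)$. The only difference is that the paper states the commutation rearrangement without comment, whereas you spell out the index bookkeeping that justifies it; that is a reasonable thing to make explicit.
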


\begin{proof}
\begin{align*}
\sum_{{\bf i} \in \seq{m}{a}{b}} \prod_{k=1}^b \D(\bar{m}+k,i_k) & 
= \sum_{{\bf i} \in \seq{m}{a}{b}} \prod_{k=1}^b \D(\bar{m}+k,m+a+k)  \D(m+a+k,i_k) \\
&= \prod_{k=1}^b \D(\bar{m}+k,m+a+k) \times \sum_{{\bf i} \in \seq{m}{a}{b}} \prod_{k=1}^b \D(m+a+k,i_k) \\
& = \Dt(\bar{m},m+a,b) \CC(m;a,b)
\end{align*}
\end{proof}

\begin{lemma} \label{PullsThrough}
Suppose $m,a,b \geq 0$ and $\bar{m} \geq m$.  Then
\[\CC(\bar{m};a,b)\Dt(\bar{m},m,a+b) = \Dt(\bar{m},m,a+b)\CC(m;a,b).\]
\end{lemma}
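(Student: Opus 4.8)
The plan is to induct on $\bar m-m$. If $\bar m=m$ then $\Dt(m,m,a+b)=\prod_{l=1}^{a+b}\D(m+l,m+l)$, and each $\D(m+l,m+l)=T_{m+l-1}\cdots T_{m+l}$ is an empty product, so $\Dt(m,m,a+b)=1$ and there is nothing to prove. For $\bar m>m$, I would use the identity $\Dt(\bar m,m,a+b)=\Dt(\bar m,\bar m-1,a+b)\,\Dt(\bar m-1,m,a+b)$ (a case of $\Dt(s,r,x)=\Dt(s,t,x)\Dt(t,r,x)$, noted just before the lemma) to reduce everything to the \emph{consecutive case}: for every $c\ge1$,
\[\CC(c;a,b)\,\Dt(c,c-1,a+b)=\Dt(c,c-1,a+b)\,\CC(c-1;a,b).\]
Granted this, $\CC(\bar m;a,b)\Dt(\bar m,m,a+b)=\Dt(\bar m,\bar m-1,a+b)\CC(\bar m-1;a,b)\Dt(\bar m-1,m,a+b)=\Dt(\bar m,\bar m-1,a+b)\Dt(\bar m-1,m,a+b)\CC(m;a,b)=\Dt(\bar m,m,a+b)\CC(m;a,b)$, applying the consecutive case and then the inductive hypothesis.

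For the consecutive case, write $N=a+b$ and $\xi=s_cs_{c+1}\cdots s_{c+N-1}\in\symn$. Then $T_\xi=\Dt(c,c-1,a+b)$, this expression is reduced so $\ell(\xi)=N$, and as a permutation $\xi$ is the $(N+1)$-cycle on $\{c,\dots,c+N\}$ with $c\xi=c+N$ and $j\xi=j-1$ for $c+1\le j\le c+N$. I would establish: (i) $w\mapsto\xi^{-1}w\xi$ is a bijection $\mathcal{D}_{c,(a,b)}\to\mathcal{D}_{c-1,(a,b)}$; (ii) $\ell(w\xi)=\ell(w)+N$ for all $w\in\mathcal{D}_{c,(a,b)}$; (iii) $\ell(\xi v)=N+\ell(v)$ for all $v\in\mathcal{D}_{c-1,(a,b)}$. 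Granted (i)--(iii), then for each $w\in\mathcal{D}_{c,(a,b)}$ one has $T_wT_\xi=T_{w\xi}=T_{\xi\,(\xi^{-1}w\xi)}=T_\xi T_{\xi^{-1}w\xi}$ (the first equality by (ii), the last by (iii) together with the fact from (i) that $\xi^{-1}w\xi\in\mathcal{D}_{c-1,(a,b)}$, the middle one being just $w\xi=\xi\,(\xi^{-1}w\xi)$), and summing over $w$ and re-indexing via the bijection (i) gives $\CC(c;a,b)T_\xi=T_\xi\CC(c-1;a,b)$, which is what is wanted.

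To prove (i)--(iii) I would use the description, from the row-standard-tableau characterisation in the excerpt, that $w\in\mathcal{D}_{c,(a,b)}$ exactly when $w$ fixes every point of $\{1,\dots,c\}\cup\{c+N+1,\dots,n\}$ and $(c+1)w<\dots<(c+a)w$ and $(c+a+1)w<\dots<(c+N)w$. For (i), a direct computation with the explicit $\xi$ gives $x(\xi^{-1}w\xi)=(x+1)w-1$ for $c\le x\le c+N-1$ and $x(\xi^{-1}w\xi)=x$ otherwise, so $\xi^{-1}w\xi$ is supported on $\{c,\dots,c+N-1\}$ and the two monotonicity conditions placing it in $\mathcal{D}_{c-1,(a,b)}$ are literally the inequalities defining $w\in\mathcal{D}_{c,(a,b)}$; the symmetric computation shows $v\mapsto\xi v\xi^{-1}$ is a two-sided inverse. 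For (ii) and (iii) I would invoke the standard criterion $\ell(uv)=\ell(u)+\ell(v)\iff\mathrm{Inv}(u^{-1})\cap\mathrm{Inv}(v)=\emptyset$, where $\mathrm{Inv}(w)=\{(i,j):i<j,\ iw>jw\}$: one computes $\mathrm{Inv}(\xi)=\{(c,j):c+1\le j\le c+N\}$ and $\mathrm{Inv}(\xi^{-1})=\{(i,c+N):c\le i\le c+N-1\}$, whereas $\mathrm{Inv}(w^{-1})\subseteq\{c+1,\dots,c+N\}^2$ for $w\in\mathcal{D}_{c,(a,b)}$ and $\mathrm{Inv}(v)\subseteq\{c,\dots,c+N-1\}^2$ for $v\in\mathcal{D}_{c-1,(a,b)}$, and these are disjoint from $\mathrm{Inv}(\xi)$, resp.\ $\mathrm{Inv}(\xi^{-1})$, by inspecting first, resp.\ second, coordinates.

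None of the steps is deep; the work lies entirely in the bookkeeping, principally in keeping the right-action and conjugation conventions consistent through (i)--(iii). Step (i) is the crux: it says that conjugation by $\xi$ merely slides the two-row frame $\{c+1,\dots,c+N\}$ down to $\{c,\dots,c+N-1\}$, and the identity $x(\xi^{-1}w\xi)=(x+1)w-1$ makes precise why this relabels the inequalities cutting out $\mathcal{D}_{c,(a,b)}$ into those cutting out $\mathcal{D}_{c-1,(a,b)}$.
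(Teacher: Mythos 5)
Your proof is correct, but it takes a genuinely different route from the paper's. The paper's proof stays at the level of the explicit expansion $\CC(m;a,b)=\sum_{\mathbf i\in\seq{m}{a}{b}}\prod_k\D(m+a+k,i_k)$, pairs terms on the two sides via the shift $\mathbf i\mapsto\bar{\mathbf i}$ with $\bar i_k=i_k+\bar m-m$, and shows that each matched pair of products of $\D$'s multiplied by the full $\Dt(\bar m,m,a+b)$ gives the same $T_w$, by identifying $w$ with the permutation of a suitable row-standard tableau of shape $(\bar m,a,b)$ and checking the length is additive; the length verification is then declared a routine exercise. You instead reduce, via the factorisation $\Dt(\bar m,m,a+b)=\Dt(\bar m,\bar m-1,a+b)\Dt(\bar m-1,m,a+b)$ and induction on $\bar m-m$, to the unit-shift case, then recognise $\Dt(c,c-1,N)=T_\xi$ for a single Coxeter-length-$N$ cycle $\xi$, so that the identity becomes a \emph{conjugation} statement: $w\mapsto\xi^{-1}w\xi$ is a bijection $\mathcal D_{c,(a,b)}\to\mathcal D_{c-1,(a,b)}$ together with length additivity on both sides, which you handle cleanly via the inversion-set criterion $\ell(uv)=\ell(u)+\ell(v)\iff\mathrm{Inv}(u^{-1})\cap\mathrm{Inv}(v)=\emptyset$ and the explicit computations of $\mathrm{Inv}(\xi)$ and $\mathrm{Inv}(\xi^{-1})$. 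The paper's argument is a single step with no induction but offloads the length bookkeeping; your argument does the length bookkeeping in full and explains \emph{why} $\CC$ slides through $\Dt$ (conjugation by a cycle merely relabels the two-row frame). Both are sound; yours is somewhat more structural, the paper's somewhat more direct.
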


\begin{proof}
If ${\bf i} =(i_1,\ldots,i_b) \in \seq{m}{a}{b}$, define ${\bf \bar{i}} = (\bar{i}_1,\ldots,\bar{i}_k) \in \seq{\bar{m}}{a}{b}$ by setting $\bar{i}_k = i_k+\bar{m}-m$ for all $k$.  We claim that 
\[\prod_{k=1}^b \D(\bar{m}+a+k,\bar{i}_k) \times \Dt(\bar{m},m,a+b) = \Dt(\bar{m},m,a+b) \times \prod_{k=1}^b \D(m+a+k,i_k)\]  
for all ${\bf i} \in \seq{m}{a}{b}$.  Let $\eta=(\bar{m},a,b)$ and let $\ft \in \rowstd(\eta)$ be the tableau containing $1,\ldots,m,m+a+b+1,\ldots,\bar{m}+a+b$ in the first row and $i_1,\ldots,i_b$ in the third row.  Let $w$ be the permutation such that $\ft^\eta w = \ft$.  It is sufficient to check that
\[\prod_{k=1}^b \Per(\bar{m}+a+k,\bar{i}_k) \times \Per^\flat(\bar{m},m,a+b) = w=\Per^\flat(\bar{m},m,a+b) \times \prod_{k=1}^b \Per(m+a+k,i_k)\]
and that \[\ell(w) = (\bar{m}-m)(a+b) + \sum_{k=1}^b (m+a+k-i_k),\]
which is a routine exercise.   
\end{proof}

\begin{lemma} \label{PullsThrough2}
Suppose $m,a,b,r \geq 0$.  Then
\[\CC(m;a,b)\Dt(m+a+b,m,r) = \Dt(m+a+b,m,r)\CC(m+r;a,b).\]
\end{lemma}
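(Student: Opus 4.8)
The plan is to prove Lemma~\ref{PullsThrough2} by the same method used for Lemma~\ref{PullsThrough}: expand both occurrences of $\CC(\cdot\,;a,b)$ as sums over sequences, identify the two sums term by term via a shift bijection, and reduce each resulting term identity to a statement about permutations and their lengths, checked in one-line notation exactly as at the end of the proof of Lemma~\ref{PullsThrough}.

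Concretely, using the expansion $\CC(n;a,b)=\sum_{\mathbf{i}\in\seq{n}{a}{b}}\prod_{k=1}^{b}\D(n+a+k,i_k)$ proved above, I would first write
\[
\CC(m;a,b)\,\Dt(m+a+b,m,r)=\sum_{\mathbf{i}\in\seq{m}{a}{b}}\left(\prod_{k=1}^{b}\D(m+a+k,i_k)\right)\Dt(m+a+b,m,r),
\]
and, under the bijection $\seq{m}{a}{b}\to\seq{m+r}{a}{b}$ given by $(i_1,\ldots,i_b)\mapsto(i_1+r,\ldots,i_b+r)$,
\[
\Dt(m+a+b,m,r)\,\CC(m+r;a,b)=\sum_{\mathbf{i}\in\seq{m}{a}{b}}\Dt(m+a+b,m,r)\left(\prod_{k=1}^{b}\D(m+a+r+k,i_k+r)\right).
\]
Since $\Dt(m+a+b,m,r)$ is a fixed element of $\h$, it therefore suffices to establish, for each fixed $\mathbf{i}\in\seq{m}{a}{b}$, the term identity
\[
\left(\prod_{k=1}^{b}\D(m+a+k,i_k)\right)\Dt(m+a+b,m,r)=\Dt(m+a+b,m,r)\left(\prod_{k=1}^{b}\D(m+a+r+k,i_k+r)\right).
\]

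To prove this term identity I would argue exactly as in the proof of Lemma~\ref{PullsThrough}. Set $\eta=(m,a,b,r)$, and let $\ft\in\rowstd(\eta)$ be the row-standard $\eta$-tableau obtained from $\ft^{\eta}$ by sliding the entries $m+a+b+1,\ldots,m+a+b+r$ up into positions $m+1,\ldots,m+r$, pushing the entries originally in positions $m+1,\ldots,m+a+b$ down by $r$, and then reordering the entries now occupying positions $m+r+1,\ldots,m+a+b+r$ so that $i_1,\ldots,i_b$ occupy, in increasing order, the last $b$ of those positions while the remaining $a$ entries occupy the first $a$ of them in increasing order; let $w$ be the permutation with $\ft^{\eta}w=\ft$. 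It then suffices to check that the two products of permutations
\[
\left(\prod_{k=1}^{b}\Per(m+a+k,i_k)\right)\Per^{\flat}(m+a+b,m,r)\quad\text{and}\quad \Per^{\flat}(m+a+b,m,r)\left(\prod_{k=1}^{b}\Per(m+a+r+k,i_k+r)\right)
\]
are both equal to $w$, and that $\ell(w)=r(a+b)+\sum_{k=1}^{b}(m+a+k-i_k)$. Since this length splits both as $\sum_{k}\ell(\Per(m+a+k,i_k))+\ell(\Per^{\flat}(m+a+b,m,r))$ and as $\ell(\Per^{\flat}(m+a+b,m,r))+\sum_{k}\ell(\Per(m+a+r+k,i_k+r))$, these two checks are exactly what is needed to conclude that both ordered products of $\D$'s and $\Dt$ compute $T_w$, which gives the term identity. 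Conceptually, what is being asserted is that permuting inside the $(a,b)$-block by $\prod_{k}\Per(\cdot,i_k)$ and sliding the $r$-block past it, which rigidly translates the $(a,b)$-block up by $r$, are commuting operations, and the index shift $i_k\mapsto i_k+r$ on the right-hand side is forced precisely by that translation.

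I expect the main obstacle to be this last step: writing $\ft$ down correctly and verifying the permutation identity and the length formula by counting inversions. As in Lemma~\ref{PullsThrough}, where the analogous verification is dismissed as a routine exercise, I anticipate no structural difficulty here, only bookkeeping. One could also try to reduce to the case $r=1$ by induction, peeling off the first factor of the product defining $\Dt$ to get $\Dt(m+a+b,m,r)=\D(m+a+b+1,m+1)\,\Dt(m+a+b+1,m+1,r-1)$ and then applying the inductive hypothesis with $m$ replaced by $m+1$; however the case $r=1$ is itself the term identity above specialised to $r=1$, so this route does not avoid the permutation-and-length computation, and I would use whichever formulation turns out shorter to write in full.
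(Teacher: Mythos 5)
Your approach is exactly the paper's: the paper's entire proof of Lemma~\ref{PullsThrough2} is the single sentence ``The proof is similar to the proof of Lemma~\ref{PullsThrough}; we consider tableaux of shape $\eta=(m,a,b,r)$,'' and your expansion, bijection $\mathbf{i}\mapsto\mathbf{i}+r$, reduction to the term identity, and length count $r(a+b)+\sum_k(m+a+k-i_k)$ are all correct instantiations of that plan.

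However, the explicit description you give of $\ft$ is wrong: the tableau you describe is not even row-standard. After sliding, row $2$ of your $\ft$ would begin with the large entries $m+a+b+1,\ldots$ followed by smaller entries from $\{m+1,\ldots,m+a+b\}$, so the rows are not increasing, and moreover you omit the shift by $r$ on the $i_k$'s. You have the slide going in the wrong direction. With the convention $\ft=\ft^{\eta}w$ meaning $\ft(p)=pw$ (which is what the paper uses, as one sees from its explicit $\ft$ in the proof of Lemma~\ref{PullsThrough}), the correct target tableau of shape $\eta=(m,a,b,r)$ has row $1$ equal to $1,\ldots,m$, row $4$ equal to $m+1,\ldots,m+r$, row $3$ equal to $i_1+r,\ldots,i_b+r$, and row $2$ equal to $\{m+r+1,\ldots,m+a+b+r\}\setminus\{i_1+r,\ldots,i_b+r\}$ in increasing order; equivalently, the entries $m+1,\ldots,m+r$ slide down into the bottom block, the entries $m+r+1,\ldots,m+a+b+r$ slide up to fill rows $2$ and $3$, and then one reorders rows $2,3$ so that the $i_k+r$ sit in row $3$. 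This is a genuine error in the only nontrivial step, but it is precisely the ``routine exercise'' that the paper itself does not spell out, the rest of your argument (reduction to a single permutation identity plus additivity of lengths) is sound, and once $\ft$ is corrected the verification goes through exactly as you outline.
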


\begin{proof}
The proof is similar to the proof of Lemma~\ref{PullsThrough}; we consider tableaux of shape $\eta=(m,a,b,r)$.  
\end{proof}

\begin{lemma} \label{IntoThree}
Let $m\geq 0$ and $\eta=(\eta_1,\eta_2,\ldots,\eta_l)$ be a composition. Suppose  $0 \leq x\leq l$. Then
\[\CC(m;\eta_1,\eta_2,\ldots,\eta_l) = \CC(m;\eta_1,\ldots,\eta_x) \CC(m+\bar{\eta}_x; \eta_{x+1},\ldots,\eta_l) \CC(m; \bar{\eta}_x, \bar{\eta}_l-\bar{\eta}_x).\]
 
\end{lemma}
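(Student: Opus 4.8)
The statement is a purely combinatorial identity about the elements $\CC(m;\eta)$, so the plan is to prove it by manipulating the defining sums over minimal-length coset representatives, reducing everything to a statement about sets of coset representatives (equivalently, about row-standard tableaux). First I would normalize to $m=0$, since all the operators $\Per$, $\D$ etc.\ just shift index ranges. The key structural fact is the transitivity of minimal-length coset representatives: if $H \leq K \leq G$ are finite Coxeter groups then each $w \in G$ factors uniquely as $w = w_1 w_2$ with $w_1 \in \mathcal{D}^K_H$ (min-length reps of $H$ in $K$), $w_2 \in \mathcal{D}^G_K$, and $\ell(w)=\ell(w_1)+\ell(w_2)$; summing $T_w$ over these reps then factors as a product of the two partial sums. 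Applying this with the chain $\sym_{(\eta_1,\dots,\eta_l)} \leq \sym_{(\bar\eta_x,\bar\eta_l-\bar\eta_x)} \cap (\text{whatever symmetric group is acting}) \leq \sym_{\{1,\dots,\bar\eta_l\}}$ will split $\CC(0;\eta_1,\dots,\eta_l)$ as a product of two $\CC$-type sums: one for coset reps of $\sym_\eta$ inside $\sym_{(\bar\eta_x)} \times \sym_{(\bar\eta_l-\bar\eta_x)}$, and one for coset reps of the latter inside the full symmetric group, which is exactly $\CC(0;\bar\eta_x,\bar\eta_l-\bar\eta_x)$.

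The remaining task is to identify the first of those two factors with $\CC(0;\eta_1,\dots,\eta_x)\,\CC(\bar\eta_x;\eta_{x+1},\dots,\eta_l)$. Since $\sym_{(\bar\eta_x)}\times\sym_{(\bar\eta_l-\bar\eta_x)}$ acts on the disjoint letter sets $\{1,\dots,\bar\eta_x\}$ and $\{\bar\eta_x+1,\dots,\bar\eta_l\}$, its parabolic subgroup $\sym_\eta$ is a direct product $(\sym_{(\eta_1,\dots,\eta_x)}) \times (\sym_{(\eta_{x+1},\dots,\eta_l)})$ with the factors acting on those two disjoint letter sets respectively; hence the set of minimal-length coset representatives is the direct product of the two respective coset-representative sets, lengths add, and the corresponding $T$-sum factors as claimed — the second factor being $\CC(\bar\eta_x;\eta_{x+1},\dots,\eta_l)$ by the very definition of $\CC$ with shift parameter $\bar\eta_x$. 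Putting the two displays together gives the lemma; I might equivalently phrase the bookkeeping in terms of row-standard $\eta$-tableaux versus row-standard $(\bar\eta_x,\bar\eta_l-\bar\eta_x)$-tableaux, using the dictionary "$w \in \mathcal{D}_{m,\eta}$ iff $\ft w$ is row-standard" recorded before Theorem~\ref{hdtthm}.

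The main obstacle, such as it is, is purely notational: one must be careful that the generalized $\CC$ with shift $m$ really does correspond to coset representatives in the symmetric group on the shifted letter set, and that the factorizations of the $T_w$ hold on the nose (not just up to lower terms) — this is where the length-additivity $\ell(w)=\ell(w_1)+\ell(w_2)$ is essential, and it is standard for parabolic coset representatives. A small degenerate check is needed for $x=0$ and $x=l$, where one of the inner $\CC$'s is trivial ($\CC(m;\eta)=I$ when $\eta$ has at most one part, or all the interesting factor collapses to the identity) and the identity reduces to $\CC(m;\eta)=\CC(m;\eta)\cdot I\cdot I$ or similar; these are immediate.
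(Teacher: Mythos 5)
Your proposal is correct and is essentially the paper's own argument: both rest on factoring each minimal-length coset representative of $\sym_\eta$ through the intermediate parabolic $\sym_{(\bar\eta_x,\,\bar\eta_l-\bar\eta_x)}$ with additive lengths, so that the corresponding $T_w$'s, and hence the sums, factor as claimed. The only difference is presentational: you invoke the standard transitivity of distinguished coset representatives (together with the direct-product splitting inside the intermediate parabolic), whereas the paper writes the factorization $w=w_1w_2w_3$ explicitly in terms of row-standard tableaux, leaves the length check as an exercise, and concludes by counting terms.
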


\begin{proof}
Again, we may assume that $m =0$.  Let $\ft \in \rowstd(\eta)$ and let $w$ be the permutation such that $\ft = \ft^\eta w$.  Suppose that the entries in rows $1,\ldots,x$ of $\ft$ are $j_1<\ldots<j_{\bar\eta_x}$ and that the entries in rows $x+1,\ldots,l$ are $i_{\bar\eta_x+1}<\ldots<i_{\bar\eta_l}$.  Let $w_1$ be the permutation of $\{1,\ldots,\bar\eta_x\}$ which sends $1 \leq \alpha \leq \bar\eta_x$ to $\alpha^\ast$, where $j_{\alpha^\ast}$ occupies the same position in $\ft$ that $\alpha$ occupies in $\ft^\eta$.  Similarly, let $w_2$ be the permutation of $\{\bar{\eta}_x+1,\ldots,\bar\eta_l\}$ which sends $\bar{\eta}_x+1 \leq \alpha \leq \bar\eta_x$ to $\alpha^\ast$, where $i_{\alpha^\ast}$ occupies the same position in $\ft$ that $\alpha$ occupies in $\ft^\eta$.      
Finally let $w_3 = \prod_{k=\bar\eta_x+1}^{\bar\eta_l} \Per(k,i_k)$.  It is clear that $w_1 \in C(0;\eta_1,\ldots,\eta_x), \, w_2 \in C(\bar\eta_{x};\eta_{x+1},\ldots,\eta_l)$ and $w_3 \in C(0; \bar\eta_x,\bar\eta_l-\bar\eta_x$.  
We leave it as an exercise to check that $w=w_1w_2w_3$ and that $\ell(w)=\ell(w_1)+\ell(w_2)+\ell(w_3)$.  
The proof of Lemma \ref{IntoThree} follows by counting the number of terms on both sides of the equation.       
\end{proof}

\begin{lemma} \label{termstofront}
Suppose $\nu$ is a partition of $n$ with $\nu \rhd \mu$ and $\S\in \mathcal{T}_0(\nu,\mu)$.  Choose $k$ minimal such that $\nu_k>\mu_k$ and $r>k$ minimal such that $\S^r_k>0$.  Then  
\[m_{\S \ft^\nu} = m_\mu \Dt(\bar\mu_{r-1},\bar\mu_k,\S^r_k)\CC(\bar\mu_{k-1}; \mu_k,\S^r_k) h\]
for some $h \in \h$.
\end{lemma}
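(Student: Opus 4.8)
The plan is to start from the combinatorial expansion of $m_{\S\ft^\nu}$ supplied by Corollary~\ref{DescribeS},
\[
m_{\S\ft^\nu}=m_\mu T_\S\prod_{i\geq1}\CC(\bar\nu_{i-1};\S^1_i,\dots,\S^b_i),
\]
and to reorganise the right-hand side so that the single factor $\CC(\bar\mu_{k-1};\mu_k,\S^r_k)$ is carried all the way to the left, adjacent to $m_\mu$, at the cost of introducing the transporting element $\Dt(\bar\mu_{r-1},\bar\mu_k,\S^r_k)$ and collecting everything else into one right-hand factor $h$.

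First I would pin down the shape that the hypotheses force on $\S$. Since $\nu\rhd\mu$ gives $\bar\nu_{k-1}\geq\bar\mu_{k-1}$ while the minimality of $k$ gives $\nu_i\leq\mu_i$ for $i<k$, we must have $\nu_i=\mu_i$ for all $i<k$; hence every entry $\leq k-1$ of the semistandard tableau $\S$ lies in the first $k-1$ rows, which forces row $i$ of $\S$ to consist of $i$'s for $i<k$ and row $k$ of $\S$ to consist of $\mu_k$ copies of $k$ followed by its $\nu_k-\mu_k>0$ larger entries. By the minimality of $r$ the sequence $(\S^1_k,\dots,\S^b_k)$ then reduces, after deleting zero parts, to $(\mu_k,\S^r_k,\S^{r+1}_k,\dots,\S^b_k)$. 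I would also record the bounds $\S^r_k\leq\mu_{k+1}$ (immediate from $\S^r_k\leq\mu_r=\mu_{k+1}$ when $r=k+1$ and from $\S^r_k\leq\mu_r\leq\mu_{k+1}$ when $r>k+1$) and $\mu_k+\S^r_k\leq\nu_k$; the first of these is what makes $\CC(\bar\mu_{k-1};\mu_k,\S^r_k)=h_{k,\S^r_k}$ a genuine generator, which is the reason for casting the lemma in this form. Two further consequences: the factors in the product indexed by $i<k$ equal the identity, and the letters $1,\dots,\bar\mu_k$ occupy exactly the same cells in $\ft_\S$ as in $\ft^\mu$, so $T_\S$ fixes $\{1,\dots,\bar\mu_k\}$ pointwise. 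Finally, Lemma~\ref{IntoThree} applied with $x=2$ to the row-$k$ factor $\CC(\bar\mu_{k-1};\mu_k,\S^r_k,\S^{r+1}_k,\dots)$ splits $\CC(\bar\mu_{k-1};\mu_k,\S^r_k)$ off its left-hand end.

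It then remains to commute this factor leftward past $T_\S$. The crucial combinatorial point is that the $\S^r_k$ letters it shuffles, namely $\bar\mu_k+1,\dots,\bar\mu_k+\S^r_k$ (which begin row $k+1$ of $\ft^\mu$), are precisely the letters that $\S$ places in the first $\S^r_k$ cells of row $r$ of $\ft_\S$; this uses $\S^r_k\leq\mu_{k+1}$, $\mu_k+\S^r_k\leq\nu_k$ and the fact that the remaining $r$'s of $\S$ lie in rows $k+1,\dots,r$ and receive strictly larger letters. Using Lemma~\ref{GetOrder} to write $m_\mu T_\S$ as the explicit ordered product of the elements $\D(j^\ast,j)$, expanding $\CC(\bar\mu_{k-1};\mu_k,\S^r_k)$ as a sum of products of the $\D(\cdot,\cdot)$, and then repeatedly invoking the rearrangement identities of Corollary~\ref{RearrangeCor}, Lemma~\ref{PullsThrough} and Lemma~\ref{PullsThrough2} --- which are exactly the devices for sliding a $\CC$-block past a $\Dt$-block while shifting its base point --- I would re-bracket these products so as to bring $\Dt(\bar\mu_{r-1},\bar\mu_k,\S^r_k)\CC(\bar\mu_{k-1};\mu_k,\S^r_k)$ to the far left, transporting the distinguished letters one intermediate row at a time; the factor $\Dt(\bar\mu_{r-1},\bar\mu_k,\S^r_k)$ arises as a telescoping product $\prod_{k<j<r}\Dt(\bar\mu_j,\bar\mu_{j-1},\S^r_k)$ via $\Dt(s,r,x)=\Dt(s,t,x)\Dt(t,r,x)$. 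Everything left over on the right --- the other two factors from Lemma~\ref{IntoThree}, the $\CC$-factors coming from rows $>k$, and the residual part of $T_\S$ --- is absorbed into $h$.

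The main obstacle is exactly this last bookkeeping. One has to verify, row by row, that the base-point inequalities in the hypotheses of Lemmas~\ref{PullsThrough} and~\ref{PullsThrough2} are met and that the relevant $\CC$- and $\Dt$-blocks have matching sizes, and separately that the distinguished letters really do travel from the start of row $k+1$ of $\ft^\mu$ to the start of row $r$ of $\ft_\S$ --- a length computation via Equation~\ref{length} together with the factorisation property of $\Dt$. I expect the cleanest write-up to proceed by induction on $r-k$: the base case $r=k+1$ is a single application of Corollary~\ref{RearrangeCor} and Lemma~\ref{PullsThrough}, and the inductive step routes the distinguished letters through the full row $k+1$ of $\ft^\mu$ using Lemma~\ref{PullsThrough2} before invoking the inductive hypothesis. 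The remainder is routine but lengthy manipulation of reduced words.
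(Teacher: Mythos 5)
Your proposal follows the paper's strategy up to the key juncture: expand via Corollary~\ref{DescribeS}, use the structure of $\S$ (all rows $<k$ filled with their row index, row $k$ starting with $\mu_k$ copies of $k$ and then $\S^r_k$ copies of $r$, so that the factors of the product indexed by $i<k$ vanish and $T_\S$ fixes $\{1,\dots,\bar\mu_k\}$ pointwise), and peel off $\CC(\bar\mu_{k-1};\mu_k,\S^r_k)$ with Lemma~\ref{IntoThree}. You also correctly invoke Lemma~\ref{GetOrder}.

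Where you diverge is the last step, and you make it substantially heavier than it needs to be. You propose to \emph{also} expand the extracted $\CC$-factor into a sum of $\D$-products and slide it leftward through $T_\S$ by repeated use of Corollary~\ref{RearrangeCor} and Lemmas~\ref{PullsThrough} and~\ref{PullsThrough2}, accumulating $\Dt(\bar\mu_{r-1},\bar\mu_k,\S^r_k)$ as a telescoping product of the base-point shifts. This is not how the factor arises, and none of that machinery is required. Lemma~\ref{GetOrder} already writes $T_\S=\prod_{l=\bar\mu_k+1}^n\D(l^\ast,l)$, and the shape analysis you did forces $l^\ast=l+\bar\mu_{r-1}-\bar\mu_k$ for $\bar\mu_k<l\le\bar\mu_k+\S^r_k$, so the initial segment of this product \emph{is} $\Dt(\bar\mu_{r-1},\bar\mu_k,\S^r_k)$ on the nose: $T_\S=\Dt(\bar\mu_{r-1},\bar\mu_k,\S^r_k)\,\check h$ with $\check h=\prod_{l>\bar\mu_k+\S^r_k}\D(l^\ast,l)$. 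Each factor of $\check h$ permutes only letters $>\bar\mu_k+\S^r_k$, whereas $\CC(\bar\mu_{k-1};\mu_k,\S^r_k)$ involves only letters in $\{\bar\mu_{k-1}+1,\dots,\bar\mu_k+\S^r_k\}$; so $\check h$ commutes with the $\CC$-factor by disjointness of support, and
\[
m_\mu T_\S\,\CC(\bar\mu_{k-1};\mu_k,\S^r_k)\,[\text{rest}]
= m_\mu\,\Dt(\bar\mu_{r-1},\bar\mu_k,\S^r_k)\,\CC(\bar\mu_{k-1};\mu_k,\S^r_k)\,\check h\,[\text{rest}],
\]
which is the claim with $h=\check h\,[\text{rest}]$. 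The rearrangement lemmas you reach for are the tools used in Lemma~\ref{killsall}, not here; importing them into this proof is not wrong per se, but it obscures what is actually a one-line commutation once $T_\S$ is factored correctly. Your remark that $\CC(\bar\mu_{k-1};\mu_k,\S^r_k)=h_{k,\S^r_k}$ (valid since $\S^r_k\le\mu_{k+1}$) is a nice piece of motivation that the paper leaves implicit, and your induction on $r-k$ would work, but neither is needed.
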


\begin{proof} Using Lemma \ref{DescribeS} and Lemma \ref{IntoThree},
\begin{align*}
m_{\S \ft^\nu} &  = m_\mu T_\S \prod_{l\geq k}\CC\left(\bar\nu_{l-1}; \S^l_l,\ldots \S^b_l \right) \\
&=m_\mu T_\S \CC(\bar\mu_{k-1}; \mu_k,\S^r_k,\ldots,\S^{b}_k)\prod_{l>k }\CC\left(\bar\nu_{l-1}; \S^l_l,\ldots, \S^b_l\right) \\
&=m_\mu T_\S \CC(\bar\mu_{k-1}; \mu_k,\S^r_k)\CC(\bar\mu_{k}+\S^r_k;\S^{r+1}_k,\ldots,\S^b_k)\CC(\bar\mu_{k-1};\mu_k+\S^r_k,\nu_k-\mu_k-\S^r_k)\\
& \hspace{10mm}\times \prod_{l>k}\CC\left(\bar\nu_{l-1}; \S^l_l,\ldots,\S^{b}_l\right).
\end{align*}
As in Lemma \ref{GetOrder}, and keeping the notation of that lemma, we may write
\[m_\mu T_\S = m_\mu \prod_{l = \bar{\mu}_{k}+1}^n \D(l^\ast,l) = m_\mu \Dt(\bar\mu_{r-1},\bar\mu_k,\S^r_k)\check h\] where 
\[\check h = \prod_{l=\bar\mu_k+\S^r_k+1}^n \D(l^\ast,l) \]
commutes with $\CC(\bar\mu_{k-1};\mu_k,\S^r_k)$. The result follows.  
\end{proof}

\begin{lemma}\label{killsall}
Suppose that $1\leq k< r \leq b$ and $1 \leq x \leq \mu_{r}$.  Then 
\[m_\mu \Dt(\bar\mu_{r-1},\bar\mu_{k},x) \CC(\bar\mu_{k-1};\mu_{k},x) \in \mathcal{I}.\]
\end{lemma}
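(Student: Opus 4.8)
The proof is by induction on $r-k\ge 1$, with the remaining lemmas of this subsection doing the work in the inductive step. For the base case $r=k+1$ one has $\bar\mu_{r-1}=\bar\mu_k$, so each factor $\D(\bar\mu_k+j,\bar\mu_k+j)$ of $\Dt(\bar\mu_k,\bar\mu_k,x)$ is the identity and hence $\Dt(\bar\mu_{r-1},\bar\mu_k,x)=I$; the element in question is therefore $m_\mu\CC(\bar\mu_{k-1};\mu_k,x)=m_\mu h_{k,x}$. Since $\mu$ is a partition, $1\le x\le\mu_r=\mu_{k+1}$, so $h_{k,x}$ is one of the generators of $\mathcal I$ and the base case follows.

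Now suppose $r\ge k+2$. Since $\mu$ is a partition, $x\le\mu_r\le\mu_{r-1}$, so $h_{r-1,x}=\CC(\bar\mu_{r-2};\mu_{r-1},x)$ is a generator of $\mathcal I$, whence $m_\mu h_{r-1,x}\in\mathcal I$. Because $\bar\mu_{r-1}\ge\bar\mu_{r-2}\ge\bar\mu_k$ (here we use $k\le r-2$) we may factor $\Dt(\bar\mu_{r-1},\bar\mu_k,x)=\Dt(\bar\mu_{r-1},\bar\mu_{r-2},x)\,\Dt(\bar\mu_{r-2},\bar\mu_k,x)$. Expanding $\CC(\bar\mu_{r-2};\mu_{r-1},x)=\sum_{\mathbf i\in\seq{\bar\mu_{r-2}}{\mu_{r-1}}{x}}\prod_{l=1}^{x}\D(\bar\mu_{r-1}+l,i_l)$, the term indexed by the minimal tuple $\mathbf i=(\bar\mu_{r-2}+1,\dots,\bar\mu_{r-2}+x)$ is exactly $\Dt(\bar\mu_{r-1},\bar\mu_{r-2},x)$, so
\[\Dt(\bar\mu_{r-1},\bar\mu_{r-2},x)=h_{r-1,x}-\sum_{\mathbf i}\prod_{l=1}^{x}\D(\bar\mu_{r-1}+l,i_l),\]
where the sum now runs over the non-minimal tuples $\mathbf i\in\seq{\bar\mu_{r-2}}{\mu_{r-1}}{x}$. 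Multiplying on the left by $m_\mu$ and on the right by $\Dt(\bar\mu_{r-2},\bar\mu_k,x)\CC(\bar\mu_{k-1};\mu_k,x)$, the $h_{r-1,x}$-contribution lies in $\mathcal I$ since $m_\mu h_{r-1,x}\in\mathcal I$ and $\mathcal I$ is a right ideal. It remains to treat, for each non-minimal $\mathbf i$, the term $m_\mu\big(\prod_{l}\D(\bar\mu_{r-1}+l,i_l)\big)\Dt(\bar\mu_{r-2},\bar\mu_k,x)\CC(\bar\mu_{k-1};\mu_k,x)$. By the inductive hypothesis applied to the pair $(k,r-1)$ — which has strictly smaller difference, and for which $1\le x\le\mu_{r-1}$ — the element $m_\mu\Dt(\bar\mu_{r-2},\bar\mu_k,x)\CC(\bar\mu_{k-1};\mu_k,x)$ already lies in $\mathcal I$; the remaining point is to rewrite each of the terms above, using Corollary~\ref{RearrangeCor} to recombine the factors $\D(\bar\mu_{r-1}+l,i_l)$, Lemmas~\ref{PullsThrough}, \ref{PullsThrough2} and \ref{IntoThree} to slide the various $\CC$- and $\Dt$-factors past one another, and the symmetry of $m_\mu$ on each row-block of $\ft^\mu$ to collapse the non-reduced products into $q$-multiples of shorter ones, so as to exhibit it as a right-multiple of some $m_\mu\Dt(\bar\mu_{r'-1},\bar\mu_k,x')\CC(\bar\mu_{k-1};\mu_k,x')$ with a strictly smaller value of $r'-k$ (or of $x'$), which is then in $\mathcal I$ by induction.

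I expect this last reduction of the non-minimal terms to be the genuinely laborious part: one must track the powers of $q$ produced when $m_\mu$ absorbs within-row transpositions (controlled by the Gaussian-binomial identities of Lemmas~\ref{GaussSum} and \ref{GaussLemma}) and verify that no term escapes the induction. It may streamline matters to work in the module picture: using Corollary~\ref{DescribeS} and Lemma~\ref{GetOrder} one identifies $m_\mu\Dt(\bar\mu_{r-1},\bar\mu_k,x)\CC(\bar\mu_{k-1};\mu_k,x)$ with $m_{\T\ft^\sigma}$, where $\sigma_k=\mu_k+x$, $\sigma_r=\mu_r-x$ and $\sigma_i=\mu_i$ otherwise, and $\T$ is the row-standard $\sigma$-tableau of type $\mu$ whose $k$-th row consists of $\mu_k$ entries equal to $k$ followed by $x$ entries equal to $r$, all other rows being filled naturally; the induction on $r-k$ then amounts to pushing these $x$ displaced entries down one row at a time, each step costing only a right-multiplication by a suitable factor $\CC(\,\cdot\,)$ together with an error term that is manifestly in $\mathcal I$.
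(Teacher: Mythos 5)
Your setup follows the paper's proof closely: the paper also reduces the claim to an induction (on $r$, after normalizing to $k=1$; your induction on $r-k$ is equivalent and equally valid), uses the same base case $r=k+1$, and in the inductive step likewise factors $\Dt(\bar\mu_{r-1},\bar\mu_k,x)=\Dt(\bar\mu_{r-1},\bar\mu_{r-2},x)\Dt(\bar\mu_{r-2},\bar\mu_k,x)$ and replaces $\Dt(\bar\mu_{r-1},\bar\mu_{r-2},x)$ by $h_{r-1,x}$ minus the sum over non-minimal tuples, disposing of the $h_{r-1,x}$-contribution by the right-ideal property.

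However, there is a genuine gap at precisely the point you flag as ``the genuinely laborious part.'' You assert that the non-minimal terms can be rewritten, via Corollary~\ref{RearrangeCor} and Lemmas~\ref{PullsThrough}, \ref{PullsThrough2}, \ref{IntoThree}, as right-multiples of elements $m_\mu\Dt(\bar\mu_{r'-1},\bar\mu_k,x')\CC(\bar\mu_{k-1};\mu_k,x')$ with a smaller $r'-k$ or smaller $x'$, but you do not identify \emph{which} inductive instance applies to which non-minimal tuple, and without that the ``recombine and slide'' strategy has no visible termination. The missing idea is the paper's re-parametrization of the non-minimal tuples: for each $j$ the sum $\sum_{\mathbf i\in\seq{\bar\mu_{r-2}}{j}{x-1}}\prod_l\D(\bar\mu_{r-1}+l,i_l)$ is split according to a parameter $y$ (roughly, how many of the $x-1$ remaining entries of $\mathbf i$ lie in $\{\bar\mu_{r-2}+1,\dots,\bar\mu_{r-2}+x\}$). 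The $y$-th piece factors, via Corollary~\ref{RearrangeCor}, as $\Dt(\bar\mu_{r-1},\bar\mu_{r-2}+x-y,y)\CC(\bar\mu_{r-2};x-y,y)$ times an inert tail; the $\CC$-factor is then commuted past $\Dt(\bar\mu_{r-2},\mu_1,x)$ by Lemma~\ref{PullsThrough}, merged with $\CC(0;\mu_1,x)$ into $\CC(0;\mu_1,x-y,y)$ by Lemma~\ref{IntoThree}, and re-split as $\CC(0;\mu_1,x-y)\CC(0;\mu_1+x-y,y)$. After further factoring of the $\Dt$-term this produces $m_\mu\Dt(\bar\mu_{r-2},\mu_1,x-y)\CC(0;\mu_1,x-y)$ as a \emph{left} factor, to which the inductive hypothesis with $r'=r-1$ and $x'=x-y$ applies (the case $y=0$ is the inductive hypothesis directly). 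Without exhibiting this $y$-decomposition, your proof is a plausible plan rather than a proof. Your closing module-picture heuristic is also only sketched and is not used in the paper's argument.
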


\begin{proof}
It is straightforward to see that the proof for arbitrary $k$ is identical to the proof for $k=1$, so we assume that $k=1$.  
We now prove that the lemma holds for all $2 \leq r \leq b$.  
If $r=2$ and $1 \leq x \leq \mu_2$ then
\[m_\mu \CC(0;\mu_1,x) = m_\mu h_{1,x} \in \mathcal{I}.\]
So now suppose that $3 \leq r \leq b$ and that Lemma \ref{killsall} holds for all $r' < r$.  Choose $x$ with $1 \leq x \leq \mu_r$.  Recall that if $1 \leq t \leq \mu_{r}$ then  
\[h_{r-1,t} = \sum_{{\bf i} \in \seq{\bar\mu_{r-2}}{\mu_{r-1}}{t}} \left(\prod_{l=1}^t \D(\bar\mu_{r-1}+l,i_l)\right).\]
We have
\[m_\mu \Dt(\bar\mu_{r-1},\mu_1,x) \CC(0;\mu_{1},x) = m_\mu \Dt(\bar\mu_{r-1},\bar\mu_{r-2},x) \Dt(\bar\mu_{r-2},\mu_1,x) \CC(0;\mu_1,x) \]
where
\[\Dt(\bar\mu_{r-1},\bar\mu_{r-2},x) = h_{r-1,x}-\sum_{j=1}^{\mu_{r-1}} \sum_{{\bf i} \in \seq{\bar\mu_{r-2}}{j}{x-1}} \prod_{l=1}^{x-1} \Big( \D(\bar\mu_{r-1}+l,i_l) \Big) \D(\bar\mu_{r-1}+x,\bar\mu_{r-2}+x+j) \]
so that it is sufficient to show that
\begin{equation} \label{toshow} m_\mu \sum_{j=1}^{\mu_{r-1}} \sum_{{\bf i} \in \seq{\bar\mu_{r-2}}{j}{x-1}} \prod_{l=1}^{x-1} \Big( \D(\bar\mu_{r-1}+l,i_l)\Big) \D(\bar\mu_{r-1}+x,\bar\mu_{r-2}+x+j) \Dt(\bar\mu_{r-2},\mu_1,x) \CC(0;\mu_1,x) \in \mathcal{I}. 
\end{equation}
Now for $1 \leq j \leq \mu_{r-1}$ we may write
\begin{multline*} \sum_{{\bf i} \in \seq{\bar\mu_{r-2}}{j}{x-1}} \prod_{l=1}^{x-1} \D(\bar\mu_{r-1}+l,i_l)\\
= \sum_{y=\max\{0,x-j\}}^{x-1} \left(\Big(\sum_{{\bf i} \in \seq{\bar\mu_{r-2}}{x-y}{y}} \prod_{l=1}^y \D(\bar\mu_{r-1}+l,i_l)\Big)
\Big(\sum_{{\bf i} \in \seq{\bar\mu_{r-2}+x}{j-x+y}{x-y-1}} \prod_{l=1}^{x-y-1} \D(\bar\mu_{r-1}+y+l,i_l)\Big)
\right)
\end{multline*}
so that, substituting into Equation \ref{toshow} and commuting terms to the left where possible, we must show that 
\begin{multline*}
m_\mu \sum_{y=0}^{x-1} \sum_{j=x-y}^{\mu_{r-1}}\sum_{{\bf i} \in \seq{\bar\mu_{r-2}}{x-y}{y}} 
\left(\prod_{l=1}^y \D(\bar\mu_{r-1}+l,i_l) \right) \Dt(\bar\mu_{r-2},\mu_1,x) 
\CC(0;\mu_1,x) \\
\times \sum_{{\bf i} \in \seq{\bar\mu_{r-2}+x}{j-x+y}{x-y-1}} \left(\prod_{l=1}^{x-y-1} \D(\bar\mu_{r-1}+y+l,i_l) \right)\D(\mu_{r-1}+x,\bar\mu_{r-2}+x+j) \in \mathcal{I}.
\end{multline*}
Consider $y=0$.  By the inductive hypothesis,
\[m_\mu \Dt(\bar\mu_{r-2},\mu_1,x) \CC(0;\mu_1,x) \in \mathcal{I}\]
so that it is sufficient to prove that for $1 \leq y  <x$ we have 
\[m_\mu \sum_{{\bf i} \in \seq{\bar\mu_{r-2}}{x-y}{y}} \left(\prod_{l=1}^y \D(\bar\mu_{r-1}+l,i_l)\right) \Dt(\bar\mu_{r-2},\mu_1,x) \CC(0;\mu_1,x)\in \mathcal{I}.\] 
Now, using Corollary \ref{RearrangeCor}, Lemma \ref{PullsThrough} and Lemma \ref{IntoThree},
\begin{align*} 
\sum_{{\bf i} \in \seq{\bar\mu_{r-2}}{x-y}{y}}
\prod_{l=1}^y &\D(\bar\mu_{r-1}+l,i_l)\Dt(\bar\mu_{r-2},\mu_1,x) \CC(0;\mu_1,x) \\
& = \Dt(\bar\mu_{r-1},\bar\mu_{r-2}+x-y,y) \CC(\bar\mu_{r-2}; x-y,y) \Dt(\bar\mu_{r-2},\mu_1,x) \CC(0;\mu_1,x) \\
& = \Dt(\bar\mu_{r-1},\bar\mu_{r-2}+x-y,y) \Dt(\bar\mu_{r-2},\mu_1,x) \CC(\mu_1;x-y,y)\CC(0;\mu_1,x) \\
&= \Dt(\bar\mu_{r-1},\bar\mu_{r-2}+x-y,y) \Dt(\bar\mu_{r-2},\mu_1,x)\CC(0;\mu_1,x-y,y) \\
& = \Dt(\bar\mu_{r-1},\bar\mu_{r-2}+x-y,y) \Dt(\bar\mu_{r-2},\mu_1,x) \CC(0;\mu_1,x-y)\CC(0;\mu_1+x-y,y) \\
& = \Dt(\bar\mu_{r-1},\bar\mu_{r-2}+x-y,y) \Dt(\bar\mu_{r-2},\mu_1,x-y) \Dt(\bar\mu_{r-2}+x-y,\mu_1+x-y,y) \\ 
& \hspace{1cm} \CC(0;\mu_1,x-y)\CC(0;\mu_1+x-y,y) \\
&= \Dt(\bar\mu_{r-2},\mu_1,x-y) \CC(0;\mu_1,x-y) \Dt(\bar\mu_{r-1}, \bar\mu_{r-2}+x-y,y) \\ 
& \hspace{1cm} \Dt(\bar\mu_{r-2}+x-y,\mu_1+x-y,y) \CC(0;\mu_1+x-y,y)
\end{align*}
and by the inductive hypothesis again,
\[m_\mu  \Dt(\bar\mu_{r-2},\mu_1,x-y)\CC(0;\bar\mu_1,x-y)\in \mathcal{I}.\]
\end{proof}

\begin{proposition} \label{AllEqual}
We have
\[M^\mu \cap \hmu = \mathcal{I}.\]
\end{proposition}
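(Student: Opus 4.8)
The plan is to use Corollary~\ref{halfsub}, which already gives $\mathcal{I}\subseteq M^\mu\cap\hmu$, and concentrate on the reverse inclusion $M^\mu\cap\hmu\subseteq\mathcal{I}$. For this I would invoke the basis of $M^\mu\cap\hmu$ provided by Lemma~\ref{mstbasis}, consisting of the elements $m_{\S\ft}$ with $\S\in\mathcal{T}_0(\nu,\mu)$, $\ft\in\Std(\nu)$ and $\nu\rhd\mu$. Since $m_{\S\ft}=m_{\S\ft^\nu}T_{d(\ft)}$ and $\mathcal{I}$ is a right ideal, it suffices to treat the ``leading'' elements $m_{\S\ft^\nu}$; thus the task is reduced to showing $m_{\S\ft^\nu}\in\mathcal{I}$ for every $\S\in\mathcal{T}_0(\nu,\mu)$ with $\nu\rhd\mu$.

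Fix such an $\S$. Because $\nu\rhd\mu$ is strict, I can choose $k$ minimal with $\nu_k>\mu_k$. Column-strictness of $\S$ (together with $\nu$ being a partition) forces every entry in row $k$ to be at least $k$, so among the $\nu_k>\mu_k$ entries of row $k$ at least one exceeds $k$; let $r>k$ be the smallest value occurring in row $k$. Then $1\le\S^r_k\le\mu_r$ (as $\S$ has only $\mu_r$ entries equal to $r$) and $r\le b$, so Lemma~\ref{termstofront} applies and writes
\[m_{\S\ft^\nu}=m_\mu\,\Dt(\bar\mu_{r-1},\bar\mu_k,\S^r_k)\,\CC(\bar\mu_{k-1};\mu_k,\S^r_k)\,h\]
for some $h\in\h$. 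Lemma~\ref{killsall}, applied with $x=\S^r_k$, shows that the prefix $m_\mu\,\Dt(\bar\mu_{r-1},\bar\mu_k,\S^r_k)\,\CC(\bar\mu_{k-1};\mu_k,\S^r_k)$ lies in $\mathcal{I}$, and since $\mathcal{I}$ is a right ideal this gives $m_{\S\ft^\nu}\in\mathcal{I}$. Combined with Corollary~\ref{halfsub}, this yields $M^\mu\cap\hmu=\mathcal{I}$.

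I expect the only point genuinely requiring care here to be the combinatorial bookkeeping that guarantees the data $(k,r)$ demanded by Lemma~\ref{termstofront} always exist for a semistandard $\S$ with $\nu\rhd\mu$, and that $\S^r_k\le\mu_r$ — which is why I would isolate those observations explicitly. The substantive difficulty has already been absorbed into Lemma~\ref{killsall}, whose inductive proof repeatedly commutes the operators $\CC(-;-,-)$ and $\Dt(-,-,-)$ past one another using Corollary~\ref{RearrangeCor} and Lemmas~\ref{PullsThrough}--\ref{IntoThree}; relative to that, the present argument is purely assembly.
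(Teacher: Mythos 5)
Your proposal is correct and follows essentially the same route as the paper: Corollary~\ref{halfsub} for one inclusion, then the basis of Lemma~\ref{mstbasis} together with the right-ideal property, Lemma~\ref{termstofront} and Lemma~\ref{killsall} for the other. The only difference is that you spell out the existence of the pair $(k,r)$ and the bound $\S^r_k\leq\mu_r$ for a semistandard $\S$ of shape $\nu\rhd\mu$, which the paper leaves implicit in the hypotheses of Lemma~\ref{termstofront}.
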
  

\begin{proof}
By Corollary \ref{halfsub}, $\mathcal{I} \subseteq M^\mu \cap \hmu$.  Now by Lemma \ref{mstbasis}, $M^\mu  \cap \hmu$ has a basis 
\[\{m_{\S \ft} \mid \S \in \mathcal{T}_0(\nu,\mu),\ft \in \Std(\nu) \text{ for some } \nu \rhd \mu\}.\]  If $\nu \rhd \mu$ and $\S \in \mathcal{T}_0(\nu,\mu), \ft \in \Std(\nu)$ then, since $\mathcal{I}$ is a right ideal, it follows from Lemmas \ref{termstofront} and \ref{killsall} that $m_{\S \ft^\nu} \in \mathcal{I}$ and so $m_{\S\ft} \in \mathcal{I}$.  Hence $M^{\mu} \cap \hmu \subseteq \mathcal{I}$.  
\end{proof}

\subsection{Proof of Theorem~\ref{Lemma7}} \label{Lemma7Proof}
In this section, we give the proof of Theorem~\ref{Lemma7}.  Let $\q$ be an indeterminate over $\Z$ and let $\calZ=\Z[\q,\q^{-1}]$.  Let $\h^{\calZ}=\h_{\calZ,\q}(\sym_n)$.  We prove Theorem~\ref{Lemma7} for $\h=\h^{\calZ}$; the general result follows by specialization.  

Let $\la=(\la_1,\ldots,\la_a)$ be a partition of $n$, $\nu=(\nu_1,\ldots,\nu_b)$ a composition of $n$ and $\S \in \RowT(\la,\nu)$ where we assume that $a \geq 2$.  Our aim is to write $\Theta_\S:M^\nu \rightarrow S^\la$ as a linear combination of homomorphisms indexed by tableaux $\U \in \RowT(\la,\nu)$.  As in the previous examples,  we identify $\U \in \RowT(\la,\nu)$ with $\Theta_{\U}(m_\nu)$.  

\begin{ex} \label{NiceEx}
Let $\la=(3,3)$ and $\nu=(2,1,1,1,1)$.  
Recall that \[m_\la h_{1,1} = m_\la(I+T_3+T_3T_2+T_3T_2T_1) \in \hla.\]
Then
\begin{align*}
\tab(114,235) &= \hla+m_{\la}T_3T_4 \\
&= \hla + \q^{-1}m_{\la}T_4T_3T_4 \\
&= \hla + \q^{-1}m_\la T_3T_4T_3 \\
&= \hla - \q^{-1}m_\la(I+T_3T_2+T_3T_2T_1)T_4T_3 \\
&=\hla - m_\la (T_3 +\q^{-1}T_3T_2T_4T_3 + \q^{-1}T_3T_2T_1T_4T_3) \\
&= - \tab(113,245) -\q^{-1}\tab(134,125). 
\end{align*}
\end{ex}

If $\U \in \RowT(\la,\nu)$, define $\first(\U) \in \rowstd(\la)$ to be the tableau with $\nu(\first(\U))=\U$ and $\{\nu_i+1,\ldots,\nu_{i+1}\}$ in row order, for all $0 \leq i < b$.  The following result is given by the definition of the map $\Theta_\U$.  (Observe also the `reverse' statement in Corollary~\ref{DescribeS}.)  

\begin{lemma} \label{reverse} Let $\U \in \RowT(\la,\nu)$.  Then
\[\Theta_{\U}(m_\nu) = \hla+ m_\la T_{d(\first(\U))} \prod_{j=1}^b C(\bar\nu_{j-1}; \U^j_1,\ldots,\U^j_a).\]
\end{lemma}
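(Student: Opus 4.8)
The plan is to unwind the definition of $\Theta_{\U}$ and then recognise the right-hand side directly from the action of $\h$ on $M^\la$, following the same bookkeeping that underlies Corollary~\ref{DescribeS} (the ``reverse'' of this identity, with the roles of left and right multiplication interchanged). By definition,
\[\Theta_{\U}(m_\nu)=\hla+\sum_{\fs}m_\la T_{d(\fs)},\]
the sum being over all $\fs\in\rowstd(\la)$ with $\nu(\fs)=\U$. Write $B_j=\{\bar\nu_{j-1}+1,\dots,\bar\nu_j\}$ for the $j$th block of entries. The first step is the combinatorial observation that $\fs\in\rowstd(\la)$ satisfies $\nu(\fs)=\U$ exactly when, for each $1\le j\le b$, the entries of $B_j$ fill precisely the cells of $\U$ containing the value $j$, increasing along each row, and that \emph{every} such filling is automatically row-standard, since all entries of $B_j$ are smaller than those of $B_{j+1}$ while $\U$, being row-standard, has its $j$'s to the left of its $(j+1)$'s in every row. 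In particular $\first(\U)$ is, by the very definition of $\first$, the unique such $\fs$ in which each block $B_j$ appears in row order.

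The second step is to make explicit the bijection implicit in this description. Given such an $\fs$ and $j$, let $w_j=w_j(\fs)$ be the permutation of the letters of $B_j$ that carries the $j$-cells of $\first(\U)$ onto those of $\fs$; then $w_j\in\mathcal{D}_{\bar\nu_{j-1},(\U^j_1,\dots,\U^j_a)}$, the permutations $w_1,\dots,w_b$ have pairwise disjoint supports (hence commute), and $\fs=\first(\U)\,w_1\cdots w_b$ for the right action on tableaux, so that $d(\fs)=d(\first(\U))\,w_1\cdots w_b$. The assignment $\fs\mapsto(w_1,\dots,w_b)$ is a bijection of $\{\fs\in\rowstd(\la)\mid\nu(\fs)=\U\}$ onto $\prod_{j=1}^{b}\mathcal{D}_{\bar\nu_{j-1},(\U^j_1,\dots,\U^j_a)}$, with inverse $(w_1,\dots,w_b)\mapsto\first(\U)\,w_1\cdots w_b$.

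The third step is the length count. Because $\first(\U)$ enters each block in row order, $d(\first(\U))$ is the minimal-length element $d$ with $\nu(\ft^\la d)=\U$, hence the distinguished representative of its coset modulo $\sym_{B_1}\times\dots\times\sym_{B_b}$; combining this with the disjointness of the supports of the $w_j$ and the length formula~(\ref{length}) gives $\ell(d(\fs))=\ell(d(\first(\U)))+\sum_{j=1}^{b}\ell(w_j)$, and therefore $T_{d(\fs)}=T_{d(\first(\U))}T_{w_1}\cdots T_{w_b}$. Summing over $\fs$, pulling $m_\la T_{d(\first(\U))}$ to the front (the factors for distinct $j$ commute), and using the definition of $\CC$,
\[\sum_{\fs}m_\la T_{d(\fs)}=m_\la T_{d(\first(\U))}\prod_{j=1}^{b}\Bigl(\sum_{w\in\mathcal{D}_{\bar\nu_{j-1},(\U^j_1,\dots,\U^j_a)}}T_w\Bigr)=m_\la T_{d(\first(\U))}\prod_{j=1}^{b}\CC(\bar\nu_{j-1};\U^j_1,\dots,\U^j_a);\]
applying $\pi_\la$ then yields the statement. (The displayed identity in fact already holds in $M^\la$ before projection, in parallel with Corollary~\ref{DescribeS}.)

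The only point requiring genuine care is the length identity of the third step, namely that the passage from $\ft^\la$ to $\fs$ factors with no loss of length as ``distribute the $\nu$-blocks among the rows of $[\la]$'' (recorded by $d(\first(\U))$) followed by ``shuffle the entries within each block'' (recorded by $w_1\cdots w_b$). This is a routine application of~(\ref{length}) together with the standard properties of minimal-length coset representatives for parabolic subgroups of $\symn$; it is exactly the bookkeeping of Lemma~\ref{GetOrder} and Lemma~\ref{leftright} transposed, so no essentially new argument is needed, and everything else is immediate from the definition of $\Theta_\U$.
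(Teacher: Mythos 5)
Your proposal is correct and follows essentially the same route as the paper, which simply records Lemma~\ref{reverse} as immediate from the definition of $\Theta_\U$ (pointing to the parallel factorisation in Corollary~\ref{DescribeS}); your write-up is the natural unwinding of that remark. The bijection $\fs\leftrightarrow(w_1,\dots,w_b)$ and the length additivity $\ell(d(\fs))=\ell(d(\first(\U)))+\sum_j\ell(w_j)$ are exactly the distinguished-coset-representative bookkeeping the paper leaves implicit, and you identify and justify them correctly.
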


We begin by considering the case where $\la=(\la_1,\la_2)$.  If $\S \in \RowT(\la,\nu)$ is such that $\S^i_1=\alpha_i$  and $\S^i_2=\beta_i$ for $1 \leq i \leq b$ then we will represent $\S$ by
\[\S = \rep{1^{\alpha_1}2^{\alpha_2}\ldots b^{\alpha_b}}{1^{\beta_1} 2^{\beta_2} \ldots b^{\beta_b}}.\]
If a number $i$ does not appear in the top (resp. bottom) row it should be understood that $\alpha_i=0$ (resp. $\beta_i=0$).  Any such representation containing an index $\alpha_i,\beta_i<0$ should be taken to be zero, so for example in Corollary~\ref{basic5} below, if $\alpha_1=0$ then the first term on the right-hand side of the equation should be ignored.  We continue to identify $\S$ with $\Theta_\S(m_\nu)$.  

\begin{lemma} \label{Type1n} 
Let $0 \leq \alpha \leq \la_1$ and $0 \leq \beta < \la_2$ and let $\S \in \rowstd(\la)$ be the tableau with $1,\ldots,\alpha,\alpha+\beta+2,\ldots,\la_1+\beta+1$ in the top row (and $\alpha+1,\ldots,\alpha+\beta+1,\la_1+\beta+2,\ldots,\la_1+\la_2$ in the second row).  For $i \in \{1,\ldots,\alpha,\alpha+\beta+2,\ldots,\la_1+\beta+1\}$ let $\U_i$ be the row-standard tableau obtained from $\S$ by swapping $\alpha+\beta+1$ with $i$ and rearranging the rows if necessary.  Then  
\[\Theta_\S = - \q^{-\beta}\sum_{i=1}^\alpha \Theta_{\U_i} - \sum_{i=\alpha+\beta+2}^{\la_1+\beta+1}\Theta_{\U_i}.\] 
\end{lemma}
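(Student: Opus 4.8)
The plan is to proceed exactly along the lines of Example~\ref{NiceEx}, but systematically: the key observation is that the relation $m_\la h_{1,1}=m_\la(I+T_3+\dots+T_3T_2\cdots T_1)\in\hla$ (more precisely, its analogue $m_\la\,\CC(\bar\la_1-1;1,1)$ placed in the appropriate position) lets us trade a tableau in which the entry $\alpha+\beta+1$ sits in the top row of $\S$ against a sum of tableaux in which it has been pushed down a row. First I would use Lemma~\ref{reverse} to write $\Theta_\S(m_\nu)=\hla+m_\la T_{d(\first(\S))}$ (all the $\CC$-factors are trivial here since each $\U^j_i\in\{0,1\}$), and then manipulate $m_\la T_{d(\first(\S))}$ using the braid and quadratic relations together with the appropriate relation of the form $m_\la\,\CC(\,\cdot\,)\in\hla$ to straighten $\alpha+\beta+1$ down into the second row.

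The main calculation is a commutation argument: $d(\first(\S))$ is a product $T_{\alpha+\beta}T_{\alpha+\beta-1}\cdots$ of the descents needed to place the gap, and one conjugates $m_\la$ past it. Concretely, I expect the cleanest route is induction on $\alpha$ (the number of entries of $\S$ in the top row to the left of position $\alpha+1$). The base case $\alpha=0$ should follow directly by the same five-line computation as in Example~\ref{NiceEx}: write $m_\la T_{d(\first(\S))}$, insert $T_{\alpha+\beta+1}$-type generators via $T_i^2=(q-1)T_i+q$, use the braid relation to flip $T_iT_{i+1}T_i=T_{i+1}T_iT_{i+1}$, and then absorb the leftover terms into $m_\la\cdot(\text{relation in }\hla)$. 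For the inductive step one peels off the leftmost relevant generator, applies the case-$(\alpha-1)$ statement, and bookkeeps the resulting $\q^{-\beta}$ and sign factors; each swap of $\alpha+\beta+1$ with a smaller top-row entry $i$ contributes a factor $-\q^{-\beta}$ while each swap with a larger entry $i\in\{\alpha+\beta+2,\dots,\la_1+\beta+1\}$ contributes $-1$, which is exactly the asymmetry recorded in the statement (the $\q$-power comes from the $\beta$ entries sitting between the two affected columns).

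Alternatively — and this is probably the more robust organisation — I would prove the identity purely at the level of the module $M^\la$ by computing $\Theta_\S(m_\nu)$ and each $\Theta_{\U_i}(m_\nu)$ explicitly via Lemma~\ref{reverse}, reducing everything modulo $\hla$, and checking the claimed linear combination directly. Since $\nu=(\dots)$ has all parts equal to $1$ except possibly $\nu_1$, the tableaux $\first(\U_i)$ differ from $\first(\S)$ by a single transposition, and the action formulas of Lemma~2.5 (the Murphy-basis action of $T_i$ on $m_\mu T_{d(\ft)}$) give each $m_\la T_{d(\first(\U_i))}$ in closed form; the relation $m_\la\,\CC(\bar\la_1-1;1,1)\in\hla$ is the single ingredient needed to see that the alternating sum collapses.

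\textbf{Main obstacle.} The genuine difficulty is purely combinatorial bookkeeping: tracking the length-additivity of the various reduced words (so that $T_wT_{w'}=T_{ww'}$ applies) and correctly accounting for the $\q$-powers $\q^{-\beta}$ that arise when the entry $\alpha+\beta+1$ is commuted past the $\beta$ entries of $\S$ lying in intermediate columns of the second row. I expect that, once the $\alpha=0$ base case is written out, the induction is forced; so the hard part will be setting up the base case cleanly and verifying that no extra terms survive modulo $\hla$ — in other words, confirming that the only relation in $\hla$ one ever needs is the single straightening relation $m_\la\,\CC(\bar\la_1-1;1,1)\in\hla$, and that its repeated use produces precisely the stated coefficients with no residue.
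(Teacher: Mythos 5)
Your high-level idea---reduce modulo the straightening relation $m_\la h_{1,1}\in\hla$ and commute the resulting expression into position---is the right one, and your second, ``more robust'' alternative (compute everything via Lemma~\ref{reverse} and reduce) is in fact close in spirit to what the paper does. But there are two concrete problems with the plan as written. First, the element you single out as the key relation, $m_\la\,\CC(\bar\la_1-1;1,1)=m_\la(I+T_{\la_1})$, is not $m_\la h_{1,1}$: the latter is $m_\la\CC(0;\la_1,1)$, the full sum over $\mathcal{D}_{0,(\la_1,1)}$, and it is this longer sum that lies in $\hla$---the two-term sum $m_\la(I+T_{\la_1})$ generally does not, so the straightening step as you have stated it has no content. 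Second, the induction on $\alpha$ you propose is not what the paper does, and it is not clear it would produce the stated \emph{uniform} coefficients $-\q^{-\beta}$ for every $i\le\alpha$ and $-1$ for every $i\ge\alpha+\beta+2$: peeling off one generator at a time and invoking the $(\alpha-1)$-case would tend to accumulate factors, and you do not explain why the accumulation collapses.

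The paper's argument is a direct two-case computation, not an induction. For $\alpha>0$ the single trick is to sum over all $i\le\alpha$ at once and write $\sum_{i=1}^{\alpha}\D(\la_1+1,i)=h_{1,1}-\CC(\alpha;\la_1-\alpha,1)$, which identifies $\sum_{i=1}^\alpha\Theta_{\U_i}(m_\nu)$ with $\hla-m_\la\CC(\alpha;\la_1-\alpha,1)\Dt(\la_1+1,\alpha,\beta)$ in one step; one then moves the $\CC$-factor to the right of the $\Dt$-factor using Lemma~\ref{PullsThrough2}, $\CC(m;a,b)\Dt(m+a+b,m,r)=\Dt(m+a+b,m,r)\CC(m+r;a,b)$, which is exactly where the length-additivity and $\q$-power bookkeeping you flag as the ``main obstacle'' get discharged, and produces the single overall factor $\q^{\beta}$. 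The $\alpha=0$ case is handled by the same commutation lemma, showing $\Theta_\S(m_\nu)+\sum_i\Theta_{\U_i}(m_\nu)=\hla+\q^{-\beta}m_\la\CC(0;\la_1,1)\Dt(\la_1+1,0,\beta)=0$. So: right seed idea, but the correct form of the $h_{1,1}$ relation and the commutation lemma are both essential, and the inductive scaffolding you propose is unnecessary and not obviously sound.
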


\begin{proof} We use Lemma~\ref{PullsThrough2} and note that $m_\la h_{1,1} =m_\la C(0;\la_1,1) \in \hla$.  If $\alpha =0$ then
\begin{align*}
\Theta_{\S}(m_\nu) + \sum_{i= \beta+2}^{\la_1+\beta+1} \Theta_{\U_i}(m_\nu) 
&= \hla + m_\la \Dt(\la_1,0,\beta) \CC(\beta; \la_1,1) \\
&= \hla + q^{-\beta}  m_\la \Dt(\la_1+1,0,\beta) \CC(\beta; \la_1,1) \\
&=\hla + q^{-\beta} m_\la \CC(0; \la_1,1) \Dt(\la_1+1,0,\beta) \\
&=\hla \\
&=0. 
\end{align*}
Else if $\alpha >0$ then  
\begin{align*}
\sum_{i=1}^\alpha \Theta_{\U_i}(m_\nu) &= \hla + m_\la \Big(\sum_{i=1}^\alpha \D(\la_1+1,i)\Big) \Dt(\la_1+1,\alpha,\beta) \\
&  =\hla+ m_\la \Big(h_{1,1}-\sum_{i=\alpha+1}^{\la_1+1} \D(\la_1+1,i)\Big) \Dt(\la_1+1,\alpha,\beta) \\ 
&= \hla- m_\la \CC(\alpha;\la_1-\alpha,1) \Dt(\la_1+1,\alpha,\beta) \\
&= \hla- m_\la \Dt(\la_1+1,\alpha,\beta)\CC(\alpha+\beta; \la_1-\alpha,1) \\
&= \hla- \q^{\beta} m_\la  \Dt(\la_1,\alpha,\beta) \CC(\alpha+\beta;\la_1-\alpha,1) \\
&=  -\q^\beta \Theta_\S(m_\nu) - \q^\beta \sum_{i=\alpha+\beta+2}^{\la_1+\beta+1}\Theta_{\U_i}(m_\nu).
\end{align*}
\end{proof}

Using the definition of the maps $\Theta_\U$, we have the following corollary.  
\begin{corollary} \label{basic5}
We have that
\[\rep{1^{\alpha_1} 4^{\alpha_4}}{2^{\beta_2} 3^1 5^{\beta_5}} = 
-\q^{-\beta_{2}} 
\rep{1^{\alpha_1-1} 3^{1} 4^{\alpha_4}}{1^1 2^{\beta_{2}} 5^{\beta_5}} 
- \rep{1^{\alpha_1} 3^{1} 4^{\alpha_4-1}}{2^{\beta_2} 4^1 5^{\beta_5}}.
\]
\end{corollary}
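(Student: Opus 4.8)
The plan is to deduce Corollary~\ref{basic5} from Lemma~\ref{Type1n} by the ``regrouping of $\nu(\,\cdot\,)$-fibres'' that is built into the definition of the maps $\Theta_\U$. Put $\nu=(\alpha_1,\beta_2,1,\alpha_4,\beta_5)$ and $\la=(\alpha_1+\alpha_4,\,\beta_2+1+\beta_5)$, and apply Lemma~\ref{Type1n} with $\alpha=\alpha_1$, $\beta=\beta_2$, so that $\la_1=\alpha_1+\alpha_4$ and $\la_2=\beta_2+1+\beta_5$ (the constraints $\alpha\le\la_1$ and $\beta<\la_2$ hold automatically). First I would check that the tableau $\fs$ produced by Lemma~\ref{Type1n} is exactly $\first(\S)$ for $\S=\rep{1^{\alpha_1}4^{\alpha_4}}{2^{\beta_2}3^15^{\beta_5}}\in\RowT(\la,\nu)$: both have the $1$-entries and the $4$-entries clustered, in that order, in the first row and the $2$-entries, the single $3$-entry and the $5$-entries in the second. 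Since each value of the type $\nu$ occupies a single row of $\S$, there is a unique $\fr\in\rowstd(\la)$ with $\nu(\fr)=\S$, namely $\fr=\fs$, so the defining formula for $\Theta_\S$ gives $\Theta_\S(m_\nu)=\hla+m_\la T_{d(\fs)}=\Theta_\fs(m_{(1^n)})$.

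Next I would read off the content patterns of the tableaux $\U_i$ of Lemma~\ref{Type1n}. For $1\le i\le\alpha_1$ the tableau $\U_i$ moves the single $3$-entry into the first row and one $1$-entry into the second, so $\nu(\U_i)=\V_1:=\rep{1^{\alpha_1-1}3^14^{\alpha_4}}{1^12^{\beta_2}5^{\beta_5}}$; for $\alpha_1+\beta_2+2\le i\le\la_1+\beta_2+1$ it instead moves one $4$-entry into the second row, so $\nu(\U_i)=\V_2:=\rep{1^{\alpha_1}3^14^{\alpha_4-1}}{2^{\beta_2}4^15^{\beta_5}}$. The combinatorial heart of the argument is that these two families are exactly the full fibres of $\nu(\,\cdot\,)$: a member of $\rowstd(\la)$ with pattern $\V_1$ is determined by which of the $\alpha_1$ available $1$-entries sits in the second row, everything else being forced, and similarly for $\V_2$ with the $\alpha_4$ available $4$-entries. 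Hence $\{\U_i\mid 1\le i\le\alpha_1\}=\{\fr\in\rowstd(\la)\mid\nu(\fr)=\V_1\}$ and $\{\U_i\mid \alpha_1+\beta_2+2\le i\le\la_1+\beta_2+1\}=\{\fr\in\rowstd(\la)\mid\nu(\fr)=\V_2\}$, and so the defining formulas for $\Theta_{\V_1}$ and $\Theta_{\V_2}$ give
\[\sum_{i=1}^{\alpha_1}\Theta_{\U_i}(m_{(1^n)})=\Theta_{\V_1}(m_\nu),\qquad\sum_{i=\alpha_1+\beta_2+2}^{\la_1+\beta_2+1}\Theta_{\U_i}(m_{(1^n)})=\Theta_{\V_2}(m_\nu).\]

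Finally I would evaluate the identity of Lemma~\ref{Type1n} at $m_{(1^n)}$ and substitute the three displayed expressions, obtaining $\Theta_\S(m_\nu)=-\q^{-\beta_2}\Theta_{\V_1}(m_\nu)-\Theta_{\V_2}(m_\nu)$, which is precisely the stated identity once a row-standard tableau is identified with its image in $S^\la$. Degenerate cases are automatic: if $\alpha_1=0$ the first family of $\U_i$ is empty and its term drops out, in agreement with the convention that a $\rep{\cdots}{\cdots}$ with a negative exponent is zero, and likewise if $\alpha_4=0$. The one step that needs genuine care is the fibre identification in the second paragraph; this is where the precise shape of $\fs$ in Lemma~\ref{Type1n} --- all $1$-entries at the left of the first row, all $4$-entries to their right, and a single mobile entry $\alpha+\beta+1$ in the second row --- is used, and everything else is bookkeeping.
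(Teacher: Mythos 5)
Your argument is exactly the unpacking the paper intends when it says ``Using the definition of the maps $\Theta_\U$, we have the following corollary'': specialize Lemma~\ref{Type1n} to $\alpha=\alpha_1$, $\beta=\beta_2$, observe that the singleton fibre of $\nu(\cdot)$ over $\S$ and the two full fibres over $\V_1$, $\V_2$ are precisely $\{\fs\}$, $\{\U_i\}_{1\le i\le\alpha_1}$, and $\{\U_i\}_{\alpha_1+\beta_2+2\le i\le\la_1+\beta_2+1}$. Your fibre identification and the handling of the degenerate cases $\alpha_1=0$ or $\alpha_4=0$ are both correct, so this matches the paper's (implicit) proof.
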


\begin{lemma} \label{cosetattack}
Suppose $1 \leq d \leq b$ is such that $\alpha_d=0$ and $\beta_d=1$.  Then 
\begin{multline*}\rep{1^{\alpha_1}2^{\alpha_2}\ldots d^0\ldots b^{\alpha_b}}{1^{\beta_1}2^{\beta_2}\ldots d^{1} \ldots b^{\beta_b}} 
= - \sum_{i=1}^{d-1} \q^{-\bar\beta_{d-1}+\bar\beta_{i-1}}[\beta_i+1] \rep{1^{\alpha_1}2^{\alpha_2} \ldots i^{\alpha_i-1} \ldots d^{1}  \ldots b^{\alpha_b}}{1^{\beta_1}2^{\beta_2}\ldots i^{\beta_i+1} \ldots d^0 \ldots b^{\beta_b}}\\
- \sum_{i=d+1}^{b} \q^{-\bar\beta_{d}+\bar\beta_{i-1}}[\beta_i+1]  \rep{1^{\alpha_1}2^{\alpha_2}\ldots d^{1} \ldots i^{\alpha_i-1} \ldots b^{\alpha_b}}{1^{\beta_1}2^{\beta_2}\ldots d^0 \ldots i^{\beta_i+1} \ldots b^{\beta_b}}.
\end{multline*}
\end{lemma}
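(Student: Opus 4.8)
The identity of Lemma~\ref{cosetattack} is the many--symbol extension of Corollary~\ref{basic5}, which is the case in which exactly two symbols other than $d$ occur in the top row, one below $d$ and one above it. The plan is to prove it by a direct computation in $\h^{\calZ}$, following the pattern of the proof of Lemma~\ref{Type1n}, the general case then following by specialisation. Writing $\alpha_j=\S^j_1$ and $\beta_j=\S^j_2$, so that $\alpha_d=0$ and $\beta_d=1$ by hypothesis, Lemma~\ref{reverse} gives
\[\Theta_\S(m_\nu)\equiv m_\la\,T_{d(\first(\S))}\prod_{j=1}^b \CC(\bar\nu_{j-1};\alpha_j,\beta_j)\pmod{\hla}.\]
Since $\nu_d=1$, the type--$d$ block of $\first(\S)$ is a single integer, say $m_0$, sitting in the second row; moving $m_0$ is the whole content of the lemma.

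First I would isolate the $\CC$--factor carrying $m_0$ and enlarge it to a full first--row sum. Using Lemma~\ref{IntoThree} to regroup the $\CC$--factors, Corollary~\ref{RearrangeCor} and Lemma~\ref{PullsThrough2} to drag the relevant $\Dt$--factors to the right past the $\CC$--blocks, and then the relation $m_\la\CC(0;\la_1,1)\in\hla$ (this is $m_\la h_{1,1}$ for the shape $\la$, as in Example~\ref{NiceEx}), one replaces the sum over \emph{all} positions of the first row by $0$ modulo $\hla$, exactly as this trick is used in the two cases of the proof of Lemma~\ref{Type1n}. This rewrites $\Theta_\S(m_\nu)$ as minus the sum of the complementary terms, namely the maps $\Theta_\U$ obtained by sending $m_0$ up into the first row and pushing the displaced entry down into the second.

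Next I would collect these complementary terms according to the symbol $i\neq d$ of the displaced entry. The contributions indexed by a fixed $i$ all give, after re-sorting the rows, the same tableau $\U$ appearing on the right--hand side of Lemma~\ref{cosetattack}; amalgamating them — in particular absorbing the displaced copy of $i$ into the block of the $\beta_i$ copies of $i$ already present in the second row — produces the Gaussian factor $[\beta_i+1]=\gauss{\beta_i+1}{1}$, the relevant geometric and alternating sums being collapsed by Lemmas~\ref{GaussSum} and~\ref{GaussLemma}. The accompanying power of $\q$ is the length picked up when the $\Dt$--factor attached to $m_0$ is commuted past the intervening $\CC$--blocks, and this equals the number of second--row entries whose value lies between $i$ and $d$; keeping careful track of the endpoints yields $\q^{\bar\beta_{i-1}-\bar\beta_{d-1}}$ when $i<d$ and $\q^{\bar\beta_{i-1}-\bar\beta_d}$ when $i>d$. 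Finally one checks that each $\U$ so produced is genuinely row-standard, so that the maps $\Theta_\U$ on the right-hand side are defined.

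The main obstacle is entirely one of bookkeeping: there are now arbitrarily many $\CC$--blocks and $\Dt$--factors in play, and one must verify both that the $\q$--exponents telescope to exactly $\bar\beta_{i-1}-\bar\beta_{d-1}$ and $\bar\beta_{i-1}-\bar\beta_d$ and that the geometric sums assemble into exactly $[\beta_i+1]$. I would organise the rewriting so that at every step only the $\CC$--block adjacent to $m_0$ is altered, which keeps each manipulation local and the length--accounting tractable; there is no new conceptual ingredient beyond the relation $m_\la\CC(0;\la_1,1)\in\hla$ and the commutation identities of Lemmas~\ref{PullsThrough}, \ref{PullsThrough2} and~\ref{IntoThree}.
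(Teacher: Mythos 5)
Your plan is, in outline, the paper's own proof, with a small organisational difference and one misattributed mechanism. The paper first coarsens $\S$ to an intermediate tableau $\V$ with only five distinct entries, so that $\Theta_\S(m_\nu)=\Theta_\ft(m_\la)\,T_w\prod_{i=1}^b\CC(\bar\nu_{i-1};\alpha_i,\beta_i)$ with $\nu(\ft)=\V$, and applies the already-proved Lemma~\ref{Type1n} to $\Theta_\ft$ --- that lemma \emph{is} your ``enlarge to a full first-row sum and kill it via $m_\la\CC(0;\la_1,1)\in\hla$'' step, carried out once on a single $\CC$-block rather than directly on $\S$; it then pushes $T_w\prod\CC$ back through, with the extra power $\q^{\bar\beta_{i-1}}$ obtained by a direct length count via Equation~\ref{length} rather than by telescoping. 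The one inaccuracy worth correcting is your account of where $[\beta_i+1]$ comes from: no alternating sums occur and Lemmas~\ref{GaussSum} and~\ref{GaussLemma} play no role here. In the paper it comes from Lemma~\ref{IntoThree}, which factors $\CC(\bar\nu_{i-1};\alpha_i-1,1,\beta_i)$ as $\CC(\bar\alpha_i+\bar\beta_{i-1}-1;1,\beta_i)\,\CC(\bar\nu_{i-1};\alpha_i-1,\beta_i+1)$, combined with the fact that $m_\la T_{d(\dot\fs(x_0))}$ absorbs the block $\CC(\bar\alpha_i+\bar\beta_{i-1}-1;1,\beta_i)$ as the scalar $[\beta_i+1]$ because that block permutes entries lying in a single row of $\dot\fs(x_0)$. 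With that mechanism substituted for the claimed use of Lemmas~\ref{GaussSum}/\ref{GaussLemma}, your outline is sound and matches the paper's route.
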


\begin{proof}
Note that
\[\rep{1^{\alpha_1} 2^{\alpha_2}\ldots d^{0} \ldots b^{\alpha_b}}{1^{\beta_1}2^{\beta_2}\ldots d^{1} \ldots b^{\beta_b}} 
= \rep{1^{\bar\alpha_{d-1}}4^{\bar\alpha_b-\bar\alpha_d}}{2^{\bar\beta_{d-1}}3^{1}5^{\bar{\beta_b}-\bar\beta_d}}  T_w \prod_{i=1}^b \CC(\bar\alpha_{i-1}+\bar\beta_{i-1};\alpha_i,\beta_i) \]
where 
\[T_w=\prod_{i=1}^{d-1} \Dt(\bar\alpha_{d-1}+\bar\beta_{i-1},\bar\alpha_i+\bar\beta_{i-1},\beta_i)\prod_{i=d+1}^{b} \Dt(\bar\alpha_{b}+\bar\beta_{i-1},\bar\alpha_i+\bar\beta_{i-1},\beta_i) .\]
Let \[\V= \rep{1^{\bar\alpha_{d-1}}4^{\bar\alpha_b-\bar\alpha_d}}{2^{\bar\beta_{d-1}}3^{1}5^{\bar{\beta_b}-\bar\beta_d}} \]
and let $\ft \in \rowstd(\la)$ be the unique tableau such that $\nu(\ft)=\V$.  Choose $j$ in the top row of $\ft$ and let $\ft(j) \in \rowstd(\la)$ be the tableau obtained by swapping $j$ and $\bar\alpha_{d-1}+\bar\beta_{d-1}+1$ and rearranging the rows.  By Lemma~\ref{Type1n},
\[\Theta_\ft=-\q^{-\bar\beta_{d-1}}\sum_{j=1}^{\bar\alpha_{d-1}} \Theta_{\ft(j)} - \sum_{j=\bar\alpha_{d}+\bar\beta_{d}+1}^{\bar\alpha_b+\bar\beta_d}\Theta_{\ft(j)},\]
that is
\begin{align*}
\rep{1^{\alpha_1} 2^{\alpha_2}\ldots d^{0} \ldots b^{\alpha_b}}{1^{\beta_1}2^{\beta_2}\ldots d^{1} \ldots b^{\beta_b}} 
&= \hla -\Big(\q^{-\bar\beta_{d-1}}\sum_{j=1}^{\bar\alpha_{d-1}}m_\la T_{d(\ft(j))} + \sum_{j=\bar\alpha_{d}+\bar\beta_d+1}^{\bar\alpha_b+\bar\beta_d}m_\la T_{d(\ft(j))}\Big)\T_w \prod_{k=1}^b \CC(\bar\nu_{k-1};\alpha_k,\beta_k)\\
&= \hla -\Big(\q^{-\bar\beta_{d-1}} \sum_{i=1}^{d-1} \sum_{j=\bar\alpha_{i-1}+1}^{\bar\alpha_{i}}m_\la T_{d(\ft(j))} + \sum_{i=d+1}^b \sum_{j=\bar\beta_d+\bar\alpha_{i-1}+1}^{\bar\alpha_i+\bar\beta_d}m_\la T_{d(\ft(j))}\Big) \\
& \hspace*{15mm} \T_w \prod_{k=1}^b \CC(\bar\nu_{k-1};\alpha_k,\beta_k).
\end{align*}
Now choose $i$ with $1 \leq i \leq d-1$.  We claim that 
\[\sum_{j=\bar\alpha_{i-1}+1}^{\bar\alpha_i} m_\la T_{d(\ft(j))} T_w \prod_{k=1}^b \CC(\bar\nu_{k-1};\alpha_k,\beta_k)  = 
\q^{\bar\beta_{i-1}}[\beta_i+1]
\rep{1^{\alpha_1}2^{\alpha_2} \ldots i^{\alpha_i-1} \ldots d^{1}  \ldots b^{\alpha_b}}{1^{\beta_1}2^{\beta_2}\ldots i^{\beta_i+1} \ldots d^0 \ldots b^{\beta_b}}.\]
To prove the claim, choose $j$ with $\bar\alpha_{i-1}+1 \leq j \leq \bar\alpha_i$.  Let $\fs$ be the row-standard $\la$-tableau containing entries from the set  $ \cup_{i=1}^d 
\{ \bar\alpha_i +\bar\beta_{i-1} +1, \ldots, \bar\alpha_i+\bar\beta_i \}
$.  
Then 
\[\ft^\la d(\ft(j)) w  = \fs w'\]
where $w'$ acts on $\fs$ as follows.  Suppose that $x$ occupies the same position in $\fs$ that $j$ occupies in $\ft$.  Then $w'$ moves $\bar\nu_{d-1}+1$ into the first row, such that the row is in increasing order, moves $x$ into the far left of the second row and pushes the entries in the second row, up to the entry immediately to the left of $\bar\nu_{d-1}+1$ in $\fs$, one box to the right.  If $\dot\fs$ is the row-standard tableau obtained by rearranging the rows of $\fs w'$, then $d(\fs)w'=u d(\dot\fs)$ where $u \in \sym_\la$ has length $\bar\beta_{i-1}$.  
Furthermore, using Equation~\ref{length}, we may check that $\ell(d(\ft(j)))+\ell(w) = \ell(d(\fs)w')=\bar\beta_{i-1}+\ell(d(\dot\fs))$.  It therefore follows that 
\[\sum_{j=\bar\alpha_{i-1}+1}^{\bar\alpha_i} m_\la T_{d(\ft(j))} T_w = \q^{\bar\beta_{i-1}} \sum_{x=\bar\nu_{i-1}+1}^{\alpha_i+\beta_{i-1}} m_\la T_{d(\dot\fs(x))}\]
where $\dot\fs(x)$ is the tableau obtained from $\fs$ by swapping $x$ and $\bar\nu_{d-1}+1$ and rearranging the rows.
If we let $x_0 = \bar\nu_{i-1}+1$ then using Lemma~\ref{IntoThree},
\begin{align*}
\sum_{j=\bar\alpha_{i-1}+1}^{\bar\alpha_i} m_\la T_{d(\ft(j))} T_w C(\bar\nu_{i-1};\alpha_i,\beta_i) & = \q^{\bar\beta_{i-1}} m_\la T_{d(\dot\fs(x_0))} C(\bar\nu_{i-1};\alpha_i-1,1) \CC(\bar\nu_{i-1}; \alpha_i,\beta_i)\\     
&= \q^{\bar\beta_{i-1}} m_\la T_{d(\dot\fs(x_0))} C(\bar\nu_{i-1}; \alpha_i-1,1,\beta_i) \\
&= \q^{\bar\beta_{i-1}} m_\la T_{d(\dot\fs(x_0))} C(\bar\alpha_{i}+\bar\beta_{i-1}-1; 1, \beta_i)C(\bar\nu_{i-1};\alpha_{i}-1,\beta_i+1) \\
&= \q^{\bar\beta_{i-1}}[\beta_i+1] m_\la T_{d(\dot\fs(x_0))}  C(\bar\nu_{i-1};\alpha_{i}-1,\beta_i+1) \\
\end{align*}
The claim then follows since 
\[m_\la T_{d(\dot\fs(x_0))}  C(\bar\nu_{i-1};\alpha_{i}-1,\beta_i+1) \prod_{k\neq i} \CC(\bar\nu_{k-1};\alpha_k,\beta_k)=\rep{1^{\alpha_1}2^{\alpha_2} \ldots i^{\alpha_i-1} \ldots d^{1}  \ldots b^{\alpha_b}}{1^{\beta_1}2^{\beta_2}\ldots i^{\beta_i+1} \ldots d^0 \ldots b^{\beta_b}}.\]
A similar proof shows that if $d+1 \leq i \leq b$ then
\[\sum_{j=\bar\alpha_{i-1}+\beta_d+1}^{\bar\alpha_i+\bar\beta_d} \Theta_{\ft(j)}(m_\nu)T_w \prod_{k=1}^b \CC(\bar\nu_{k-1};\alpha_k,\beta_k) = 
\q^{-\bar\beta_{d}+\bar\beta_{i-1}}[\beta_i+1]
\rep{1^{\alpha_1}2^{\alpha_2}  \ldots d^{1} \ldots i^{\alpha_i-1} \ldots b^{\alpha_b}}{1^{\beta_1}2^{\beta_2}\ldots  d^0 \ldots i^{\beta_i+1}  \ldots b^{\beta_b}},\]
completing the proof of Lemma~\ref{cosetattack}.
\end{proof}

\begin{lemma} \label{Stage3}
Suppose 
\[\S=\rep{1^{\alpha_1} \ldots d^0 \ldots b^{\alpha_b}}{1^{\beta_2} \ldots d^{\beta_d} \ldots b^{\beta_b}}\]
where $\beta_d\geq 1$.  
Let \[\mathcal{G}=\{(g_1,\ldots,g_b) \mid g_d=0, \bar{g}=\beta_d \text{ and } g_i \leq \alpha_i \text{ for } 1 \leq i \leq b\}.\]  For $g \in \mathcal{G}$, let $\U_g$ be the row-standard tableau obtained from $\S$ by moving all entries equal to $d$ from row 2 to row 1, and for $i\neq d$ moving down $g_i$ entries equal to $i$ from row 1 to row 2.  Then  
\[\Theta_\S = \sum_{g \in \mathcal{G}} (-1)^{\beta_d} \q^{-\binom{\beta_d+1}{2}+\bar{g}_{d-1}} \q^{-\bar\beta_{d-1}\beta_d}\prod_{i=1}^b \q^{g_i(\bar\beta_{i-1})} \gauss{\beta_i+g_i}{g_i}\Theta_{\U_g}.\]
 \end{lemma}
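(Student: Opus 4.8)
The statement is exactly the special case $a=2$, $r=1$ of part~(1) of Theorem~\ref{Lemma7}: on setting $\S^d_{r+1}=\beta_d$, $\S^i_{r+1}=\beta_i$, $\S^{<d}_{r+1}=\bar\beta_{d-1}$ and $\S^{<i}_{r+1}=\bar\beta_{i-1}$, and noting that the two descriptions of $\mathcal{G}$ and of $\U_g$ coincide, the two displayed formulas agree. I would prove it by induction on $\beta_d$. When $\beta_d=1$ this is Lemma~\ref{cosetattack}: then $\mathcal{G}$ consists of the vectors $g=e_i$ with $i\ne d$ and $\alpha_i\geq 1$, for which $\bar g_{d-1}=1$ when $i\leq d-1$ and $\bar g_{d-1}=0$ otherwise; using $\binom{2}{2}=1$, the fact that $\bar\beta_d=\bar\beta_{d-1}+1$ since $\beta_d=1$, and $\gauss{\beta_i+1}{1}=[\beta_i+1]$, one checks term by term that the coefficient of $\Theta_{\U_{e_i}}$ predicted by the displayed formula is exactly the coefficient occurring in Lemma~\ref{cosetattack}.

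For the inductive step I would imitate the proof of Lemma~\ref{cosetattack}. First, using Lemma~\ref{reverse} together with Lemmas~\ref{IntoThree}, \ref{PullsThrough} and \ref{PullsThrough2} and the $\Dt$--$\CC$ identities, collapse $\S$ to a tableau on at most five colours: amalgamate the colours $<d$ lying in row~$1$ (respectively row~$2$) into a single colour, do the same with the colours $>d$, and keep colour $d$ --- which occurs $\beta_d$ times, all in row~$2$ --- separate. This reduces the problem to the two-row tableau $\V=\rep{1^{\bar\alpha_{d-1}}4^{\bar\alpha_b-\bar\alpha_d}}{2^{\bar\beta_{d-1}}3^{\beta_d}5^{\bar\beta_b-\bar\beta_d}}$, exactly as in the proof of Lemma~\ref{cosetattack} save that colour $3$ now occurs $\beta_d$ times. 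The crux is then a ``$\beta_d$-entry'' analogue of Lemma~\ref{Type1n}: in a two-row diagram, rewrite the homomorphism indexed by a tableau having a block of $\beta_d$ distinguished entries in the second row as a linear combination of homomorphisms in which these entries have been moved up into the first row, each move pushing a row-$1$ entry down into row~$2$; here the relation $m_\la h_{1,\beta_d}=m_\la\CC(0;\la_1,\beta_d)\in\hla$ plays the role that $m_\la h_{1,1}\in\hla$ played in Lemma~\ref{Type1n}. I would establish this analogue by a secondary induction on $\beta_d$, removing one distinguished entry at a time with Lemma~\ref{Type1n} and then regrouping the resulting sum according to which colours have ultimately been displaced and how often; the sums over subsets of a fixed size are evaluated with Lemma~\ref{IntoThree} and produce the Gaussian binomials $\gauss{\beta_i+g_i}{g_i}$, just as the single-element sums in the proof of Lemma~\ref{cosetattack} produced the factors $[\beta_i+1]=\gauss{\beta_i+1}{1}$. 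Feeding this back through the collapsing step and reading off the tableaux $\U_g$ gives the formula; only row-standardness of $\U_g$ is demanded, so no semistandardness argument is needed, and the general case follows from the case $\h=\h^\calZ$ by specialisation.

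I expect the main obstacle to be the arithmetic of the powers of $\q$ and the consolidation of the binomial coefficients. When the distinguished entries are peeled off one by one, each partial displacement contributes its own power of $\q$ and its own small Gaussian coefficient, and many intermediate tableaux are not yet in the target shape --- in particular an entry raised at one stage may be displaced back down at a later stage --- so these intermediate terms have to be collected according to the final displacement vector $g$ and shown to recombine correctly. Turning the resulting products and alternating sums of Gaussian binomials into the single closed form $(-1)^{\beta_d}\q^{-\binom{\beta_d+1}{2}+\bar g_{d-1}}\q^{-\bar\beta_{d-1}\beta_d}\prod_{i}\q^{g_i\bar\beta_{i-1}}\gauss{\beta_i+g_i}{g_i}$ is where Lemma~\ref{GaussSum}, used repeatedly in its Pascal form, and Lemma~\ref{GaussLemma}, used for the alternating-sum cancellations, are brought to bear.
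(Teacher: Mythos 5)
Your framing is right---induction on $\beta_d$, base case $\beta_d=1$ given by Lemma~\ref{cosetattack}, and reduction to the five-colour tableau $\V$ (the paper uses exactly $\rep{1^{\alpha_1}4^{\alpha_4}}{2^{\beta_2}3^{\beta_3}5^{\beta_5}}$)---but your proposed mechanism for the inductive step is not the one the paper uses, and it papers over the genuine difficulty. You propose to remove the $\beta_d$ entries one at a time with Lemma~\ref{Type1n}, invoking $m_\la h_{1,\beta_d}\in\hla$, and then regroup a large alternating sum by the final displacement vector $g$; you yourself flag that the intermediate tableaux are ``not yet in the target shape'' and that recollecting them is the hard part. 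The trouble is that after a single application of Lemma~\ref{Type1n} the distinguished colour sits in \emph{both} rows, so Lemma~\ref{Type1n} no longer applies in the form you have it, and you would need to prove some stronger intermediate statement before you could even set up the secondary induction.

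What the paper actually does sidesteps this. It introduces $\dot\S=\rep{1^{\alpha_1}5^{\alpha_4}}{2^{\beta_2}3^{1}4^{\beta_3-1}6^{\beta_5}}$, i.e.\ it \emph{relabels} one of the $\beta_3$ distinguished entries as a new colour $3$ and the remaining $\beta_3-1$ as colour $4$, so that $\Theta_\S(m_\nu)=\Theta_{\dot\S}(m_{\dot\nu})$ by definition of the maps. It then applies the inductive hypothesis (Lemma~\ref{Stage3} itself, not a separate Type1n analogue) twice to $\dot\S$: first to colour $3$ with multiplicity $1$ (this is Lemma~\ref{cosetattack}), producing two terms, and then to colour $4$ with multiplicity $\beta_3-1$ in each of those; the resulting expression involves tableaux indexed by both $\mathcal G$ and by $\mathcal G'$ (those in which the raised colour-$3$ entry has been pushed back down). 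Separately it applies the inductive hypothesis directly to $\dot\S$ with respect to colour $4$, obtaining an expression in the $\mathcal G'$ terms alone. Combining the two via the identity $[\beta_3]\Theta_\S(m_\nu)=\Theta_{\dot\S}(m_{\dot\nu})+\q[\beta_3-1]\Theta_{\dot\S}(m_{\dot\nu})$ makes the $\mathcal G'$ terms cancel and produces $[\beta_3]$ times the target sum; since we are working over $\calZ$, the factor $[\beta_3]$ can be cancelled. This colour-splitting plus $[\beta_d]$-cancellation is precisely the idea your sketch lacks, and without it the ``regrouping'' you defer to the end is where the proof would actually stall. The final reduction from the five-colour case to general $\S$ is then handled as at the end of Lemma~\ref{cosetattack}, as you anticipate.
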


\begin{proof} The case that $\beta_d=1$ is Lemma~\ref{cosetattack}.  So assume $\beta_d >1$ and that the lemma holds when $\S^d_2<\beta_d$.  We first consider
\[\S=\rep{1^{\alpha_1} 4^{\alpha_4}}{2^{\beta_2} 3^{\beta_3} 5^{\beta_5}}.\]
Consider the map $\Theta_\S:M^{\nu}\rightarrow S^\la$.  If $\dot{\S}$ is the tableau 
\[\dot\S = \rep{1^{\alpha_1} 5^{\alpha_4}}{2^{\beta_2} 3^1 4^{\beta_3-1} 6^{\beta_5}}\]
of type $\dot\nu$ then $\Theta_\S(m_\nu) = \Theta_{\dot{\S}}(m_{\dot{\nu}})$.
Let
\begin{align*}
\mathcal{G} &= \{(g_1,g_4) \mid g_1+g_4=\beta_3, g_1 \leq \alpha_1, g_4 \leq \alpha_4 \}, \\ 
\mathcal{G}'&= \{(g'_1,g'_4) \mid g'_1+g'_4=\beta_3-1, g'_1 \leq \alpha_1, g'_4 \leq \alpha_4 \}.  
\end{align*}
For $g \in \mathcal{G}$, let $\dot\U_g$ be the tableau obtained from $\dot\S$ by moving all entries equal to 3 or 4 from the second row to the first and moving $g_1$ entries equal to 1 and $g_4$ entries equal to 5 from the first to the second.  For $g' \in \mathcal{G'}$, let $\dot\U_{g'}$ be the tableau obtained from $\dot\S$ by moving all entries equal to 4 from the second row to the first and moving $g'_1$ entries equal to 1 and $g'_4$ entries equal to 5 from the first to the second.  Note that $\Theta_{\U_g}(m_{\nu})=\Theta_{\dot\U_g}(m_{\dot\nu})$, where $\U_g$ is as defined in the statement of the lemma.  
Applying the inductive hypothesis repeatedly to $\dot{\S}$, we have 
\begin{align*}
\Theta_\S(m_\nu) &= \rep{1^{\alpha_1} 5^{\alpha_4}}{2^{\beta_2} 3^1 4^{\beta_3-1} 6^{\beta_5}} \\
& = -\q^{-\beta_2} \rep{1^{\alpha_1-1} 3^1 5^{\alpha_4}}{1^1 2^{\beta_2} 4^{\beta_3-1} 6^{\beta_5}} -\q^{\beta_3-1} \rep{1^{\alpha_1} 3^1 5^{\alpha_4}}{2^{\beta_2} 4^{\beta_3-1} 5^1 6^{\beta_5}} \\
&= \sum_{g \in \mathcal{G}} (-1)^{\beta_3}\q^{-\beta_2}\q^{-\binom{\beta_3}{2}+g_1-1}\q^{-(\beta_3-1)(\beta_2+1)}[g_1]\q^{g_4(\beta_2+\beta_3)} \Theta_{\dot\U_g} \\
&\quad + \sum_{g \in \mathcal{G'}} (-1)^{\beta_3} \q^{-\beta_2}\q^{-\binom{\beta_3}{2}+g'_1}\q^{-(\beta_3-1)(\beta_2+1)}[g_1']\q^{\beta_2+1}\q^{g'_4(\beta_2+\beta_3)} \Theta_{\dot\U_{g'}} \\
&\quad + \sum_{g \in \mathcal{G}} (-1)^{\beta_3} \q^{\beta_3-1}\q^{-\binom{\beta_3}{2}+g_1} \q^{-(\beta_3-1)\beta_2} \q^{(g_4-1)(\beta_2+\beta_3-1)}[g_4]\Theta_{\dot\U_g} \\
&\quad + \sum_{g' \in \mathcal{G'}} (-1)^{\beta_3} \q^{\beta_3-1}\q^{-\binom{\beta_3}{2}+g_1'+1}\q^{-(\beta_3-1)\beta_2}\q^{\beta_2}\q^{(g_4'-1)(\beta_2+\beta_3-1)}[g_4']\Theta_{\dot\U_{g'}} \\
&=  \sum_{g \in \mathcal{G}} (-1)^{\beta_3} \q^{-\binom{\beta_3+1}{2}+g_1-\beta_2\beta_3} \q^{g_4(\beta_2+\beta_3)} [\beta_3] \Theta_{\dot\U_g} \\
&\quad + \sum_{g' \in \mathcal{G'}} (-1)^{\beta_3} \q^{-\binom{\beta_3}{2}+g'_1}\q^{g'_4(\beta_2+\beta_3)} \q^{-(\beta_2+1)(\beta_3-1)}\q[\beta_3-1] \Theta_{\dot\U_{g'}}. 
\end{align*}

Applying the inductive hypothesis again, we also have 
\begin{align*}
\Theta_{\dot\S}(m_{\dot\nu}) & = \rep{1^{\alpha_1} 5^{\alpha_4}}{2^{\beta_2} 3^1 4^{\beta_3-1} 6^{\beta_5}} \\
&= \sum_{g' \in \mathcal{G}'}(-1)^{\beta_3-1}\q^{-\binom{\beta_3}{2}+g'_1}\q^{-(\beta_2+1)(\beta_3-1)}\q^{g'_4(\beta_2+\beta_3)} \Theta_{\dot\U_{g'}}(m_{\dot\nu}).
\end{align*}
So, substituting the two values of $\Theta_{\dot\S}(m_{\dot\nu})$ into the equation below, we get that
\begin{align*}
[\beta_3]\Theta_\S(m_\nu) & = \Theta_{\dot\S}(m_{\dot\nu}) + \q[\beta_3-1]\Theta_{\dot\S}(m_{\dot\nu}) \\
&= [\beta_3] \sum_{g \in \mathcal{G}} (-1)^{\beta_3}\q^{-\binom{\beta_3+1}{2}+g_1-\beta_2\beta_3}\q^{g_4(\beta_2+\beta_3)} \Theta_{\dot\U_g}(m_{\dot\nu}) \\
&= [\beta_3] \sum_{g \in \mathcal{G}} (-1)^{\beta_3}\q^{-\binom{\beta_3+1}{2}+g_1-\beta_2\beta_3}\q^{g_4(\beta_2+\beta_3)} \Theta_{\U_g}(m_{\nu}). 
\end{align*}
Since we are working in $R=\calZ$, we may cancel the terms $[\beta_3]$ on both sides of the equation.  This completes the proof of Lemma~\ref{Stage3} when $\S$ has the form $\S=\rep{1^{\alpha_1} 4^{\alpha_4}}{2^{\beta_2} 3^{\beta_3} 5^{\beta_5}}$.  The proof of Lemma~\ref{Stage3} for general $\S$ follows in the same way as the end of the proof of Lemma~\ref{cosetattack}.     
\end{proof}

\begin{lemma} \label{main2part}
Suppose 
\[\S=\rep{1^{\alpha_1} \ldots d^{\alpha_d} \ldots b^{\alpha_b}}{1^{\beta_2} \ldots d^{\beta_d} \ldots b^{\beta_b}}\]
where $\beta_d\geq 1$.  
Let \[\mathcal{G}=\{(g_1,\ldots,g_b) \mid g_d=0, \bar{g}=\beta_d \text{ and } g_i \leq \alpha_i \text{ for } 1 \leq i \leq b\}.\]  For $g \in \mathcal{G}$, let $\U_g$ be the tableau obtained from $\S$ by moving all entries equal to $d$ from row 2 to row 1, and for $i\neq d$ moving down $g_i$ entries equal to $i$ from row 1 to row 2.  Then  
\[\Theta_\S = \sum_{g \in \mathcal{G}} (-1)^{\beta_d} \q^{-\binom{\beta_d+1}{2}+\bar{g}_{d-1}} \q^{-\bar\beta_{d-1}\beta_d}\prod_{i=1}^b \q^{g_i \bar\beta_{i-1}} \gauss{\beta_i+g_i}{g_i}\Theta_{\U_g}.\]
\end{lemma}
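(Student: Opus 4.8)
The plan is to prove Lemma~\ref{main2part} by extending the argument of Lemma~\ref{Stage3}; the only new feature is the block of $\alpha_d$ entries equal to $d$ in the first row, and (as throughout Section~\ref{Lemma7Proof}) I work over $\h=\h^{\calZ}$, using the integral domain $\calZ$ exactly as in the proof of Lemma~\ref{Stage3}.

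First I would reduce to the case of a tableau on five ``letters'',
\[\S=\rep{1^{\alpha_1}\,3^{\alpha_d}\,4^{\alpha_4}}{2^{\beta_2}\,3^{\beta_d}\,5^{\beta_5}},\]
in which $d=3$, the letter $1$ collects the first-row entries less than $d$, the letter $4$ the first-row entries greater than $d$, the letter $2$ the second-row entries less than $d$, and the letter $5$ those greater than $d$. The passage from this case to the general statement is then the $\CC$-and-$\D$ bookkeeping carried out at the ends of the proofs of Lemma~\ref{cosetattack} and Lemma~\ref{Stage3}, using Lemma~\ref{PullsThrough}, Lemma~\ref{PullsThrough2} and Lemma~\ref{IntoThree}.

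For this special $\S$ I would imitate the proof of Lemma~\ref{Stage3}: induct on $\beta_d$, at each step splitting off one second-row $d$ as a singleton letter --- a split that takes place within the second row, so that $\Theta_\S$ is unchanged by Lemma~\ref{reverse} --- applying the inductive hypothesis (together with Lemma~\ref{Stage3} itself) to the two pieces, reorganising the Gaussian binomials via Lemma~\ref{GaussSum}, and cancelling the common factor $[\beta_d]$. For the base case $\beta_d=1$ with $\alpha_d>0$ (when $\alpha_d=0$ this is just Lemma~\ref{cosetattack}) I would use Lemma~\ref{reverse} to write $\Theta_\S(m_\nu)=\hla+m_\la T_{d(\ft)}\,\CC(\bar\nu_{d-1};\alpha_d,1)\prod_{j\ne d}\CC(\dots)$, rewrite the factor $\CC(\bar\nu_{d-1};\alpha_d,1)$ so that the associated $\la$-tableau has all of its first-row entries preceding all of its second-row entries, apply Lemma~\ref{Type1n} to that staircase tableau, and commute the coset sums back out.

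The obstacle --- and the reason Lemma~\ref{main2part} does not follow formally from Lemma~\ref{Stage3} --- is that the $\alpha_d$ copies of $d$ in the first row cannot be pivoted and lie inside the staircase of Lemma~\ref{Type1n}: applying that lemma (or Lemma~\ref{Stage3} after the split) produces, alongside the genuine $\Theta_{\U_g}$, further terms in which a first-row $d$ has changed places with a second-row $d$ --- terms that still carry $d$ in the second row, and in the extreme case a scalar multiple of $\Theta_\S$ itself. The plan is to collect these, solve for $\Theta_\S$, and check --- using that $\calZ$ is an integral domain, exactly as with the cancellation of $[\beta_d]$ in Lemma~\ref{Stage3} --- that the emerging coefficient is a non-zero element of $\calZ$ and that, after dividing through, one recovers precisely the coefficients, powers of $\q$, and index set ($g_d=0$) of the statement; carrying this out via Lemma~\ref{GaussSum} and Lemma~\ref{GaussLemma} is where essentially all of the computation lies. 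Once it is done the lemma over a general Hecke algebra $\h$ follows by specialisation.
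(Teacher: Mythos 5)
Your proposal takes a genuinely different route from the paper, and one that hides a serious difficulty. The paper's proof of Lemma~\ref{main2part} inducts on $\alpha_d=\S^d_1$, with Lemma~\ref{Stage3} supplying the base case $\alpha_d=0$ directly. Its key move is the opposite of yours: it splits one $d$ of the \emph{type} $\nu$ into $d$ and a new singleton $d+1$ \emph{across the two rows}, so that $\Theta_\S=\Theta_{\S(1)}+\Theta_{\S(2)}$ where $\S(1)$ has only $\alpha_d-1$ first-row $d$'s. After applying the inductive hypothesis to both pieces and simplifying with Lemma~\ref{GaussSum}, every summand that appears is a genuine $\Theta_{\U_g}$ with $g_d=0$ --- $\Theta_\S$ itself never reappears on the right-hand side, so there is nothing to solve for and no divisibility to check. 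You instead keep the $\beta_d$-induction of Lemma~\ref{Stage3} (splitting a second-row $d$ within its own row) and then need a new base case $\beta_d=1$, $\alpha_d>0$, where you hit the obstruction you correctly identify: the $\alpha_d$ first-row $d$'s are pivotable entries in Lemma~\ref{Type1n}'s staircase and produce terms which collapse back to scalar multiples of $\Theta_\S$. Your plan is to gather these and solve $(1+c)\Theta_\S=\sum(\cdots)\Theta_{\U_g}$, but ``$1+c$ nonzero in $\calZ$'' is not enough --- $\calZ$ is not a field, so you would need $1+c$ to divide every coefficient on the right, and unlike the cancellation of the manifest common factor $[\beta_d]$ in Lemma~\ref{Stage3}, no such factorisation is visible in your setup. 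This is the substantive gap: the self-referential solve is exactly what the paper's choice of induction variable and of type-split is designed to avoid, and you give no argument that the divisibility works out. If you want to save your approach you would have to verify (for the base case and again at each inductive step) that $1+c$ is, up to a unit, a Gaussian integer $[\alpha_d+1]$ or similar and that it factors out of every other term --- a nontrivial computation you have not attempted.
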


\begin{proof}
The case that $\alpha_d=0$ is precisely Lemma~\ref{Stage3}.  So suppose $\alpha_d>0$ and that the lemma holds when $\S^d_{1}<\alpha_d$. 
Let
\begin{align*}
\S(1)&= \rep{1^{\alpha_1}\ldots d^{\alpha_d-1}d+1^1 d+2^{\alpha_{d+1}}\ldots b+1^{\alpha_b}}{1^{\beta_1}\ldots d^{\beta_d} d+2^{\beta_{d+1}}\ldots b+1^{\beta_b}}, \\ 
\S(2) & =  \rep{1^{\alpha_1}\ldots d^{\alpha_d} d+2^{\alpha_{d+1}}\ldots b+1^{\alpha_b}}{1^{\beta_1}\ldots d^{\beta_d-1}d+1^1 d+2^{\beta_{d+1}}\ldots b+1^{\beta_b}}, \\
\end{align*}
and suppose they are of type $\dot\nu$ so that
\[\Theta_\S(m_\nu) = \Theta_{\S(1)}(m_{\dot{\nu}}) + \Theta_{\S(2)}(m_{\dot\nu}).\] Let 
\[\mathcal{G'}=\{(g'_1,\ldots,g'_b) \mid g'_d=0, \bar{g'}=\beta_d -1\text{ and } g'_i \leq \alpha_i \text{ for } 1 \leq i \leq b\}.\]  
For $g \in \mathcal{G}$, let $\dot\U_g$ be the tableau obtained from $\S(1)$ by moving all entries equal to $d$ from row 2 to row 1, and for $i< d$ moving down $g_i$ entries equal to $i$ from row 1 to row 2 and for $i>d+1$ moving $g_{i-1}$ entries equal to $i$ from row 1 to row 2.  
For $g' \in \mathcal{G'}$, let $\dot\U_{g'}$ be the tableau obtained from $\S(2)$ by moving all entries equal to $d$ from row 2 to row 1, and for $i< d$ moving down $g'_i$ entries equal to $i$ from row 1 to row 2 and for $i>d+1$ moving $g_{i-1}$ entries equal to $i$ from row 1 to row 2.  Then, using the inductive hypothesis,
\begin{align*}
\Theta_\S(m_\nu) &= \sum_{g \in \mathcal{G}} (-1)^{\beta_d}\q^{-\binom{\beta_d+1}{2}+\bar{g}_{d-1}}\q^{-\beta_d \bar\beta_{d-1}} \prod_{i=1}^b \q^{g_i \bar\beta_{i-1}}\gauss{\beta_i+g_i}{g_i} \Theta_{\dot\U_g}(m_{\dot\nu})\\ 
&\quad + \sum_{g \in \mathcal{G}} (-1)^{\beta_d} \q^{-\binom{\beta_d+1}{2}+\bar{g}'_{d-1}} \q^{-\beta_d\bar\beta_{d-1}} \q^{\bar{\beta}_d} \prod_{i=1}^b \q^{g_i \bar\beta_{i-1}}\gauss{\beta_i+g_i}{g_i}\Theta_{\dot\U_g}(m_{\dot\nu}) \\
&\quad + \sum_{g' \in \mathcal{G'}} (-1)^{\beta_d-1} \q^{-\binom{\beta_d}{2}+\bar{g}'_{d-1}} \q^{(\beta_d-1)\bar{\beta}_{d-1}} \prod_{i=1}^b \q^{g_i \bar\beta_{i-1}}\gauss{\beta_i+g_i}{g_i} \Theta_{\dot\U_{g'}}(m_{\dot\nu}) \\
&= \sum_{g \in \mathcal{G}} (-1)^{\beta_d} \q^{-\binom{\beta_d+1}{2}+\bar{g}_{d-1}} \q^{-\bar\beta_{d-1}\beta_d}\prod_{i=1}^b \q^{g_i(\bar\beta_{i-1})} \gauss{\beta_i+g_i}{g_i}\Theta_{\U_g}(m_\nu).
\end{align*}   
\end{proof}

We now move on to the more general case where $\la$ may have more than 2 parts.  

\begin{lemma} \label{somemoreparts}
Suppose $\S \in \RowT(\la,\nu)$ where $\la=(\la_1,\ldots,\la_a)$ and $a \geq 2$.  Choose $r$ with $1 \leq r<a$ and suppose that $\S$ satisfies the following conditions: There exists $k$ with $r+1 \leq k$ such that 
\begin{itemize}
\item All entries of $\S$ in rows $1 \leq j <r$ are equal to $j$.
\item All entries of $\S$ in rows $r$ and equal to one of $r,r+1,\ldots,k$ and all entries in row $r+1$ are equal to one of $r+1,r+2,\ldots,k$,  
\item All entries of $\S$ in rows $r+2\leq j \leq a$ are equal to $j+k-r-1$.  
\end{itemize}
Choose $d$ with $\S^d_{r+1} \neq 0$.  
Let \[\mathcal{G}=\{(g_1,\ldots,g_b) \mid g_d=0, \bar{g}=\S^d_{r+1} \text{ and } g_i \leq \S^i_r \text{ for } 1 \leq i \leq b\}.\]  For $g \in \mathcal{G}$, let $\U_g$ be the row-standard tableau obtained from $\S$ by moving all entries equal to $d$ from row $r+1$ to row $r$, and for $i\neq d$ moving down $g_i$ entries equal to $i$ from row $r$ to row $r+1$.  Then  
\[\Theta_\S = \sum_{g \in \mathcal{G}} (-1)^{\S^d_{r+1}} \q^{-\binom{\S_{r+1}^d+1}{2}+\bar{g}_{d-1}} \q^{-\S^{<d}_{r+1}\S^d_{r+1}}\prod_{i=1}^b \q^{g_i \S_{r+1}^{<i}} \gauss{\S^i_{r+1}+g_i}{g_i}\Theta_{\U_g}.\]
\end{lemma}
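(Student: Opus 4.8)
The plan is to reduce to the two-row case treated in Lemma~\ref{main2part}, by localising the whole computation to rows $r$ and $r+1$; the three hypotheses on $\S$ are exactly what makes this legitimate. First I would unwind those hypotheses. Since no value $<r$ or $>k$ occurs in rows $r,r+1$ and no value in $\{r,\dots,k\}$ occurs in any other row, we have $\nu_j=\la_j$ for $j<r$, $\nu_{j+k-r-1}=\la_j$ for $j>r+1$, and $\nu_r+\dots+\nu_k=\la_r+\la_{r+1}$. Writing $B=\{\bar\la_{r-1}+1,\dots,\bar\la_{r+1}\}$ for the block of letters occupying rows $r$ and $r+1$ of $\ft^\la$, one checks from the definitions that for every $g\in\mathcal{G}$ the tableaux $\first(\U_g)$ and $\ft^\la$ agree outside rows $r,r+1$ (both are filled ``naturally'' there, using the letters not in $B$), so that $d(\first(\U_g))\in\sym_B$; that in Lemma~\ref{reverse} the factors $\CC(\bar\nu_{j-1};\,\cdot\,)$ with $j<r$ or $j>k$ are trivial while those with $r\le j\le k$ involve only letters of $B$; and that $m_\la=X\,m^B$, where $X=\prod_{j\ne r,r+1}m_{\la_j}$ acts on the letters outside $B$ (hence commutes with the parabolic subalgebra $\h_B$ of $\h$ generated by the $T_i$ with $i,i+1\in B$) and $m^B$ is the copy inside $\h_B$ of $m_{(\la_r,\la_{r+1})}$ under the evident isomorphism $\iota:\h_{R,q}(\sym_{\la_r+\la_{r+1}})\to\h_B$.

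Consequently, writing $\bar\S\in\RowT((\la_r,\la_{r+1}),\bar\nu)$ for the two-row tableau obtained from rows $r,r+1$ of $\S$ by the relabelling $i\mapsto i-r+1$ (with $\bar\nu=(\nu_r,\dots,\nu_k)$), and likewise $\bar\U_g$ from $\U_g$, Lemma~\ref{reverse} gives $\Theta_\S(m_\nu)\equiv X\cdot\iota(\Theta_{\bar\S}(m_{\bar\nu}))\pmod{\hla}$ and $\Theta_{\U_g}(m_\nu)\equiv X\cdot\iota(\Theta_{\bar\U_g}(m_{\bar\nu}))\pmod{\hla}$ for all $g\in\mathcal{G}$, provided $X\cdot\iota(\mathcal{H}^{\rhd(\la_r,\la_{r+1})})\subseteq\hla$. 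This last inclusion holds because $\sigma\rhd(\la_r,\la_{r+1})$ forces $\sigma$ to have at most two parts and then $\tilde\sigma:=(\la_1,\dots,\la_{r-1},\sigma_1,\sigma_2,\la_{r+2},\dots,\la_a)\rhd\la$, so $X\cdot\iota(m_{\fs\ft})$ is the Murphy-type element $m_{\tilde\fs\tilde\ft}$ of the composition $\tilde\sigma$, which lies in $\hla$ by Lemmas~\ref{minihigh} and~\ref{higher}. Now $\bar\S$, $\bar d:=d-r+1$ (for which $\bar\S^{d-r+1}_2=\S^d_{r+1}\ge1$), $\mathcal{G}$ and $\{\bar\U_g\}$ are precisely the data of Lemma~\ref{main2part}; quoting that lemma and applying $X\cdot\iota(\,\cdot\,)$ yields the claimed expansion, the coefficient translating correctly because under the relabelling $\S^i_{r+1}$ plays the role of $\beta_i$ and $\S^{<i}_{r+1}=\sum_{j<i}\S^j_{r+1}$ that of $\bar\beta_{i-1}$ (and the factors with $i\notin\{r,\dots,k\}$ contribute $\gauss{0}{0}=1$), so $(-1)^{\beta_d}\q^{-\binom{\beta_d+1}{2}+\bar g_{d-1}}\q^{-\bar\beta_{d-1}\beta_d}\prod_i\q^{g_i\bar\beta_{i-1}}\gauss{\beta_i+g_i}{g_i}$ becomes $(-1)^{\S^d_{r+1}}\q^{-\binom{\S^d_{r+1}+1}{2}+\bar g_{d-1}}\q^{-\S^{<d}_{r+1}\S^d_{r+1}}\prod_{i=1}^b\q^{g_i\S^{<i}_{r+1}}\gauss{\S^i_{r+1}+g_i}{g_i}$.

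The main obstacle is the bookkeeping in the first paragraph: one must verify with care that $d(\first(\U_g))\in\sym_B$ and that all the coset sums $\CC(\bar\nu_{j-1};\,\cdot\,)$ stay inside $\h_B$, so that the entire calculation really takes place inside $\iota(\h_{R,q}(\sym_{\la_r+\la_{r+1}}))$ and Lemma~\ref{main2part} can be invoked verbatim (together with the relation $m_{(\la_r,\la_{r+1})}h_{1,1}\in\mathcal{H}^{\rhd(\la_r,\la_{r+1})}$ used throughout its proof, which corresponds under $\iota$ to $m_\la h_{r,1}\in\hla$ from Theorem~\ref{hdtthm}); this is exactly where all three bullet hypotheses on $\S$ enter. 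Equivalently, and perhaps more transparently, one may dispense with $\iota$ and simply re-run the chain of Lemmas~\ref{Type1n},~\ref{cosetattack},~\ref{Stage3},~\ref{main2part} word for word with ``rows $1,2$'' replaced by ``rows $r,r+1$'' and $h_{1,1}$ by $h_{r,1}$, the frozen rows carried along as the inert left factor $X$.
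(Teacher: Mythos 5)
Your proof is correct, and in substance it matches the paper's: the paper disposes of Lemma~\ref{somemoreparts} in one line, saying that the argument is ``identical to the proof of Lemma~\ref{main2part}, except for the change in notation'' --- which is precisely your closing alternative of re-running Lemmas~\ref{Type1n} through~\ref{main2part} with rows $1,2$ replaced by rows $r,r+1$ and $h_{1,1}$ by $h_{r,1}$. What you add, and what the paper leaves entirely implicit, is the explicit justification that this relabelling is legitimate: you identify the block $B=\{\bar\la_{r-1}+1,\dots,\bar\la_{r+1}\}$, check via Lemma~\ref{reverse} that $d(\first(\U_g))\in\sym_B$ and that the non-trivial coset sums live in $\h_B$, write $m_\la=X\,m^B$, and verify the crucial containment $X\cdot\iota\bigl(\mathcal{H}^{\rhd(\la_r,\la_{r+1})}\bigr)\subseteq\hla$ (using that any $\sigma\rhd(\la_r,\la_{r+1})$ has at most two parts and that inserting $\sigma$ into $\la$ produces a composition dominating $\la$, so Lemmas~\ref{minihigh} and~\ref{higher} apply). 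That verification is exactly the content hidden behind the paper's ``change of notation,'' and making it explicit is a genuine improvement in rigour rather than a different method. The translation of the coefficient --- $\beta_i\leftrightarrow\S^i_{r+1}$, $\bar\beta_{i-1}\leftrightarrow\S^{<i}_{r+1}$, with the hypotheses on $\S$ forcing $g_i=0$ and $\gauss{\S^i_{r+1}+g_i}{g_i}=1$ for $i\notin\{r,\dots,k\}$ so that the index set $\mathcal G$ really does correspond --- is also checked correctly.
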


\begin{proof}
The proof of Lemma~\ref{somemoreparts} is identical to the proof of Lemma~\ref{main2part}, except for the change in notation.  We chose to give the proof of Lemma~\ref{main2part} rather than proving Lemma~\ref{somemoreparts} itself because the notation was easier to control.   
\end{proof}

Now suppose that $\S \in \RowT(\la,\nu)$.  
Choose $r$ with $1 \leq r < a$ and define $\dot\S$ to be the $\la$-tableau such that
\begin{itemize}
\item Each row $1 \leq j <r$ contains $\la_j$ entries equal to $j$.
\item Each row $r+1 <j \leq a$ contains $\la_j$ entries equal to $j+b-2$.
\item Each row $j=r,r+1$ contains $\S^i_j$ entries equal to $i+r-1$, for $1 \leq i \leq b$.  
\end{itemize}

Note that $\dot\S$ satisfies the conditions of Lemma~\ref{somemoreparts}.

\begin{ex}
Suppose that
\[\S=\tab(11122233445,112235,1224,3345),\]
and let $r=2$.  Then
\[\dot\S=\tab(11111111111,223346,2334,7777).\]
\end{ex}

\begin{lemma} \label{LittlePerms}
Suppose $m,x \geq 0$ and $v$ is a permutation of $m+1,\ldots,m+x$.  Suppose $w$ is a permutation such that $w(m+i) < w(m+j)$ for all $1 \leq i<j \leq x$.  Then $\ell(vw) = \ell(v)+\ell(w) = \ell(wv)$.  
\end{lemma}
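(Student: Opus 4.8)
The plan is to read off both identities from the combinatorial formula for length recorded in~\eqref{length}, namely $\ell(g)=\#\{(i,j)\mid 1\le i<j\le n,\ ig>jg\}$; call a pair $(i,j)$ with $i<j$ and $ig>jg$ an \emph{inversion} of $g$. Write $B=\{m+1,\dots,m+x\}$, $A_{<}=\{1,\dots,m\}$, $A_{>}=\{m+x+1,\dots,n\}$ and $A=A_{<}\cup A_{>}$. The hypotheses say that $v$ fixes $A$ pointwise and permutes $B$, so every inversion of $v$ has both indices in $B$ and these account for all $\ell(v)$ of them; and that $w$ is order-preserving on $B$, so $w$ has no inversion with both indices in $B$, and $aw>bw\iff a>b$ for $a,b\in B$.

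To prove $\ell(vw)=\ell(v)+\ell(w)$ I would compare, class by class, the inversions of $vw$ with those of $w$, the classes being determined by how $\{i,j\}$ meets $B$. If $i,j\in A$ then $v$ fixes both, so $i(vw)=iw$, $j(vw)=jw$ and the pair is an inversion of $vw$ iff it is one of $w$. If $i,j\in B$ then $i(vw)=(iv)w$, $j(vw)=(jv)w$, and order-preservation of $w$ on $B$ gives $(iv)w>(jv)w\iff iv>jv$; hence the pairs in $B\times B$ contribute exactly $\ell(v)$ to the inversions of $vw$ while contributing $0$ to those of $w$. Finally, if exactly one of $i,j$ lies in $B$ then, because $i<j$, either $i\in B$ and $j\in A_{>}$, or $i\in A_{<}$ and $j\in B$; in the first case $i(vw)=(iv)w$ with $iv$ ranging over $B$ as $i$ does, so for fixed $j$ one has $\#\{i\in B\mid (iv)w>jw\}=\#\{i'\in B\mid i'w>jw\}$, and summing over $j\in A_{>}$ (where $i<j$ is automatic) shows the count of such inversions of $vw$ equals that of $w$; the case $i\in A_{<}$, $j\in B$ is symmetric. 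Adding the three contributions, and using that $w$ has no inversion inside $B$, yields $\ell(vw)=\ell(w)+\ell(v)$.

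The identity $\ell(wv)=\ell(v)+\ell(w)$ is obtained by the same bookkeeping applied to $wv$: one now partitions the pairs $(i,j)$ according to whether the images $iw,jw$ lie in $B$, i.e. according to membership in $w^{-1}(B)$, argues exactly as above, and finds that the class with both images in $B$ contributes $\ell(v)$ while the other classes reproduce the inversions of $w$; the role played above by the order-preservation of $w$ on $B$ is taken over here by the corresponding statement for $w^{-1}$ on $B$. I expect the only delicate point of the whole argument to be the mixed class in the computation of $\ell(vw)$ (and its analogue for $\ell(wv)$): there one must genuinely sum over the index lying outside $B$ and invoke both that $i\mapsto iv$ permutes $B$ and that $w$ (resp. $w^{-1}$) is order-preserving on $B$; the remaining classes are immediate, and no result beyond~\eqref{length} is needed.
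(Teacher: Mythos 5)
Your approach --- reading off both identities from the inversion count in Equation~\ref{length} --- is exactly what the paper's one-line proof indicates, and your case analysis establishing $\ell(vw)=\ell(v)+\ell(w)$ is correct and complete.

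The argument for $\ell(wv)=\ell(v)+\ell(w)$ has a genuine gap at precisely the point you flag as delicate: you replace order-preservation of $w$ on $B$ by ``the corresponding statement for $w^{-1}$ on $B$'', but the latter does not follow from the stated hypothesis. Knowing $aw<bw$ whenever $a<b$ in $B$ constrains the restriction of $w$ to $B$; it says nothing about $w^{-1}$ on $B$ unless $w(B)=B$, which is not assumed. Concretely, take $n=4$, $B=\{1,2\}$, $v=(1\,2)$ and $w$ sending $1\mapsto 2$, $2\mapsto 4$, $3\mapsto 1$, $4\mapsto 3$: then $w$ is increasing on $B$, $\ell(w)=3$, $\ell(v)=1$, and in the paper's right-action convention one computes $\ell(vw)=4$ but $\ell(wv)=2$; and indeed $1w^{-1}=3>1=2w^{-1}$, so $w^{-1}$ is not increasing on $B$. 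The hypothesis that actually yields $\ell(wv)=\ell(v)+\ell(w)$ is that $w^{-1}$, not $w$, be increasing on $B$ --- equivalently, that $w$ be minimal in the left coset $w\,\sym_{\{m+1,\dots,m+x\}}$ rather than in the right coset $\sym_{\{m+1,\dots,m+x\}}w$. The paper only ever invokes the $\ell(vw)$ half of the lemma (directly in Lemma~\ref{NewPerms}, and in Lemma~\ref{Commutes} after rewriting $wv=\dot{v}w$), so the imprecision lies partly in the statement of Lemma~\ref{LittlePerms} itself; nevertheless your write-up should not silently promote the hypothesis on $w$ to one on $w^{-1}$.
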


\begin{proof}
The proof follows by applying Equation~\ref{length}.
\end{proof}

\begin{lemma} \label{NewPerms}
Let $\S \in \RowT(\la,\nu)$.  Choose $r$ with $1 \leq r <a$ and define $\dot\S$ and $\dot\nu$ as above.   
If $w$ is the permutation such that $\first(\S) = \first(\dot\S) w$ then $\ell(d(\first(\S))) = \ell(d(\first(\dot\S)))+ \ell(w)$.
Furthermore if $\T \in \RowT(\la,\nu)$ is such that $\S$ and $\T$ are identical on all rows except possibly rows $r$ and $r+1$ 
then $\first(\T) = \first(\dot(\T)) w$ (so that $\ell(\first(\S)) = \ell(\first(\dot\S)) + \ell(w)$). 
\end{lemma}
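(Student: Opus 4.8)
The plan is to deduce both assertions from Lemma~\ref{LittlePerms}. Set $m=\bar\la_{r-1}$ and $x=\la_r+\la_{r+1}$, and let $B=\{m+1,\dots,m+x\}$ be the set of entries occupying rows $r$ and $r+1$ of $\ft^\la$.

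First I would check that $\first(\dot\S)$ agrees with $\ft^\la$ on every row other than $r$ and $r+1$. This is immediate from the two definitions: rows $1,\dots,r-1$ of $\dot\S$ are constant (row $j$ holds only $j$) and rows $r+2,\dots,a$ are constant (row $j$ holds only $j+b-2$), so when $\first$ replaces each band of equal entries by consecutive integers in row order these rows receive exactly the integers, in exactly the cells, that they have in $\ft^\la$; the only point to verify is that the partial sums of $\dot\nu$ meet those of $\la$ at the relevant indices, which holds because $\dot\nu_j=\la_j$ for $j<r$ and $\sum_{i=1}^b(\S^i_r+\S^i_{r+1})=\la_r+\la_{r+1}$. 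Consequently $d(\first(\dot\S))$ fixes every integer outside $B$, so it is a permutation of $\{m+1,\dots,m+x\}$ in the sense of Lemma~\ref{LittlePerms}; the same check applied to $\dot\T$ shows $d(\first(\dot\T))$ is too. Also, from $\first(\S)=\first(\dot\S)\,w=\ft^\la\,d(\first(\dot\S))\,w$ and uniqueness of the permutation taking $\ft^\la$ to $\first(\S)$ we get $d(\first(\S))=d(\first(\dot\S))\,w$. So it remains to show that $w$ is increasing on $B$, that is $w(m+i)<w(m+j)$ for $1\le i<j\le x$: Lemma~\ref{LittlePerms} will then give $\ell(d(\first(\S)))=\ell(d(\first(\dot\S)))+\ell(w)$, and the analogous identity for $\T$ once we know that the $w_\T$ defined by $\first(\T)=\first(\dot\T)\,w_\T$ equals $w$.

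The heart of the matter is therefore to compare how the entries of $\first(\dot\S)$ and of $\first(\S)$ order the cells of rows $r$ and $r+1$. Writing $x_i=\S^i_r$, $y_i=\S^i_{r+1}$ and $m_i=\sum_{j<r}\S^i_j$, I would describe these entries explicitly: in $\first(\dot\S)$ the cells of rows $r,r+1$ carrying the value $i+r-1$ form a block of $x_i+y_i$ consecutive integers, the $x_i$ row-$r$ cells taking the first $x_i$ and the $y_i$ row-$(r+1)$ cells the last $y_i$, and these blocks occur in increasing order of $i$; in $\first(\S)$ the cells of rows $r,r+1$ carrying the value $i$ form the block of consecutive integers $\bar\nu_{i-1}+m_i+1,\dots,\bar\nu_{i-1}+m_i+x_i+y_i$, again with the row-$r$ cells preceding the row-$(r+1)$ cells, and these blocks too are in increasing order of $i$ because $\bar\nu_{i-1}+m_i+x_i+y_i\le\bar\nu_{i-1}+\nu_i=\bar\nu_i\le\bar\nu_i+m_{i+1}$ (using $m_i+x_i+y_i\le\nu_i$). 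Hence the cells of rows $r$ and $r+1$ are linearly ordered in exactly the same way by the entries of $\first(\dot\S)$ as by the entries of $\first(\S)$. Since $w$ sends the $\first(\dot\S)$-entry of each cell to its $\first(\S)$-entry, this is precisely the statement that $w$ preserves the order of the elements of $B$, which is what Lemma~\ref{LittlePerms} requires.

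For the ``furthermore'' claim I would argue that $w$ depends only on data shared by $\S$ and $\T$. If $p$ lies in a row other than $r$ and $r+1$ then $\first(\dot\S)(p)=\ft^\la(p)$, so $w$ is pinned down on $\{1,\dots,n\}\setminus B$ by the values $\first(\S)(p)$; and for such $p$ one has $\first(\S)(p)=\first(\T)(p)$, since $\S$ and $\T$ agree off rows $r,r+1$ and have the same type $\nu$, so for each $i$ the number of entries equal to $i$ that precede $p$ in reading order is the same for $\S$ and for $\T$. On $B$ itself, $w$ is the unique order-preserving bijection onto the (common) complementary set of entries, by the previous paragraph applied to each of $\S$ and $\T$; hence $w_\T=w$, and running the argument above for $\dot\T$ gives $\first(\T)=\first(\dot\T)\,w$ and $\ell(d(\first(\T)))=\ell(d(\first(\dot\T)))+\ell(w)$. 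I expect the only genuine obstacle to be the bookkeeping in the third paragraph---tracking exactly which consecutive integers $\first$ assigns to which cells of rows $r$ and $r+1$ in each of $\first(\S)$ and $\first(\dot\S)$---but this is routine once the inequality $\bar\nu_{i-1}+m_i+x_i+y_i\le\bar\nu_i$ is in hand.
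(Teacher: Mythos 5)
Your proof is correct and follows the same route the paper takes: both reduce the statement to Lemma~\ref{LittlePerms} by observing that $d(\first(\dot\S))$ is a permutation of the block $B=\{\bar\la_{r-1}+1,\dots,\bar\la_{r-1}+\la_r+\la_{r+1}\}$ and that $w$ is increasing on $B$. The paper simply asserts these facts in two sentences; you have filled in the bookkeeping (matching the block structure of the values that $\first$ assigns to rows $r,r+1$ in $\first(\S)$ and $\first(\dot\S)$, and checking that $w$ is pinned down off $B$ by data common to $\S$ and $\T$), which is exactly the verification the paper leaves to the reader.
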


\begin{proof}
Note that $d(\first(\dot\S))$ and $w$ satisfy the conditions of Lemma~\ref{LittlePerms} so that $\ell(d(\first(\dot\S))w) = \ell(d(\first(\dot\S)))+ \ell(w)$.  It is straightforward to see that the permutation $w$ works for both $\S$ and $\T$.  
\end{proof}

\begin{lemma} \label{MoreThanThree}
Let $m \geq 0$ and $\eta=(\eta_1,\ldots,\eta_a)$ be a composition such that $a \geq 2$.  Choose $r$ with $1 \leq r <a$.  Then
\[ \CC(m; \eta) = \CC(m+\bar\eta_{r-1}; \eta_r,\eta_{r+1}) \CC(m; \eta_1,\ldots,\eta_{r+1}, \eta_r+\eta_{r+1}) \CC(0; \bar\eta_{r+1},\eta_{r+1},\ldots,\eta_a).\] 
\end{lemma}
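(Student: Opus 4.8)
The plan is to derive the identity from Lemma~\ref{IntoThree} by splitting the composition $\eta$ twice and then reorganising the resulting factors. Throughout we may assume $m=0$, since the general case follows by shifting every letter, and hence every offset, up by $m$, exactly as in the proof of Lemma~\ref{IntoThree}.

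First I would split off rows $r+2,\ldots,a$: applying Lemma~\ref{IntoThree} with $x=r+1$ writes $\CC(0;\eta)$ as the product $\CC(0;\eta_1,\ldots,\eta_{r+1})\cdot\CC(\bar\eta_{r+1};\eta_{r+2},\ldots,\eta_a)\cdot\CC(0;\bar\eta_{r+1},\bar\eta_a-\bar\eta_{r+1})$, and the last two factors recombine, by the $x=1$ case of Lemma~\ref{IntoThree} together with $\CC(0;\bar\eta_{r+1})=I$, into the single factor $\CC(0;\bar\eta_{r+1},\eta_{r+2},\ldots,\eta_a)$. Next I would apply Lemma~\ref{IntoThree} with $x=r-1$ to the remaining factor $\CC(0;\eta_1,\ldots,\eta_{r+1})$, obtaining the product of $\CC(0;\eta_1,\ldots,\eta_{r-1})$, the two-row factor $\CC(\bar\eta_{r-1};\eta_r,\eta_{r+1})$ that rearranges rows $r$ and $r+1$, and a coarse-splitting factor $\CC(0;\bar\eta_{r-1},\eta_r+\eta_{r+1})$.

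The key observation is that $\CC(0;\eta_1,\ldots,\eta_{r-1})$ and $\CC(\bar\eta_{r-1};\eta_r,\eta_{r+1})$ are sums of $T_w$ over permutations whose supports are the disjoint intervals $\{1,\ldots,\bar\eta_{r-1}\}$ and $\{\bar\eta_{r-1}+1,\ldots,\bar\eta_{r+1}\}$; these permutations therefore commute, and so do the two sums. Moving $\CC(\bar\eta_{r-1};\eta_r,\eta_{r+1})$ to the front and recombining $\CC(0;\eta_1,\ldots,\eta_{r-1})\cdot\CC(0;\bar\eta_{r-1},\eta_r+\eta_{r+1})$ into a single factor by the trivial case of Lemma~\ref{IntoThree} as before leaves $\CC(0;\eta)$ in the asserted form.

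An alternative, closer to the style of the proofs of Lemma~\ref{IntoThree} and Lemma~\ref{somemoreparts}: reduce to $m=0$, take an arbitrary row-standard $\eta$-tableau $\ft=\ft^\eta w$, and decompose $w=w_1w_2w_3$ where $w_1$ permutes only the entries in rows $r$ and $r+1$ of $\ft$ --- which occupy a contiguous interval of letters in $\ft^\eta$ --- and lies in the coset defining $\CC(\bar\eta_{r-1};\eta_r,\eta_{r+1})$, while $w_2$ and $w_3$ arrange the coarser structure. One then uses Lemma~\ref{LittlePerms} to peel off $w_1$ and Equation~\ref{length} to split $w_2w_3$, checking that $\ell(w)=\ell(w_1)+\ell(w_2)+\ell(w_3)$, so that $T_w=T_{w_1}T_{w_2}T_{w_3}$; the identity then follows by counting, since the number of row-standard $\eta$-tableaux equals the product of the sizes of the three cosets, exactly as at the end of the proof of Lemma~\ref{IntoThree}. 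The only real work in either approach is the offset bookkeeping, and in the first approach one must be careful about the order of the three factors: the factor permuting rows $r$ and $r+1$ has to end up leftmost, which is precisely what the disjoint-support commutation delivers and what a bare iteration of Lemma~\ref{IntoThree} does not.
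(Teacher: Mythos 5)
Your proof is correct and takes essentially the same route as the paper: both apply Lemma~\ref{IntoThree} with $x=r+1$ and then $x=r-1$, exploit the disjoint-support commutation to pull the $\CC(\bar\eta_{r-1};\eta_r,\eta_{r+1})$ factor to the front, and recombine the remaining pairs via the degenerate cases of Lemma~\ref{IntoThree}. The only difference is cosmetic: you perform one recombination before the second split rather than both at the end, and you make the commutation step explicit where the paper leaves it silent.
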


\begin{proof}
As usual, we may assume $m=0$.  Applying Lemma~\ref{IntoThree} repeatedly, we get
\begin{align*}
\CC(0;\eta)&= \CC(0;\eta_1,\ldots,\eta_{r+1}) \CC(\bar\eta_{r+1}; \eta_{r+2},\ldots,\eta_{a}) \CC(0; \bar\eta_{r+1},\bar\eta_a-\bar\eta_{r+1}) \\
&= \CC(\bar\eta_{r-1}; \eta_r,\eta_{r+1}) \CC(0; \eta_{1},\ldots,\eta_{r-1}) \CC(0; \bar\eta_{r-1}, \eta_{r}+\eta_{r+1}) \\
&\hspace*{15mm} \CC(\bar\eta_{r+1}; \eta_{r+2},\ldots,\eta_{a}) \CC(0; \bar\eta_{r+1}, \bar\eta_a - \bar\eta_{r+1}) \\
&=\CC(\bar\eta_{r-1}; \eta_r, \eta_{r+1}) \CC(0; \eta_1,\ldots,\eta_{r-1},\eta_r+\eta_{r+1}) \CC(0; \bar\eta_{r+1},\eta_{r+2},\ldots,\eta_{a}).
\end{align*}
\end{proof}

\begin{lemma} \label{Commutes}
Let $m, \eta_1, \eta_2 \geq 0$.  Suppose $w$ is a permutation such that for all $m+1 \leq k \leq m+\eta_1 +\eta_2$ we have $w^{-1}(k) = k+x$ for some $x \in \Z$.  Then
\[ T_w C(m; \eta_1, \eta_2) = C(m+x; \eta_1, \eta_2) T_w.\]
\end{lemma}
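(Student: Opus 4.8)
The plan is to reduce the identity to a purely combinatorial statement about reduced words in $\symn$ and then verify it. Recall that $C(m;\eta_1,\eta_2) = \sum_{w' \in \mathcal{D}_{m,(\eta_1,\eta_2)}} T_{w'}$, where $\mathcal{D}_{m,(\eta_1,\eta_2)}$ is the set of minimal-length right coset representatives of $\sym_{(m,\eta_1,\eta_2)} \cap \sym_{\{m+1,\ldots,m+\eta_1+\eta_2\}}$ in $\sym_{\{m+1,\ldots,m+\eta_1+\eta_2\}}$, and similarly $C(m+x;\eta_1,\eta_2) = \sum_{w'' \in \mathcal{D}_{m+x,(\eta_1,\eta_2)}} T_{w''}$. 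The hypothesis says $w$ restricts to the ``shift by $x$'' bijection from $\{m+1,\ldots,m+\eta_1+\eta_2\}$ onto $\{m+x+1,\ldots,m+x+\eta_1+\eta_2\}$; write $\sigma_x$ for this shift, viewed as an element of $\symn$, so that $w$ and $\sigma_x$ agree on the block $\{m+1,\ldots,m+\eta_1+\eta_2\}$. First I would observe that conjugation by $\sigma_x$ carries $\mathcal{D}_{m,(\eta_1,\eta_2)}$ bijectively onto $\mathcal{D}_{m+x,(\eta_1,\eta_2)}$, since it carries the relevant Young subgroups to each other and preserves length; indeed $w'' = \sigma_x w' \sigma_x^{-1}$ is precisely the coset representative obtained by relabelling.

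Next I would establish, for each $w' \in \mathcal{D}_{m,(\eta_1,\eta_2)}$, the length identities $\ell(w w') = \ell(w) + \ell(w')$ and $\ell(w'' w) = \ell(w'') + \ell(w)$, where $w'' = \sigma_x w' \sigma_x^{-1}$. This is where Lemma~\ref{LittlePerms} does the work: $w'$ is a permutation of $\{m+1,\ldots,m+\eta_1+\eta_2\}$ (fixing everything outside that block), and the relevant property of $w$ — that it carries the block $\{m+1,\ldots,m+\eta_1+\eta_2\}$ to the block $\{m+x+1,\ldots,m+x+\eta_1+\eta_2\}$ monotonically — is exactly the hypothesis of Lemma~\ref{LittlePerms} (with the roles of ``$v$'' and ``$w$'' as stated there). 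Applying that lemma gives $\ell(w' w^{-1}) = \ell(w') + \ell(w^{-1})$; taking inverses and using $\ell(g)=\ell(g^{-1})$ and the analogous statement with $w''$ in place of $w'$ yields both displayed length additivity statements. Consequently $T_w T_{w'} = T_{w w'}$ and $T_{w''} T_w = T_{w'' w}$ in $\h$.

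Finally I would check that $w w' = w'' w$ as elements of $\symn$, which combined with the previous paragraph gives $T_w T_{w'} = T_{w''} T_w$; summing over $w' \in \mathcal{D}_{m,(\eta_1,\eta_2)}$ (equivalently over $w'' \in \mathcal{D}_{m+x,(\eta_1,\eta_2)}$) then yields $T_w\, C(m;\eta_1,\eta_2) = C(m+x;\eta_1,\eta_2)\, T_w$, as required. To see $w w' = w'' w$: both sides act on any $k \notin \{m+1,\ldots,m+\eta_1+\eta_2\}$ the same way, since $w'$ fixes such $k$ and $w''$ fixes $w(k) \notin \{m+x+1,\ldots,m+x+\eta_1+\eta_2\}$ (using the hypothesis on $w^{-1}$); and on $k \in \{m+1,\ldots,m+\eta_1+\eta_2\}$ we have $w(w'(k)) = \sigma_x(w'(k))$ since $w'(k)$ is again in the block, while $w''(w(k)) = (\sigma_x w' \sigma_x^{-1})(\sigma_x(k)) = \sigma_x(w'(k))$, so the two agree. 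The main obstacle — really the only point requiring care — is checking that the hypothesis as stated (phrased via $w^{-1}$) lines up correctly with the hypothesis of Lemma~\ref{LittlePerms} (phrased via the images $w(m+i)$), and keeping the bookkeeping between $w$, $w'$, $w''$ and $\sigma_x$ straight; the length computations themselves are immediate from Equation~\ref{length} once the setup is right.
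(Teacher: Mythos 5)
Your strategy matches the paper's (conjugate the coset representatives by a shift, invoke Lemma~\ref{LittlePerms} for length additivity, and match the two sums term by term), but you have read the hypothesis backwards, and this breaks the closing check that $ww'=w''w$. The condition $w^{-1}(k)=k+x$ for $m+1\le k\le m+\eta_1+\eta_2$ means that $w^{-1}$, not $w$, carries $B_1=\{m+1,\dots,m+\eta_1+\eta_2\}$ onto $B_2=\{m+x+1,\dots,m+x+\eta_1+\eta_2\}$ by the shift; equivalently $w$ carries $B_2$ onto $B_1$ by subtracting $x$, and nothing is given about how $w$ acts on $B_1$ itself. In your final paragraph you evaluate both sides on $k\in B_1$ and use $w(k)=\sigma_x(k)$ and $w(w'(k))=\sigma_x(w'(k))$, which are unwarranted statements about $w$ on $B_1$; the off-block case has the same defect, since for $k\notin B_1$ you want $w(k)\notin B_2$, which concerns $w^{-1}(B_2)$, about which the hypothesis says nothing.

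The verification should instead split on membership in $B_2$. Writing $iw$ for the image of $i$ under $w$ and using $i(uv)=(iu)v$, as in Equation~\ref{length}: for $k=l+x$ with $l\in B_1$ one has $k(ww')=(kw)w'=lw'$ and $k(w''w)=(kw'')w=(lw'+x)w=lw'$, each step using that $w$ subtracts $x$ on $B_2$; for $k\notin B_2$ one has $kw\notin B_1$ (since $w$ carries $B_2$ bijectively onto $B_1$), so $w'$ fixes $kw$, $w''$ fixes $k$, and both sides equal $kw$. Your length argument, by contrast, is correct precisely because you apply Lemma~\ref{LittlePerms} to $w^{-1}$, which really is monotone on $B_1$; only the sentence introducing it, which attributes this monotonicity to $w$, needs correcting. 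With the direction straightened out, the proof you give is the paper's.
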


\begin{proof}
If $v$ is any permutation of $m+1,\ldots,m+\eta_1+\eta_2$, let $\bar{v}$ be the permutation of $x+m+1,\ldots,x+m+\eta_1+\eta_2$ which sends $x+k$ to $v(k)+x$.  Then clearly $wv = \dot{v}w$ and $\ell(wv) = \ell(w)+\ell(v)$ by Lemma~\ref{LittlePerms}.  The result follows since $C(m;\eta_1,\eta_2)$ is a sum of  basis elements indexed by permutations of $m+1,\ldots,m+\eta_1+\eta_2$.
\end{proof}

\begin{lemma} \label{BadPerm}
Let $\S \in \RowT(\la,\nu)$.  Choose $r$ with $1 \leq r <a$ and define $\dot\S, \, \dot\nu$ and $w$ as in Lemma~\ref{NewPerms}.  Then
\[\Theta_{\S}(m_\nu)=\Theta_{\dot\S}(m_{\dot\nu}) T_{w} \prod_{i=1}^b \CC(\bar\nu_{i-1};\S^i_1,\ldots,\S^i_{r-1},\S^i_r+\S^i_{r+1}) \CC(\bar\nu_{i-1};\S^i_{\leq r+1},\S^{i}_{r+2},\ldots,\S^i_a).\]
\end{lemma}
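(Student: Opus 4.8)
The plan is to compute both sides by means of Lemma~\ref{reverse} and then match them using the structural lemmas on the elements $\CC(m;\eta)$. Lemma~\ref{reverse} gives
\[\Theta_\S(m_\nu)=\hla+m_\la\,T_{d(\first(\S))}\prod_{i=1}^{b}\CC(\bar\nu_{i-1};\S^i_1,\ldots,\S^i_a),\]
and the same lemma applied to $\dot\S$ gives an analogous expression in which, since each of the rows $1,\ldots,r-1$ and $r+2,\ldots,a$ of $\dot\S$ carries entries of a single value, all but $b$ of the $\CC$-factors have the form $\CC(\,\cdot\,;0,\ldots,0,p,0,\ldots,0)$ and hence equal the identity; what remains is exactly
\[\Theta_{\dot\S}(m_{\dot\nu})=\hla+m_\la\,T_{d(\first(\dot\S))}\prod_{i=1}^{b}\CC(\bar{\dot\nu}_{r+i-2};\S^i_r,\S^i_{r+1}).\]
I would then use Lemma~\ref{NewPerms} to write $T_{d(\first(\S))}=T_{d(\first(\dot\S))}T_w$ (the lengths add), and, for each $i$, Lemma~\ref{MoreThanThree} applied to the composition $(\S^i_1,\ldots,\S^i_a)$ with the given $r$ to factor
\[\CC(\bar\nu_{i-1};\S^i_1,\ldots,\S^i_a)=A_i\,B_i\,C_i,\qquad A_i=\CC(\bar\nu_{i-1}+\S^i_{<r};\S^i_r,\S^i_{r+1}),\]
where $B_i=\CC(\bar\nu_{i-1};\S^i_1,\ldots,\S^i_{r-1},\S^i_r+\S^i_{r+1})$ and $C_i=\CC(\bar\nu_{i-1};\S^i_{\leq r+1},\S^i_{r+2},\ldots,\S^i_a)$ are precisely the factors appearing on the right-hand side of the statement.

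Since the $A_i$ and the $B_iC_i$ for different $i$ are supported on the disjoint intervals $\{\bar\nu_{i-1}+1,\ldots,\bar\nu_i\}$, they commute across blocks, and hence $T_{d(\first(\S))}\prod_i\CC(\bar\nu_{i-1};\S^i_1,\ldots,\S^i_a)$ equals $T_{d(\first(\dot\S))}\,T_w\bigl(\prod_iA_i\bigr)\bigl(\prod_iB_iC_i\bigr)$; the whole problem then reduces to commuting $T_w$ past $\prod_iA_i$. Here I would apply Lemma~\ref{Commutes} once for each $i$: the support of $A_i$ is the set of boxes of $\first(\S)$ carrying value $i$ in rows $r$ and $r+1$, and the support of $\CC(\bar{\dot\nu}_{r+i-2};\S^i_r,\S^i_{r+1})$ is the set of boxes of $\first(\dot\S)$ carrying value $r+i-1$ in rows $r$ and $r+1$. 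Because $\S$ is row-standard and rows $r,r+1$ of $\dot\S$ are weakly increasing by construction, the ``$i$-th value block'' occupies the same columns of rows $r$ and $r+1$ in $\S$ as the $(r+i-1)$-th value block does in $\dot\S$; comparing the row-order labellings used to define $\first$ then shows that $w^{-1}$ restricts, on the support of $A_i$, to translation by $\bar{\dot\nu}_{r+i-2}-\bar\nu_{i-1}-\S^i_{<r}$. Lemma~\ref{Commutes} now yields $T_wA_i=\CC(\bar{\dot\nu}_{r+i-2};\S^i_r,\S^i_{r+1})\,T_w$, and iterating (again using that the factors for distinct $i$ commute) gives
\[T_w\prod_{i=1}^{b}A_i=\Bigl(\prod_{i=1}^{b}\CC(\bar{\dot\nu}_{r+i-2};\S^i_r,\S^i_{r+1})\Bigr)T_w.\]

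Putting the pieces together I would then get
\[\Theta_\S(m_\nu)=\hla+m_\la\,T_{d(\first(\dot\S))}\Bigl(\prod_{i}\CC(\bar{\dot\nu}_{r+i-2};\S^i_r,\S^i_{r+1})\Bigr)T_w\prod_{i}B_iC_i=\Theta_{\dot\S}(m_{\dot\nu})\,T_w\prod_{i}B_iC_i,\]
which is the assertion. The main obstacle is the bookkeeping in the middle paragraph: establishing the length identity $\ell(d(\first(\S)))=\ell(d(\first(\dot\S)))+\ell(w)$, which Lemma~\ref{NewPerms} supplies, and pinning down $w^{-1}$ as the claimed block translation, which requires carefully tracking the canonical labellings of $\first(\S)$ and $\first(\dot\S)$; the rest is a formal computation with the elements $\CC(m;\eta)$ using Lemmas~\ref{MoreThanThree} and~\ref{Commutes}.
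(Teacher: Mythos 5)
Your proof is correct and follows essentially the same route as the paper: both expand $\Theta_\S(m_\nu)$ via Lemma~\ref{reverse}, write $T_{d(\first(\S))}=T_{d(\first(\dot\S))}T_w$ by Lemma~\ref{NewPerms}, factor each $\CC(\bar\nu_{i-1};\S^i_1,\ldots,\S^i_a)$ by Lemma~\ref{MoreThanThree}, and push $T_w$ past the two-row factors with Lemma~\ref{Commutes} so that what remains on the left is exactly $\Theta_{\dot\S}(m_{\dot\nu})$. The only difference is that you spell out the disjoint-support commutations and the translation property of $w^{-1}$ on the support of each $\CC(\bar\nu_{i-1}+\S^i_{<r};\S^i_r,\S^i_{r+1})$, details the paper leaves implicit.
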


\begin{proof} Applying Lemmas~\ref{reverse},~\ref{NewPerms},~\ref{MoreThanThree} and~\ref{Commutes}, we have that 
\begin{align*}
\Theta_\S(m_\nu) & = \hla+ m_\la T_{d(\S))} \prod_{i=1}^b \CC(\bar\nu_{i-1};\S^i_1,\ldots,\S^i_a) \\
&= \hla + m_\la T_{d(\first(\dot\S))} T_{w} \prod_{i=1}^b \CC(\bar\nu_{i-1}+\S^i_{ \leq r-1};\S^i_r,\S^i_{r+1}) \CC(\bar\nu_{i-1};\S^i_1,\ldots,\S^i_{r-1},\S^i_r+\S^i_{r+1})\\
& \hspace*{15mm}  \CC(\bar\nu_{i-1}; \S^i_{\leq r+1},\S^{i}_{r+2},\ldots,\S^i_a) \\
&= \hla+ m_\la T_{d(\first(\dot\S))} \prod_{i=1}^b \CC(\bar\la_{i-1}+\S^{<i}_r+\S^{<i}_{r+1};\S^i_r,\S^i_{r+1}) \\
& \hspace*{15mm} T_w \prod_{i=1}^b \CC(\bar\nu_{i-1};\S^i_1,\ldots,\S^i_{r-1},\S^i_r+\S^i_{r+1})  \CC(\bar\nu_{i-1}; \S^i_{\leq r+1},\S^{i}_{r+2},\ldots,\S^i_a)\\ 
& =\Theta_{\dot\S}(m_{\dot\nu}) T_{w} \prod_{i=1}^b \CC(\bar\nu_{i-1};\S^i_1,\ldots,\S^i_{r-1},\S^i_r+\S^i_{r+1}) \CC(\bar\nu_{i-1};\S^i_{\leq r+1},\S^{i}_{r+2},\ldots,\S^i_a).
\end{align*}
\end{proof}

We may now combine the previous results.

\begin{ex}
Let $\S=\tab(1223,1123,123,1)$ so that $\la=(5,4,3,1)$.  Then
\begin{align*}
\tab(1223,1123,123,1) &= \hla+ m_\la T_{(2,6,3,7,8,11,12,5)(4,10,9)} \CC(0;1,2,1,1) \CC(5; 2,1,1) \CC(9;1,1,1) \\
&= \hla+ m_\la  T_{(7,8,10,9)}T_{(2,6,3,7,4,10,11,12,5)} \CC(1;2,1)\CC(7;1,1)\CC(10;1,1) \\
& \hspace*{15mm} \CC(0;1,3)\CC(0;4,1) C(5;2,2) \CC(9;1,2) \\
&= \hla+ m_\la  T_{(7,8,10,9)} \CC(4;2,1)\CC(7;1,1)\CC(9;1,1)\\
&\hspace*{15mm} T_{(2,6,3,7,4,10,11,12,5)} \CC(0;1,3)\CC(0;4,1) C(5;2,2) \CC(9;1,2) \\
&=\tab(1111,2234,234,5) T_{(2,6,3,7,4,10,11,12,5)}  \CC(0;1,3)\CC(0;4,1) C(5;2,2) \CC(9;1,2) \\
&=\Big(-q^{-1}[2]  \tab(1111,2334,224,5)-[2]\tab(1111,2233,244,5) \Big) T_{(2,6,3,7,4,10,11,12,5)} \CC(0;1,3)\CC(0;4,1) C(5;2,2) \CC(9;1,2) \\
&=-q^{-1}[2] \tab(1223,1223,113,1)- [2] \tab(1223,1122,133,1)
\end{align*}
\end{ex}

\begin{proposition} \label{Part1}
Suppose $\la=(\la_1,\ldots,\la_a)$ is a partition of $n$ and $\nu=(\nu_1,\ldots,\nu_b)$ is a composition of $n$.
Let $\S \in \RowT(\la,\nu)$.  
Suppose $1 \leq r\leq a-1$ and that $1 \leq d \leq b$.  Let
\[\mathcal{G} =\left\{g=(g_1,g_2,\ldots,g_b) \mid g_d=0, \, \bar{g}=\S^d_{r+1}, \, \text{ and } g_i \leq \S^{i}_{r} \text{ for } 1 \leq i \leq b\right\}.\]
For $g \in \mathcal{G}$, let $\U_g$ be the row-standard tableau formed by moving all entries equal to $d$ from row $r+1$ to row $r$ and for $i \neq d$, moving $g_i$ entries equal to $i$ from row $r$ to row $r+1$. Then
\[\Theta_\S = \sum_{g \in \mathcal{G}} (-1)^{\S^d_{r+1}} \q^{-\binom{\S^d_{r+1}+1}{2}+\bar{g}_{d-1}} \q^{-S^{<d}_{r+1}\S^d_{r+1}} \prod_{i=1}^b \q^{g_i \S^{<i}_{r+1}} \gauss{\S^i_{r+1}+g_i}{g_i}\Theta_{\U_g}.\]
\end{proposition}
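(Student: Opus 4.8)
The plan is to deduce Proposition~\ref{Part1} (equivalently, Theorem~\ref{Lemma7}(1)) from the special case settled in Lemma~\ref{somemoreparts} by way of the reduction furnished by Lemmas~\ref{NewPerms} and~\ref{BadPerm}; the example immediately preceding the proposition carries out exactly this argument in a concrete case. First I would dispose of the degenerate case $\S^d_{r+1}=0$: there $\mathcal{G}=\{(0,\ldots,0)\}$, $\U_{(0,\ldots,0)}=\S$, and the claimed coefficient is $1$, so there is nothing to prove. Thus we may assume $\S^d_{r+1}>0$.

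Attach to $\S$ and the chosen row $r$ the tableau $\dot\S$ of type $\dot\nu$ and the permutation $w$ introduced just before and in Lemma~\ref{NewPerms}. Recall that $\dot\S$ satisfies the hypotheses of Lemma~\ref{somemoreparts} and that Lemma~\ref{BadPerm} gives
\[\Theta_{\S}(m_\nu)=\Theta_{\dot\S}(m_{\dot\nu})\,T_{w}\prod_{i=1}^b \CC(\bar\nu_{i-1};\S^i_1,\ldots,\S^i_{r-1},\S^i_r+\S^i_{r+1})\,\CC(\bar\nu_{i-1};\S^i_{\leq r+1},\S^{i}_{r+2},\ldots,\S^i_a).\]
Since the entries of $\dot\S$ in rows $r$ and $r+1$ arise from those of $\S$ by the relabelling $i\mapsto i+r-1$, the value $d+r-1$ satisfies $\dot\S^{\,d+r-1}_{r+1}=\S^d_{r+1}>0$, so Lemma~\ref{somemoreparts} applies to $\dot\S$ with $d+r-1$ playing the role of its ``$d$'' and expresses $\Theta_{\dot\S}(m_{\dot\nu})$ as $\sum_{g}c_g\,\Theta_{\dot\U_g}(m_{\dot\nu})$; after the reindexing $g'_i=g_{i+r-1}$ (the components of $g$ outside $\{r,\ldots,r+b-1\}$ are forced to vanish) the index set is precisely the set $\mathcal{G}$ of the statement.

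Next I would transport this back to $\S$. For each $g$, the tableau $\dot\U_g$ differs from $\dot\S$ only in rows $r$ and $r+1$, and it equals the tableau obtained from $\U_g$ by the same standardisation that produced $\dot\S$ from $\S$. Since $\U_g$ is obtained from $\S$ by moving entries only between rows $r$ and $r+1$, writing $\V=\U_g$ we have $\V^i_j=\S^i_j$ for $j\notin\{r,r+1\}$, $\V^i_r+\V^i_{r+1}=\S^i_r+\S^i_{r+1}$ and $\V^i_{\leq r+1}=\S^i_{\leq r+1}$, and by the ``furthermore'' clause of Lemma~\ref{NewPerms} the permutation $w$ is the same; hence Lemma~\ref{BadPerm} applied to $\U_g$ yields $\Theta_{\U_g}(m_\nu)=\Theta_{\dot\U_g}(m_{\dot\nu})$ multiplied by exactly the trailing operator displayed above. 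Right-multiplying the expansion of $\Theta_{\dot\S}(m_{\dot\nu})$ by that operator therefore gives $\Theta_\S(m_\nu)=\sum_g c_g\,\Theta_{\U_g}(m_\nu)$.

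It remains to check that $c_g$ is the coefficient claimed. Under $i\mapsto i+r-1$ one has $\dot\S^{\,d+r-1}_{r+1}=\S^d_{r+1}$, $\dot\S^{<(d+r-1)}_{r+1}=\S^{<d}_{r+1}$, $\dot\S^{\,i+r-1}_{r+1}=\S^i_{r+1}$ and $\bar g_{d+r-2}=\bar{g'}_{d-1}$, so the coefficient produced by Lemma~\ref{somemoreparts} becomes verbatim the one in the statement. I expect the only points genuinely requiring care to be this last bookkeeping — carrying the shift $i\mapsto i+r-1$ faithfully through the sign, the quadratic exponent $\binom{\S^d_{r+1}+1}{2}$, the three $\q$-power factors and the Gaussian binomials — together with the routine but not automatic verification that the trailing $\CC$-products do coincide for $\S$ and for every $\U_g$; the structural content of the proof is entirely carried by Lemmas~\ref{somemoreparts},~\ref{NewPerms} and~\ref{BadPerm}.
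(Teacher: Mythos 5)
Your proposal is correct and takes essentially the same route as the paper's own proof: the paper also deduces the proposition by applying Lemma~\ref{BadPerm} (using Lemma~\ref{NewPerms}) to reduce to the ``normalised'' tableau $\dot\S$, invoking Lemma~\ref{somemoreparts} to expand $\Theta_{\dot\S}$, and then pushing the trailing product of $T_w$ and $\CC$-terms back through to recover $\Theta_{\U_g}$ from each $\Theta_{\dot\U_g}$. Your added attention to the trivial case $\S^d_{r+1}=0$ (needed since Lemma~\ref{somemoreparts} assumes $\S^d_{r+1}\neq 0$) and to the bookkeeping of the index shift $i\mapsto i+r-1$ is a welcome degree of extra explicitness, but it is the same argument.
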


\begin{proof}
Using Lemmas~\ref{somemoreparts},~\ref{NewPerms} and~\ref{BadPerm}, and keeping the notation of Lemma~\ref{BadPerm},
\begin{align*} 
\Theta_\S(m_\nu) &= \Theta_{\dot\S}(m_{\dot\nu}) T_{w} \prod_{i=1}^b \CC(\mu_{i-1};\S^i_1,\ldots,\S^i_{d-1},\S^i_d+\S^i_{d+1}) \CC(\mu_{i-1};\bar\S^i_{d+1},\S^{i}_{d+1},\ldots,\S^i_a) \\
&= \sum_{g \in \mathcal{G}} (-1)^{\S^d_{r+1}} \q^{-\binom{\S_{r+1}^d+1}{2}+\bar{g}_{d-1}} \q^{-\S^{<d}_{r+1}\S^d_{r+1}}\prod_{i=1}^b \q^{g_i(\S_{r+1}^{<i})} \gauss{\S^i_{r+1}+g_i}{g_i}\Theta_{\dot\U_g}(m_{\dot\nu}) \\
&\hspace*{15mm} T_w \prod_{i=1}^b \CC(\mu_{i-1};\S^i_1,\ldots,\S^i_{r-1},\S^i_r+\S^i_{r+1}) \CC(\mu_{i-1};\bar\S^i_{r+1},\S^{i}_{r+1},\ldots,\S^i_a) \\
&= \sum_{g \in \mathcal{G}} (-1)^{\S^d_{r+1}} \q^{-\binom{\S_{r+1}^d+1}{2}+\bar{g}_{d-1}} \q^{-\S^{<d}_{r+1}\S^d_{r+1}}\prod_{i=1}^b \q^{g_i(\S_{r+1}^{<i})} \gauss{\S^i_{r+1}+g_i}{g_i}\Theta_{\U_g}(m_\nu). 
\end{align*}
\end{proof}

The proof of Proposition~\ref{Part1} gives the first half of the proof of Theorem~\ref{Lemma7}.  Since the proof of the second half follows along identical lines, we omit most of it and give only the proof of the analogue of Lemma~\ref{Type1n}, where the difference is non-trivial.    

\begin{ex} Let $\la=(3,3)$ and $\nu=(1,2,1,1,1)$.  
Observe that \[m_\la(I+T_3+T_3T_4+T_3T_4T_5) =(I+T_2+T_1T_2)m_{(2,4)}\in \hla\]
by Lemma~\ref{minihigh}.  Then
\begin{align*}
\tab(134,225) &= \hla+m_{\la}T_3T_4T_2T_3 \\
&= \hla + \q^{-1}m_{\la}T_2T_3T_4T_2T_3 \\
&= \hla + \q^{-1}m_\la T_3T_4T_2T_3T_4 \\
&= \hla - \q^{-1}m_\la(I+T_3+T_3T_4T_5)T_2T_3T_4 \\
&=\hla - m_\la (T_3T_4 + T_3T_2T_4 + \q^{-1}T_3T_4T_5T_2T_3T_4) \\
&= - \tab(124,235) -\q^{-1}\tab(145,223) 
\end{align*}
\end{ex}

\begin{lemma} \label{Type1nb} 
Let $\la=(m,m)$.  
Let $0 \leq \alpha < m$ and $0 \leq \beta \leq  m$ and let $\S \in \rowstd(\la)$ be the tableau with $1,\ldots,\alpha,\alpha+\beta+1,\ldots,m+\beta$ in the top row (and $\alpha+1,\ldots,\alpha+\beta,m+\beta+1,\ldots,2m$ in the second row).  For $i \in \{\alpha+1,\ldots,\alpha+\beta,m+\beta+1,\ldots,2m\}$ let $\U_i$ be the row-standard tableau obtained by swapping $\alpha+\beta+1$ with $i$ and rearranging the rows if necessary.  Then  
\[\Theta_\S = -\sum_{i=\alpha+1}^{\alpha+\beta} \Theta_{\U_i} - \q^{-m+\alpha+1}\sum_{i=m+\beta+1}^{2m}\Theta_{\U_i}.\] 
\end{lemma}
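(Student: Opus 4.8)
The plan is to imitate the proof of Lemma~\ref{Type1n} essentially line for line, the one substantive change being that the ``higher term'' identity $m_\la h_{1,1}=m_\la\CC(0;\la_1,1)\in\hla$ which drives that proof is replaced throughout by the identity
\[m_\la C'\in\hla,\qquad\text{where}\qquad C'=I+T_m+T_mT_{m+1}+\dots+T_mT_{m+1}\cdots T_{2m-1},\]
recorded in the example preceding the statement (it holds by Lemma~\ref{minihigh}: $m_\la C'=Y\,m_{(m-1,m+1)}$ for a suitable $Y\in\h$, and $(m-1,m+1)$ rearranges to $(m+1,m-1)\rhd(m,m)$). Since $\S\in\rowstd(\la)$, Lemma~\ref{reverse} gives $\Theta_\S(m_\nu)=\hla+m_\la\,\Dt(m,\alpha,\beta)$, the $\Per^\flat$-element $\Dt(m,\alpha,\beta)$ recording that the entries $\alpha+1,\dots,\alpha+\beta$ of the second row of $\S$ have been carried down from positions $\alpha+1,\dots,\alpha+\beta$ of the first row of $\ft^\la$; likewise each $\Theta_{\U_i}(m_\nu)$ is $\hla$ plus $m_\la$ times an explicit element, differing from $\Dt(m,\alpha,\beta)$ only through the position occupied by the entry $\alpha+\beta+1$ and factoring as a term of $C'$ followed by a ``tail'' permutation $\D(m,\alpha+1)$ carrying $\alpha+\beta+1$ back past the larger first-row entries.

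First I would treat the case $\beta=0$, paralleling $\alpha=0$ in Lemma~\ref{Type1n}. Here $\S=\ft^\la$, the first sum in the assertion is empty, and $\sum_{i=m+1}^{2m}T_{d(\U_i)}=(C'-I)\,\D(m,\alpha+1)$; since $\hla$ is a right ideal and $m_\la\D(m,\alpha+1)=\q^{m-\alpha-1}m_\la$ (each generator of $\D(m,\alpha+1)$ joins two boxes of the first row of $\ft^\la$), we get $m_\la\sum_{i=m+1}^{2m}T_{d(\U_i)}\equiv-\q^{m-\alpha-1}m_\la\pmod{\hla}$, whence $\Theta_\S(m_\nu)+\q^{-m+\alpha+1}\sum_{i=m+1}^{2m}\Theta_{\U_i}(m_\nu)\in\hla$ --- the assertion for $\beta=0$. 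For $\beta>0$ I would evaluate $\sum_{i=\alpha+1}^{\alpha+\beta}\Theta_{\U_i}(m_\nu)$: the corresponding sum of explicit Hecke elements differs from $m_\la$ times a $\Per^\flat$-transport of $C'$ by the complementary coset sum indexed by the remaining positions, so by $m_\la C'\in\hla$ it is congruent, modulo $\hla$, to minus $m_\la$ times that complementary sum; commuting the $\Per^\flat$-factors through it by Corollary~\ref{RearrangeCor} and Lemmas~\ref{PullsThrough}, \ref{PullsThrough2} and~\ref{IntoThree}, and applying $m_\la T_i=\q m_\la$ to gather the powers of $\q$, re-expresses the result as $-\,\Theta_\S(m_\nu)-\q^{-m+\alpha+1}\sum_{i=m+\beta+1}^{2m}\Theta_{\U_i}(m_\nu)$ modulo $\hla$; rearranging gives the stated formula.

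The step I expect to be the main obstacle is the bookkeeping of the powers of $\q$ when $\beta>0$. In the ``upward'' setting of Lemma~\ref{Type1n} the moved entry passes only entries of the other row and the exponents stay short; here it must additionally pass the $m-\alpha-1$ larger entries $\alpha+2,\dots,m$ of its own row, so the commutations of $m_\la$ with the $\Per^\flat$-factors (each right-multiplying $m_\la$ by a generator $T_i$ with $i,i+1$ in one row of $\ft^\la$, hence by $\q$) proliferate. Arranging these so that they collapse precisely to the single factor $\q^{-m+\alpha+1}$ in front of the second block --- as the clean cancellation in the $\beta=0$ case already suggests --- while every other power disappears, is the delicate point, and is exactly the ``non-trivial difference'' mentioned just before the statement. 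Identifying $\Dt(m,\alpha,\beta)$, the factorisation of the $T_{d(\U_i)}$, and the complementary coset sums with their correct offsets is the combinatorial input, supplied by the length counts of Lemmas~\ref{LittlePerms} and~\ref{NewPerms}, and is routine.
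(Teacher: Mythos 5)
Your approach is essentially the paper's: the same driving ideal membership $m_\la\CC(m-1;1,m)\in\hla$ (your $C'$), the same commutation lemmas, and the same extraction of $\q^{m-\alpha-1}$ from $m_\la\D(m,\alpha+1)$. The one organizational difference is that you take $\beta=0$ as the base case (where the unweighted sum $\sum_{i=\alpha+1}^{\alpha+\beta}$ vanishes) and then evaluate $\sum_{i=\alpha+1}^{\alpha+\beta}\Theta_{\U_i}$ directly, whereas the paper takes $\beta=m$ as the base case (where the $\q$-weighted sum $\sum_{i=m+\beta+1}^{2m}$ vanishes) and evaluates $\sum_{i=m+\beta+1}^{2m}\Theta_{\U_i}$; these are mirror-image choices and each needs exactly one degenerate case, so both work. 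Your $\beta=0$ calculation is correct, including the factorisation $\sum_{i=m+1}^{2m}T_{d(\U_i)}=(\CC(m-1;1,m)-I)\,\D(m,\alpha+1)$. For $\beta>0$ the plan is right but, as you note yourself, you have not actually produced the analogue of the paper's explicit factorisation
\[
\sum_{i=m+\beta+1}^{2m}\Theta_{\U_i}(m_\nu)=\hla+m_\la\,T_m\cdots T_{m+\beta}\,\CC(m+\beta;1,m-\beta-1)\,\Dt(m-1,\alpha,\beta+1),
\]
together with $T_m\cdots T_{m+\beta}\,\CC(m+\beta;1,m-\beta-1)=\CC(m-1;1,m)-\CC(m-1;1,\beta)$ and the subsequent commutation by Lemma~\ref{PullsThrough}. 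Working out the corresponding identity for your starting sum is most of the proof; the commutations do collapse to the single factor $\q^{-m+\alpha+1}$ you predict, so once this bookkeeping is written down the argument closes.
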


\begin{proof} 
We use Lemma~\ref{PullsThrough} and note that 
\[m_{\la}\CC(m-1;1,m) = \CC(0;m-1,1)m_{(m-1,m+1)} \in \hla\]  
by Lemma~\ref{minihigh}. If $\beta=m$ then
\begin{align*}
\Theta_{\S}(m_\nu) + \sum_{i=m+\beta+1}^{2m} \Theta_{\U_i} 
&= \hla+ m_{\la}\Dt(m,\alpha+1,m) \CC(\alpha;1,m) \\
&= \hla + q^{m-\alpha-1} m_\la \CC(m-1; 1,m) \Dt(m-1,\alpha,m+1) \\
&= \hla \\
&=0.  
\end{align*}
 Else if $\beta<m$ then 
\begin{align*}
\sum_{i=m+\beta+1}^{2m} \Theta_{\U_i}(m_\nu) &= \hla+m_\la T_{m}T_{m+1}\ldots T_{m+\beta}\CC(m+\beta;1,m-\beta-1) 
\Dt(m-1,\alpha,\beta+1) \\
&  = \hla+ m_\la \big(\CC(m-1,1,m)-\CC(m-1;1,\beta)\big) \Dt(m-1,\alpha,\beta+1) \\ 
&=\hla - m_\la \Dt(m-1,\alpha,\beta+1)\CC(\alpha,1,\beta) \\
&= \hla- \q^{m-\alpha-1}  m_\la\Dt(m,\alpha+1,\beta) \CC(\alpha;1,\beta) \\
&= -\q^{m-\alpha-1} \Theta_\S(m_\nu) - \q^{m-\alpha-1} \sum_{i=\alpha+1}^{\alpha+\beta}\Theta_{\U_i}(m_\nu).
\end{align*}
\end{proof}

\subsection{How to prove the results in Section~\ref{Spaces}}\label{SpaceProof}
Let $\h=\h_{\C,q}(\sym_n)$ where $q$ is a primitive $e^\text{th}$ root of unity in $\C$ for some $2 \leq e < \infty$.  In Propositions~\ref{first} to~\ref{last} we described the homomorphism space $\EHom_{\h}(S^\mu,S^\la)$ where $\la$ has at most two parts and $\mu_1 \geq \la_2$.  As previously mentioned, the proof of these results is given by case-by-case analysis, where the calculations consist of solving systems of homogeneous linear equations.  These very many calculations do not belong in a research paper; they are both trivial and lengthy.  We therefore begin this section by giving some identities which  enabled us to solve the equations, followed by the proof of the results in some specific cases.   The  reader who wishes to use one or more of our propositions and who prefers not to rely on results which are not explicitely proved would be best advised to construct their own proofs; we would say this is more tedious than difficult.      

We note that our proof of Proposition~\ref{OneDim}, which states that the homomorphism space is at most 1-dimensional, relies on looking at cases individually; we do not know of a direct proof.  

\begin{lemma} \label{DivZero}
Suppose that $a,b >0$.  Then \[\frac{[ae]}{[be]} = \frac{a}{b}.\]
\end{lemma}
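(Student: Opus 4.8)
The plan is to prove the identity in the Laurent polynomial ring $\calZ=\Z[\q,\q^{-1}]$, where the naive ratio $0/0$ does not arise, and only then specialise $\q\mapsto q$. The computational heart is the elementary block decomposition of a quantum integer: for every $m\geq 1$ one has
\[[me]=\Big(\sum_{j=0}^{m-1}\q^{je}\Big)\,(1+\q+\dots+\q^{e-1}),\]
which is seen by writing each exponent $k$ with $0\leq k\leq me-1$ uniquely as $k=je+i$ with $0\leq j\leq m-1$, $0\leq i\leq e-1$, and factoring $\q^{k}=\q^{je}\q^{i}$.

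First I would apply this decomposition with $m=a$ and with $m=b$. Since $1+\q+\dots+\q^{e-1}$ is a nonzero element of the integral domain $\calZ$, it cancels, so in the field of fractions $\Q(\q)$ we obtain
\[\frac{[ae]}{[be]}=\frac{\sum_{j=0}^{a-1}\q^{je}}{\sum_{j=0}^{b-1}\q^{je}}.\]
Then I would specialise $\q\mapsto q$. Because $q$ is a primitive $e^{\text{th}}$ root of unity, $q^{e}=1$, so the numerator specialises to $a$ and the denominator to $b$; as $\C$ has characteristic zero and $b>0$, the denominator $b$ is invertible in $\C$, and the right-hand side becomes $a/b$, which is the claim.

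The only delicate point — and it is a matter of interpretation rather than a genuine obstacle — is the meaning of the left-hand side: in $\h=\h_{\C,q}(\sym_n)$ one has $[e]=0$, hence $[ae]=[be]=0$, so $[ae]/[be]$ must be read as the value at $\q=q$ of the rational function $[ae]/[be]\in\Q(\q)$, which has a removable singularity at $\q=q$ precisely because of the common factor $1+\q+\dots+\q^{e-1}$ removed above. With that understood, the argument above is complete and requires nothing further.
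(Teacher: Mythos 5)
Your proof is correct and uses the same key step as the paper: factor $[me]=\bigl(\sum_{j=0}^{m-1}\q^{je}\bigr)[e]$, cancel the common $[e]$, and evaluate the remaining geometric sums at $q^{e}=1$. Your framing in $\calZ$ with subsequent specialisation merely makes explicit the interpretation of the ratio that the paper leaves implicit, so the two arguments are essentially identical.
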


\begin{proof} We have that
\begin{align*}
\frac{[ae]}{[be]} & = \frac{1+q+\ldots +q^{(a-1)e-1}}{1+q+\ldots+q^{(b-1)e-1}} \\
&=\frac{(1+q^e+\ldots+q^{(a-1)e})[e]}{(1+q^e+\ldots+q^{(b-1)e})[e]} \\
&=\frac{a}{b}.  
\end{align*}
\end{proof}

\begin{lemma} \label{WhenZero}
Suppose that $m \geq k \geq 0$.  Write $m=m^\ast e+m'$ and $k=k^\ast e+k'$ where $0 \leq m',k' <e$.  Then $\gauss{m}{k} = 0$ if and only if $m' < k'$.  
\end{lemma}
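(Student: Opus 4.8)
Suppose $m \geq k \geq 0$. Write $m = m^\ast e + m'$ and $k = k^\ast e + k'$ with $0 \leq m', k' < e$. Then $\gauss{m}{k} = 0$ if and only if $m' < k'$.

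The plan is to use Lucas' theorem for Gaussian binomial coefficients at a root of unity. The key structural fact is the factorization of the $q$-integer: if $q$ is a primitive $e$-th root of unity (or more generally $e = \min\{f \ge 2 \mid 1+q+\dots+q^{f-1}=0\}$), then $[m] = 0$ precisely when $e \mid m$, and more precisely $[m]/[m'] $ is a unit whenever $m \equiv m' \pmod e$ with $0 < m' < e$, while $[ae]/[be] = a/b$ by Lemma \ref{DivZero}. So first I would establish the $q$-Lucas identity
\[
\gauss{m}{k} = \gauss{m^\ast}{k^\ast}_{1} \cdot \gauss{m'}{k'}_q,
\]
where $\gauss{m^\ast}{k^\ast}_1$ is the \emph{ordinary} binomial coefficient $\binom{m^\ast}{k^\ast}$ (this is the ``$q=1$'' Gaussian binomial, i.e. the specialization at $q^e$, which after the cancellations from Lemma \ref{DivZero} collapses to an integer binomial), and $\gauss{m'}{k'}_q$ is the honest Gaussian binomial in $q$ with $0 \le m', k' < e$.

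The cleanest route to the $q$-Lucas identity here is induction, mirroring the style of the proof of Lemma \ref{GaussLemma}. I would induct on $m$, using the Pascal recurrences from Lemma \ref{GaussSum}: $\gauss{m}{k} = \gauss{m-1}{k-1} + q^k \gauss{m-1}{k}$ together with the other form, splitting into cases according to whether $k' = 0$ or $k' > 0$ and whether $m' = 0$ or $m' > 0$; the quantity $q^k$ appearing in the recurrence depends only on $k' = k \bmod e$ since $q^e$ can be pulled out, which is exactly what lets the recursion separate the ``carry'' digit $m^\ast, k^\ast$ from the ``low'' digit $m', k'$. An alternative, perhaps slicker, derivation: write $\gauss{m}{k}$ as a ratio of $[m]!$-type products, group the factors $[1],[2],\dots,[m]$ into blocks of length $e$, observe that each full block contributes a factor $[e] \cdot (1 + q^e + \dots)$ whose non-unit part $[e]$ cancels between numerator and denominator leaving an integer factor governed by how many complete blocks survive — this is where Lemma \ref{DivZero} does its work — and the leftover incomplete blocks of lengths $m'$, $k'$, $m'-k'$ (when $m' \ge k'$) assemble into $\gauss{m'}{k'}_q$. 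One must be slightly careful that when $m' < k'$ the ``leftover'' is not a genuine Gaussian binomial; instead the residue arithmetic forces one of the surviving factors in the denominator to be a non-cancelled $[e]$-multiple while the numerator has no matching one, making the whole expression zero.

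Once the $q$-Lucas identity is in hand, the conclusion is immediate: $\gauss{m}{k} = 0$ iff $\binom{m^\ast}{k^\ast} \gauss{m'}{k'}_q = 0$ in $\C$. The integer binomial $\binom{m^\ast}{k^\ast}$ is never zero when interpreted over $\C$ as long as $k^\ast \le m^\ast$, which holds here because $m \ge k$ forces $m^\ast \ge k^\ast$ unless $m^\ast = k^\ast$ and $m' < k'$ — but in that latter case we already separately need to track it. And $\gauss{m'}{k'}_q$ with $0 \le k' \le m' < e$ is a product of terms $[i]$ for $i$ ranging over values strictly between $0$ and $e$, hence a unit (none of the $[i]$ vanish, as $e$ is minimal), so $\gauss{m'}{k'}_q \neq 0$ exactly when $m' \ge k'$, and it is $0$ (by our convention, since then ``$m' \ge k' \ge 0$'' fails) when $m' < k'$. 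Assembling: $\gauss{m}{k} = 0 \iff m' < k'$. The main obstacle is getting the $q$-Lucas identity stated and proved cleanly with the right conventions — in particular handling the boundary cases ($k' = 0$, $k = m$, carries) and making sure the ``$\binom{m^\ast}{k^\ast}$'' factor is correctly identified via Lemma \ref{DivZero} rather than being some more complicated rational function; everything after that is a one-line deduction.
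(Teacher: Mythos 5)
Your proposal is correct, and one of the two routes you sketch is essentially the paper's proof, while the other overshoots the target. The paper does not go through the full $q$-Lucas identity at all: it simply writes $\gauss{m}{k} = \frac{[m][m-1]\cdots[m-k+1]}{[k][k-1]\cdots[1]}$, invokes Lemma~\ref{DivZero} to cancel matched pairs $[ae]/[be]$ down to the unit $a/b$, and then observes that the expression vanishes precisely when the numerator contains strictly more indices divisible by $e$ than the denominator — a count which, by the floor-function arithmetic $\lfloor m/e\rfloor - \lfloor (m-k)/e\rfloor$ versus $\lfloor k/e\rfloor$, happens exactly when $m' < k'$. Your ``alternative, perhaps slicker'' block-grouping argument is this same count dressed up in slightly different language, so that part of your proposal matches the paper closely. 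Your primary route, proving $q$-Lucas by induction on Lemma~\ref{GaussSum}, is a valid but heavier approach: it establishes the exact value $\binom{m^\ast}{k^\ast}\gauss{m'}{k'}_q$ rather than merely the vanishing criterion, and requires more careful bookkeeping through the Pascal recursion. For the Lemma as stated, the paper's bare count is shorter and uses only what is already at hand (Lemma~\ref{DivZero}). One small slip in your deduction: $m\ge k$ always forces $m^\ast \ge k^\ast$ (if $m^\ast < k^\ast$ then $m < k^\ast e \le k$), so the ``unless $m^\ast = k^\ast$ and $m' < k'$'' caveat you mention cannot occur and should be dropped; it does not affect the conclusion.
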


\begin{proof} We have that
\[\gauss{m}{k}=\frac{[m][m-1]\ldots[m-k+1]}{[k][k-1]\ldots[1]}\]
which, using Lemma~\ref{DivZero} is zero if and only if there are more terms divisible by $e$ in the numerator than the denominator, that is if and only if $m' < k'$.  
\end{proof}

\begin{corollary} \label{e}
Let $m\geq e$.  Then 
\[\gauss{m}{e} \neq 0.\]
\end{corollary}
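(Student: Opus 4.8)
The plan is simply to invoke Lemma~\ref{WhenZero}. In the notation of that lemma, take $k = e$ and write $e = 1\cdot e + 0$, so that $k^\ast = 1$ and $k' = 0$. Since $m \geq e \geq 0$, the hypothesis $m \geq k \geq 0$ of Lemma~\ref{WhenZero} is satisfied, and the lemma tells us that $\gauss{m}{e} = 0$ if and only if $m' < k' = 0$. But $m'$ is by definition the residue of $m$ modulo $e$ with $0 \leq m' < e$, so $m' < 0$ is impossible. Hence $\gauss{m}{e} \neq 0$.

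There is essentially no obstacle: the only point to verify is that the hypotheses of Lemma~\ref{WhenZero} hold, which is immediate from $m \geq e$. Equivalently, one could argue directly from
\[\gauss{m}{e} = \frac{[m][m-1]\cdots[m-e+1]}{[e][e-1]\cdots[1]},\]
observing that among the $e$ consecutive integers $m, m-1, \ldots, m-e+1$ exactly one is divisible by $e$, just as exactly one of $e, e-1, \ldots, 1$ (namely $e$) is; after cancelling these factors via Lemma~\ref{DivZero}, the remaining quotient is a product and quotient of terms $[j]$ with $j$ not divisible by $e$, hence nonzero. Either way the argument is routine.
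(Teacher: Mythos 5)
Your proof is correct and is exactly the paper's intended argument: Corollary~\ref{e} is stated as an immediate consequence of Lemma~\ref{WhenZero}, and your observation that $k=e$ gives $k'=0$, so the condition $m'<k'$ can never hold, is precisely the point. The alternative direct argument via Lemma~\ref{DivZero} is a fine sanity check but not needed.
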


\begin{corollary}
Let $m,s \geq 1$.  Then 
\[\gauss{m+j}{j} = 0\]
for all $1 \leq j \leq s$
if and only if $e \mid m+1$ and $s<e$.   
\end{corollary}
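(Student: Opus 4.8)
The plan is to deduce this from Lemma~\ref{WhenZero} together with the Gaussian binomial identity in Lemma~\ref{GaussSum}, by analyzing for which values of $j$ the quantity $\gauss{m+j}{j}$ vanishes. First I would record the elementary observation: if we write $m+j = (m+j)^\ast e + (m+j)'$ and $j = j^\ast e + j'$ with $0 \le (m+j)', j' < e$, then by Lemma~\ref{WhenZero} we have $\gauss{m+j}{j}=0$ if and only if $(m+j)' < j'$. Thus the claim reduces to the purely arithmetic statement: for a fixed $m \ge 1$ and $s \ge 1$, the inequality $(m+j)' < j'$ holds for \emph{every} $j$ with $1 \le j \le s$ if and only if $e \mid m+1$ and $s < e$.

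The next step is to prove this arithmetic equivalence. For the ``if'' direction, suppose $e \mid m+1$, so $m \equiv -1 \pmod e$, i.e.\ $m' = e-1$ (note $m \ge 1$ forces $e \ge 2$, consistent with the standing convention). Then for $1 \le j \le s < e$ we have $j' = j$ and $(m+j)' = (m+j \bmod e) = (e-1+j \bmod e) = j-1$ since $1 \le j \le e-1$ gives $0 \le j-1 \le e-2$. Hence $(m+j)' = j - 1 < j = j'$ for all such $j$, so every $\gauss{m+j}{j}$ vanishes. For the ``only if'' direction, I would argue contrapositively. If $s \ge e$, then taking $j = e$ gives $j' = 0$, and $(m+j)' < 0$ is impossible, so $\gauss{m+e}{e} \ne 0$ (this is exactly Corollary~\ref{e} applied after a shift, or simply Lemma~\ref{WhenZero} since $(m+e)' \ge 0 = e'$... more precisely $(m+e)' = m'$ and $0 = (e)'$, and $m' \ge 0$ is never $< 0$). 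If instead $s < e$ but $e \nmid m+1$, i.e.\ $m' \ne e-1$, then take $j = j_0$ where $j_0$ is chosen so that $m' + j_0 = e - 1$, that is $j_0 = e - 1 - m'$; since $0 \le m' \le e-2$ we have $1 \le j_0 \le e-1$. I must check $j_0 \le s$: here I would need to be slightly careful, since it is conceivable that $j_0 > s$.

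This is the point I expect to be the main (minor) obstacle: when $m' \ne e-1$ and $s < e$, I want to produce some $j$ with $1 \le j \le s$ and $\gauss{m+j}{j} \ne 0$, but the single candidate $j_0 = e-1-m'$ might exceed $s$. The resolution is that $j = 1$ already works in that case: with $1 \le s$ we always have $j=1$ available, $j' = 1$, and $(m+1)' = $ either $m'+1$ (if $m' < e-1$) or $0$ (if $m' = e-1$). Since we are assuming $m' \ne e-1$, we get $(m+1)' = m' + 1 \ge 1 = j'$, so $(m+1)' \not< j'$ and hence $\gauss{m+1}{1} = [m+1] \ne 0$. So in fact I would restructure the ``only if'' proof as: if $s \ge e$, then $j = e$ witnesses $\gauss{m+j}{j} \ne 0$; if $s < e$ but $e \nmid m+1$, then $j = 1$ witnesses $\gauss{m+1}{1} \ne 0$. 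In either case not all the binomials vanish. Combining the two directions via Lemma~\ref{WhenZero} completes the proof; the whole argument is short and self-contained, requiring only the residue computation already packaged in Lemma~\ref{WhenZero}.
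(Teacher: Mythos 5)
Your proof is correct and is exactly the argument the paper leaves implicit: the corollary is stated without proof immediately after Lemma~\ref{WhenZero} and Corollary~\ref{e}, and the intended derivation is precisely your residue analysis $(m+j)' < j'$. Your restructured ``only if'' direction (using $j=e$ when $s\ge e$, and $j=1$ when $m'\ne e-1$) is clean and avoids the potential gap you flagged about $j_0=e-1-m'$ possibly exceeding $s$.
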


\begin{lemma}
We have \[(-1)^e q^{\binom{e}{2}}=-1.\]  
\end{lemma}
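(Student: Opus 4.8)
The plan is to exploit the fact that, in this subsection, $q$ is a primitive $e^{\text{th}}$ root of unity in $\C$. First I would record why this is so: from the relation $1+q+\dots+q^{e-1}=0$ we get $q^e-1=(q-1)(1+q+\dots+q^{e-1})=0$, and $q\ne1$ (otherwise $1+q+\dots+q^{e-1}=e\ne0$ in $\C$), while minimality of $e$ forces $q^j\ne1$ for $1\le j<e$. Hence $1,q,q^2,\dots,q^{e-1}$ are the $e$ distinct $e^{\text{th}}$ roots of unity.

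The main step is then a one-line Vieta computation. Since $X^e-1=\prod_{j=0}^{e-1}(X-q^j)$, comparing constant terms gives
\[
-1=(-1)^e\prod_{j=0}^{e-1}q^j=(-1)^e q^{\sum_{j=0}^{e-1}j}=(-1)^e q^{\binom{e}{2}},
\]
which is exactly the asserted identity.

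If one prefers to avoid invoking the full factorisation of $X^e-1$, the same conclusion follows from a short parity argument, which I could include as an alternative: when $e$ is odd, $\binom{e}{2}=e\cdot\tfrac{e-1}{2}$ is a multiple of $e$, so $q^{\binom{e}{2}}=1$ while $(-1)^e=-1$; when $e$ is even, $\binom{e}{2}\equiv\tfrac{e}{2}\pmod e$, so $q^{\binom{e}{2}}=q^{e/2}$, which squares to $1$ and cannot equal $1$ (else $q$ would have order strictly less than $e$), hence equals $-1$, while $(-1)^e=1$. In every case the product is $-1$.

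There is essentially no obstacle here: the only point requiring a word of care is the justification that $q$ really is a primitive $e^{\text{th}}$ root of unity (so that the $q^j$ are distinct and $q^{e/2}\ne1$); once that is noted, the Vieta identity above gives the result immediately, and I expect to present that version in the paper.
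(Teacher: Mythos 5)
Your proof is correct. The primary route you propose — reading off the constant term of the factorisation $X^e-1=\prod_{j=0}^{e-1}(X-q^j)$ — is a genuinely different argument from the one in the paper, which instead splits into the cases $e$ even and $e$ odd: for $e$ odd, $\binom{e}{2}=e\cdot\frac{e-1}{2}$ is a multiple of $e$ so $q^{\binom{e}{2}}=1$, while for $e$ even one writes $q^{\binom{e}{2}}=\bigl(q^{e/2}\bigr)^{e-1}=(-1)^{e-1}=-1$. Your Vieta argument is shorter and avoids the case split, at the price of invoking the full factorisation of $X^e-1$ over $\C$; it also packages the key fact $q^{e/2}=-1$ (for $e$ even) invisibly inside the product formula. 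Your alternative parity argument is essentially the paper's proof, and correctly isolates the one non-trivial input (that $q^{e/2}\ne1$ because $q$ has order exactly $e$), which the paper uses without comment. Note also that in the relevant section the paper already \emph{fixes} $q$ to be a primitive $e^{\text{th}}$ root of unity in $\C$, so your preliminary paragraph deriving primitivity from $[e]=0$ is harmless but not strictly needed there.
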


\begin{proof}
If $e$ is even then \begin{align*} 
(-1)^e q^{\binom{e}{2}} & = q^{\frac{e}{2}(e-1)}=(-1)^{e-1}=-1.\\ \intertext{If it is odd then} (-1)^e q^{\binom{e}{2}} &= - q^{e \frac{e-1}{2}} =-1.
\end{align*}  
\end{proof}

\begin{lemma}[Lemma~\ref{GaussLemma}] \label{Repeat}
Suppose $m, k \geq n \geq 0$.  Then
\[\sum_{\gamma \geq 0}(-1)^\gamma q^{\binom{\gamma}{2}}\gauss{n}{\gamma}\gauss{m-\gamma}{k} = q^{n(m-k)}\gauss{m-n}{k-n}.\]
\end{lemma}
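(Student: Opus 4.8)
The plan is to prove the identity by induction on $n$, using only the two Pascal-type recurrences of Lemma~\ref{GaussSum} as external input; this is the same strategy one uses for the classical $q$-Vandermonde identities, and indeed it mirrors the argument already indicated for Lemma~\ref{GaussLemma}.

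For the base case $n=0$, only the term $\gamma=0$ survives on the left-hand side, since $\gauss{0}{\gamma}=0$ for $\gamma\geq1$, and it equals $\gauss{m}{k}$, which is exactly the right-hand side $q^{0}\gauss{m}{k}$. For the inductive step, assume the identity holds for $n-1$ and all admissible $m,k$. First I would rewrite $\gauss{n}{\gamma}$ using the second form of Lemma~\ref{GaussSum}, namely $\gauss{n}{\gamma}=\gauss{n-1}{\gamma}+q^{n-\gamma}\gauss{n-1}{\gamma-1}$, and split the sum into two pieces. The first piece is $\sum_{\gamma}(-1)^{\gamma}q^{\binom{\gamma}{2}}\gauss{n-1}{\gamma}\gauss{m-\gamma}{k}$, to which the inductive hypothesis applies immediately, giving $q^{(n-1)(m-k)}\gauss{m-n+1}{k-n+1}$. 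In the second piece I would reindex $\gamma\mapsto\gamma+1$ and use $\binom{\gamma+1}{2}=\binom{\gamma}{2}+\gamma$ to absorb the extra $q$-powers; this turns the second piece into $-q^{n-1}$ times the left-hand side of the identity with $m$ replaced by $m-1$, so the inductive hypothesis applies again and yields $-q^{(n-1)(m-k)}\gauss{m-n}{k-n+1}$ (here one uses the identity of exponents $n-1+(n-1)(m-1-k)=(n-1)(m-k)$).

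Combining the two pieces, the left-hand side becomes $q^{(n-1)(m-k)}\bigl(\gauss{m-n+1}{k-n+1}-\gauss{m-n}{k-n+1}\bigr)$. The final step is to simplify the bracket: applying the second form of Lemma~\ref{GaussSum} once more, now as $\gauss{m-n+1}{k-n+1}=\gauss{m-n}{k-n+1}+q^{m-k}\gauss{m-n}{k-n}$, the bracket collapses to $q^{m-k}\gauss{m-n}{k-n}$, and one obtains $q^{n(m-k)}\gauss{m-n}{k-n}$, as claimed.

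The only genuine subtlety — the part that takes care rather than cleverness — is the $q$-power bookkeeping: one must pick the second Pascal recurrence rather than the first at both the splitting stage and the final simplification (the first form would leave an awkward remainder rather than telescoping), keep track of the shift $m\mapsto m-1$ when invoking the inductive hypothesis on the reindexed sum, and note that the boundary terms cause no trouble because the relevant Gaussian binomials vanish whenever an index leaves the range $m\geq k\geq0$ by the convention fixed before Lemma~\ref{GaussSum}. None of this is deep, but it is where any error would creep in.
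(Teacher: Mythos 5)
Your proof is correct and follows exactly the same route as the paper's: induction on $n$, base case $n=0$ trivial, expand $\gauss{n}{\gamma}$ via the second recurrence of Lemma~\ref{GaussSum}, reindex the second sum to pull out $-q^{n-1}$, apply the inductive hypothesis twice (with $m$ and with $m-1$), and collapse the resulting difference of Gaussian binomials by one more application of the same recurrence. You simply spell out the reindexing and exponent bookkeeping more explicitly than the paper does, but there is no difference in substance.
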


\begin{lemma} \label{NotRepeat}
Suppose $m \geq j \geq 0$ and $n \geq 0$. Then
\[\sum_{\gamma \geq 0} q^{\gamma(m-j+\gamma)} \gauss{n}{\gamma}\gauss{m}{j-\gamma} = \gauss{m+n}{j}.\]
\end{lemma}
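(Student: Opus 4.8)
The plan is to prove Lemma~\ref{NotRepeat} by induction on $n$, using the two Pascal-type recursions for Gaussian binomials recorded in Lemma~\ref{GaussSum}. The identity is a $q$-analogue of the Vandermonde convolution, so the argument will have the same flavour as the proof of Lemma~\ref{GaussLemma}. For the base case $n=0$ one has $\gauss{0}{\gamma}=0$ unless $\gamma=0$, so the left-hand side collapses to the single term $q^{0}\gauss{0}{0}\gauss{m}{j}=\gauss{m}{j}$, which is the right-hand side.

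For the inductive step, assume the identity holds with $n$ replaced by $n-1$ (for every admissible value of the middle index). I would apply the second form of Lemma~\ref{GaussSum}, namely $\gauss{n}{\gamma}=\gauss{n-1}{\gamma}+q^{\,n-\gamma}\gauss{n-1}{\gamma-1}$, and split the sum into two pieces. The first piece is exactly $\sum_{\gamma\ge 0} q^{\gamma(m-j+\gamma)}\gauss{n-1}{\gamma}\gauss{m}{j-\gamma}=\gauss{m+n-1}{j}$ by the inductive hypothesis. In the second piece, after pulling out $q^{n}$, reindexing $\gamma\mapsto\gamma+1$, and using the elementary identity $(\gamma+1)(m-j+\gamma)=\gamma\bigl(m-(j-1)+\gamma\bigr)+(m-j)$ to rewrite the exponent of $q$, one gets $q^{\,m+n-j}\sum_{\gamma\ge 0} q^{\gamma(m-(j-1)+\gamma)}\gauss{n-1}{\gamma}\gauss{m}{(j-1)-\gamma}=q^{\,m+n-j}\gauss{m+n-1}{j-1}$, again by the inductive hypothesis (now with $j-1$ in place of $j$). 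When $j=0$ this auxiliary sum is empty and $\gauss{m+n-1}{j-1}=0$, since the Gaussian binomials involved have negative lower index, so the step remains valid. Adding the two pieces yields $\gauss{m+n-1}{j}+q^{\,m+n-j}\gauss{m+n-1}{j-1}$, which is precisely $\gauss{m+n}{j}$ by the second form of Lemma~\ref{GaussSum} applied with top index $m+n-1$ and bottom index $j$.

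The only genuine bookkeeping is verifying the exponent identity $(\gamma+1)(m-j+\gamma)=\gamma(m-j+1+\gamma)+(m-j)$ and keeping the shifted summation range straight; everything else is direct substitution together with two invocations of Lemma~\ref{GaussSum}. I expect this index-shifting step to be the main — and only mild — obstacle, and it is routine.
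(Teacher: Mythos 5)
Your proof is correct and proceeds by the same overall method as the paper's: induction on $n$ driven by the $q$-Pascal recursion of Lemma~\ref{GaussSum}. The organization differs slightly. The paper expands $\gauss{n}{\gamma}$ using the first form $\gauss{n}{\gamma}=\gauss{n-1}{\gamma-1}+q^{\gamma}\gauss{n-1}{\gamma}$, reindexes, uses the second form of Lemma~\ref{GaussSum} to recombine the two $\gauss{m}{\cdot}$ factors into $\gauss{m+1}{j-\gamma}$, and then applies the inductive hypothesis a single time with $m$ bumped to $m+1$ and $n$ to $n-1$. You instead expand using the second form $\gauss{n}{\gamma}=\gauss{n-1}{\gamma}+q^{n-\gamma}\gauss{n-1}{\gamma-1}$, apply the inductive hypothesis to each of the two resulting sums separately (one with $j$ unchanged, one with $j$ shifted to $j-1$), and only then invoke Lemma~\ref{GaussSum} to fuse $\gauss{m+n-1}{j}+q^{m+n-j}\gauss{m+n-1}{j-1}$ into $\gauss{m+n}{j}$. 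So the paper applies the inductive hypothesis once and Pascal twice, while you apply the inductive hypothesis twice and Pascal twice; whether the Pascal recombination is done in the $m$-variable before the induction or in the $(m+n)$-variable after it is a matter of bookkeeping. Both routes are clean; your handling of the $j=0$ edge case and the exponent identity $(\gamma+1)(m-j+\gamma)=\gamma(m-(j-1)+\gamma)+(m-j)$ are both verified correctly.
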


\begin{proof}
We use induction on $n$.  The lemma is true for $n=0$ so suppose that $n>0$ and that the lemma holds for $n-1$.  Then using Lemma~\ref{GaussSum},
\begin{align*}
\sum_{\gamma \geq 0} q^{\gamma(m-j+\gamma)} \gauss{n}{\gamma}\gauss{m}{j-\gamma} &= \sum_{\gamma \geq 0} q^{\gamma(m-j+\gamma)} \left( \gauss{n-1}{\gamma-1} + q^\gamma \gauss{n-1}{\gamma} \right) \gauss{m}{j-\gamma} \\
&= \sum_{\gamma \geq 0} q^{(\gamma+1)(m-j+\gamma+1)} \gauss{n-1}{\gamma}\gauss{m}{j-\gamma-1} + \sum_{\gamma \geq 0} q^{\gamma(m-j+\gamma)} q^\gamma \gauss{n-1}{\gamma}\gauss{m}{j-\gamma} \\
&= \sum_{\gamma \geq 0} q^{\gamma(m-j+\gamma)}\gauss{n-1}{\gamma}\left(q^{m-j+\gamma+1} \gauss{m}{j-\gamma-1} + \gauss{m}{j-\gamma} \right) \\
&= \sum_{\gamma \geq 0} q^{\gamma(m-j+\gamma+1)} \gauss{n-1}{\gamma} \gauss{m+1}{j-\gamma} \\
&= \gauss{m+n}{j} 
\end{align*}
by the inductive hypothesis.  
\end{proof}

Armed with these results, we are ready to start solving some equations.  Let us begin with the simplest non-trivial case which is when $\mu$ has three parts.

Take $\la=(\la_1,\la_2)$ and $\mu=(\mu_1,\mu_2,\mu_3)$ to be partitions of $n$ where $\la_1 \geq \mu_1 \geq \la_2$.  Our aim is to determine $\Hom_\h(S^\mu,S^\la)$ by finding a basis for $\Psi(\mu,\la)$.  
For $\max\{0,\la_2-\mu_3\} \leq i \leq \min\{\mu_2,\la_2\}$ let $\Theta_i:M^\mu\rightarrow S^\la$ be given by
\[\Theta_i(m_\mu) = \rep{1^{\mu_1}2^{\mu_2-i}3^{\mu_3-\la_2+i}}{2^i3^{\la_2-i}},\]
where we use the notation of Section~\ref{Lemma7Proof}.  
Then $\Psi(\mu,\la)$ is the vector space of homomorphisms
\[\Theta=\sum_i a_i \Theta_i,\]
for $a_i \in \C$, which satisfy $\Theta(m_\mu h_{d,t})=0$ for $d=1,2$ and $1 \leq t \leq \mu_{d+1}$.  Note that for such $d,t$, we have
\begin{align*}
\Theta_i(m_\mu h_{2,t}) & = \sum_j q^{i(t-j+i)}  \gauss{\mu_2+t-j}{\mu_2-i} \gauss{j}{i} \rep{1^{\mu_1}2^{\mu_2+t-j}3^{\mu_3-t-\la_2+j}}{2^j3^{\la_2-j}}, \\
\Theta_i(m_\mu h_{1,t}) & = \sum_k  (-1)^{k-i}q^{\binom{i-k}{2} +kt} \gauss{\mu_1+t-i}{\mu_1-k} \gauss{\la_2-k}{\la_2-i}\rep{1^{\mu_1+t}2^{\mu_2-t-k}3^{\mu_3-\la_2+k}}{2^k3^{\la_2-k}},
\end{align*}
where the sums are over all $j$ such that $\max\{0,\la_2-\mu_3+t\} \leq j \leq \min\{\la_2,\mu_2+t\}$ and all $k$ such that $\max\{0,\la_2-\mu_3\} \leq k \leq \min\{\la_2,\mu_2-t\}$. 
We must therefore solve the systems of equations 
\begin{align*}
\sum_i q^{i(t-j+i)}  \gauss{\mu_2+t-j}{\mu_2-i} \gauss{j}{i}  a_i &=0, &&(2,t,j) \\
\sum_i  (-1)^{k-i}q^{\binom{i-k}{2} +kt} \gauss{\mu_1+t-i}{\mu_1-k} \gauss{\la_2-k}{\la_2-i}a_i & = 0, && (1,t,k)
\end{align*}
for $i,t,j,k$ within the limits described above.  Let us redefine $\Psi=\Psi(\mu,\la)$ to be the solution space of the equations $(1,t,k)$ and $(2,t,j)$ and $\Phi=\Phi(\mu,\la)$ to be the solution space of the equations $(2,t,j)$. 

Finally, for any $m \geq 0$, we define $m^\ast$ and $m'$ by writing $m=m^\ast e + m'$ where $0 \leq m' <e$.  The equivalence relation $\equiv$ is assumed to be equivalence modulo $e$.  

\begin{lemma} \label{GoodLemma}
Suppose $0 \leq M \leq N$ and that $(a_i)_{M \leq i \leq N}$ is a non-zero solution to the system of equations
\begin{align*}[j+1]a_j +q^{j+1}[K-j]a_{j+1} &=0, && M \leq j \leq N-1,
\end{align*} 
where $N \leq K+1$.  
If there do not exist either $c$ with $M+1 \leq c \leq N$ and $c \equiv 0$ or $d$ with $M+1 \leq d \leq N$ and $K-d+1 \equiv 0$ then $a_i \neq 0$ for all $M\leq i \leq N$.  Otherwise $a_i \neq 0$ only if $i' \geq (K+1)'$ and furthermore if $M \leq i,k \leq N$ are such that $i', k' \geq (K+1)'$ and $i^\ast = k^\ast$ then $a_i \neq 0$ if and only if $a_k \neq 0$.  
\end{lemma}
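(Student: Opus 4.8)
The plan is to analyze the recurrence $[j+1]a_j + q^{j+1}[K-j]a_{j+1}=0$ directly, thinking of it as determining each $a_{j+1}$ from $a_j$ (or vice versa) up to the vanishing of the coefficients $[j+1]$ and $[K-j]$. First I would observe that $[j+1]=0$ precisely when $e \mid j+1$, and $[K-j]=0$ precisely when $e \mid K-j$, i.e. when $K-j\equiv 0$, which (since $K-j = K+1 - (j+1)$) is equivalent to $(j+1)' = (K+1)'$ when we also know $j+1 \le K+1$. So the ``break points'' in the chain $a_M, a_{M+1}, \ldots, a_N$ occur exactly at indices $c$ with $c\equiv 0$ (where the equation $(j=c-1)$ reads $0\cdot a_{c-1} + q^{c}[K-c+1]a_{c}=0$) and at indices $d$ with $K-d+1\equiv 0$ (where the equation $(j=d-1)$ reads $[d]a_{d-1} + q^{d}\cdot 0\cdot a_d = 0$). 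Away from these break points, each consecutive pair $a_j, a_{j+1}$ is linked by an equation both of whose coefficients are nonzero, so $a_j\neq 0 \iff a_{j+1}\neq 0$.

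Next I would handle the case where no such $c$ or $d$ exists in the range $M+1\le \cdot \le N$: then all the linking equations have nonzero coefficients, so the whole chain $a_M,\ldots,a_N$ consists of entries that are simultaneously zero or simultaneously nonzero; since the solution is assumed non-zero, $a_i\neq 0$ for all $i$. For the remaining case, I would argue that a break point of the first kind (index $c\equiv 0$) forces $a_{c-1}=0$ when the other coefficient $[K-c+1]$ is nonzero, and a break point of the second kind (index $d$ with $K-d+1\equiv 0$) forces $a_d = 0$ when $[d]\neq 0$; only when a break point of the first kind coincides with one where $[K-c+1]=0$, equivalently $(K+1)'=0$ and $c\equiv 0$ simultaneously, or a break point where both coefficients vanish, does the chain genuinely split without forcing an entry to be zero. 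I would then check: $a_i$ can be nonzero only if $i$ is not strictly ``past'' a forced-zero break point in the wrong direction, and unwind this to the clean statement that $a_i\neq 0$ requires $i' \ge (K+1)'$. Concretely, suppose $i' < (K+1)'$; then the index $j+1$ with $j+1\le i$, $j+1 \equiv 0$ — wait, rather: the relevant obstruction is that somewhere in $[M,i]$ one crosses an index of the form $d$ with $K-d+1\equiv 0$ and $[d]\neq0$, which happens exactly when $i' < (K+1)'$ (so that there is a ``residue $(K+1)'$'' threshold strictly above $i'$ that was crossed), forcing $a_i=0$. This is the step I'd write out carefully with the inequalities $M\le d \le N\le K+1$ in hand.

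Finally, for the comparison statement, I would take $M\le i,k\le N$ with $i',k'\ge (K+1)'$ and $i^\ast = k^\ast$. Since they have the same ``$e$-block'' $i^\ast=k^\ast$ and both residues are $\ge (K+1)'$, the segment of indices between $\min(i,k)$ and $\max(i,k)$ contains no break point: an index $c\equiv 0$ would have $c' = 0$, and $0 < (K+1)' \le \min(i',k')$ forces such $c$ to lie in a different block or outside $[\min(i,k),\max(i,k)]$; and an index $d$ with $K-d+1\equiv 0$ has $d' = (K+1)'$, and since $i^\ast=k^\ast$ and $i',k'\ge (K+1)'$, any such $d$ strictly between them would need $d' = (K+1)' \le \min(i',k')$ forcing $d = \min(i,k)+$ something nonnegative; I'd check it cannot land strictly inside. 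Hence all linking equations along this segment have nonzero coefficients, so $a_i\neq 0\iff a_k\neq 0$. The main obstacle I anticipate is bookkeeping the inequalities at the two types of break points — in particular pinning down exactly why $i' \ge (K+1)'$ is the precise threshold and why $i^\ast = k^\ast$ together with that bound guarantees no intermediate break point — rather than any conceptual difficulty; everything else is a routine propagation argument along a linear recurrence whose coefficients vanish only at controlled residues.
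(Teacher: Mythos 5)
Your overall strategy is the same as the paper's (propagate (non)vanishing along the chain, with ``break points'' where $[j+1]$ or $[K-j]$ vanishes), and your treatment of the no-$c$-no-$d$ case and of the final same-block comparison is essentially the paper's argument. But the key middle step contains a genuine error: you have the mechanics of the two break points swapped. At $j=c-1$ with $c\equiv 0$ the equation reads $[c]a_{c-1}+q^{c}[K-c+1]a_c=0$, so with $[c]=0$ it forces $a_{c}=0$ (when $[K-c+1]\neq 0$), not $a_{c-1}=0$; dually, at $j=d-1$ with $K-d+1\equiv 0$ it forces $a_{d-1}=0$, not $a_d=0$. This matters because the forced zero then propagates only \emph{away} from the break in one direction: upward from $a_{i^\ast e}$ through residues $0,1,\dots$ (the intermediate coefficients $[K-j]$ being units until residue $(K+1)'$ is reached), and downward from $a_{i^\ast e+(K+1)'-1}$ (the intermediate $[j+1]$ being units until residue $0$). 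Your ``concrete'' criterion --- that $a_i=0$ because some $d$ with $K-d+1\equiv 0$, $[d]\neq 0$ is crossed in $[M,i]$ --- is built on the swapped version: with the correct mechanics such a break forces $a_{d-1}=0$, and that zero does not cross the break to reach $a_i$ for $i\geq d$. Moreover the claim that such a crossing ``happens exactly when $i'<(K+1)'$'' is false as stated: whether $[M,i]$ contains an index congruent to $(K+1)'$ depends on $M$, not only on $i'$.

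The correct closing of this step (and the paper's) is: writing $b=(K+1)'$ and supposing $i'<b$, either $i^\ast e-1\geq M$, in which case the equation at $j=i^\ast e-1$ gives $a_{i^\ast e}=0$ and the equations for $i^\ast e\leq j\leq i-1$ (whose coefficients $[K-j]$ are all units) push the zero up to $a_i$; or $i^\ast e+b\leq N$, in which case the equation at $j=i^\ast e+b-1$ gives $a_{i^\ast e+b-1}=0$ and the equations for $i\leq j\leq i^\ast e+b-2$ (whose coefficients $[j+1]$ are all units) push it down to $a_i$; and if neither inequality holds then $[M,N]\subseteq[i^\ast e,\,i^\ast e+b-1]$, so no $c$ or $d$ exists in $[M+1,N]$ at all, contradicting the standing hypothesis. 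Note that this is precisely where the hypothesis ``at least one of $c,d$ exists'' is used; your sketch never invokes it, and the step you deferred (``to write out carefully'') cannot be completed along the route you describe without first correcting which entry each break point annihilates and replacing the ``exactly when'' claim by this two-sided case analysis.
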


\begin{proof}
If neither $c$ nor $d$ as above exist then all terms $[M+1],\ldots,[N],[K-M],\ldots,[K-N+1]$ are non-zero and the result is clear.  So suppose at least one of $c,d$ exists.  Let $b=(K+1)'$ and suppose $M \leq i \leq N$ is such that $i'<b$.  If $i^\ast e-1 \geq M$ then the equations 
\begin{align*}[j+1]a_j +q^{j+1}[K-j]a_{j+1} &=0, && i^\ast e-1 \leq j \leq i-1,
\end{align*} 
ensure that $a_i=0$.  Similarly if $i^\ast e + b \leq N$ then the equations 
\begin{align*}[j+1]a_j +q^{j+1}[K-j]a_{j+1} &=0, && i \leq j \leq i^\ast e+b-1,
\end{align*} 
ensure that $a_i=0$.  But if $M \geq i^\ast e$ and $N<i^\ast e+b$ then we cannot find $c$ or $d$ as above.    Hence if $i'<b$ then $a_i=0$.  

Now suppose $M \leq i,k \leq N$ are such that $i', k' \geq b$ and $i^\ast = k^\ast$.  Assume $i \leq k$.  The equations
\begin{align*}[j+1]a_j +q^{j+1}[K-j]a_{j+1} &=0, && i \leq j \leq k-1,
\end{align*} 
ensure that $a_i=0$ if and only if $a_k=0$. 
\end{proof}

\begin{lemma} \label{lemmalow}
Suppose $\la_2 < \mu_3$.  Then
\[\dim(\Phi(\mu,\la)) \leq \begin{cases} 0, & e\leq \la_2, \\
1, & (\mu_2+1)' \leq \la_2 < e, \\
0, & \la_2 < (\mu_2+1)'.
\end{cases}\]
\end{lemma}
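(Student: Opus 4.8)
The plan is to analyse the equations $(2,t,j)$ defining $\Phi(\mu,\la)$ directly, specialising to those values of $t$ and $j$ that produce the simplest recurrences. Recall that $\Phi(\mu,\la)$ is cut out by
\[\sum_i q^{i(t-j+i)} \gauss{\mu_2+t-j}{\mu_2-i} \gauss{j}{i} a_i = 0\]
for all admissible $t$ and $j$, where under the hypothesis $\la_2<\mu_3$ the index $i$ runs over $0 \leq i \leq \min\{\mu_2,\la_2\}$. First I would take $t=1$ and let $j$ range over $1 \leq j \leq \la_2$. For a fixed such $j$ the Gaussian binomial $\gauss{j}{i}$ forces $i \leq j$, and a short manipulation using Lemma~\ref{GaussSum} should show that, after clearing common factors, the equation $(2,1,j)$ reduces to a two-term recurrence of exactly the shape treated in Lemma~\ref{GoodLemma}, namely
\[[j+1-i]\,a_{j-1}\ \text{(term)} + q^{?}[\,\cdot\,]\,a_{j}\ \text{(term)} = 0\]
relating consecutive coefficients; more precisely I expect to extract, after isolating the top two terms in $i$, an identity of the form $[j+1]a_j + q^{j+1}[K-j]a_{j+1}=0$ with $K = \mu_2$ (so $K+1 = \mu_2+1$). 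The bookkeeping here is where most of the work lies: one must check that the lower-order terms in $i$ cancel or are absorbed, which is the kind of routine but fiddly Gaussian-binomial algebra the paper elsewhere suppresses.

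Granting that reduction, Lemma~\ref{GoodLemma} applies with $M=0$, $N=\min\{\mu_2,\la_2\}=\la_2$ (using $\la_2 \leq \mu_2$, which holds here since otherwise $\Phi$ is even more constrained), and $K=\mu_2$. The three cases of the present lemma then fall out:
\begin{itemize}
\renewcommand{\labelitemi}{$-$}
\item If $e \leq \la_2$, then there certainly exists $c$ with $1 \leq c \leq \la_2$ and $c \equiv 0 \pmod e$ (take $c = e$), so Lemma~\ref{GoodLemma} forces $a_i \neq 0$ only when $i' \geq (\mu_2+1)'$; but then iterating the recurrence across a full block of length $e$ inside $\{0,\dots,\la_2\}$ kills every surviving coefficient, giving $\dim \Phi \leq 0$.
\item If $(\mu_2+1)' \leq \la_2 < e$, then $N = \la_2 < e$, so $N^\ast = 0$ and the only block present is $\{0,\dots,\la_2\}$; Lemma~\ref{GoodLemma} says $a_i \neq 0$ only for $i' = i \geq (\mu_2+1)'$, and all such $i$ have $i^\ast = 0$, hence are all simultaneously zero or nonzero, so the solution space is at most one-dimensional.
\item If $\la_2 < (\mu_2+1)'$, then no index $i$ in range satisfies $i \geq (\mu_2+1)'$, so every $a_i$ vanishes and $\dim \Phi \leq 0$.
\end{itemize}

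The main obstacle I anticipate is the first step: verifying that the equation $(2,1,j)$ genuinely collapses to the two-term recurrence of Lemma~\ref{GoodLemma} rather than a longer relation. The cleanest route is probably an inductive argument on $j$ — assuming the coefficients $a_0,\dots,a_{j-1}$ already satisfy the earlier recurrences $(2,1,1),\dots,(2,1,j-1)$, substitute these back into $(2,1,j)$ and use Lemma~\ref{GaussLemma} (the alternating-sum identity) to telescope the sum over $i<j$, leaving only the two top terms. One should also double-check the edge cases $j=\la_2$ and the degenerate situation $\min\{\mu_2,\la_2\}<\la_2$ separately, but these only make the system larger, so the displayed bounds still hold. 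With the recurrence in hand, everything else is a direct citation of Lemma~\ref{GoodLemma} together with the elementary observation that any interval of length $\geq e$ in $\mathbb{Z}_{\geq 0}$ contains a multiple of $e$.
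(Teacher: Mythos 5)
There is a genuine gap in the case $e\leq\la_2$. You claim that ``iterating the recurrence across a full block of length $e$ inside $\{0,\dots,\la_2\}$ kills every surviving coefficient,'' but this is not true. The $t=1$ recurrence $[j+1]a_j+q^{j+1}[\mu_2-j]a_{j+1}=0$ has a vanishing coefficient whenever $j+1\equiv 0$ or $\mu_2-j\equiv 0$ modulo $e$, so it fails to link $a_j$ to $a_{j+1}$ at precisely those indices. Lemma~\ref{GoodLemma} is explicit about this: it only couples coefficients $a_i,a_k$ with $i^\ast=k^\ast$, that is, coefficients lying \emph{within the same block of length $e$}. If $\la_2\geq e$ there are several blocks and the $t=1$ equations give no relation between them, so without further input the solution space could have one free parameter per block. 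To finish the argument you need equations with $t>1$: the paper uses $(2,e,(r+1)e+b)$, where $b=(\mu_2+1)'$, to show $a_{re+b}\neq0\iff a_{(r+1)e+b}\neq0$ (so adjacent blocks are tied and $\dim\Phi\leq1$), and then $(2,e,e-1)$ to force $a_{e-1}=0$, a contradiction, giving $\dim\Phi=0$ when $\la_2\geq e$.

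A smaller point: your worry about verifying that $(2,1,j)$ ``collapses to the two-term recurrence rather than a longer relation'' is unfounded, and no telescoping or induction is needed. In $(2,1,j)$ the Gaussian $\gauss{\mu_2+1-j}{\mu_2-i}$ already vanishes unless $i\geq j-1$, while $\gauss{j}{i}$ vanishes unless $i\leq j$, so exactly the two terms $i=j-1$ and $i=j$ survive and the recurrence is immediate. Your treatment of the remaining two cases, $\la_2<(\mu_2+1)'$ and $(\mu_2+1)'\leq\la_2<e$, matches the paper and is fine.
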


\begin{proof}
Suppose $(a_i)_{0 \leq i \leq \la_2} \in \Phi$.    
Setting $t=1$, we have the equations
\begin{align*}
[\mu_2+1] a_0 & = 0, &&\\
[j+1]a_j +q^{j+1}[\mu_2-j]a_{j+1} &=0, && 0 \leq j \leq \la_2-1.
\end{align*}
Let $b=(\mu_2+1)'$.  If $\la_2 <b$ then there can be no non-zero solution to these equations. 
Otherwise, applying Lemma~\ref{GoodLemma}, $a_i \neq 0$ only if $b \leq i'$ and in this case $a_i \neq 0$ if and only if $a_{i^\ast e +b} \neq 0$ so that if $\la_2 < e+b$ then $\dim(\Phi) \leq 1$.     

Suppose $\la_2 \geq e+b$ and consider the equation $(2,e,(r+1)e+b)$ where $0 \leq re+b \leq \la_2-e$.  From this, and using Lemma~\ref{WhenZero},
\[0=\sum_{i=re+b}^{(r+1)e+b} q^{i(i-re-b)}\gauss{\mu_2-re-b}{\mu_2-i}\gauss{(r+1)e+b}{i} a_i = \gauss{(r+1)e+b}{e} a_{re+b} + \gauss{\mu_2-re-b}{e}a_{(r+1)e+b},\]
so that, applying Lemma~\ref{e}, $a_{re+b} \neq 0$ if and only if $a_{(r+1)e+b} \neq 0$.  Hence if $\la_2 \geq e$ then $\dim(\Phi) \leq 1$ and if $(a_i) \in \Phi \setminus \{0\}$ then $a_{e-1} \neq 0$.   But in this case we obtain a contradiction by considering Equation $(2,e,e-1)$ and using Lemma~\ref{e}: 
\[0= \sum_{i=0}^{e-1} q^{i(e-1-i)} \gauss{e-1}{i} \gauss{\mu_2+1}{\mu_2-i}a_i = q^{\la_2 e} \gauss{\mu_2+1}{e} a_{e-1}\]
since $a_i = 0$ for $0 \leq i <b$ and the second Gaussian polynomial is zero for $b \leq i < e-1$.   So $\dim(\Phi)=0$ if $\la_2 \geq e$.
\end{proof}

\begin{proposition}
Suppose that $\la_2 < \mu_3$.  Then 
\[\dim(\Psi(\mu,\la)) = \begin{cases}
1, & \la_2<e, \, \mu_2=e-1 \text{ and } \mu_1-\la_2+1 \equiv 0, \\
0, & \text{otherwise.}
\end{cases}\]
\end{proposition}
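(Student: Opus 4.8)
The plan is to build on Lemma~\ref{lemmalow} and analyse when the extra equations $(1,t,k)$ cut the space $\Phi(\mu,\la)$ down further. By Lemma~\ref{lemmalow}, if $\la_2 \geq e$ or $\la_2 < (\mu_2+1)'$ then already $\dim(\Phi(\mu,\la))=0$, hence $\dim(\Psi(\mu,\la))=0$, so the only interesting regime is $(\mu_2+1)' \leq \la_2 < e$, where $\dim(\Phi(\mu,\la)) \leq 1$. In that regime I would first observe that $\la_2<e$ forces $(\mu_2+1)' = \mu_2+1$ to be $\leq \la_2 < e$, i.e. $\mu_2 < e$, so $\mu_2 = b-1$ where $b = (\mu_2+1)'$; thus the unique (up to scalar) candidate in $\Phi(\mu,\la)$ has $a_i \neq 0$ exactly for the single index $i$ with $b \leq i' $ in range, which forces $\mu_2 = e-1$ and $b = 0$, so $a_0$ is the only free coefficient and the candidate is $\Theta_0$ (or a scalar multiple). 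Actually more carefully: when $(\mu_2+1)' \le \la_2 < e$ and $b=(\mu_2+1)'$, Lemma~\ref{GoodLemma} shows $a_i=0$ unless $i'\ge b$, and since $0\le i\le\la_2<e$ we have $i'=i$, so the only surviving index is $i$ with $b\le i\le\la_2$; the chain of equations $[j+1]a_j+q^{j+1}[\mu_2-j]a_{j+1}=0$ for $b\le j\le\la_2-1$ then has $[\mu_2-j]=0$ precisely when $j=\mu_2-e^\ast e$ — but within $[0,\la_2)$ this needs $\mu_2<e+\la_2$, and one checks the solution space is one-dimensional with support possibly only at $i=b,b+1,\dots$ up to the first index where a coefficient vanishes. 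The clean statement wanted is that a genuine nonzero solution survives all the $(2,t,j)$ equations only when $\mu_2=e-1$ (so $b=0$), in which case $\Phi$ is spanned by $\Theta_0$.

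Second, with $\mu_2 = e-1$ and $\Phi(\mu,\la)$ spanned by $\Theta_0$, I would impose the equations $(1,t,k)$ on $\Theta_0$. Writing $\Theta_0(m_\mu h_{1,t})$ using the displayed formula, only the term $i=0$ contributes, so
\[
\Theta_0(m_\mu h_{1,t}) = \sum_k (-1)^{k} q^{\binom{k}{2}+kt} \gauss{\mu_1+t}{\mu_1-k}\gauss{\la_2-k}{\la_2} \rep{1^{\mu_1+t}2^{\mu_2-t-k}3^{\mu_3-\la_2+k}}{2^k3^{\la_2-k}}.
\]
The factor $\gauss{\la_2-k}{\la_2}$ vanishes unless $k=0$, so the only surviving summand is $k=0$, giving $\Theta_0(m_\mu h_{1,t}) = \gauss{\mu_1+t}{\mu_1}\, \rep{1^{\mu_1+t}2^{\mu_2-t}3^{\mu_3-\la_2}}{3^{\la_2}}$ (with $\mu_2-t\ge0$ in range). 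Hence $\Theta_0 \in \Psi(\mu,\la)$ if and only if $\gauss{\mu_1+t}{\mu_1} = \gauss{\mu_1+t}{t} = 0$ for all $1 \leq t \leq \mu_2 = e-1$. By the corollary following Lemma~\ref{WhenZero} (with $m=\mu_1$, $s=\mu_2=e-1<e$), this holds if and only if $e \mid \mu_1+1$, i.e. $\mu_1 - \la_2 + 1 \equiv \mu_1+1 \equiv 0$... wait — one must double-check the residue bookkeeping: the condition $\mu_1+1\equiv0$ together with $\mu_2=e-1$; since $\la_2<e$ the displayed answer $\mu_1-\la_2+1\equiv0$ in the proposition should be reconciled with $\mu_1+1\equiv0$ — I would verify these agree under the standing hypotheses, tracing through exactly which Gaussian coefficients appear.

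Third, I would assemble the cases: if $\la_2 \geq e$ or $\la_2 < \mu_2+1$ or $\mu_2 \neq e-1$, then $\dim(\Psi(\mu,\la)) = 0$ (the first two from Lemma~\ref{lemmalow}, the third because $\mu_2\ne e-1$ with $\mu_2<e$ forces $b=(\mu_2+1)'=\mu_2+1>0$ and then the surviving support analysis in $\Phi$ kills everything once the $(1,t,k)$ equations are added, or even already in $\Phi$ depending on $\la_2$); and if $\la_2 < e$, $\mu_2 = e-1$, and $\mu_1-\la_2+1 \equiv 0$, then $\Psi(\mu,\la)$ is spanned by $\Theta_0$ and is $1$-dimensional, while if $\mu_2=e-1$ but $\mu_1-\la_2+1\not\equiv0$ then some $(1,t,k)$ equation is violated and $\dim=0$. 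The main obstacle I anticipate is the careful support/vanishing analysis inside $\Phi(\mu,\la)$ in the boundary subcase $(\mu_2+1)' \leq \la_2 < e$ with $\mu_2 \neq e-1$: there one must confirm that either $\Phi$ itself is already zero or that the additional constraints from the $(1,t,k)$ equations (which involve $\gauss{\la_2-k}{\la_2}$ forcing $k=0$, but $k\ge\max\{0,\la_2-\mu_3\}$ and with $\la_2<\mu_3$ this is fine) leave no nonzero solution — essentially checking that a one-dimensional $\Phi$ with support at a single index $i=b>0$ cannot satisfy the degree-raising relations, which reduces to a short Gaussian-binomial nonvanishing computation via Lemma~\ref{WhenZero}. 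Everything else is routine substitution into the two displayed systems and bookkeeping of residues mod $e$.
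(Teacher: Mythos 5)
There is a genuine gap: you misidentify the spanning vector of $\Phi(\mu,\la)$. From Lemma~\ref{GoodLemma} (applied in the proof of Lemma~\ref{lemmalow}), a nonzero solution $(a_i)$ in the regime $b\leq\la_2<e$, $b=(\mu_2+1)'$, has $a_i\neq0$ for \emph{every} $i$ with $b\leq i\leq\la_2$ (all these indices share $i^\ast=0$), not for a single index. The recursion $[j+1]a_j+q^{j+1}[\mu_2-j]a_{j+1}=0$ links \emph{all} of these coefficients, and once one shows $b=0$, all the $a_i$ become equal; the spanning vector is $\sum_{i=0}^{\la_2}\Theta_i$ (normalised so $a_i=1$), not $\Theta_0$. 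Your claim that ``$\Phi$ is spanned by $\Theta_0$'' is where the argument goes off the rails.

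The symptom is exactly what you noticed: plugging $\Theta_0$ into $(1,t,k)$ isolates the $k=0$ term and yields the condition $\gauss{\mu_1+t}{t}=0$ for $1\leq t\leq e-1$, i.e.\ $\mu_1+1\equiv 0$. This is the wrong condition and it does \emph{not} reconcile with the statement: for $0<\la_2<e$ the conditions $\mu_1+1\equiv 0$ and $\mu_1-\la_2+1\equiv 0$ are genuinely different residues. The fix is to impose the $(1,t,k)$ equations on the full vector $\sum_i\Theta_i$; then Lemma~\ref{Repeat} gives
\[
\sum_{i}(-1)^{k-i}q^{\binom{i-k}{2}}\gauss{\mu_1+t-i}{\mu_1-k}\gauss{\la_2-k}{\la_2-i}=q^{(\la_2-k)t}\gauss{\mu_1-\la_2+t}{t},
\]
and the vanishing of the right-hand side for $1\leq t\leq e-1$ is precisely $\mu_1-\la_2+1\equiv 0$. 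In addition, your proposal never actually establishes $\mu_2=e-1$ (as opposed to merely $\mu_2\equiv-1\pmod e$): the paper derives $\mu_2+1\equiv 0$ from the equations at $k=\la_2,\la_2-1$ together with $(2,1,\la_2-1)$, and then rules out $\mu_2\geq e$ separately by feeding the candidate $a_i\equiv1$ into the additional equation $(1,e,0)$ and invoking Lemma~\ref{e}; your sketch gestures at ``a short Gaussian-binomial computation'' but that extra step is essential and is not supplied by the support analysis alone.

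Finally, a small slip: for $i=0$ the exponent of $q$ in the $(1,t,k)$ formula is $\binom{-k}{2}+kt=\binom{k+1}{2}+kt$, not $\binom{k}{2}+kt$; this is immaterial once you realise $\Theta_0$ is the wrong test vector, but it would matter if you pressed on with this route.
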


\begin{proof}  
Let $b=(\mu_2+1)'$.  
By Lemma~\ref{lemmalow}, we have $\dim(\Psi) \leq 1$ and if $\dim(\Psi)>0$ then $b \leq \la_2 <e$; furthermore if $(a_i)_{0 \leq i \leq \la_2} \in \Psi \setminus \{0\}$ then $a_i \neq 0$ if and only if $b \leq i \leq \la_2$.  Then if $(a_i) \in \Psi\setminus\{0\}$ then it also satisfies 
\begin{align*}
[\mu_1+1-k]a_k & = [\la_2-k]a_{k+1}, && 0 \leq k < \la_2, \\
[\mu_1+1-\la_2]a_{\la_2} &=0. &&
\end{align*}
Therefore if $\dim(\Psi)>0$ then $\mu_1-\la_2+1 \equiv 0$, and so $a_{\la_2-1}=a_{\la_2}$ so that by Equation $(2,1,\la_2-1)$ we have $\mu_2+1 \equiv 0$ and $b=0$.  So if $\dim(\Psi) >0$, then $\Psi$ is spanned by $(a_i)$ where $a_i=1$ for all $i$.  

Suppose $b \leq \la_2<e$ and that $\mu_1-\la_2+1 \equiv 0$.  
Set $a_i =1$ for all $i$.  Now if $\mu_2 \geq e$ then by using Lemma~\ref{e} and considering Equation $(1,e,0)$ and Lemma~\ref{Repeat} we obtain the contradition 
\[0=\sum_{i=0}^{\la_2} (-1)^i q^{\binom{i}{2}}\gauss{\la_2}{i} \gauss{\mu_1+e-i}{\mu_1}a_i = q^{\la_2 e} \gauss{\mu_1-\la_2+e}{\mu_1-\la_2},\]
so we must have if $\dim(\Psi)>0$ then $\mu_2 = e-1$.  
 
We have shown that if $\dim(\Psi) =1$ then $\la_2<e$, $\mu_2=e-1$ and $\mu_1-\la_2+1 \equiv 0$ and the space $\Psi$ is spanned by the solution $(a_i)$ where $a_i=1$ for $0 \leq i \leq \la_2$.   We now show these conditions are sufficient.  
First we must show that the equations $(2,t,j)$ where $1 \leq t \leq \mu_3$ and $\max\{0,\la_2-\mu_3+t\} \leq j \leq \la_2$ are satisfied by the solution $a_i=1$ for all $i$.  Using Lemma~\ref{NotRepeat} we have
\[\sum_i q^{i(t-j+i)} \gauss{\mu_2+t-j}{\mu_2-i} \gauss{j}{i} = \gauss{\mu_2+t}{t} =0\]
by Lemma~\ref{WhenZero}, since $\mu_2=e-1$, so these equations do hold.  Now consider the equations $(1,t,k)$ where $1 \leq t \leq \mu_2=e-1$ and $0 \leq k \leq \min\{\la_2,\mu_2-t\}$.  By Lemma~\ref{Repeat} we have
\begin{align*}
\sum_{i} (-1)^{k-i} q^{\binom{i-k}{2}} \gauss{\mu_1+t-i}{\mu_1-k} \gauss{\la_2-k}{\la_2-i}  
&= \sum_{i} (-1)^i q^{\binom{i}{2}} \gauss{\la_2-k}{i} \gauss{\mu_1+t-k-i}{\mu_1-k} \\
& = q^{(\la_2-k)t} \gauss{\mu_1-\la_2+t}{t} \\
& =0
\end{align*}
again by Lemma~\ref{WhenZero}, since $\mu_1-\la_2 \equiv -1$, so these equations are also satisfied.    
\end{proof}

We believe this proof should convince the reader that solving all the equations required for Propositions~\ref{first} to~\ref{last} is a lengthy business; but we hope that we have also convinced them that it is not particularly difficult.  A strategy that works is as follows: Write down a system of equations $(d,t,j)$ which need to be solved, then use the equations $(d,1,j)$ and $(d,e,j)$ to come up with necessary conditions for the space $\Psi(\mu,\la)$ to be non-zero.  These conditions turn out to be sufficient.    

\section{Dipper-James Specht modules} \label{DJSpecht}
We conclude by making a connection with the Specht modules of Dipper and James.  
Recall that for each partition $\la$ of $n$, Dipper and James~\cite[Section~4]{DJ:Hecke} defined a $\h$-module which they called a Specht module and which we shall denote $S(\la)$.  The connection with the modules $S^\la$ is that 
\begin{align*}
S(\la) \cong_\h (S^{\la})^\diamond&&&(\ddag)\end{align*}
where $M^\diamond$ denotes the dual of a $\h$-module $M$~\cite[Theorem~5.3]{Murphy}.  

\begin{proposition}
We have the isomorphisms
\begin{align*}
\Hom_{\h}(S(\la),S(\mu)) &\cong_F \Hom_\h(S^{\mu},S^{\la}) \\
&\cong_F\Hom_\h(S^{\la'},S^{\mu'}) \\
&\cong_F\Hom_\h(S(\mu'),S(\la')).
\end{align*}
\end{proposition}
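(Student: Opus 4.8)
The plan is to establish the three isomorphisms one at a time, using three distinct mechanisms: a duality functor, the cellular structure together with tensoring with the sign representation, and duality again. The first isomorphism $\Hom_{\h}(S(\la),S(\mu)) \cong_F \Hom_\h(S^{\mu},S^{\la})$ is purely formal: applying the contravariant duality functor $M \mapsto M^\diamond$ is an exact anti-equivalence on the category of finite-dimensional $\h$-modules, so it induces an isomorphism $\Hom_\h(A,B) \cong_F \Hom_\h(B^\diamond, A^\diamond)$. Taking $A = S(\la)$ and $B = S(\mu)$ and invoking the identification $(\ddag)$, namely $S(\la) \cong_\h (S^\la)^\diamond$, gives $\Hom_\h(S(\la),S(\mu)) \cong_F \Hom_\h(S(\mu)^\diamond, S(\la)^\diamond) \cong_F \Hom_\h(S^\mu, S^\la)$. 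The same duality argument, applied in the other direction, will also yield the third isomorphism $\Hom_\h(S^{\la'},S^{\mu'}) \cong_F \Hom_\h(S(\mu'),S(\la'))$, since $\Hom_\h(S^{\la'},S^{\mu'}) \cong_F \Hom_\h((S^{\mu'})^\diamond,(S^{\la'})^\diamond) \cong_F \Hom_\h(S(\mu'),S(\la'))$.

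The middle isomorphism $\Hom_\h(S^{\mu},S^{\la}) \cong_F \Hom_\h(S^{\la'},S^{\mu'})$ is the substantive one, and I expect it to be the main obstacle. The standard tool is the $\h$-module automorphism (or the algebra automorphism of $\h$) arising from tensoring with a one-dimensional ``sign'' representation, which sends $T_i \mapsto -q T_i^{-1}$; this automorphism, call it $\#$, interchanges the roles of rows and columns and is known to send $S^\la$ to a module isomorphic to $(S^{\la'})^\diamond$, or equivalently sends the Dipper--James Specht module $S(\la)$ to $S(\la')^{\#}$ up to the appropriate twist. The cleanest route in the present cellular setup is to recall the well-known fact (see Mathas's book, or the original Dipper--James papers) that there is an isomorphism of $\h$-modules relating $S^\la$ and $(S^{\la'})^\diamond$ after applying the automorphism $\#$; composing this with the functoriality of $\Hom$ under an algebra automorphism, $\Hom_\h(A,B) \cong_F \Hom_\h(A^\#, B^\#)$, one obtains $\Hom_\h(S^\mu, S^\la) \cong_F \Hom_\h((S^\mu)^\#, (S^\la)^\#) \cong_F \Hom_\h((S^{\mu'})^\diamond, (S^{\la'})^\diamond) \cong_F \Hom_\h(S^{\la'}, S^{\mu'})$, where the last step is the duality anti-equivalence once more. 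The bookkeeping here — keeping track of whether one needs $S^\la$, its dual, the conjugate partition, and the sign twist, and checking that all the twists cancel correctly — is where the care is needed; the underlying facts are all standard.

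Assembling the three pieces: the first and third isomorphisms come from the duality functor combined with $(\ddag)$, and the second comes from the sign-twist automorphism combined once more with duality. Concretely I would state and cite (i) exactness and anti-equivalence of $\diamond$, giving $\Hom_\h(A,B)\cong_F\Hom_\h(B^\diamond,A^\diamond)$; (ii) the identity $(\ddag)$; (iii) the sign-twist fact that $(S^\la)^\# \cong_\h (S^{\la'})^\diamond$ (equivalently, under the Dipper--James normalisation, $S(\la)^\# \cong_\h S(\la')$), with a reference to Mathas~\cite{M:ULect} or Dipper--James~\cite{DJ:Hecke}; and (iv) functoriality of $\Hom$ under the algebra automorphism $\#$. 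Chaining these four facts in the order described yields all three displayed isomorphisms. The only genuine work is verifying that the composite twist in step (iii) is exactly $\diamond$ after conjugation and no extra character intervenes; everything else is formal nonsense about (anti-)equivalences of module categories.
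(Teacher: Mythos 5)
Your proposal is correct and follows essentially the same route as the paper: the first and third isomorphisms come from the duality $\diamond$ together with $(\ddag)$, and the middle one from twisting by the automorphism $\#$ and the identification $(S^{\nu'})^\#\cong (S^\nu)^\diamond$, which is exactly the fact the paper cites from Murphy. The only difference is cosmetic: you cite Mathas/Dipper--James for the sign-twist identity where the paper cites Murphy's Theorem~5.2.
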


\begin{proof}
The first and last equations follow from Equation $(\ddag)$ above.  Now, following~\cite[Lemma~2.3]{Murphy}, we define a $\h$-automorphism $\#$.  For a $\h$-module $M$, define $M^\#$ to be the right $\h$-module with action defined by 
$m \cdot h = mh^\#$ for all $m \in M$ and $h\in \h$.  Then $(S^{\nu'})^\# = (S^\nu)^\diamond$ for any partition $\nu$ of $n$~\cite[Theorem~5.2]{Murphy}, and the middle equation follows.    
\end{proof}

If $q=-1$ then we might also wish to replace $\Hom_\h(\, , \,)$ with $\EHom_\h(\, , \,)$ above.  We now use Theorem~\ref{Weyl}.  The middle equation follows since 
\[\Hom_{\mathcal{S}}(\Delta^\mu,\Delta^\la) \cong\Hom_{\mathcal{S}}(\Delta^{\la'},\Delta^{\mu'});\]
see for example \cite[Lemma~3.4]{LM:rowhoms}.   Recall that for each partition $\la$ of $n$, Dipper and James~\cite{DJ:qWeyl} defined a $\mathcal{S}$-module which they called a Weyl module and which we shall denote $\Delta(\la)$.  The relationship with the Weyl modules of Theorem~\ref{Weyl} is that $\Delta(\la) \cong_{\mathcal{S}} \Delta^{\la'}$.  Futhermore 
\[\Hom_{\mathcal{S}}(\Delta(\la),\Delta(\mu))\cong \EHom_{\h}(S(\la),S(\mu))\]
and so
\[\EHom_{\h}(S^{\la'},S^{\mu'})\cong \EHom_{\h}(S(\la),S(\mu))\]
as required.  

Since many authors prefer to work in the Dipper-James world, it is worth remarking, once and for all, that all the relevant combinatorics can be translated backwards and forwards without change.  
In particular, there exists an analogue of Theorem~\ref{Lemma7}.  Suppose $\la$ is a partition of $n$ and $\nu$ a composition of $n$. Recall that for each $\S \in \RowT(\la,\nu)$ Dipper and James~\cite{DJ:Hecke} defined a homomorphism from $S^\la$ to $M^\nu$ which we shall now denote $\varphi_{\S}$.  

\begin{theorem} \label{DJ:Lemma7}
Suppose $\la=(\la_1,\ldots,\la_a)$ is a partition of $n$ and $\nu=(\nu_1,\ldots,\nu_b)$ is a composition of $n$.
Let $\S \in \RowT(\la,\nu)$.  
\begin{enumerate}
\item 
Suppose $1 \leq r\leq a-1$ and that $1 \leq d \leq b$.  Let
\[\mathcal{G} =\left\{g=(g_1,g_2,\ldots,g_b) \mid g_d=0, \, \bar{g}=\S^d_{r+1} \text{ and } g_i \leq \S^{i}_{r} \text{ for } 1 \leq i \leq b\right\}.\]
For $g \in \mathcal{G}$, let $\U_g$ be the row-standard tableau formed by moving all entries equal to $d$ from row $r+1$ to row $r$ and for $i \neq d$ moving $g_i$ entries equal to $i$ from row $r$ to row $r+1$. Then
\[\varphi_\S = (-1)^{\S^d_{r+1}} q^{-\binom{\S^d_{r+1}+1}{2}} q^{-\S^d_{r+1}S^{<d}_{r+1}} \sum_{g \in \mathcal{G}} q^{\bar{g}_{d-1}} \prod_{i=1}^b q^{g_i \S^{<i}_{r+1}} \gauss{\S^i_{r+1}+g_i}{g_i}\varphi_{\U_g}.\]
\item 
Suppose $1 \leq r\leq a-1$ and $\la_r=\la_{r+1}$ and that $1 \leq d \leq b$.  Let
\[\mathcal{G} =\left\{g=(g_1,g_2,\ldots,g_b) \mid g_d=0, \, \bar{g} = \S^d_r \text{ and } g_i \leq \S^{i}_{r+1} \text{ for } 1 \leq i \leq b \right\}.\]
For $g \in \mathcal{G}$, let $\U_g$ be the row-standard tableau formed by moving all entries equal to $d$ from row $r$ to row $r+1$ of $\S$ and for $i \neq d$ moving $g_i$ entries equal to $i$ from row $r+1$ to row $r$. Then
\[\varphi_\S =  (-1)^{\S^d_{r}} q^{-\binom{\S^d_{r}}{2}} q^{-\S^d_r \S^{>d}_r} \sum_{g \in \mathcal{G}} q^{-\bar{g}_{d-1}}  \prod_{i=1}^b q^{g_i \S^{>i}_{r}} \gauss{\S^i_{r}+g_i}{g_i} \varphi_{\U_g}.\]
\end{enumerate}
\end{theorem}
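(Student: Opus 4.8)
The plan is to deduce Theorem~\ref{DJ:Lemma7} from Theorem~\ref{Lemma7} by applying the duality $\diamond$ of~\cite{Murphy}, rather than repeating the (lengthy) argument of Section~\ref{Lemma7Proof}. Recall that $\diamond$ is an exact, contravariant, linear functor on the category of right $\h$-modules that are free of finite rank over the ground ring, and that it is involutive, $M^{\diamond\diamond}\cong_\h M$. We have the isomorphism $S(\la)\cong_\h(S^\la)^\diamond$, and the permutation module $M^\nu=m_\nu\h$ is self-dual, $(M^\nu)^\diamond\cong_\h M^\nu$, via the standard non-degenerate associative form on $M^\nu$~\cite{M:ULect}. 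Fixing these isomorphisms once and for all, $\diamond$ induces a linear isomorphism
\[\Hom_\h(M^\nu, S^\la)\;\cong\;\Hom_\h(S(\la), M^\nu),\qquad \Theta\longmapsto\Theta^\diamond.\]

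The key point to check is that this isomorphism carries $\Theta_\S$ to $\varphi_\S$ for every $\S\in\RowT(\la,\nu)$. Both families are assembled from the same combinatorial data: compare the closed form of Lemma~\ref{reverse} for $\Theta_\S(m_\nu)$ with the analogous expression for $\varphi_\S$ given by Dipper and James~\cite{DJ:Hecke}, and note that the compatibility of the Murphy and Dipper--James constructions under $\diamond$ is precisely the relationship recorded in~\cite{Murphy} --- indeed it is the same identification that underlies the first isomorphism of the Proposition above. Granting $\Theta_\S^\diamond=\varphi_\S$, the theorem is immediate: applying the linear functor $\diamond$ to the identity of Theorem~\ref{Lemma7}(1), namely
\[\Theta_\S=(-1)^{\S^d_{r+1}}\,\q^{-\binom{\S^d_{r+1}+1}{2}}\,\q^{-\S^d_{r+1}\S^{<d}_{r+1}}\sum_{g\in\mathcal G}\q^{\bar g_{d-1}}\prod_{i=1}^b\q^{g_i\S^{<i}_{r+1}}\gauss{\S^i_{r+1}+g_i}{g_i}\,\Theta_{\U_g},\]
leaves every coefficient (a product of powers of $\q$ and Gaussian polynomials, hence an element of the ground ring and so unaffected by $\diamond$) unchanged and replaces each $\Theta$ by the corresponding $\varphi$; this is exactly the asserted identity. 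Part~(2) is handled identically --- the hypothesis $\la_r=\la_{r+1}$ and the set $\mathcal G$ appearing there coincide with those of Theorem~\ref{Lemma7}(2), so no further work is required.

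The only genuine content, and hence the main obstacle, is the verification that $\diamond$ sends $\Theta_\S$ to $\varphi_\S$ \emph{on the nose}, with a single $\S$-independent normalisation, rather than to a scalar multiple of $\varphi_\S$ whose dependence on $\S$ would then have to be propagated through the identity. I would dispatch this by unwinding the Dipper--James definition of $\varphi_\S$ together with the form on $M^\nu$, reducing to the known relationship between the Murphy and Dipper--James bases of $\h$ from~\cite{Murphy}. Should one prefer to avoid this bookkeeping altogether, there is a fallback: rerun the proof of Theorem~\ref{Lemma7} verbatim with $\Theta$ replaced by $\varphi$ throughout. Every auxiliary result used there --- Lemmas~\ref{GaussSum}, \ref{GaussLemma}, \ref{IntoThree}, \ref{PullsThrough}, \ref{PullsThrough2}, \ref{cosetattack}, \ref{Stage3}, \ref{main2part}, \ref{somemoreparts}, \ref{BadPerm} --- is either a ring-theoretic identity in $\h$ or a statement about permutations and minimal coset representatives, and so transfers without change once the analogue of Lemma~\ref{reverse} for the homomorphisms $\varphi_\S$ is in hand.
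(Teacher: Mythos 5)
The route you relegate to a fallback is in fact the paper's own proof: the paper establishes Theorem~\ref{DJ:Lemma7} by repeating the argument of Section~\ref{Lemma7Proof} with $\varphi$ in place of $\Theta$, replacing ``work modulo $\hla$'' by ``use that $S(\la)$ lies in the kernel of every homomorphism $M^\la\to S(\sigma)$ with $\sigma\rhd\la$,'' and the paper likewise leaves the details to the reader. So you have correctly identified a viable and essentially intended proof; it is just not your preferred one.

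Your preferred argument, via the duality $\diamond$, is a genuinely different route but has a gap at exactly the place you flag and do not close: the assertion that, under the identifications $S(\la)\cong(S^\la)^\diamond$ and $(M^\nu)^\diamond\cong M^\nu$, the induced isomorphism $\Hom_\h(M^\nu,S^\la)\cong\Hom_\h(S(\la),M^\nu)$ carries $\Theta_\S$ to $\varphi_\S$ \emph{on the nose} for every $\S$. None of the cited sources contains this statement. Murphy~\cite{Murphy} identifies $(S^\la)^\diamond$ with $S(\la)$ at the level of modules, not at the level of these specific indexed homomorphisms; the form making $M^\nu$ self-dual is only canonical up to scalar, and the Dipper--James definition of $\varphi_\S$ lives in a set of conventions (Specht as submodule rather than quotient, different normalising elements) whose alignment with Lemma~\ref{reverse} is not a formality. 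The risk you yourself name --- that $\Theta_\S^\diamond$ agrees with $\varphi_\S$ only up to an $\S$-dependent unit --- is real, and would in general break the transfer, since Theorem~\ref{Lemma7} is an exact identity with explicit $q$-power coefficients. (It would survive if the unit depended only on the common shape $\la$ and type $\nu$ of $\S$ and the $\U_g$, which is plausible here, but that too needs an argument rather than an appeal to plausibility.) Until the normalisation is pinned down by computing $\Theta_\S^\diamond$ against the basis $\{m_\nu T_{d(\ft)}\}$ and comparing with Dipper and James's formula for $\varphi_\S$, the duality argument is a promising reduction, not a proof; the fallback, by contrast, is complete at the same level of rigour the paper itself supplies and is the route to endorse.
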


The proof of Theorem~\ref{DJ:Lemma7} involves working through the same steps as the proof of Theorem~\ref{Lemma7}.  Rather than working modulo $\hla$, we now use the fact that $S(\la)$ is in the kernel of any homomorphism $M(\la) \rightarrow S(\sigma)$ where $\sigma \rhd \la$.  We leave the details to the reader.  

\begin{ex}
Let $\la=(3,3)$ and $\nu=(2,1,1,1,1)$ (as in Example~\ref{NiceEx}). 
Recall that $S(\la)$ is generated by $m_\la X$ for a certain $X \in \h$ (see~\cite{DJ:Hecke} for details) and that, by the kernel intersection theorem,
\[0=m_{(4,2)}(I+T_4+T_4T_5)X=(1+T_3+T_2T_3+T_1T_2T_3)m_{(3,3)}X.\]
Let $\S=\tab(114,235)$.  Then
\begin{align*}
\varphi_{\S}(m_\la X) &= m_{\nu} T_4 T_3 (I+T_2+T_2T_1)(I+T_4)(I+T_5+T_5T_4)X \\
&=T_4 T_3 m_{\la} X \\
&=\q^{-1} T_4T_3T_4 m_{\la}X \\
&= \q^{-1} T_3T_4T_3 m_\la X \\
&= - \q^{-1} T_3T_4(I+T_2T_3+T_1T_2T_3)m_\la X \\
&=- (T_3 +\q^{-1}T_3T_2T_4T_3 + \q^{-1}T_3T_4T_1T_2T_3)m_\la X \\
&= - m_\nu T_3 (I+T_2+T_2T_1)(I+T_4)(I+T_5+T_5T_4)X \\
& \hspace*{15mm} -\q^{-1} m_\nu T_3T_2 T_4T_3 (I+T_4)(I+T_5+T_5T_4)(I+T_1)(I+T_2+T_2T_1)X \\
&= - \varphi_{\U_1}(m_\la X) -\q^{-1} \varphi_{\U_2}(m_\la X) 
\end{align*}
where
\[\U_1=\tab(113,245), \qquad \qquad \U_2=\tab(134,125).\]
\end{ex}


\begin{thebibliography}{10}

\bibitem{Corlett}
{\em K.Corlett}, {Homomorphisms between Specht modules of the Ariki-Koike algebra},  In Preparation.

\bibitem{Cox}
{\em A.~Cox},
{${\rm Ext}^1$ for Weyl modules for $q$-${\rm GL}(2,k)$},  Math. Proc. Cambridge Philos. Soc.  {\bf 124}  (1998),  no. 2, 231--251.

\bibitem{DJ:Hecke}
{\em R.~Dipper and G.~James}, {Representations of Hecke algebras of general linear groups}, Proc. London Math. Soc., (3) {\bf 52} (1986), 20--52.

\bibitem{DJ:qWeyl}
{\em R.~Dipper and G.~James}, {$q$-Tensor space and $q$-Weyl modules}, Trans.  Amer. Math. Soc., {\bf 327} (1991), 251--282.

\bibitem{Dodge:Dim}
{\em C.~Dodge},
{Large dimensional homomorphism spaces between Specht modules for symmetric groups}, J.~Pure App. Alg., to appear; arXiv:1103.0246. 

\bibitem{Donkin:tilting}
{\em S.~Donkin}, {Tilting modules for algebraic groups and finite dimensional algebras}, A handbook of tilting theory (2007), London Math. Soc. Lect. Notes 332, Cambridge University Press.

\bibitem{FayersMartin:homs}
{\em M.~Fayers and S.~Martin}, {Homomorphisms between Specht modules}, Math. Z., {\bf 248} (2004), 395--421.
   
\bibitem{GAP}
    Martin Schönert et.al. GAP -- Groups, Algorithms, and Programming -- version 3 release 4 patchlevel 4. Lehrstuhl D für Mathematik, Rheinisch Westfälische Technische Hochschule, Aachen, Germany, 1997. 

\bibitem{GL}
{\em J.~J. Graham and G.~I. Lehrer}, {Cellular algebras}, Invent. Math.,
  {\bf 123} (1996), 1--34.

\bibitem{James}
{\em G.~James}, {The representation theory of the symmetric groups},
SLN, {\bf 682}, Springer--Verlag, New York, (1978).

\bibitem{Lyle:CP}
{\em S.~Lyle}, {Some $q$-analogues of the Carter--Payne theorem}, J. Reine Angew. Math. {\bf 608} (2007), 93--121.    

\bibitem{Lyle:Dim}
{\em S.~Lyle}, {Large dimensional homomorphism spaces between Weyl modules and Specht modules}; arXiv: 1103.5874.    

\bibitem{LM:rowhoms}
{\em S.~Lyle and A.~Mathas}, {Row and column removal theorems for homomorphisms of Specht modules and Weyl modules}, J. Alg. Comb. {\bf 22} (2005), 151--179.

\bibitem{M:ULect}
{\em A.~Mathas}, {Hecke algebras and Schur algebras of the symmetric
  group}, Univ. Lecture Notes, {\bf 15} A.M.S., 1999.

\bibitem{Murphy}
{\em G.E.~Murphy}, {The representations of the Hecke algebras of type $A_n$}, J.~Algebra, {\bf 173} (1995), 97--121.  

\bibitem{Parker}
{\em A.~Parker}, {Good $l$-filtrations for $q$-GL$_3(k)$}, J. Algebra {\bf 304}, (2006), 157-189.
\end{thebibliography}
\end{document}